%
%
%
\documentclass{psapm-l}

\usepackage{amssymb,amsmath,amscd}
\usepackage[mathbf,mathcal]{euler}
\usepackage{eucal}
\usepackage{dsfont}
\usepackage{epsfig}
\usepackage{graphicx}
\usepackage{mdwlist}
\usepackage{hyperref}

\usepackage{xy}
\xyoption{all}
\usepackage{floatflt}


\newtheorem{theorem}{Theorem}[section]
\newtheorem{lemma}[theorem]{Lemma}
\newtheorem{proposition}[theorem]{Proposition}
\newtheorem{corollary}[theorem]{Corollary}

\theoremstyle{definition}

\newtheorem{example}[theorem]{Example}

\theoremstyle{remark}
\newtheorem{remark}[theorem]{Remark}

\numberwithin{equation}{section}

\newcommand{\style}[1]{{\sc{#1}}\index{{#1}}}    

\def\sfopname#1{\operatorname{\mathsf{#1}}}

\newcommand{\sweetline}{
\begin{center}
\nointerlineskip\vspace{-0.04in}
        $\bullet$\hfill\rule{0.77\linewidth}{1.0pt}\hfill$\bullet$
\par\nointerlineskip\vspace{-0.01in}
\end{center}
}

\def\Flip#1{\raisebox{1ex}{\rotatebox{180}{#1}}}
\def\bigcheck#1{\rlap{\smash{\raisebox{2.25ex}{\Flip{$\hat{~}\,$}}}}#1}

\newcommand{\ie}{{\em i.e.}}
\newcommand{\eg}{{\em e.g.}}
\newcommand{\cf}{{\em cf.~}}

\newcommand{\real}{{\mathbb R}}

\newcommand{\nats}{{\mathbb N}}

\newcommand{\zed}{{\mathbb Z}}

\newcommand{\A}{{\mathcal A}}           
\newcommand{\abs}[1]{\left\vert{#1}\right\vert}

\newcommand{\AGr}{{\sf AGr}}           
\newcommand{\B}{{\mathcal B}}           %
\newcommand{\Bessel}{{\bf B}}       
\newcommand{\bfx}{\mbox{\boldmath $x$}}

\newcommand{\C}{{\mathcal C}}

\newcommand{\CF}{{\sfopname{CF}}}
\newcommand{\CFS}{{\sfopname{CFS}}}
\newcommand{\Chamber}{{\mathcal C}}

\newcommand{\Complex}{{\mathcal C}} 


\newcommand{\crit}{{\sfopname{Cr}}}        
\newcommand{\Crit}{{\sfopname{Cr}}}        

\newcommand{\dchifloor}{{\lfloor d\chi\rfloor}}
\newcommand{\dchiceil}{{\lceil d\chi\rceil}}

\newcommand{\Def}{{\sfopname{Def}}}            
\newcommand{\del}{\partial}

\newcommand{\diam}{{\sfopname{diam}}}
\renewcommand{\dim}{{\sfopname{dim}}}              
\newcommand{\Dual}{{\bf D}}
\newcommand{\Dualizing}{{\mathbb{D}}}

\newcommand{\Field}{{\mathbb F}}        

\newcommand{\Fourier}{{\bf F}}       
\newcommand{\FourierSato}{{\bf _\mu\! F}}       

\newcommand{\Grading}{\bullet}       
\newcommand{\Graph}{{\mathcal G}}       
\newcommand{\Gr}{{\sfopname{Gr}}}           

\newcommand{\Hom}{{\sfopname{Hom}}}

\newcommand{\id}{{\sfopname{Id}}}              
\newcommand{\Id}{{\sfopname{Id}}}              
\newcommand{\im}{{\sfopname{im}}}              

\newcommand{\Index}{{\mathcal I}}       
\newcommand{\inv}{^{-1}}
\newcommand{\isom}{\cong}               
\newcommand{\Kernel}{{K}}   
\renewcommand{\ker}{{\sfopname{ker\ }}}  

\newcommand{\Link}{{\Lambda}} 
\newcommand{\LMD}{{\sfopname{LMD}}}       

\newcommand{\Nodes}{{\mathcal N}}       
\newcommand{\norm}[1]{\left\|{#1}\right\|}
\newcommand{\Omin}{{\mathcal O}}
\newcommand{\one}{{\mathbf{1}}}
\newcommand{\Obs}{{\mathcal O}}         

\newcommand{\PD}{{\sfopname{PD}}}

\newcommand{\Presheaf}{{\mathcal P}}

\newcommand{\proj}{{\pi}}
\newcommand{\Radon}{{\bf R}}       

\newcommand{\Ring}{{\Bbbk}}
\newcommand{\RP}{{\mathbb{P}}}
\newcommand{\sect}{{\gamma}}  

\newcommand{\Sense}{{\mathcal S}}

\newcommand{\Sheaf}{{\mathcal F}}
\newcommand{\Snake}{{\delta}}

\newcommand{\Sphere}{{\mathbb S}}   

\newcommand{\supp}{{\sfopname{supp}}}      
\newcommand{\Support}{{\mathcal S}}     

\newcommand{\totvar}{{\sfopname{totvar}}}      
\newcommand{\Transform}{{\mathcal T}}         

\newcommand{\V}{{\mathcal V}}           
\newcommand{\vol}{{\sfopname{vol}}}      
\newcommand{\Wavelet}{{\mathcal W}}       


\begin{document}

\title{Euler Calculus with Applications to Signals and Sensing}

\author{Justin Curry}
\address{Department of Mathematics, University of Pennsylvania}
\email{jucurry@math.upenn.edu}

\author{Robert Ghrist}
\address{Departments of Mathematics and Electrical/Systems Engineering, University of Pennsylvania}
\email{ghrist@math.upenn.edu}
\thanks{All authors supported by DARPA DSO - HR0011-07-1-0002 and ONR N000140810668.}

\author{Michael Robinson}
\address{Department of Mathematics, University of Pennsylvania}
\email{mrobin@math.upenn.edu}

\subjclass[2000]{Primary ; Secondary }

\keywords{Euler characteristic, sheaves, cohomology, integral geometry, signal processing.}

\begin{abstract}
This article surveys the Euler calculus --- an integral calculus based on Euler characteristic --- and its applications to data, sensing, networks, and imaging.
\end{abstract}

\maketitle



\section{Introduction}
\label{sec:intro}

This work surveys the theory and applications of \style{Euler calculus}, an integral calculus built with the Euler characteristic, $\chi$, as a measure. While the theory engages an ethereal swath of topology (complexes, sheaves, and cohomology), the applications (to signal processing, data aggregation, and sensing) are concrete. These notes are meant to be read by both pure and applied mathematicians.

In the mid-1970s, MacPherson \cite{MacPherson} and Kashiwara \cite{Kashiwara} independently published seminal works on constructible sheaves. Their respective motivations appeared quite different. MacPherson was interested in  answering a conjecture of Deligne and Grothendieck on the theory of Chern classes for  complex algebraic varieties with singularities. Kashiwara had been following up on work from his 1970 thesis on the algebraic study of partial differential equations via D-modules. In each setting --- singularities, solutions, and obstructions ---  were understood using sheaf theory. Both MacPherson and Kashiwara made use of constructible sheaves and functions to provide algebraic characterizations of the local nature of singularities. Both provided index-theoretic formulae and developed a calculus relying on Euler characteristic. If it was in sheaf theory that Euler calculus was born, it had an earlier conception in geometry, going back at least to work of Blaschke \cite{Blaschke} and perhaps before, though neither he nor those who followed (Hadwiger, Groemer, Santalo, Federer, Rota, etc.) developed the full calculus that arose from sheaves.

The language of Euler calculus was slow to form and be appreciated. The short survey paper of Viro \cite{Viro} cited MacPherson and mentioned simple applications of the Euler integral to algebraic geometry. The short survey paper of Schapira \cite{Schapira:op} relied more upon Kashiwara \cite{Kashiwara,Kashiwara2} and was full of interesting formulations and applications --- it was explicitly motivated by the work of Guibas, Ramshaw, and Stolfi \cite{GRS} in computational geometry. In addition, the followup paper of Schapira \cite{Schapira:tom} contained a prescient application of Euler integral transforms to problems of tomography and reconstruction of images from the Euler characteristics of slicing data. In the decade following  these works, the language of Euler integration was used infrequently, hiding mostly in works on real-algebraic geometry and constructible sheaves and paralleled in the combinatorial geometry literature \cite{Groemer,Rota,Morelli,KR,Chen,Schanuel}.

There has been a recent renaissance of appreciation for Euler calculus. Some of this activity comes from the role of Euler characteristic as an elementary type of \style{motivic measure} in motivic integration \cite{CL,GZ}. Applications to algebraic geometry seem to be the primary impetus for interest in the subject \cite{Viro,McCP,Kiritchenko,GZ}. Parallel applications to integral geometry also have recently emerged. This survey comes out of the recent applications \cite{BG:enum,BG:PNAS} to problems in sensing, networks, signal processing, and data aggregation. These applications, presaged by Schapira \cite{Schapira:tom}, are poised to impact a number of problems of contemporary relevance.

It is somewhat remarkable that Schapira's deep insights saw little-to-no followup. One explanation is that in this, as in many other things, Schapira is ahead of his time. However, the language in which his results were couched --- sheaf theory --- was and is beyond the grasp of nearly all researchers in the application areas to which his results were directed. In the intervening decades, algebraic-topological methods have become more numerous and palatable to scientists and engineers, and the basics of homology and cohomology are now not so foreign outside of Mathematics departments. The same cannot yet be said of sheaf theory. It is with this in mind that this article exposits the Euler calculus from both explicit/applied and implicit/sheaf-theoretic perspectives.

The article begins with a concrete presentation of the Euler integral (\S\ref{sec:euler}-\ref{sec:integral}); continues with a gentle if brief introduction to the topology undergirding the Euler characteristic (\S\ref{sec:hom}-\ref{sec:seq}); then advances to the sheaf-theoretic view (\S\ref{sec:presheaves}-\ref{sec:dictionary}). With this full span of concepts established, this article turns to the many applications of the Euler calculus to engineering systems and data aggregation (\S\ref{sec:target}-\ref{sec:fubini}). We emphasize issues connected with implementation of the Euler calculus, including numerical approximation (\S\ref{sec:numerical}-\ref{sec:eucharis}), the use and inversion of integral transforms (\S\ref{sec:conv}-\ref{sec:wavelet}), and the extension to a real-valued theory (\S\ref{sec:Rval}-\ref{sec:defker}). This last development, motivated by the need to build an honest numerical analysis for the Euler calculus, flows back to the abstract sea from which Euler calculus was born, by yielding fundamental connections to Morse theory. The article concludes (\S\ref{sec:open}) with a collection of open problems and directions for further research.

The all-encompassing title of this work is a misnomer: our applications of the Euler calculus focus primarily on problems inspired by engineering systems and data. This is by no means the sole --- or even most important --- application of the Euler calculus. Other survey articles on applications of the Euler calculus (\eg, \cite{GZ}) detail applications that have no overlap with those of this paper at all: it is a broad subject. Several classical results in topology/geometry (the Gauss-Bonnet and Riemann-Hurwitz theorems) are both simplified and illuminated by an application of the Euler calculus. More applications are either implicit or emerging in the literature:
\begin{enumerate}
\item A careful reading of work by R. Adler and others on Gaussian random fields \cite{Adler,AT}, in which one wants to compute the expected Euler characteristic of an excursion set of a random smooth distribution over a domain, reveals the generous use of Euler calculus, without the language. Recent preprints \cite{Adler+,BoBo} incorporate this language.
\item The exciting work on persistent homology and associated barcodes for data \cite{CZ,Carlsson,G:barcodes} has a recent connection to Euler calculus. O. Bobrowski and M. Strom Borman \cite{BoBo} define the Euler characteristic of a barcode and relate this quantity to Euler integrals.
\item The work of S. Gal on configuration spaces \cite{Gal} and its applications to robotics \cite{Farberbook} resonates with similar constructions in the algebraic geometry literature \cite{GZ} and is clearly awaiting an Euler calculus reinterpretation.
\item Computational complexity of Euler characteristic computation for semialgebraic sets has been considered by Basu \cite{Basu1}, with recent work of \cite{Basu2} identifying Euler characteristic as an important obstruction in complexity theory related to the classical Toda theorem. This hints at the use of Euler integrals in computational complexity of constructible functions.
\end{enumerate}
It is to be hoped that other problems in Applied Mathematics and Statistics are equally amenable to simplification by means of this elegant and efficacious theory.

\specialsection*{\bf The Combinatorial Formulation}
\sweetline

\section{Euler characteristic}
\label{sec:euler}

The Euler characteristic is a generalization of counting. Given a finite discrete set $X$, the Euler characteristic is its cardinality $\chi(X)=\abs{X}$. If one connects two points of $X$ together by means of an edge (in a cellular/simplicial structure), the resulting space has one fewer component and the Euler characteristic is decremented by one. Continuing inductively, the Euler characteristic counts vertices with weight $+1$ and edges with weight $-1$. This intuition of counting connected components works at first; however, the addition of an edge producing a cycle does not change the count of connected components. To fill in such a cycle with a $2$-cell would return to the setting of counting connected components again, suggesting that $2$-cells be weighted with $+1$. This intuition of counting with $\pm 1$ weights inspires the following combinatorial definition of $\chi$.

Given a space $X$ and a decomposition of $X$ into a finite number of open cells $X=\coprod_\alpha\sigma_\alpha$, where each $k$-cell $\sigma_\alpha$ is homeomorphic to $\real^k$, the \style{Euler characteristic} of $X$ is defined as
\begin{equation}
    \chi(X) = \sum_\alpha (-1)^{\dim\ \sigma_\alpha} .
\end{equation}
For an appropriate class of {\em ``tame''} spaces (see \S\ref{sec:tame}), this quantity is well-defined and independent of the cellular decomposition of $X$. This combinatorial Euler characteristic is a homeomorphism invariant, but, as defined, is not a homotopy invariant for non-compact spaces, as, \eg, it distinguishes $\chi((0,1))=-1$ from $\chi([0,1])=1$. Among compact finite cell complexes, it is a homotopy invariant, as will be shown in \S\ref{sec:homot}.

\begin{example}
\begin{enumerate}
\item Euler characteristic completely determines the homotopy type of a compact connected graph.
\item Euler characteristic is also a sharp invariant among closed orientable 2-manifolds: $\chi=2-2g$, where $g$ equals the genus.
\item Any compact convex subset of $\real^n$ has $\chi=1$. Removing $k$ disjoint convex open sets from such a convex superset results in a compact space with Euler characteristic $1-k(-1)^n$.
\item The $n$-dimensional sphere $\Sphere^n$ has $\chi(\Sphere^n)=1+(-1)^n$.
\end{enumerate}
\end{example}

\section{Tame topology}
\label{sec:tame}

Euler characteristic requires some degree of finiteness to be well-defined. This finiteness, when enlarged to unions, intersections, and mappings of spaces, demands a behavior that is best described as {\em tameness}. Different mathematical communities have adopted different schemes for imposing tameness on subsets of Euclidean space. Computer scientists often focus on piecewise linear (PL) spaces, describable in terms of affine spaces and matrix inequalities. Combinatorial geometers sometimes use a generalization called \style{polyconvex} sets \cite{KR,Morelli}. Algebraic geometers tend to prefer \style{semialgebraic} sets --- subsets expressible in terms of a finite number of polynomial inequalities. Analysts prefer the use of analytic functions, leading to a class of sets called \style{subanalytic} \cite{Shiota}. Logicians have recently created an axiomatic reduction of these classes of sets in the form of an \style{o-minimal structure}, a term derived from {\em order minimal}, in turn derived from model theory. The text of Van den Dries \cite{vdD} is a beautifully clear reference.

For our purposes, an o-minimal structure $\Omin=\{\Omin_n\}$ (over $\real$) denotes a sequence of Boolean algebras $\Omin_n$ of subsets of $\real^n$ (families of sets closed under the operations of intersection and complement) which satisfies simple axioms:
\begin{enumerate*}
\item $\Omin$ is closed under cross products;
\item $\Omin$ is closed under axis-aligned projections $\real^{n}\to\real^{n-1}$;
\item $\Omin$ contains all algebraic sets (zero-sets of polynomials);
\item $\Omin_1$ consists of all finite unions of points and open intervals.
\end{enumerate*}
Elements of $\Omin$ are called \style{tame} or, more properly, \style{definable} sets. Canonical examples of o-minimal systems include semialgebraic sets and subanalytic sets. Note the last axiom: the finiteness imposed there is the crucial piece that drives the theory. (The open intervals need not be bounded, however.)

Given a fixed o-minimal structure, one can work with tame sets with relative ease: \eg, the union and intersection of two tame sets are again tame. A (not necessarily continuous) function between tame spaces is tame (or definable) if its graph (in the product of domain and range) is a tame set. A definable homeomorphism is a tame bijection between tame sets. To repeat: {\em definable homeomorphisms are not necessarily continuous.} Such a convention makes the following theorem concise:

\begin{theorem}[Triangulation Theorem \cite{vdD}]
\label{thm:Triangulation}
Any tame set is definably homeomorphic to a subcollection of open simplices in (the geometric realization of) a finite Euclidean simplicial complex.
\end{theorem}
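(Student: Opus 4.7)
The plan is to prove this by induction on the ambient dimension $n$, driven by the \emph{Cell Decomposition Theorem} (CDT), which is the workhorse derived from the four o-minimal axioms. CDT states that any finite collection of tame subsets of $\real^n$ admits a common refinement into finitely many \emph{cells}: either graphs of continuous definable functions over cells $C \subset \real^{n-1}$, or ``bands'' $\{(x,y) : x \in C,\ f(x) < y < g(x)\}$ (with the analogous unbounded variants). Since $x \mapsto x / \sqrt{1+\|x\|^2}$ is semialgebraic, hence definable in every o-minimal structure, I may first reduce to the bounded setting so that every constructed simplex sits in a compact region of $\real^n$.

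The base case $n=1$ is immediate from axiom (4): a tame subset of $\real$ is a finite union of points and open intervals, which is manifestly a subcollection of open $0$- and $1$-simplices of the obvious finite $1$-complex with vertices at the endpoints and isolated points.

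For the inductive step, I apply CDT to the tame set $X \subset \real^n$, producing a finite cell partition compatible with $X$. Each cell of this partition projects (by axiom (2)) onto a cell in $\real^{n-1}$. The inductive hypothesis then yields a finite simplicial complex $K$ in $\real^{n-1}$ together with a definable homeomorphism whose image realizes the base cells as unions of open simplices. Pulling back, the problem reduces to the following: over each open simplex $\sigma$ of $K$ one has finitely many continuous definable functions $f_1 < f_2 < \cdots < f_k$, and one must triangulate the union of graphs $\Gamma(f_i)$ together with the bands between consecutive graphs, in a way that agrees with the triangulations chosen over neighboring simplices.

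This last step is the crux. Over a single $\sigma$, I would introduce, for each vertex $v$ of $\bar\sigma$ and each index $i$, a new vertex $(v, f_i(v))$; the graph $\Gamma(f_i)$ then becomes a simplex of the same dimension as $\sigma$, and each band between $\Gamma(f_i)$ and $\Gamma(f_{i+1})$ becomes a prism that admits a canonical subdivision into simplices once a total order on the vertex set is fixed. The main obstacle --- and the one that forces an extra round of refinement --- is twofold: (i) the $f_i$ need not extend continuously to $\bar\sigma$, so I must first apply CDT to each $f_i$ restricted to every boundary stratum of $\sigma$ and refine $K$ accordingly (reinvoking the inductive hypothesis) until all the $f_i$ extend continuously to the closures; and (ii) the prismatic subdivisions must match on shared faces of adjacent top-dimensional simplices of $K$. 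The latter is handled by fixing once and for all a total order on the vertex set of $K$, since the ``ordered-prism'' subdivision is then determined by the order alone and therefore agrees on any shared face automatically. Assembling the subdivisions over all simplices of $K$ produces the desired finite simplicial complex and definable homeomorphism onto $X$.
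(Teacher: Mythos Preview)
The paper does not prove this theorem: it is stated as a citation to van den Dries \cite{vdD} and used as a black box throughout, so there is no ``paper's own proof'' to compare against. Your outline is the standard argument from that reference --- induction on ambient dimension via cell decomposition, triangulation of the base, then lifting to graphs and bands via ordered-prism subdivisions --- and is essentially correct as a sketch.

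One point worth tightening in step (i): the issue is not merely that the $f_i$ may fail to extend continuously to $\bar\sigma$, but that distinct $f_i$ and $f_j$ may coalesce on boundary faces, so the combinatorics of ``how many sheets lie over each base simplex'' can change across faces of $K$. The standard remedy (as in van den Dries, Chapter 8) is to refine $K$ so that over each open simplex the number of sheets and their incidence pattern on the closure is constant --- this is a finite, definable condition and so is achievable by a further application of CDT in dimension $n-1$. Your phrasing ``refine $K$ accordingly \ldots\ until all the $f_i$ extend continuously to the closures'' gestures at this but understates the bookkeeping required to make the prisms over adjacent simplices genuinely share faces.
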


The analogue of the Triangulation Theorem for tame mappings is equally important:

\begin{theorem}[Hardt Theorem \cite{vdD}]
\label{thm:Hardt}
Given a tame mapping $F\colon X\to Y$, $Y$ has a definable partition into cells $Y_\alpha$ such that $F\inv(Y_\alpha)$ is definably homeomorphic to $U_\alpha\times Y_\alpha$ for $U_\alpha$ definable, and $F$ restricted to this inverse image acts as projection to $Y_\alpha$.
\end{theorem}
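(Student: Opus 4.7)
The plan is to reduce Hardt's Theorem to the Cell Decomposition Theorem for o-minimal structures, a foundational companion to Theorem~\ref{thm:Triangulation}. This theorem partitions any definable set into finitely many \emph{cells} --- subsets defined recursively as graphs of continuous definable functions over lower-dimensional cells, or as definable open bands between two such graphs. The consequence I would use repeatedly is that every $d$-dimensional cell is definably homeomorphic to the open box $(0,1)^d$, and that the recursive graph-over-graph structure is inherited by any definable set presented as a union of cells over a common base.

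First I would apply the parametric form of cell decomposition to the graph $\Gamma_F \subset X \times Y$ of the map $F$. This yields a finite definable partition of $Y$ into cells $\{Y_\alpha\}$ together with a compatible cell decomposition of $\Gamma_F$ whose cells project onto single $Y_\alpha$. Refining if necessary, I may arrange that over each $Y_\alpha$ the cells of $\Gamma_F$ are organized into a constant combinatorial pattern, parameterized continuously by points of $Y_\alpha$. Because the conclusion can be established for each $Y_\alpha$ independently, the problem reduces to the following: for a single cell $Y_\alpha$ identified with $(0,1)^d$, exhibit a definable homeomorphism $F^{-1}(Y_\alpha) \to U_\alpha \times Y_\alpha$ intertwining $F$ with projection to the second factor.

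To construct this trivialization, I would fix a basepoint $y_0 \in Y_\alpha$, set $U_\alpha := F^{-1}(y_0)$, and induct on $d = \dim Y_\alpha$. The graph-cell structure of the pieces of $F^{-1}(Y_\alpha)$ over $Y_\alpha$ presents each such piece explicitly as a graph or band between graphs of continuous definable functions on $Y_\alpha$; the trivialization is then built piece by piece by linearly interpolating these graphs along the parameter directions of $(0,1)^d$. The main obstacle will be compatibility at the interfaces. On each individual cell the interpolation is transparent, but one must verify that the assembled map is continuous across adjacent cells and globally definable. This is exactly what the recursive cell structure is designed to deliver --- adjacent cells share their bounding graphs, so the interpolations automatically agree there --- but in the o-minimal setting the verification requires careful induction on the cell hierarchy, invoking closure of $\Omin$ under the axis-aligned projections and Boolean operations to conclude definability of the global map at each stage.
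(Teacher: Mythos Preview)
The paper does not prove this theorem at all: it is stated as a black-box result with a citation to Van den Dries \cite{vdD}, and is used only as a tool (for the Fubini theorem, well-definedness of limits, etc.). There is therefore no paper proof to compare against.

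That said, your outline is essentially the standard proof one finds in \cite{vdD}. Hardt's theorem there is derived from the Cell Decomposition Theorem exactly as you describe: decompose the graph of $F$ compatibly with the projection to $Y$, partition $Y$ into cells, and then over each base cell build a fiberwise trivialization by inducting on the cell hierarchy, using the explicit graph/band description of cells to write down the required definable homeomorphism. Your identification of the real work --- checking that the piecewise interpolations glue continuously across shared boundary graphs and that the assembled map remains definable --- is accurate; in the reference this is handled by a careful induction on the ambient dimension rather than on $\dim Y_\alpha$ alone, and one typically trivializes over a fixed fiber simultaneously with all strata rather than cell-by-cell, precisely to avoid compatibility issues. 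Your sketch would benefit from making that global-versus-local distinction explicit, but the strategy is sound and matches the cited source.
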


The Triangulation Theorem implies that tame sets always have a well-defined Euler characteristic, as well as a well-defined dimension (the max of the dimensions of the simplices in a triangulation). The surprise is that these two quantities are not only topological invariants with respect to definable homeomorphism; they are complete invariants.

\begin{theorem}[Invariance Theorem \cite{vdD}]
\label{thm:EulerDim}
Two tame sets in an o-minimal structure are definably homeomorphic if and only if they have the same dimension and Euler characteristic.
\end{theorem}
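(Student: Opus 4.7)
The proof breaks into two directions. For necessity, suppose $f \colon X \to Y$ is a definable homeomorphism. Apply the Hardt Theorem to $f$: the base $Y$ partitions into tame cells $Y_i$ such that each preimage $X_i := f\inv(Y_i)$ is definably homeomorphic to $\{\mathrm{pt}\} \times Y_i$ (the fiber is a single point since $f$ is bijective) and $f|_{X_i}$ acts as the projection. Thus each restriction $f|_{X_i} \colon X_i \to Y_i$ is in fact a continuous homeomorphism. Since continuous homeomorphism of tame sets preserves both dimension and combinatorial Euler characteristic (as noted in \S\ref{sec:euler}), the additivity of $\chi$ and of maximum dimension over a finite definable partition yields $\dim X = \max_i \dim X_i = \max_i \dim Y_i = \dim Y$ and $\chi(X) = \sum_i \chi(X_i) = \sum_i \chi(Y_i) = \chi(Y)$.

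For sufficiency, apply the Triangulation Theorem to reduce to the combinatorial problem: given two multisets of dimensions $\{n_k\},\{n'_k\}$ with common maximum $d$ and common alternating sum $\chi$, produce a definable bijection between the disjoint unions of the corresponding open simplices. The key geometric ingredient is the definable decomposition
\begin{equation*}
\real^k \;=\; \bigl(\real^{k-1} \times (-\infty,0)\bigr) \,\sqcup\, \bigl(\real^{k-1} \times \{0\}\bigr) \,\sqcup\, \bigl(\real^{k-1} \times (0,\infty)\bigr),
\end{equation*}
which, after definably identifying each open half-space with $\real^k$, realizes one open $k$-cell as the disjoint union of two open $k$-cells and one open $(k-1)$-cell. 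Inserting or removing this decomposition inside any disjoint union of open simplices yields the combinatorial move $(n_{k-1},n_k) \leftrightarrow (n_{k-1}+1,n_k+1)$, which manifestly preserves both $d$ and $\chi$.

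Composing a forward move with a backward move at adjacent levels produces a ``trade'' $(n_{k-2},n_k) \leftrightarrow (n_{k-2}+1,n_k-1)$ that leaves $n_{k-1}$ unchanged, effecting parity-preserving jumps in dimension. Using such trades, one iteratively consolidates cells of each parity class into dimension $0$ or $1$ while retaining one top-dimensional cell; a final application of the basic move at level $1$ cancels $n_0$ against $n_1$ until one of them vanishes. The resulting canonical form depends only on $(d,\chi)$: $n_d = 1$, $n_k = 0$ for $1 \leq k < d$, with $n_0 = \chi - (-1)^d$ when $\chi \geq (-1)^d$ and $n_1 = (-1)^d - \chi$ otherwise (with minor adjustments for $d \leq 1$). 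Since both $\{n_k\}$ and $\{n'_k\}$ reduce to this common form, composing the intermediate definable bijections produces the desired definable homeomorphism $X \to Y$.

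The main obstacle is the careful bookkeeping of preconditions during reduction --- particularly ensuring that $n_d \geq 1$ is never violated mid-process and that backward moves are applied only when both decremented counts are positive, which forces a case split by the parity of $d$ and the sign of $\chi - (-1)^d$. A foundational subtlety underlying the whole theorem is the o-minimality axiom restricting $\Omin_1$ to finite unions of points and open intervals: this excludes $\nats$ from the tame sets and so blocks any Hilbert-hotel definable bijection between $\real^k$ and $\real^k \sqcup \{\mathrm{pt}\}$, which would otherwise violate $\chi$-invariance and falsify the theorem.
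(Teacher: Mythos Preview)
The paper does not supply a proof of this theorem --- it is quoted from van den Dries [vdD] without argument. Your proposal is essentially the proof given there (Chapter~4): the scissors decomposition $\real^k \cong \real^k \sqcup \real^k \sqcup \real^{k-1}$ together with reduction to a canonical cell-count depending only on $(d,\chi)$ is exactly the standard engine for sufficiency, and your necessity argument via Hardt is a legitimate variant of the direct cell-decomposition approach.

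One small caution on the necessity direction: you invoke ``continuous homeomorphism of tame sets preserves combinatorial $\chi$'' as though it were already available from \S\ref{sec:euler}, but for non-compact tame sets this is itself nontrivial --- in the paper it is only secured later by identifying the combinatorial $\chi$ with $\chi_c$ and appealing to topological invariance of compactly supported cohomology. A more self-contained route, and the one van den Dries takes, is to apply cell decomposition directly to the graph of $f$ in $X\times Y$: this produces a partition of $X$ into cells $\sigma_i$ such that each $f(\sigma_i)$ is a cell of $Y$, and o-minimal invariance of dimension forces $\dim\sigma_i = \dim f(\sigma_i)$, so the alternating sums match term by term without any appeal to continuity or cohomology.
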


This result reinforces the idea of a definable homeomorphism as a \style{scissors equivalence}. One is permitted to cut and rearrange a space (or, even, a mapping) with abandon. Recalling the utility of such scissors work in computing areas of planar sets, the reader will not be surprised to learn of a deep relationship between tame sets, the Euler characteristic, and integration.

\section{The Euler integral}
\label{sec:integral}

We now have all the tools at our disposal for an integral calculus based on $\chi$. The {\em measurable sets} in this theory are the tame sets in a fixed o-minimal structure. From Theorem \ref{thm:Triangulation} it follows that each such set has a well-defined Euler characteristic. The Euler characteristic, like a measure is \style{additive}:

\begin{lemma}
\label{lem:MVchi}
For $A$ and $B$ tame,
\begin{equation}
\label{eq:MVchi}
    \chi(A\cup B) = \chi(A) + \chi(B) - \chi(A\cap B).
\end{equation}
\end{lemma}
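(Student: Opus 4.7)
The plan is to reduce the identity to finite inclusion-exclusion on a single simplicial complex adapted to all four sets $A$, $B$, $A\cap B$, and $A\cup B$.

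First, I would apply the Triangulation Theorem (Theorem~\ref{thm:Triangulation}), in the standard strengthened form which permits one to triangulate a tame set together with any prescribed finite family of tame subsets. Applied to $A\cup B$ with the distinguished subsets $A$, $B$, $A\cap B$, this yields a finite Euclidean simplicial complex $K$ and a definable homeomorphism from $A\cup B$ onto a subcollection $\Sigma$ of open simplices of $K$ such that $A$, $B$, $A\cap B$ correspond to subcollections $\Sigma_A,\Sigma_B,\Sigma_{A\cap B}\subset\Sigma$ satisfying $\Sigma_{A\cap B}=\Sigma_A\cap\Sigma_B$ and $\Sigma=\Sigma_A\cup\Sigma_B$.

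Second, since $\chi$ is defined as an alternating sum over cells and, as asserted in \S\ref{sec:euler}, is independent of the cellular decomposition, I would write
\begin{equation*}
\chi(A)=\sum_{\sigma\in\Sigma_A}(-1)^{\dim\sigma},\quad \chi(B)=\sum_{\sigma\in\Sigma_B}(-1)^{\dim\sigma},
\end{equation*}
and analogously for $A\cap B$ and $A\cup B$, using the images under the definable homeomorphism. (That $\chi$ is a definable-homeomorphism invariant follows from the combinatorial definition applied to the image triangulation.) The required identity then collapses to the purely set-theoretic observation that for any real-valued function $f$ on a finite set, $\sum_{\Sigma_A\cup\Sigma_B}f = \sum_{\Sigma_A}f+\sum_{\Sigma_B}f-\sum_{\Sigma_A\cap\Sigma_B}f$, applied with $f(\sigma)=(-1)^{\dim\sigma}$.

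The main obstacle is the first step: producing a single triangulation simultaneously adapted to $A$, $B$, and $A\cap B$. The form of Theorem~\ref{thm:Triangulation} quoted in the excerpt speaks of a single tame set only, so I would either cite the standard strengthening from Van den Dries \cite{vdD} that allows triangulating a tame set compatibly with a finite family of tame subsets, or else triangulate $A$ and $B$ separately and invoke o-minimal refinement to obtain a common definable subdivision on which both $A$ and $B$ (and hence $A\cap B$, $A\cup B$) appear as unions of open simplices. Either route reduces to the same technical fact about o-minimal triangulations; once that is granted, the remaining steps are routine bookkeeping on signed cell counts.
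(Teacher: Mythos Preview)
Your proposal is correct and follows exactly the approach the paper takes: the paper's proof consists of the single sentence ``The result follows from (1) the Triangulation Theorem, (2) the well-definedness of $\chi$, and (3) counting cells,'' which is precisely your argument in compressed form. Your explicit attention to the need for a triangulation simultaneously adapted to $A$, $B$, and $A\cap B$ (via the standard strengthening in \cite{vdD}) is a welcome clarification that the paper leaves implicit.
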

\begin{proof}
The result follows from (1) the Triangulation Theorem, (2) the well-definedness of $\chi$, and (3) counting cells.
\end{proof}

Additivity suggests converting $\chi$ into a (signed, integer-valued) measure $d\chi$ against which a indicator function is integrated in the obvious manner:
\begin{equation}
\label{eq:intchar}
    \int_X \one_A\,d\chi := \chi(A) ,
\end{equation}
The additivity of $\chi$ is, crucially, finite; the limiting process used in (standard) measure theory is certainly inapplicable. The natural set of {\em measurable functions} in this theory are definable functions with finite, discrete image. For the sake of convenience and clear presentation, we use the integers $\zed$ as range and invoke a compactness assumption. In what follows, $X$ will be assumed a tame space in a fixed o-minimal structure.

An integer-valued function $h\colon X\to\zed$ is \style{constructible} if all of its level sets $h\inv(n)\subset X$ are tame. Denote by $\CF(X)$ the set of bounded compactly supported constructible functions on $X$. The use of compact support is not strictly needed and is done for convenience. We may sometimes bend this criterion without warning: {\em caveat lector}. The integrable functions of Euler calculus on $X$ are, precisely, $\CF(X)$.

The \style{Euler integral} is defined to be the homomorphism $\int_X\colon\CF(X)\to\zed$ given by:
\begin{equation}
\label{eq:level}
    \int_X h\,d\chi = \sum_{s=-\infty}^{\infty}s\ \chi(\{h=s\}) .
\end{equation}
Each level set is tame and thus has a well-defined Euler characteristic. Alternately, one may use triangulation to write $h\in\CF(X)$ as $h=\sum_\alpha c_\alpha\one_{\sigma_\alpha}$, where $c_\alpha\in\zed$ and $\{\sigma_\alpha\}$ is a decomposition of $X$ into a disjoint union of open cells, yielding
\begin{equation}
\label{eq:simplices}
    \int_X h\,d\chi
    = \sum_\alpha c_\alpha\chi(\sigma_\alpha)
    = \sum_\alpha c_\alpha(-1)^{\dim\ \sigma_\alpha}
\end{equation}
That this sum is invariant under the decomposition into definable cells is a consequence of corresponding properties of the Euler characteristic.

Equation (\ref{eq:level}) is deceptive in explicit computations, since the level sets are rarely compact and the Euler characteristics must be computed with care. The following reformulation --- a manifestation of the Fundamental Theorem of Integral Calculus --- is more easily implemented in practice.

\begin{proposition}
\label{prop:computation}
For any $h\in CF(X)$,
\begin{equation}
\label{eq:eulerexcursion}
    \int_Xh\,d\chi = \sum_{s=0}^\infty \chi\{h>s\} - \chi\{h<-s\}
\end{equation}
\end{proposition}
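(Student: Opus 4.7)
The plan is to establish the identity by reorganizing the defining sum $\sum_s s\,\chi\{h=s\}$ via a discrete Abel-summation / Fubini-style argument, using the finite additivity of $\chi$ (Lemma \ref{lem:MVchi}). Because $h \in \CF(X)$ is bounded with finite discrete image, every sum that appears will in fact be finite, so no convergence issues arise.

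First I would observe that the positive and negative excursion sets decompose as finite disjoint unions of level sets,
\begin{equation*}
\{h > s\} = \coprod_{t > s} \{h = t\}, \qquad \{h < -s\} = \coprod_{t < -s}\{h = t\},
\end{equation*}
and that Lemma \ref{lem:MVchi}, applied inductively (the intersections vanish and $\chi(\emptyset)=0$), yields
\begin{equation*}
\chi\{h > s\} = \sum_{t > s} \chi\{h = t\}, \qquad \chi\{h < -s\} = \sum_{t < -s} \chi\{h = t\}.
\end{equation*}
Each level set is tame by the constructibility of $h$, so the Triangulation Theorem (Theorem \ref{thm:Triangulation}) guarantees each $\chi\{h=t\}$ is well-defined.

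Next I would interchange the order of summation in $\sum_{s=0}^\infty \chi\{h>s\}$ and count multiplicities: the term $\chi\{h=t\}$ for $t \geq 1$ appears for exactly $s = 0, 1, \ldots, t-1$, contributing a factor of $t$. Symmetrically, in $\sum_{s=0}^\infty \chi\{h<-s\}$ the term $\chi\{h=t\}$ for $t \leq -1$ appears for $s = 0, 1, \ldots, -t-1$, contributing a factor of $-t$. Subtracting gives
\begin{equation*}
\sum_{s=0}^\infty \chi\{h>s\} - \chi\{h<-s\} = \sum_{t \geq 1} t\,\chi\{h=t\} + \sum_{t \leq -1} t\,\chi\{h=t\} = \sum_{t \in \zed} t\,\chi\{h=t\},
\end{equation*}
which is precisely $\int_X h\,d\chi$ by the defining equation (\ref{eq:level}) (the $t=0$ term vanishes).

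There is no real obstacle here, since everything reduces to a bookkeeping exercise once additivity is invoked. The only subtle point worth flagging is that additivity of $\chi$ was stated only for pairs in Lemma \ref{lem:MVchi}; promoting it to the finite disjoint unions above requires a short induction on the number of nonempty level sets, which is legitimate because $h$ has finite image. Everything else is a straightforward reindexing.
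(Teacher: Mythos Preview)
Your proof is correct and is essentially the same argument as the paper's, just organized dually: the paper performs the Abel/telescoping manipulation at the level of the integrand (rewriting $h=\sum_s s\one_{\{h=s\}}$ as $\sum_{s\geq 0}\one_{\{h>s\}}-\one_{\{h<-s\}}$ via $\one_{\{h=s\}}=\one_{\{h\geq s\}}-\one_{\{h>s\}}$ and telescoping) and then integrates, whereas you integrate first via additivity of $\chi$ and then perform the same reindexing on the resulting numbers. Both reduce to the identical finite combinatorial identity, so there is no substantive difference.
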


\begin{proof}
Rewrite $h$ as
\begin{eqnarray*}
h
&=& \sum_{s=-\infty}^\infty s\one_{\{h=s\}}
\\
&=& \sum_{s=0}^\infty
    s(\one_{\{h\geq s\}}-\one_{\{h>s\}})
+ \sum_{s=0}^{-\infty}
    s(\one_{\{h\leq s\}}-\one_{\{h<s\}})
\\
&=& \sum_{s=0}^\infty \one_{\{h>s\}} - \one_{\{h<-s\}} ,
\end{eqnarray*}
where the last equality comes from telescoping sums.
\end{proof}

\begin{example}
In the example illustrated in Figure \ref{fig:simplexample}, the integral with respect to Euler characteristic is equal to $6$, since the integrand can be expressed as the sum of indicator functions over six (contractible) closed sets with unit Euler characteristics, though such a decomposition is not assumed given.
\begin{figure}[hbt]
\begin{center}
\includegraphics[angle=0,width=3.0in]{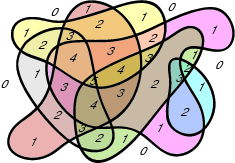}
\caption{A simple example of an integrand $h\in\CF(\real^2)$ whose Euler integral equals six: it is the sum of six characteristic functions over contractible discs in the plane.}
\label{fig:simplexample}
\end{center}
\end{figure}
\end{example}


Euler characteristic is like a measure in another aspect: it is multiplicative under cross products:
\begin{lemma}
\label{lem:chiproduct}
For $X$ and $Y$ definable sets,
\begin{equation}
    \chi(X\times Y) = \chi(X)\chi(Y).
\end{equation}
\end{lemma}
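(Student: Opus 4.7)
The plan is to reduce the claim to the combinatorial formula for $\chi$ by triangulating $X$ and $Y$ separately and then exhibiting the resulting product decomposition of $X \times Y$ as an honest cell decomposition into open cells, each homeomorphic to some $\real^k$. Once that decomposition is in hand, the desired multiplicativity collapses to a routine factoring of a double sum of signs.

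First I would invoke the Triangulation Theorem to write $X \cong \coprod_\alpha \sigma_\alpha$ and $Y \cong \coprod_\beta \tau_\beta$ as disjoint unions of open simplices (via definable homeomorphism), with $\sigma_\alpha \cong \real^{i_\alpha}$ and $\tau_\beta \cong \real^{j_\beta}$. Taking products, one obtains
\begin{equation*}
X \times Y \;\cong\; \coprod_{\alpha,\beta} \sigma_\alpha \times \tau_\beta,
\end{equation*}
and each piece $\sigma_\alpha \times \tau_\beta$ is homeomorphic to $\real^{i_\alpha} \times \real^{j_\beta} = \real^{i_\alpha + j_\beta}$, hence a legitimate open cell of dimension $i_\alpha + j_\beta$ in the sense of the combinatorial definition of $\chi$ given in Section~\ref{sec:euler}. (Note: an open product of simplices is not itself a simplex, but the combinatorial definition of $\chi$ only requires a decomposition into open cells, each homeomorphic to Euclidean space, which is precisely what this product delivers. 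This is the one point that needs a brief comment but no real work.)

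With this decomposition in place, the computation is immediate:
\begin{equation*}
\chi(X \times Y) \;=\; \sum_{\alpha,\beta}(-1)^{\dim(\sigma_\alpha \times \tau_\beta)}
\;=\; \sum_{\alpha,\beta}(-1)^{i_\alpha + j_\beta}
\;=\; \left(\sum_\alpha (-1)^{i_\alpha}\right)\!\left(\sum_\beta (-1)^{j_\beta}\right)
\;=\; \chi(X)\,\chi(Y).
\end{equation*}

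The only conceptual obstacle is the legitimacy of the last equality of the first display, namely invoking the combinatorial formula on a product decomposition rather than a simplicial one; this is handled by the well-definedness of $\chi$ on tame sets (a consequence of the Triangulation Theorem together with Lemma~\ref{lem:MVchi}, which guarantees independence of the combinatorial formula from the choice of open-cell decomposition). Everything else is bookkeeping: distributing the sum and using $(-1)^{i+j} = (-1)^i(-1)^j$.
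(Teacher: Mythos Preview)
Your proof is correct and follows essentially the same approach as the paper: triangulate $X$ and $Y$, take the product cell decomposition, and use $\dim(\sigma\times\tau)=\dim\sigma+\dim\tau$ together with the distributive law for the resulting double sum. The paper's version is terser but identical in substance; your explicit remark that the product cells are open cells (not simplices) but still homeomorphic to Euclidean space is a welcome clarification.
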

\begin{proof}
The product $X\times Y$ has a definable cell structure using products of cells from $X$ and $Y$. For cells $\sigma\subset X$ and $\tau\subset Y$, the lemma holds via the exponent rule since $\dim(\sigma\times\tau)=\dim\ \sigma + \dim\ \tau$. The rest follows from additivity.
\end{proof}

The assertion that $\int\,d\chi$ is an honest integral is supported by this fact and its corollary: the Euler integration theory admits a Fubini theorem. Calculus students know that $\int f(x,y)dA$ is computable via the double integral $\iint f(x,y)dx\,dy$. This familiar result is the image of a deeper truth about integrations and projections.

\begin{theorem}[Fubini Theorem]
\label{thm:fubini}
Let $F\colon X\to Y$ be a tame mapping between tame spaces. Then for all $h\in CF(X)$,
\begin{equation}
\label{eq:fubini}
    \int_X h\,d\chi = \int_Y \left( \int_{F^{-1}(y)}h(x)\,d\chi(x) \right) d\chi(y) .
\end{equation}
\end{theorem}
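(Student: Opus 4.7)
The plan is to reduce to indicator functions by linearity and then apply Hardt's Theorem to control the fibers. Since every $h \in \CF(X)$ can be written as a finite $\zed$-linear combination $h = \sum_\alpha c_\alpha \one_{A_\alpha}$ with each $A_\alpha$ tame, and both sides of \eqref{eq:fubini} are linear in $h$, it suffices to prove the equality when $h = \one_A$ for a tame compact $A \subset X$.

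Given $h = \one_A$, I would apply the Hardt Theorem (Theorem \ref{thm:Hardt}) to the tame mapping $F|_A \colon A \to Y$. This produces a finite definable partition $Y = \coprod_\alpha Y_\alpha$ together with definable sets $U_\alpha$ and definable homeomorphisms
\[
    \phi_\alpha \colon F^{-1}(Y_\alpha) \cap A \longrightarrow U_\alpha \times Y_\alpha
\]
intertwining $F$ with projection onto the second factor. In particular, for every $y \in Y_\alpha$ the fiber $F^{-1}(y) \cap A$ is definably homeomorphic to $U_\alpha$, so by Theorem \ref{thm:EulerDim},
\[
    g(y) := \int_{F^{-1}(y)} \one_A\, d\chi(x) = \chi\bigl(F^{-1}(y) \cap A\bigr) = \chi(U_\alpha)
    \quad \text{for all } y \in Y_\alpha.
\]
Thus $g = \sum_\alpha \chi(U_\alpha)\, \one_{Y_\alpha}$ is constructible on $Y$, which in turn makes the outer integral in \eqref{eq:fubini} well-defined.

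Now the right-hand side of \eqref{eq:fubini} evaluates, via \eqref{eq:simplices} applied to the decomposition $\{Y_\alpha\}$, to
\[
    \int_Y g\, d\chi = \sum_\alpha \chi(U_\alpha)\, \chi(Y_\alpha).
\]
For the left-hand side, additivity of $\chi$ (Lemma \ref{lem:MVchi}), applied inductively to the finite disjoint decomposition $A = \coprod_\alpha \bigl(F^{-1}(Y_\alpha) \cap A\bigr)$, gives
\[
    \int_X \one_A\, d\chi = \chi(A) = \sum_\alpha \chi\bigl(F^{-1}(Y_\alpha) \cap A\bigr) = \sum_\alpha \chi(U_\alpha \times Y_\alpha),
\]
and the multiplicativity of $\chi$ under products (Lemma \ref{lem:chiproduct}) collapses each summand to $\chi(U_\alpha)\chi(Y_\alpha)$. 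This matches the right-hand side, completing the proof.

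The main obstacle is verifying that the inner integral $g(y)$ is genuinely a constructible function on $Y$ (so that the outer integral makes sense at all) and that its value is locally constant in the right way — this is precisely what Hardt's Theorem buys us, and the rest of the argument is just bookkeeping with additivity and multiplicativity. A subsidiary care-point is the additivity step, which must be read in the signed sense of \eqref{eq:MVchi} when the cells $Y_\alpha$ or fibers are non-compact; however, since the combinatorial $\chi$ is additive on finite disjoint decompositions into definable locally closed pieces, no limiting argument is needed.
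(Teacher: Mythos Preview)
Your proof is correct and follows essentially the same route as the paper's: reduce to the product case via the Hardt Theorem, invoke multiplicativity of $\chi$ (Lemma~\ref{lem:chiproduct}) on each trivialized piece, and finish by additivity. The paper's argument is simply a terser version of what you have written out in full.
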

\begin{proof}
If $X$ is homeomorphic to $U\times Y$ and $F$ is projection to the second factor, the result follows from Lemma \ref{lem:chiproduct}. The Hardt Theorem
and additivity of the integral complete the proof.
\end{proof}

\specialsection*{\bf The (Co)Homological Formulation}
\sweetline
\vspace{0.1in}

As defined, both $\chi$ and the integral $\int\,d\chi$ are explicit, combinatorial, and concrete. Much of the depth and applicability of the theory derives from the pairing of these features with the algebraic-topological formulation. We provide a brief introduction to these methods, referring the interested reader to the better and more in-depth treatment in, \eg, \cite{Hatcher} for more details.

\section{Homology}
\label{sec:hom}

The reason for the topological invariance of ostensibly combinatorial $\chi$ lies in a particular \style{categorification} ---  an enrichment of the combinatorial Euler characteristic with (first) linear and (then) homological algebra, yielding an algebraic-topological means of counting and canceling features in a topological space. The simplest such lifting is via cellular homology.


Consider a finite cell complex, $X$: a space built from standard compact cells (simplices, discs, cubes, or other simple components) assembled by means of attaching maps along cell faces.\footnote{The reader for whom cell complexes are unfamiliar should consult, \eg, \cite{Hatcher,Koslov}. The reader for whom CW complexes are familiar should replace {\em cell complex} with {\em CW complex}.} The mechanics of counting used to define the Euler characteristic of $X$,
\[
\xymatrix{
\cdots  & \{\mbox{k-cells}\}  & \{\mbox{(k-1)-cells}\} & \cdots & \{\mbox{1-cells}\} & \{\mbox{0-cells}\}
},
\]
may be lifted to a sequence of vector spaces over a field $\Field$,
\[
\xymatrix{
\cdots  \quad & C_k \quad  & C_{k-1} & \cdots  & C_1 & \quad C_0
},
\]
where each $C_k$ is the $\Field$-vector space with basis the $k$-cells of $X$. This collection of vector spaces is then enriched to a sequence of linear transformations that detail how the cells are connected together:
\begin{equation}
\xymatrix{
\cdots \ar[r] & C_k \ar[r]^{\del_k} & C_{k-1} \ar[r]^{\del_{k-1}} &
\cdots \ar[r]^{\del_2} & C_1 \ar[r]^{\del_1} & C_0 \ar[r]^{\del_0} & 0
}.
\end{equation}
Here, the linear transformation $\del_k$ sends a $k$-cell of $X$ (a basis element of $C_k$) to the abstract sum of its oriented $(k-1)$-dimensional faces (a sum of basis elements in $C_{k-1}$). For $\Field=\Field_2$ the field of integers modulo 2, this is a simple sum of the faces, each with coefficient $1$. For other fields $\Field$, one must assign an orientation to all basis cells and compute the image of $\del$ with coefficients $\pm 1$, depending on orientation. The reader for whom homology is unfamiliar may want to work with $\Field_2$ coefficients at first, for which $-1=+1$: \cf the treatment in \cite{EdelsHarer}. Coefficients in $\real$ are motivated by, \eg, currents in electrical networks. The most generally informative coefficients are $\zed$, prompting the use of free abelian chain groups $C_k$ and homomorphisms $\del_k$. For simplicity of exposition, we will use fields and linear-algebraic constructs where possible.

The boundary of a boundary is null: $\del_{k-1}\circ\del_{k}=0$ for all $k$. As such, for all $k$, $\im\ \del_{k+1}$ is a subspace of $\ker  \del_k$. The \style{homology} of $\Complex$, $H_\Grading(\Complex)$, is a sequence of $\Field$-vector spaces built from the following features of $\del$. A \style{cycle} of $\Complex$ is a chain with empty boundary, \ie, an element of $\ker  \del$. Homology is an equivalence relation on cycles of $\Complex$. Two cycles in $Z_k=\ker  \del_k$ are said to be \style{homologous} if they differ by something in $B_k=\im\ \del_{k+1}$. The homology of $X$ is the sequence of quotients $H_k(X)$, for $k\in\nats$, given by:
\begin{equation}
\label{eq:homology}
    H_k(X) = Z_k/B_k = \left.\ker  \del_k\right/\im\ \del_{k+1} = \left.{\mbox{\rm cycles}} \right/{\mbox{\rm boundaries}} .
\end{equation}
To repeat: a \style{homology class} $[\alpha]\in H_k(X)$ is an equivalence class of cycles, two cycles being declared homologous if their difference is a boundary. Homology $H_k(X)$ inherits the sequential structure or \style{grading} ($k=0,1,2,\ldots$) of $\Complex$ and will be denoted $H_\Grading(X)$ when no particular grading is intended.

It takes some effort to get an intuition for homology, and several perspectives and examples are useful to this end. Homology is:
\begin{enumerate}
\item {\bf Multifarious:} The homology of a space $X$ can be defined in numerous ways, each of which counts some feature and cancels according to a boundary-like accounting. Homology theories which count cells [cellular], patches in a covering [\v{C}ech], critical points of a smooth map $f\colon X\to\real$ [Morse], maps of cells into $X$ [singular], and more, are, under the right `tameness' assumptions, isomorphic.
\item {\bf Functorial:} Homology applies not only to spaces, but to mappings between spaces. For $f\colon X\to Y$, there is an induced homomorphism (or linear transformation) $H(f):H_\Grading(X)\to H_\Grading(Y)$ that respects grading, identities, and composition of mappings. This $H(f)$ indicates how $f$ transforms cycles of $X$ over to cycles of $Y$.
\item {\bf Invariant:} The homology of $X$ is invariant not only under changes in cell decompositions, but also under homotopy equivalences: for $f\colon X\stackrel{\simeq}{\longrightarrow}Y$, $H(f)$ is an isomorphism.
\item {\bf Excisive:} Given $A\subset X$ a subcomplex, there is a \style{relative homology} $H_\Grading(X,A)$ given by taking the quotients $C_k(X,A)=C_k(X)/C_k(A)$ and using the induced boundary maps $\del_k$. This relative homology has the effect of collapsing the subcomplex $A$ to an abstract point: $H_\Grading(X,A)\cong H_\Grading(X/A,\{A\})$.
\end{enumerate}

\section{Cohomology}
\label{sec:cohom}

An algebraic mirror image of homology will prove salient to defining the Euler characteristic on tame but non-compact spaces. A \style{cochain complex} is a sequence $\Complex=(C^\Grading,d)$ of $\Field$-vector spaces $C^k$ (or free abelian groups) and linear transformations (homomorphisms) $d^k:C^k\to C^{k+1}$ with the property that $d^{k+1}\circ d^k=0$ for all $k$. The arrows are reversed:
\[
\xymatrix{
0 \ar[r] & C^0 \ar[r]^{d^0} & C^{1} \ar[r]^{d^1} &
\cdots \ar[r]^{d^{k-1}} & C^{k} \ar[r]^{d^k} & C^{k+1} \ar[r]^{d+1} & \cdots
}.
\]
The \style{cohomology} of a cochain complex is,
\begin{equation}
    H^k(\Complex) = \left.\ker  d^k\right/\im\ d^{k-1} .
\end{equation}
Cohomology classes are equivalence classes of \style{cocycles} in $\ker  d$. Two cocylces are \style{cohomologous} if they differ by a \style{coboundary} in $\im\ d$.

The simplest means of constructing cochain complexes is to dualize a chain complex $(C_\Grading,\del)$. Given such a complex (with vector spaces over $\Field$), define $C^k=C_k^\vee$, the dual space of linear functionals $C_k\to\Field$ (or, in the group setting, the group $\Hom$ of homomorphisms to the coefficient group). The coboundary $d$ is the adjoint of the boundary $\del$, so that
\[
    d\circ d=\del^\vee\circ\del^\vee=(\del\circ\del)^\vee=0^\vee=0.
\]
The coboundary operator $d$ can be presented explicitly: $(df)(\tau) = f(\del\tau)$. For $\tau$ a $k$-cell, $d$ implicates the \style{cofaces} --- those $(k+1)$-cells having $\tau$ as a face.

All of the constructs of homology --- exact sequences, functoriality, excision --- pass naturally to cohomology by means of dualization. One further construct is necessary for use in Euler calculus. Given a cochain complex $\Complex^\Grading$ on a space $X$, consider the subcomplex $\Complex^\Grading_c$ of cochains which are compactly supported. The coboundary map restricts to $d:C_c^k\to C^{k+1}_c$ with $d^2=0$, yielding a well-defined \style{cohomology with compact supports}, $H^\Grading_c(X)$. This cohomology satisfies the following:
\begin{enumerate*}
\item   $H^k_c(\real^n)=0$ for all $k$ except $k=n$, in which case it is of rank $1$.
\item   $H^\Grading_c$ is not a homotopy invariant, but is a proper homotopy (and hence a homeomorphism) invariant.
\item   $H^\Grading_c(X)\isom H^{\Grading}(X)$ for $X$ compact.
\end{enumerate*}

\begin{example}
\label{ex:Poincare}
Homology and cohomology are closely related by a classical duality that springs from Poincar\'e's original conceptions of homology. For a simple example, consider a compact surface with a polyhedral cell structure, and let $\Complex$ be the cellular chain complex with $\Field_2$ coefficients. There is a dual polyhedral cell structure, yielding a chain complex $\overline{\Complex}$, where the dual cell structure places a vertex in the center of each original $2$-cell, has $1$-cells transverse to each original $1$-cell, and, necessarily, has as its $2$-cells neighborhoods of the original vertices. Each dual $2$-cell is an $n$-gon, where $n$ is the degree of the original $0$-cell. Note that these cell decompositions are truly dual and have the effect of reversing the dimensions of cells: $k$-cells generating $C_k$ are in bijective correspondence with $(2-k)$-cells (on a surface) generating a modified cellular chain group $\overline{C}_{2-k}$. The dual complex $\overline{\Complex}^\bullet$ consisting of $\overline{C}^k=\overline{C}_k^\vee$ and $\overline{d}=\overline{\del}^\vee$ entwines with $\Complex_\bullet$ in a diagram:
\[
\xymatrix{
0 \ar[r] & C_{2}\ar[d]^{\cong} \ar[r]^{\del} & C_1 \ar[d]^{\cong}\ar[r]^{\del} & C_{0} \ar[d]^{\cong}\ar[r] & 0
\\
0 \ar[r] & \overline C^0 \ar[r]^{\overline d} & \overline C^1 \ar[r]^{\overline d} & \overline C^2 \ar[r] & 0
}
\]
The vertical maps are isomorphisms and, crucially, the diagram is commutative. The equivalence of singular and cellular (co)homology implies that, for a compact surface with $\Field_2$ coefficients, $H_k\cong H^{2-k}$. Such a result fails for a non-compact surface (\cf, $\real^2$), unless one switches to $H^\Grading_c$. With this, and using a similar proof as in the 2-d case, one obtains:

\begin{figure}[hbt]
\begin{center}
\includegraphics[angle=0,width=2in]{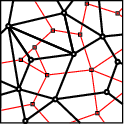}
\caption{A polyhedral cell structure on a surface and its dual mirror homological-cohomological duality.}
\label{fig:poincaredual}
\end{center}
\end{figure}
%
\begin{theorem}[Poincar\'{e} duality]
For $M$ an $n$-manifold, there is a natural isomorphism $\PD:H_c^k(M;\Field_2)\to H_{n-k}(M;\Field_2)$.
\end{theorem}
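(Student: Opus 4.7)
The plan is to mimic the two-dimensional argument given in Example~\ref{ex:Poincare}, promoting the dual polygonal decomposition to a dual cell decomposition valid in all dimensions. First I would fix a triangulation $\Triangulation$ of $M$ (whose existence for topological $n$-manifolds in the tame category follows from Theorem~\ref{thm:Triangulation}; for smooth or PL manifolds one has the classical triangulations directly). Passing to the barycentric subdivision, to each $k$-simplex $\sigma$ of $\Triangulation$ one associates the \emph{dual cell} $\overline{\sigma}$, defined as the union of all simplices in the subdivision whose closure meets $\sigma$ only in its barycenter. The key local fact — which relies on $M$ being an $n$-manifold, so the link of each simplex is a sphere of the appropriate dimension — is that each $\overline{\sigma}$ is an open $(n-k)$-cell, and these cells assemble into a bona fide cell decomposition $\overline{\Triangulation}$ of $M$. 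This gives a bijective correspondence between $k$-cells of $\Triangulation$ and $(n-k)$-cells of $\overline{\Triangulation}$.

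Next I would set up the chain-level map. With $\Field_2$ coefficients, define $\PD_\sharp \colon C^k_c(\Triangulation) \to C_{n-k}(\overline{\Triangulation})$ by sending the Kronecker dual $\sigma^\vee$ of a $k$-cell $\sigma$ to the dual $(n-k)$-cell $\overline{\sigma}$, and extending $\Field_2$-linearly. The compact support hypothesis is essential here: a general cochain in $C^k(\Triangulation)$ may be supported on infinitely many cells, but a compactly supported cochain corresponds to a \emph{finite} sum of dual cells and hence to a genuine chain in $C_{n-k}(\overline{\Triangulation})$. The heart of the proof is the identity
\begin{equation}
\overline{\del}\circ\PD_\sharp \;=\; \PD_\sharp\circ d,
\end{equation}
which amounts to the combinatorial observation that $\tau$ is a codimension-one face of $\sigma$ in $\Triangulation$ if and only if $\overline{\sigma}$ is a codimension-one face of $\overline{\tau}$ in $\overline{\Triangulation}$. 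Since we are working modulo $2$ all incidence coefficients are $1$, so no orientations need be tracked; this is why the stated theorem does not require $M$ to be orientable. Passing to (co)homology yields the isomorphism $\PD \colon H^k_c(M;\Field_2)\to H_{n-k}(M;\Field_2)$.

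Finally, naturality must be addressed. The cleanest way is to verify that $\PD$ is independent of the chosen triangulation by interpreting it intrinsically: the image $\PD(c)$ of a compactly supported cocycle $c$ is the cap product $c\frown [M]$ with the $\Field_2$ fundamental class $[M]\in H_n^{BM}(M;\Field_2)$, whose existence for an unoriented $n$-manifold follows from a standard Mayer--Vietoris / direct-limit argument over compact subsets. Naturality of cap product with $[M]$ under inclusions of open submanifolds then gives naturality of $\PD$.

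I expect the main obstacle to be the cell-level verification that the dual cells $\overline{\sigma}$ really are open $(n-k)$-cells and assemble into a regular CW structure on $M$. This is a local computation in the star of each vertex, and it requires knowing that the link of every simplex is a sphere, which in the purely topological category is a nontrivial input (essentially Poincaré duality for links, proved inductively, or reduced to the PL/smooth case via a triangulation theorem). Once that local structural result is in place, the boundary/coboundary correspondence and the naturality statement follow by the routine arguments sketched above.
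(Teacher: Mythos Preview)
Your proposal is correct and follows essentially the same approach as the paper: the paper only works out the $2$-dimensional case explicitly via a dual polyhedral decomposition and the resulting commutative ladder between $(C_\Grading,\del)$ and $(\overline{C}^\Grading,\overline{d})$, then asserts the general statement ``using a similar proof as in the 2-d case'' together with the switch to $H^\Grading_c$ for non-compact $M$. You have supplied precisely that similar proof, with more care than the paper gives --- in particular your explicit handling of compact supports (so that a cochain maps to a \emph{finite} chain), your identification of the link-is-a-sphere issue as the place where the manifold hypothesis enters, and your treatment of naturality via cap product with the $\Field_2$ fundamental class all go beyond what the paper sketches.
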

It follows that for $M$ a compact $n$-manifold, $H_k(M;\Field_2)\cong H_{n-k}(M;\Field_2)$. The coefficients may be modified at the expense of worrying about orientability of the manifold and the torsional elements: see, \eg, \cite{Hatcher} for details. Poincar\'e duality does not hold in general for non-manifolds, but generalizations abound. In \S\ref{sec:shops} and \S\ref{sec:dictionary}, a very broad and powerful extension will be used in defining Euler integrals.
\end{example}

\section{Homotopy invariance of $\chi$}
\label{sec:homot}


Invariance of Euler characteristic can be pursued from many angles; \eg, invariance under a refinement of cell structure can be ascertained through simple combinatorics. To get the full invariance under homotopy equivalence requires some basic homological algebra. This comes from lifting the notion of Euler characteristic and homology from a cell complex to an arbitrary (finite) chain complex $\Complex=(C_{\Grading},\del)$, a sequence of free abelian groups $C_k$ and homomorphisms $\del_k$ satisfying $\del_{k-1}\del_k=0$ for all $k$. For such a sequence, the homology $H_\Grading(\Complex)$ is, as before, $\ker\ \del / \im\ \del$ and the Euler characteristic is given via:
\begin{equation}
\label{eq:ChainEuler}
    \chi(\Complex) = \sum_{k}(-1)^k \dim\ C_k .
\end{equation}
Note that this is independent of the maps $\del_k$ and thus is sensible for any sequence of vector spaces. The reason for the alternating sum is to take advantage of cancelations that permit the following.

\begin{lemma}
\label{lem:ChainEuler}
The Euler characteristic of a chain complex and its homology are identical, when both are defined.
\end{lemma}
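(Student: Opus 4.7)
The plan is a direct linear-algebra computation using the rank-nullity theorem applied degree by degree. Write $Z_k = \ker \del_k$ and $B_k = \im\ \del_{k+1}$, so that $H_k(\Complex) = Z_k/B_k$. First I would record the two key dimension identities. From the definition of homology,
\begin{equation*}
    \dim H_k(\Complex) = \dim Z_k - \dim B_k.
\end{equation*}
From the rank-nullity theorem applied to $\del_k : C_k \to C_{k-1}$, whose image is $B_{k-1}$ and whose kernel is $Z_k$,
\begin{equation*}
    \dim C_k = \dim Z_k + \dim B_{k-1}.
\end{equation*}

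Next I would substitute the second identity into the definition (\ref{eq:ChainEuler}) to obtain
\begin{equation*}
    \chi(\Complex) = \sum_k (-1)^k \dim C_k = \sum_k (-1)^k \dim Z_k + \sum_k (-1)^k \dim B_{k-1}.
\end{equation*}
Reindexing the second sum by $j = k-1$ turns $\sum_k (-1)^k \dim B_{k-1}$ into $-\sum_j (-1)^j \dim B_j$, so that
\begin{equation*}
    \chi(\Complex) = \sum_k (-1)^k (\dim Z_k - \dim B_k) = \sum_k (-1)^k \dim H_k(\Complex) = \chi(H_\Grading(\Complex)).
\end{equation*}

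There is essentially no obstacle: the argument is the standard telescoping cancellation, and its only prerequisite is finite-dimensionality of the $C_k$ (assumed since $\Complex$ is a finite chain complex). The one cosmetic care needed is to handle the boundary index, since $B_{-1}$ and $Z_k$ for $k$ outside the support of $\Complex$ are zero, so the reindexing is legal and the sums are finite. If one wishes to work with free abelian groups rather than $\Field$-vector spaces, the same proof goes through verbatim with ``dim'' replaced by ``rank'' and the short exact sequence $0 \to Z_k \to C_k \to B_{k-1} \to 0$ splits because $B_{k-1}$ is a subgroup of the free group $C_{k-1}$ and hence free.
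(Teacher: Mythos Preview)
Your proof is correct and follows essentially the same approach as the paper: both use the two identities $\dim H_k = \dim Z_k - \dim B_k$ and $\dim C_k = \dim Z_k + \dim B_{k-1}$ and then telescope the alternating sum. The paper merely combines these into the single line $\dim C_k = \dim H_k + \dim B_k + \dim B_{k-1}$ before summing, whereas you substitute and reindex; the content is the same.
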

\begin{proof}
Since $H_k=Z_k/B_k$, one has
\[
    \dim\ Z_k = \dim\ H_k + \dim\ B_k .
\]
Since $\del^2=0$, one has $C_k/Z_k\cong B_{k-1}$, so that
\[
    \dim\ C_k = \dim\ Z_k + \dim\ B_{k-1} ,
\]
from which it follows that
\[
    \dim\ C_k = \dim\ H_k + \dim\ B_k + \dim\ B_{k-1}.
\]
Multiply this equation by $(-1)^k$; the sum over $k$ telescopes.
\end{proof}

Applying this to the chain complex for cellular homology and invoking the homotopy invariance of homology yields:
\begin{corollary}
\label{cor:EulerChar}
For $X$ a finite compact cell complex,
\begin{equation}
\label{eq:homeuler}
    \chi(X) = \sum_k(-1)^k\dim\ H_k(X;\real).
\end{equation}
\end{corollary}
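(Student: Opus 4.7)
The plan is to assemble three ingredients already available in the preceding material: (i) the combinatorial definition of $\chi(X)$ as an alternating count of cells, (ii) Lemma \ref{lem:ChainEuler}, and (iii) the identification of cellular homology with the topological homology $H_\Grading(X;\real)$.

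First I would let $\Complex = (C_\Grading, \del)$ denote the cellular chain complex of the finite compact cell complex $X$ with coefficients in $\real$, so that $C_k$ is the $\real$-vector space on the set of $k$-cells of $X$. Since $X$ has finitely many cells, each $C_k$ is finite-dimensional and vanishes for $k$ larger than the top dimension of $X$. By definition of the combinatorial Euler characteristic,
\begin{equation*}
    \chi(X) \;=\; \sum_\alpha (-1)^{\dim\ \sigma_\alpha}
    \;=\; \sum_k (-1)^k \cdot \#\{k\text{-cells of }X\}
    \;=\; \sum_k (-1)^k \dim\ C_k
    \;=\; \chi(\Complex),
\end{equation*}
where the last equality uses formula (\ref{eq:ChainEuler}) defining $\chi(\Complex)$. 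So the combinatorial Euler characteristic of the space equals the algebraic Euler characteristic of its cellular chain complex.

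Next I would invoke Lemma \ref{lem:ChainEuler} to replace $\chi(\Complex)$ by the Euler characteristic of its homology:
\begin{equation*}
    \chi(\Complex) \;=\; \sum_k (-1)^k \dim\ H_k(\Complex).
\end{equation*}
This is where the algebraic cancellation between $B_k$ and $Z_k$ that was established in the lemma does all the work: the dimensions of the boundary subspaces telescope out of the alternating sum, leaving only the homology ranks.

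Finally, I would close the loop by citing the "multifarious" property listed in Section \ref{sec:hom}: for a finite cell complex $X$, the cellular homology $H_\Grading(\Complex)$ is canonically isomorphic to the (singular) homology $H_\Grading(X;\real)$. Substituting this identification gives the claimed formula
\begin{equation*}
    \chi(X) \;=\; \sum_k (-1)^k \dim\ H_k(X;\real).
\end{equation*}
The main obstacle, such as it is, is not in the alternating-sum algebra (that is handled by Lemma \ref{lem:ChainEuler}) but in the invariance step: one must know that the cellular homology of any cell decomposition agrees with the topological homology of $X$, so that the right-hand side is intrinsic and, in particular, homotopy invariant. Once this standard comparison theorem is granted, the corollary drops out, and as a byproduct one obtains the homotopy invariance of $\chi$ on compact finite cell complexes promised at the end of Section \ref{sec:euler}.
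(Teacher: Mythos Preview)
Your proposal is correct and follows precisely the route the paper intends: apply Lemma~\ref{lem:ChainEuler} to the cellular chain complex, then identify cellular homology with $H_\Grading(X;\real)$. The paper compresses all of this into the single sentence ``Applying this to the chain complex for cellular homology and invoking the homotopy invariance of homology yields,'' but your unpacking of the three steps matches it exactly.
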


\begin{corollary}
\label{cor:Eulerinvt}
$\chi$ is a homotopy invariant among finite compact cell complexes.
\end{corollary}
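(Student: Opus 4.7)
My plan is to simply combine Corollary \ref{cor:EulerChar} with the homotopy invariance of homology recorded in \S\ref{sec:hom}. Suppose $X$ and $Y$ are finite compact cell complexes and $f\colon X\to Y$ is a homotopy equivalence. The first step is to apply Corollary \ref{cor:EulerChar} to each, obtaining
\begin{equation*}
\chi(X) = \sum_k (-1)^k \dim H_k(X;\real), \qquad
\chi(Y) = \sum_k (-1)^k \dim H_k(Y;\real),
\end{equation*}
so that the question is reduced to comparing Betti numbers.

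Next, I would invoke the functorial and invariant properties of homology listed in \S\ref{sec:hom}: the map $f$ induces homomorphisms $H(f)\colon H_k(X;\real)\to H_k(Y;\real)$ which, because $f$ is a homotopy equivalence, are isomorphisms in every grading $k$. Taking real dimensions gives $\dim H_k(X;\real) = \dim H_k(Y;\real)$ for each $k$, and substitution into the two displays above yields $\chi(X)=\chi(Y)$.

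Since all sums involved are finite (both complexes have finitely many cells, so the chain complexes, and hence the homology groups, are concentrated in finitely many gradings and are finite-dimensional), no convergence subtleties arise. The only real input beyond Corollary \ref{cor:EulerChar} is the homotopy invariance of homology, which is a standard fact from algebraic topology and was quoted (without proof) in the introductory list of properties of $H_\Grading$; so there is no serious obstacle here. The argument is essentially a one-line consequence of Corollary \ref{cor:EulerChar} together with the invariance already recorded in the Multifarious/Functorial/Invariant discussion preceding it.
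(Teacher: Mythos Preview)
Your proposal is correct and is exactly the argument the paper intends: the corollary is stated immediately after Corollary~\ref{cor:EulerChar} with no separate proof, precisely because it follows by combining that formula with the homotopy invariance of homology already recorded in \S\ref{sec:hom}. Your write-up just makes this one-line deduction explicit.
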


This is helpful is computing integrals with respect to Euler characteristic: one need merely count holes as opposed to counting cells.

\begin{example}
The homological formulation of $\chi$ not only gives invariance --- it also provides a potentially simple means of computing Euler integrals. Consider the integrand displayed in Figure \ref{fig:eulerintex}. Constructing the appropriate triangulation and computing Euler characteristic may be involved. By combining Equation (\ref{eq:eulerexcursion}) with the homological definition, it is an easy matter (in this example at least) to compute the Euler integral.
\begin{figure}[hbt]
\begin{center}
\includegraphics[angle=0,width=5.0in]{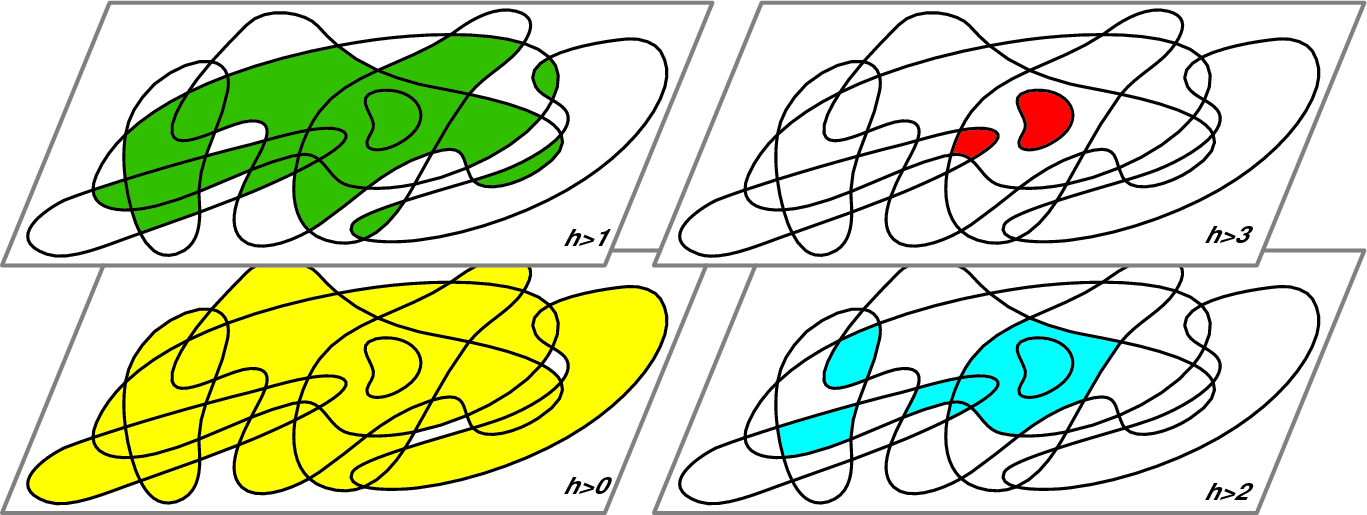}
\caption{A simple example of Euler integration. Instead of triangulating, computing homology ranks of upper excursion sets is a simple method to determine the Euler integral.}
\label{fig:eulerintex}
\end{center}
\end{figure}
\end{example}

\section{Sequences}
\label{sec:seq}

There is more to homology than simply counting cells or holes. The algebraic constructs built to support homology mirror the topological spaces implicated \cite{GelfandManin}. If a chain complex $\Complex=(C_{\Grading},\del)$ is the analogue of a topological space or cell complex, then the analogue of a continuous map is a \style{chain map} --- a map $\varphi_\Grading:C_\Grading\to C_\Grading'$ between chain complexes that is a homomorphism on chain groups respecting the grading and commuting with the boundary maps. This is best expressed in the form of a \style{commutative diagram}:
\begin{equation}
\label{eq:chainmap}
\displaystyle
\xymatrix{
\cdots \ar[r] & C_{n+1} \ar[r]^{\del}\ar[d]^{\varphi_\Grading} & C_n \ar[r]^{\del}\ar[d]^{\varphi_\Grading} & C_{n-1} \ar[r]^{\del}\ar[d]^{\varphi_\Grading} & \cdots
\\
\cdots \ar[r] & C'_{n+1} \ar[r]^{\del'} & C'_n \ar[r]^{\del'} & C'_{n-1} \ar[r]^{\del'} & \cdots
}
\end{equation}
Commutativity means that homomorphisms are path-independent in the diagram: $\varphi_\Grading\circ\del=\del'\circ\varphi_\Grading$. Since chain maps send neighbors to neighbors, the appropriate generalization of a homeomorphism to chain complexes is therefore an invertible chain map --- one which is an isomorphism for all $C_k\to C'_k$. Clearly, a homeomorphism $f\colon X\to Y$ induces a chain map $f_\Grading\colon\Complex_X\to\Complex_Y$ from a cellular chain complex on $X$ to the complex of the induced cell structure on $Y$: in this case, $f_\Grading$ is an isomorphism and, clearly, $H_{\Grading}(X)\cong H_{\Grading}(Y)$. Furthermore, if $f\simeq g\colon X\to Y$ are homotopic maps, then the induced homomorphisms are also isomorphisms: $H(f)\cong H(g)$.

Of equal importance is the algebraic analogue of a nullhomologous space.  A chain complex $\Complex=(C_{\Grading},\del)$ is \style{exact} when its homology vanishes: $\ker  \del_n=\im\ \del_{n+1}$ for all $n$. The most important examples of exact sequences are those relating homologies of various spaces and subspaces. The critical technical tool for the generation of such uses the technique of weaving an exact thread through a loom of chain complexes.

\begin{theorem}[Snake Lemma]
\label{thm:snake}
Any short exact sequence of chain complexes
\[
\displaystyle
\xymatrix{
0 \ar[r] & \A_{\Grading} \ar[r]^{i_\Grading} & \B_{\Grading} \ar[r]^{j_\Grading} & \C_{\Grading} \ar[r] & 0
} ,
\]
induces the \style{long exact sequence}:
\begin{equation}
\label{eq:LES}
\displaystyle
\xymatrix{
\ar[r] & H_n(\A_{\Grading}) \ar[r]^{H(i)} & H_n(\B_{\Grading}) \ar[r]^{H(j)} & H_n(\C_{\Grading}) \ar[r]^{\Snake} & H_{n-1}(\A_{\Grading}) \ar[r]^(.7){H(i)} &
} ,
\end{equation}
where $\Snake$ is the induced \style{connecting map}. Moreover, the long exact sequence is functorial: a commutative diagram of short exact sequences and chain maps
\[
\displaystyle
\xymatrix{
0 \ar[r] & \A_{\Grading} \ar[r] \ar[d]^{f_\Grading}& \B_{\Grading} \ar[r] \ar[d]^{g_\Grading} & \C_{\Grading} \ar[r] \ar[d]^{h_\Grading} & 0
\\
0 \ar[r] & \tilde\A_{\Grading} \ar[r] & \tilde\B_{\Grading} \ar[r] & \tilde\C_{\Grading} \ar[r] & 0
}
\]
induces a commutative diagram of long exact sequences
\begin{equation}
\label{eq:naturality}
\displaystyle
\xymatrix{
\ar[r] & H_n(\A_{\Grading}) \ar[r] \ar[d]^{H(f)}& H_n(\B_{\Grading}) \ar[r] \ar[d]^{H(g)} & H_n(\C_{\Grading}) \ar[r]^{\Snake} \ar[d]^{H(h)} & H_{n-1}(\A_{\Grading}) \ar[r] \ar[d]^{H(f)} &
\\
\ar[r] & H_n(\tilde\A_{\Grading}) \ar[r] & H_n(\tilde\B_{\Grading}) \ar[r] & H_n(\tilde\C_{\Grading}) \ar[r]^{\Snake} & H_{n-1}(\tilde\A_{\Grading}) \ar[r] &
}
\end{equation}
\end{theorem}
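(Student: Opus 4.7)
The plan is a standard diagram chase that constructs the connecting map $\Snake$ explicitly and then verifies exactness at three spots plus naturality. Unpack the short exact sequence into the commutative ladder
\[
\xymatrix{
 & 0 \ar[d] & 0 \ar[d] & 0 \ar[d] & \\
\cdots \ar[r] & \A_{n} \ar[r]^{i}\ar[d]^{\del} & \B_{n} \ar[r]^{j}\ar[d]^{\del} & \C_{n} \ar[r]\ar[d]^{\del} & 0 \\
\cdots \ar[r] & \A_{n-1} \ar[r]^{i}\ar[d] & \B_{n-1} \ar[r]^{j}\ar[d] & \C_{n-1} \ar[r]\ar[d] & 0 \\
 & 0 & 0 & 0 &
}
\]
with exact rows. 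First I would define $\Snake\colon H_n(\C)\to H_{n-1}(\A)$: given a cycle $c\in C_n$, use surjectivity of $j$ to pick $b\in B_n$ with $j(b)=c$; then $j(\del b)=\del j(b)=\del c=0$, so exactness of the bottom row forces $\del b = i(a)$ for a unique $a\in A_{n-1}$ (unique by injectivity of $i$). Injectivity of $i$ applied to $i(\del a)=\del i(a)=\del^{2}b=0$ shows $a$ is a cycle, so set $\Snake[c]=[a]$.

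Next I would verify that $\Snake$ is well-defined. Two separate choices must be checked: (i) if $c'=c+\del c''$ differs from $c$ by a boundary, and (ii) if $b'$ is any other preimage of $c$ under $j$, then the resulting $a$'s differ by a boundary in $A_{n-1}$. Both reduce to a one-step diagram chase using exactness of the rows and $\del^{2}=0$. The resulting map is clearly $\Field$-linear.

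With $\Snake$ in hand, I would prove exactness at each of the three positions $H_n(\A)$, $H_n(\B)$, $H_n(\C)$ of the putative long exact sequence (\ref{eq:LES}). Each of the six inclusions $\im\subseteq\ker$ and $\ker\subseteq\im$ is a routine chase: for example, exactness at $H_n(\C)$ amounts to showing that $[c]$ lies in $\im H(j)$ iff $\Snake[c]=0$, which is immediate from the construction of $\Snake$ (if $a=\del a'$ then $b-i(a')$ is a cycle in $B_n$ mapping to $c$; conversely if $c=j(b)$ with $b$ a cycle, take that $b$ when computing $\Snake$ and obtain $\del b=0$, hence $a=0$). The remaining four inclusions are analogous.

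For functoriality, given the vertical chain maps $f,g,h$ between two such short exact sequences, the induced maps $H(f),H(g),H(h)$ on homology make the two long exact sequences compatible at the $H(i)$ and $H(j)$ squares simply because $f,g,h$ commute with $i,j$ in the ladder, which is inherited on passing to cycles modulo boundaries. The only genuine content is commutativity of the $\Snake$ square: starting with $[c]\in H_n(\C)$, pick $b$ with $j(b)=c$ and $a\in A_{n-1}$ with $i(a)=\del b$; then $g(b)$ is a preimage of $h(c)$ in $\tilde\B_n$ under $\tilde\jmath$, and $\tilde\del g(b)=g(\del b)=g(i(a))=\tilde\imath(f(a))$, so the recipe for $\tilde\Snake(h[c])$ produces $[f(a)]=H(f)(\Snake[c])$. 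The main obstacle, such as it is, is simply bookkeeping: keeping track of which exactness or injectivity statement is invoked at each step of the three separate well-definedness verifications and the six inclusion checks. Once the construction of $\Snake$ is nailed down, everything else is formal.
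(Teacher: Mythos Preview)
Your proposal is correct and is exactly the standard diagram-chase proof of the Snake Lemma. The paper itself does not give a proof: it merely remarks that the short exact sequence of chain complexes yields short exact sequences in each degree compatible with the boundary maps, and then refers the reader to \cite{Hatcher,GelfandManin,Koslov} for the construction of $\Snake$ and the remaining details. So your write-up is more complete than what appears in the paper.
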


The exact sequence of chain complexes means that there is a short exact sequence for each dimension, and these short exact sequences fit into a commutative diagram with respect to the boundary operators. For details on the definition of $\Snake$, see \cite{Hatcher,GelfandManin,Koslov} or any standard reference.

\begin{example}[LES of pair]
Given $A\subset X$ a subcomplex, the following short sequence is exact:
\[
\displaystyle
\xymatrix{
0 \ar[r] & C_{\Grading}(A) \ar[r]^{i_\Grading} & C_{\Grading}(X) \ar[r]^{j_\Grading} & C_{\Grading}(X,A) \ar[r] & 0
} ,
\]
where $i:A\hookrightarrow X$ is an inclusion and $j:(X,\varnothing)\hookrightarrow(X,A)$ is an inclusion of pairs. This yields the \style{long exact sequence of the pair} $(X,A)$:
\begin{equation}
\label{eq:LESpair}
\displaystyle
\xymatrix{
\ar[r] & H_n(A) \ar[r]^{H(i)} & H_n(X) \ar[r]^(.4){H(j)} & H_n(X,A) \ar[r]^{\Snake} & H_{n-1}(A)\ar[r] & },
\end{equation}

The connecting map $\Snake$ takes a relative homology class $[\alpha]\in H_n(X,A)$ to the homology class $[\del\alpha]\in H_{n-1}(A)$.
\end{example}

\begin{example}[Mayer-Vietoris]
Another important sequence is derived from a decomposition of $X$ into subcomplexes $A$ and $B$. Consider the short exact sequence
\begin{equation}
\label{eq:SESMV}
\xymatrix{
0 \ar[r] & C_{\Grading}(A\cap B) \ar[r]^(.4){\phi_\Grading} & C_{\Grading}(A)\oplus C_{\Grading}(B) \ar[r]^(.6){\psi_\Grading} & C_{\Grading}(A+B) \ar[r] & 0
,
}
\end{equation}
with chain maps $\phi_\Grading:c\mapsto(c,-c)$, and $\psi_\Grading:(a,b)\mapsto a+b$. The term on the right, $C_{\Grading}(A+B)$, consists of those chains which can be expressed as a sum of chains on $A$ and chains on $B$. In cellular homology with $A, B$ subcomplexes, $C_{\Grading}(A+B)\cong C_{\Grading}(X)$, resulting in the \style{Mayer-Vietoris sequence}:
\begin{equation}
\label{eq:LESMV}
\displaystyle
\xymatrix{
\ar[r] & H_n(A\cap B) \ar[r]^(.35){H(\phi)} & H_n(A)\oplus H_n(B) \ar[r]^(.7){H(\psi)} & H_n(X) \ar[r]^(.35){\Snake} & H_{n-1}(A\cap B) \ar[r] &
}
\end{equation}
\end{example}

These exact sequences allow one to re-derive the combinatorial properties of the Euler characteristic. The following come from careful use of Lemmas \ref{lem:ChainEuler}, exactness, and the two exact sequences above:

\begin{proposition}
\label{prop:EulerChar}
For $A,B\subset X$ compact and tame,
\begin{eqnarray}
    \chi(A\cup B) &=& \chi(A) + \chi(B) - \chi(A\cap B) \\
    \chi(X - A) &=& \chi(X) - \chi(A)
\end{eqnarray}
\end{proposition}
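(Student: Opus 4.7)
Both identities will follow from a single general principle that is an immediate consequence of Lemma \ref{lem:ChainEuler}: any bounded exact sequence of finite-dimensional vector spaces has vanishing alternating sum of dimensions. Indeed, viewing such a sequence $0 \to V_n \to \cdots \to V_0 \to 0$ as a chain complex $\mathcal{E}$ with the given maps as its boundary operators, exactness is precisely the vanishing of the homology, so Lemma \ref{lem:ChainEuler} gives $\sum_i (-1)^i \dim V_i = \chi(\mathcal{E}) = \chi(H_*(\mathcal{E})) = 0$.

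For (1), I would use Theorem \ref{thm:Triangulation} to triangulate $A \cup B$ so that $A$, $B$, and $A \cap B$ are all subcomplexes, and then apply the principle above to the (bounded) Mayer--Vietoris sequence (\ref{eq:LESMV}). Combined with Corollary \ref{cor:EulerChar}, which identifies the alternating sum of Betti numbers with $\chi$ on compact complexes, this yields
\begin{equation*}
\chi(A \cap B) \;-\; \bigl(\chi(A) + \chi(B)\bigr) \;+\; \chi(A \cup B) \;=\; 0,
\end{equation*}
which rearranges to the stated formula.

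For (2), I would apply the same principle to the long exact sequence (\ref{eq:LESpair}) of the pair $(X, A)$ to obtain $\chi(X) - \chi(A) = \sum_k (-1)^k \dim H_k(X, A)$. It then remains to identify the right-hand side with $\chi(X - A)$. Triangulating $X$ so that $A$ is a subcomplex, the relative chain groups $C_k(X, A) = C_k(X) / C_k(A)$ are freely generated by the $k$-cells of $X$ not contained in $A$, and these open cells are precisely the pieces whose disjoint union makes up $X - A$. Applying Lemma \ref{lem:ChainEuler} at the chain level, $\sum_k (-1)^k \dim H_k(X, A) = \sum_k (-1)^k \dim C_k(X, A)$, and this latter sum is the combinatorial Euler characteristic $\chi(X - A)$ directly from the definition in \S\ref{sec:euler}.

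The main obstacle is the last identification in (2): since $X - A$ is typically non-compact, Corollary \ref{cor:EulerChar} does not apply to it, and one cannot directly replace $\chi(X - A)$ by an alternating sum of Betti numbers. The resolution is to pass through the relative cellular chain complex, whose ranks literally count the open cells comprising $X - A$, so that the combinatorial definition of $\chi(X - A)$ is linked to the alternating Betti sum of $H_*(X, A)$ via Lemma \ref{lem:ChainEuler} at the chain level rather than at the homology level.
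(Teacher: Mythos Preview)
Your proposal is correct and follows exactly the approach the paper indicates: apply Lemma \ref{lem:ChainEuler} to the Mayer--Vietoris sequence for (1) and to the long exact sequence of the pair for (2). Your explicit treatment of the identification $\sum_k(-1)^k\dim H_k(X,A)=\chi(X-A)$ via the relative chain groups is a detail the paper leaves to the reader but is precisely what ``careful use of Lemma \ref{lem:ChainEuler}'' entails.
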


The combinatorial Euler characteristic $\chi$ is equal to $\chi_c$, the Euler characteristic defined via cohomology with compact supports:
\begin{lemma}
For $X$ tame  and locally compact,
\begin{equation}
   \chi(X) := \sum_{\sigma}(-1)^{\dim\ \sigma} = \chi_c := \sum_{k=0}^\infty(-1)^k\dim\ H^k_c(X;\real)
\end{equation}
\end{lemma}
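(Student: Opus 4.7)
The plan is to triangulate $X$, establish an additivity property for $\chi_c$ along open-closed decompositions, and reduce to the case of a single open cell, where the computation of $H^\bullet_c(\real^n)$ quoted in \S\ref{sec:cohom} yields the result.

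By the Triangulation Theorem (Theorem \ref{thm:Triangulation}), $X$ is definably homeomorphic to a finite disjoint union $X = \coprod_\alpha \sigma_\alpha$ of open simplices inside some Euclidean simplicial complex, with each $\sigma_\alpha \cong \real^{\dim \sigma_\alpha}$. Since $\chi_c$ is a homeomorphism invariant, I may work with this realization; moreover, the combinatorial side $\sum_\alpha (-1)^{\dim \sigma_\alpha}$ already matches the right-hand side of (\ref{eq:simplices}), so only the left-hand identification $\chi_c(X) = \sum_\alpha \chi_c(\sigma_\alpha)$ requires work.

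The key tool is the long exact sequence for compactly supported cohomology associated with an open-closed decomposition: whenever $U \subset Y$ is open with closed complement $A = Y \setminus U$, one has
\[
\xymatrix{
\cdots \ar[r] & H^k_c(U) \ar[r] & H^k_c(Y) \ar[r] & H^k_c(A) \ar[r] & H^{k+1}_c(U) \ar[r] & \cdots
}
\]
The telescoping argument used in Lemma \ref{lem:ChainEuler} applied to this exact sequence (viewed as an acyclic chain complex of finite-dimensional vector spaces, using tameness to ensure finite-dimensionality at each stage) gives the additivity
\[
\chi_c(Y) = \chi_c(U) + \chi_c(A).
\]
I then induct on the number of cells in the triangulation: at each step, pick a cell $\sigma$ of maximal remaining dimension; it is automatically open in the current subspace $Y$, its complement $A = Y \setminus \sigma$ is a closed subcomplex which is again a finite disjoint union of open cells, and additivity gives $\chi_c(Y) = \chi_c(\sigma) + \chi_c(A)$. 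The induction collapses to the single-cell case $\sigma \cong \real^k$, for which the quoted calculation $H^j_c(\real^k) \cong \real$ if $j=k$ and $0$ otherwise yields $\chi_c(\sigma) = (-1)^k = (-1)^{\dim \sigma}$. Summing gives $\chi_c(X) = \sum_\alpha (-1)^{\dim \sigma_\alpha} = \chi(X)$.

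The main obstacle is justifying the additivity identity $\chi_c(Y) = \chi_c(U) + \chi_c(A)$ from the open-closed long exact sequence; this requires invoking the existence of that sequence (which the paper implicitly uses but does not explicitly construct in the excerpt) and confirming finite-dimensionality of each $H^k_c$ on tame spaces so that the alternating sum telescopes legitimately. Both facts follow from the Triangulation Theorem plus standard properties of $H^\bullet_c$, but they are the only step above the level of formal bookkeeping.
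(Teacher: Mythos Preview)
Your proof is correct and follows essentially the same approach as the paper: both use the open-closed long exact sequence in $H^\bullet_c$ to obtain the additivity $\chi_c(Y)=\chi_c(U)+\chi_c(Y\setminus U)$, then peel off top-dimensional open cells via the Triangulation Theorem and induct down to the single-cell case $\chi_c(\real^k)=(-1)^k$. The only cosmetic difference is that the paper inducts on dimension (removing all top cells at once) while you induct on the number of cells.
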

\begin{proof}
If $U$ is an open subset of a locally compact $X$ we have the following long exact sequence in cohomology:
\begin{equation}
\displaystyle
\xymatrix{
\ar[r] & H^n_c(U) \ar[r] & H^n_c(X) \ar[r] & H^n_c(X-U) \ar[r] &
}
,
\end{equation}
whence, from Lemma \ref{lem:ChainEuler}, $\chi_c(X)=\chi_c(U)+\chi_c(X-U)$. Invoking the Triangulation Theorem, fix a triangulation of $X$ with $U$ a cell of maximal dimension $k$ in $X$. This is necessarily open and so applying the above result allows us to peel off a sum of $(-1)^k$. Repeat this for all (finitely many!) $k$-dimensional cells, and note that $X-(U_1\cup\cdots\cup U_{n(k)})$ is a tame set of dimension $k-1$. Repeating the argument inductively with respect to dimension concludes the proof.
\end{proof}

This is the impetus for using a cohomological version of the Euler characteristic --- it allows one to manage non-compact spaces with a cell structure. Cohomological formulations quickly become complicated if local compactness is not assumed (\eg, a compact triangle with one open face removed) \cite{Beke}. A better method still is via categorifying a constructible set to a richer data structure: a constructible \style{sheaf}.

\specialsection*{\bf The Sheaf-Theoretic Formulation}
\sweetline
\vspace{0.1in}

Tame sets possess triangulations of such virtue as to permit a combinatorial Euler characteristic. This simple counting procedure foreshadows the  richer (co)homological formulation from algebraic topology, which, in turn, returns the principle of independence of triangulation. At yet higher elevations, this principle transmutes into invariance of the Euler integral by relating it to an intrinsic quantity --- the Euler characteristic of a constructible sheaf. Far from being useless abstraction, this lifting provides both tools and insights for the applications to come. We proceed, therefore, via a brief introduction to sheaves.

It is of general interest to aggregate local observations into globally coherent ones. The connection between local and global properties has been captured in many famous mathematical theorems attached to equally famous names: Gauss-Bonnet, Poincar\'e-Hopf, Riemann-Roch, Atiyah-Singer and so on. In many of these results local quantities are integrated to produce interesting invariants. These invariants, traditionally algebraic in nature, can, for instance, provide obstructions to extending local constructions globally.

Sheaf theory provides a systematic means of describing and deriving many local-to-global results. This intricate machinery, often coupled with and cast in the language of categories, has garnered a reputation for being abstruse and abstract. The authors are of the opinion that sheaves are extremely useful as data management tools and will gradually shed their intimidating appearance as incarnations of sheaves relevant to Applied Mathematics are found. One such incarnation is given by Euler integration and constructible functions. Although Euler calculus can be appreciated separately from sheaves, the connection between the two, expressed most clearly in the work of Schapira \cite{Schapira:op}, serves to both deepen the foundations of Euler calculus and elucidate sheaves.

\section{Presheaves}
\label{sec:presheaves}

The most basic assignment of local data is captured in a structure called a \style{presheaf}. A presheaf is a map $\Presheaf$ from open sets of $X$ to (say) abelian groups in a manner that respects {\em restriction to subsets}. Specifically, $\Presheaf$ is a contravariant functor from open sets under inclusion to a category (of, in this paper, abelian groups). For a less abstract definition, the following will suffice: given $V\subset U$ open in $X$, there is an induced homomorphism $\Presheaf(V)\stackrel{r}{\leftarrow}\Presheaf(U)$ that respects composition. Namely, the map $\Presheaf(U)\stackrel{r}{\leftarrow}\Presheaf(U)$ is the identity transformation, and for $W\subset V\subset U$ one has

\[
    \xymatrix{
        W\,\,\ar@{^{(}->}[dr]\ar@{^{(}->}[rr] & & \,\,U & &
        \Presheaf(W) & & \Presheaf(U)\ar[ll]_{r}\ar[dl]^{r}\\
        & \,\,V\,\,\ar@{^{(}->}[ur] & & & & \Presheaf(V)\ar[ul]^{r} &
    } .
\]

Note that, as in the case of cohomology and other \style{contravariant} theories, the algebraic maps reverse the direction of the topological maps. One calls $\Presheaf(U)$ the group of \style{sections} of $\Presheaf$ over $U$; an element $\sect\in\Presheaf(U)$ is called a section. The presheaf condition implies that one can restrict sections uniquely. Although we define presheaves valued in abelian groups, one is free to use any algebraic data one might like to use: vector spaces, rings, modules, algebras, etc. Even more general data may be assigned: sets, spaces, spectra and even categories.\footnote{Presheaves of categories are often called \style{prestacks} and have functors as restriction maps.}

The use of open sets for the assignees of data has many advantages over assigning data directly to points. However, pointwise data assignment can be recovered by a limiting process. For $x\in X$ and $\{U_i\}$ a nested sequence of open neighborhoods converging to $x$ (that is, any neighborhood of $x$ contains $U_i$ for all $i$ sufficiently large), one has a sequence of groups of sections and homomorphisms
\[
    \cdots
    \stackrel{r}{\longrightarrow}\Presheaf(U_{i-1})
    \stackrel{r}{\longrightarrow}\Presheaf(U_i)
    \stackrel{r}{\longrightarrow}\Presheaf(U_{i+1})
    \stackrel{r}{\longrightarrow}\cdots
\]
The \style{stalk} of $\Presheaf$ at $x$, $\Presheaf_x$, is the group of equivalence classes of sequences $(\sect_i)_i$ with $\sect_{i+1}=r(\sect_i)$ for $\Presheaf(U_i)\stackrel{r}{\rightarrow}\Presheaf(U_{i+1})$ and all $i$; and where two such sequences are equivalent if they eventually agree: $[(\sect_i)]=[(\sect_i')]$ if and only if $\sect_i=\sect_i'$ for $i>N$, for some $N$. By using a more implicit definition --- the stalk is the colimit over $\Presheaf(U_x)$ for all open sets $U_x$ containing $x$ --- it follows that the stalk is a group and that $\Presheaf_x$ is independent of the system of neighborhoods chosen to limit to $x$.

\section{Sheaves}
\label{sec:sheaves}

A \style{sheaf} is a presheaf $\Sheaf$ that respects {\em gluings} as well as restrictions. Consider two open subsets $U, V\subset X$.
A presheaf is a sheaf if and only if for all $U$ and $V$ open in $X$, and sections $\sect_U$, $\sect_V$ of $U$ and $V$ which agree on the overlap $U\cap V$, there exists a {\em unique} section of $U\cup V$ which agrees with $\sect_U$ and $\sect_V$ on the components. More generally, we require that this property hold for a family of open sets $\{U_i\}$ for $i$ in some potentially large indexing set.

The canonical example of a non-sheaf presheaf is the presheaf that assigns to an open set $U$ the group of constant functions $f:U\to\zed$, with restriction maps defined to be the identity. To see this is not a sheaf, consider two disjoint open sets $U$ and $V$ and the functions $f_U(x)=5$ for $x\in U$ and $f_V(x)=7$ for $x\in V$. Since the sets have empty intersection they vacuously agree there, despite the absence of a constant function $f:U\cup V\to\zed$ such that $f|_V=f_V$ and $f|_U=f_U$. One can mitigate the problem by instead assigning to any open set $U$ the group of {\em locally constant functions}: this defines a sheaf, sometimes called $\widetilde{\zed}$. To an open subset it assigns continuous functions $f:U\to\zed$, which, by the discrete topology on $\zed$, are constant on connected components. Consequently the group of everywhere defined sections $\widetilde{\zed}(X)$ are exactly functions that are constant on connected components of $X$. The rank of this free abelian group calculates the number of connected components and is the most basic topological invariant of a space. This sheaf has even  more information about $X$ and has cohomological data exactly equal to the familiar cohomology of $X$ with $\zed$ coefficients. This indicates another interpretation of sheaves --- a sheaf is a generalized coefficient system for computing cohomology.

Sheaves, like simplicial complexes, are defined abstractly but have a geometric realization reminiscent of covering spaces. To any sheaf $\Sheaf$ is associated a topological space (also denoted $\Sheaf$) and a projection $\proj\colon\Sheaf\to X$. This \style{\'etale space} is the disjoint union of all stalks $\Sheaf_x$, $x\in X$, outfitted with a topology so that:
\begin{enumerate*}
    \item Fibers $\proj\inv(x)$ are discrete spaces with the structure of an abelian group.
    \item Algebraic operations in the fibers (addition/multiplication) yield continuous maps of the space $\Sheaf$.
    \item The projection $\proj$ is a local homeomorphism: for each small neighborhood $U\subset X$, $\proj\inv(U)$ is a disjoint union of homeomorphic copies of $U$ in $\Sheaf$.
\end{enumerate*}

A \style{section} of $\Sheaf$ over an open set $U\subset X$ is a map $\sect:U\to\Sheaf$ with $\proj\circ\sect = \id$. Denote by $\Gamma(U,\Sheaf)$ the group of sections over $U$, defined in the obvious manner (via pointwise operations). The \style{global sections} of $\Sheaf$ are denoted $\Gamma(X,\Sheaf)$. The utility of the \'etale space perspective is that, as a space, the sheaf possesses global features whose detection and computation reveal important structure.

Note that the \'etale space of a sheaf provides an alternate path to defining sheaves. More precisely, let us temporarily term the pair $(\Sheaf,\proj)$ consisting of an \'etale space with its projection map an \style{\'etale sheaf}. To such an \'etale sheaf we could associate a presheaf $\Gamma(\Sheaf)$ that assigns to an open subset $U$ in $X$ the group of sections of the \'etale sheaf $\Gamma(U,\Sheaf)$. It is a theorem (see \cite{Rotman} Thm 5.68) that $\Gamma(\Sheaf)$ is actually a sheaf. Conversely, as noted above, starting with an abstract sheaf $\Sheaf$ and associating to it an \'etale sheaf will produce an isomorphic sheaf after applying $\Gamma(-)$, \ie, $\Sheaf(U)\cong\Gamma(U,\Sheaf)$ for all $U$. Consequently one may use these two perspectives interchangeably. This also explains why in the literature one may see the terms {\em sheaf of sections} and {\em sheaf of groups} to refer to the same sheaf. The former term emphasizes a model of sheaves as \'etale sheaves whereas the latter term emphasizes the abstract assignment model we first described.

\section{Sheaf operations}
\label{sec:shops}

In all the examples of sheaves considered above, certain common features resolve into canonical constructions. These are the beginnings of sheaf theory --- a means of working with data over spaces in a platform-independent manner. For example, every presheaf can be turned into a sheaf through a process called \style{sheafification} \cite{Bredon}. Conversely, every sheaf gives rise to a presheaf simply by forgetting the extra structure of sheaf, and these operations are, to a reasonable degree, inverses. Other operations in sheaf theory involve pushing forward and pulling back sheaves based on maps of the base spaces, defining the cohomology of a sheaf, and various constructions related to duality and the distinction between compactly and non-compactly supported sections. Instead of detailing these operations in full (as in, \eg, \cite{Godemont,Bredon}), a few brief highlights with applications are given.

\vspace{0.1in}
\noindent
{\bf Morphisms:} A \style{morphism} $F\colon\Sheaf\to\Sheaf'$ of (pre)sheaves over $X$ is a collection of maps $F(U)\colon\Sheaf(U)\to\Sheaf'(U)$ that are compatible with the restrictions internal to both $\Sheaf$ and $\Sheaf'$. One can pass to the stalk at $x$ and get a well-defined map $F_x\colon\Sheaf_x\to\Sheaf'_x$. This allows one to address the notions of subsheaves, quotient sheaves, and exact sequences of sheaves on a stalk-by-stalk basis: the machinery is built to integrate based on stalk data. This becomes particularly relevant when defining cohomology for sheaves (\S\ref{sec:shcohom}), but is also requisite for understanding most other sheaf operations.

\vspace{0.1in}
\noindent
{\bf Direct image:} Assume a continuous map $F\colon X\to Y$ of spaces. For $\Sheaf_X$ a sheaf over $X$, the \style{direct image} or \style{pushforward} of $F$ is the sheaf $F_*\Sheaf_X$ on $Y$ defined by $F_*\Sheaf_X(V) := \Sheaf_X(F\inv(V))$, for $V\subset Y$ open. A nice connection between the direct image and the group of global sections should be noted: If $p\colon X\to\star$ is the constant map then the pushforward of $\Sheaf$ from $X$ to $\star$ returns a sheaf which is a single group $p_*\Sheaf_X=\Gamma(X,\Sheaf)$. This example will be fundamental in understanding the theoretical connection between sheaf theory and the Euler calculus when $X$ is compact. To handle the situation of non-compact $X$ we use instead the direct image with compact supports
\[
       F_!\Sheaf_X(V) := \{s\in\Sheaf_X(F\inv(V)) \, ; \, F|_{\supp(s)} \, \mathrm{proper}\}
\]
Note that $\supp(s)$ is just the set of points $x$ such that $s_x\neq 0$ --- the image of $s$ in the stalk at $x$. When $F$ is already a proper continuous map we have $F_!=F_*$. If we again take $p$ the constant map $p_!\Sheaf=\Gamma_c(X,\Sheaf)$ is the group of global sections with compact support.

\vspace{0.1in}
\noindent
{\bf Inverse image:} With $F\colon X\to Y$ as before, one can pull back a sheaf from the codomain to the domain. The \style{inverse image} or \style{pullback} of a sheaf $\Sheaf_Y$ on $Y$ is the sheaf on $X$ defined (using the \'etale interpretation) as
\[
    F^*\Sheaf_Y := \{ (x,y) \in X\times\Sheaf_Y \, : \, F(x)=\proj_Y(y) \} ,
\]
where $\proj$ is the canonical projection to $Y$. It is more awkward to define $F^*$ in terms of presheaves, since one cannot guarantee that the forward image $F(U)$ of an open set $U\subset X$ is open in $Y$. This can be remedied by defining
\[
   F^*\Sheaf_Y(U) := \varinjlim_{F(U)\subset V} \Sheaf(V)
\]
and this will be a presheaf. One can then sheafify this to define a pullback of sheaves without the \'etale perspective. One should note that the inverse image of $\Sheaf$ by $F$ is sometimes written $F\inv\Sheaf$ as in \cite{KS}: our notation is closer to that of  \cite{Iversen}.

\vspace{0.1in}
\noindent
{\bf Duality:} The canonical operations in sheaf theory are entwined. The morphisms between a direct and inverse image of a
sheaf are related by a categorical \style{adjunction} --- a form of duality. Let $\Sheaf$ and $\mathcal{G}$ be arbitrary sheaves on $X$ and $Y$ respectively and consider $F\colon X\to Y$ a continuous map; then
\begin{equation}
        \Hom(F^*\mathcal{G},\Sheaf)\cong\Hom(\mathcal{G},F_*\Sheaf),
\end{equation}
where $\Hom$ denotes the group of sheaf morphisms. The motivation for calling this a form of duality comes from a related pair of operations: direct and inverse image with compact supports. As we will see later an adjunction between these two operations provides a vast generalization of Poincar\'e duality called \style{Verdier duality} \cite{Schurmann,Dimca}.

\section{Sheaf Cohomology}
\label{sec:shcohom}

There are two approaches to defining the cohomology of sheaves, each with particular advantages. One, defined using \style{\v Cech cohomology}, is computationally the most tractable and will be familiar to topologists. The other, which uses \style{injective resolutions}, is very useful for proving theorems and will be familiar to the reader comfortable with homological algebra.

\vspace{0.1in}
\noindent
{\bf \v{C}ech cohomology:}
Fix $X$ a base space, $\{U_i\}$ an open cover of $X$, and a sheaf $\Sheaf$ over $X$. Now choose $k+1$ sets from $\{U_i\}$, say $U_{j_0},\ldots,U_{j_k}$. Denote their intersection by $U_{j_0,\ldots,j_k}$, and let $\sect_{j_0,\ldots,j_k}$ be an element of $\Sheaf(U_{j_0,\ldots,j_k})$. An assignment that takes every ordered tuple of $k+1$ sets from $\{U_i\}$ and specifies a section over the intersection is called a \style{$k$-cochain}. The group of all such assignments,
\[
	C^k(\{U_i\},\Sheaf)=\prod_{j_0<\ldots<j_k}\Sheaf(U_{j_0,\ldots,j_k}) ,
\]
is the $k^{th}$ cochain group of $\Sheaf$. For each $k\geq 0$ there is such a group and successive groups are connected via a differential described as follows: to define $d\alpha$ given $\alpha\in C^{k-1}(\{U_i\},\Sheaf)$, one must specify a value on every $k+1$-fold intersection. For each nonempty intersection of $k+1$ sets, one forgets each factor, say $U_{j_i}$, in the intersection, yielding a $k$-fold intersection with value $\alpha(U_{j_0,\ldots,\hat{j}_i,\ldots,j_k})$. Repeating this for $0\leq i\leq k$, one obtains $k+1$ sections that can be combined after restriction:
\[
	d\alpha(U_{j_0,\ldots,j_k})=\sum_{i=0}^k (-1)^i\alpha(U_{j_0,\ldots,\hat{j}_i,\ldots,j_k})|_{U_{j_0,\ldots,j_k}}.
\]
Notice that the restriction is necessary, as the $\alpha(U_{j_0,\ldots,\hat{j}_i,\ldots,j_k})$ live in different abelian groups and so an alternating sum would not be well-defined, but the ability to restrict to a common abelian group is part of the definition of a sheaf. The situation is perhaps more easily visualized diagrammatically.

\[
	\xymatrix{
	\Sheaf(U_{j_1,\ldots,j_k}) \ar[rd]_r & \cdots\,\, \Sheaf(U_{j_0,\ldots,\hat{j}_i,\ldots,j_k}) \,\,\cdots \ar[d] & \Sheaf(U_{j_0,\ldots,j_{k-1}}) \ar[ld]^r \\
	& \Sheaf(U_{j_0,\ldots,j_k}) &
	}
\]

It is left to the reader to verify that $d^2=0$ and conclude that the cohomology $H^\Grading(C^{\Grading}(\{U_i\},\Sheaf),d)$ is well-defined. For suitably nice covers $\{U_i\}$ this cohomology computes the cohomology $H^\Grading(X;\Sheaf)$ of the sheaf $\Sheaf$. For example, to compute the cohomology of the constant sheaf $\widetilde{\zed}$ one may simply employ a \style{good cover}, \ie, one whose intersections are contractible\footnote{For non-good covers, \v{C}ech cohomology computes the $E_2$ page in a spectral sequence that converges to the full sheaf cohomology.}, and \v{C}ech cohomology will compute $H^\Grading(X;\zed)$.

\vspace{0.1in}
\noindent
{\bf Injective resolutions:}
The \v Cech approach has the advantage of being computable, but, like many constructions, one would like to know if the output of said computation is independent of choices: the choice of cover, the choice of ordering, and so on. In order to put sheaf cohomology on intrinsic ground one appeals to injective resolutions. The basic operation one uses when dealing with sheaf cohomology is the global section functor:
\[
	\Gamma(X,-):Sh(X)\to Ab
\]
This is the map that takes a sheaf $\Sheaf$ on $X$ and sends it to the abelian group $\Sheaf(X)$. The fact that this map is functorial comes from the observation that a map of sheaves includes maps on the level of global sections. More is true: if $F\colon\Sheaf\to\mathcal{G}$ is a map of sheaves that is injective on stalks, then $F_X\colon\Sheaf(X)\to\mathcal{G}(X)$ is injective (\cite{Iversen} II.2.2). The same statement is not true for surjections.

A canonical example of this asymmetry is given by the exponential sequence
\[
	\xymatrix{0\to \widetilde{\zed} \to \mathcal{O}_X \to \mathcal{O}^*_X \to 0}
\]
where $X=\mathbb{C}-{0}$ is the punctured complex plane, $\mathcal{O}_X$ is the sheaf of holomorphic functions, and $\mathcal{O}^*_X$ nonvanishing holomorphic functions. The first map simply includes continuous integer valued functions into holomorphic ones. The second map takes a function $f\colon X\to \mathbb{C}$ and sends it to $g=e^{2\pi i f}$, which is manifestly nonzero. It is surjective on stalks because locally a non-zero function has a well defined logarithm, but this is not true globally. In particular, $g=z$ is a global section of $\mathcal{O}^*_X$ not hit by the exponential map.

This sequence is exact despite the fact that it is {\em not} an exact sequence of groups for every open set $U$. Saying that a sequence of sheaves is exact is a local statement. One could take the following theorem as a definition of exactness:

\begin{theorem}[\cite{Iversen} II.2.6]
A sequence of sheaves and sheaf morphisms
	\[
	    \cdots
	    \stackrel{}{\longrightarrow}\Sheaf^{i-1}
	    \stackrel{}{\longrightarrow}\Sheaf^i
	    \stackrel{}{\longrightarrow}\Sheaf^{i+1}
	    \stackrel{}{\longrightarrow}\cdots
	\]
is exact if and only if the corresponding sequence of groups and group homomorphisms is exact for all $x\in X$
	\[
	    \cdots
	    \stackrel{}{\longrightarrow}\Sheaf^{i-1}_x
	    \stackrel{}{\longrightarrow}\Sheaf^i_x
	    \stackrel{}{\longrightarrow}\Sheaf^{i+1}_x
	    \stackrel{}{\longrightarrow}\cdots
	\]
\end{theorem}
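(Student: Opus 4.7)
The plan is to reduce the statement to the exactness of the stalk functor together with the faithfulness of stalks on sheaves. Write $f^i\colon\Sheaf^i\to\Sheaf^{i+1}$ for the sheaf morphisms. Exactness at $\Sheaf^i$ asserts $\ker f^i = \im f^{i-1}$ as subsheaves of $\Sheaf^i$, while exactness of the stalk sequence at $x$ asserts $\ker f^i_x = \im f^{i-1}_x$ inside $\Sheaf^i_x$. The first technical lemma I would establish is that the stalk functor $\Sheaf\mapsto\Sheaf_x$ is \emph{exact}: realized as the filtered colimit of the groups $\Sheaf(U)$ over open neighborhoods $U$ of $x$, it commutes with finite limits and with sheafification, and hence with both kernels and images of sheaf morphisms. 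This yields the natural identifications $(\ker f^i)_x = \ker f^i_x$ and $(\im f^{i-1})_x = \im f^{i-1}_x$.

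Granted these identifications, the forward implication is immediate: if $\ker f^i = \im f^{i-1}$ as subsheaves of $\Sheaf^i$, then passing to stalks gives $\ker f^i_x = \im f^{i-1}_x$ for every $x$. For the converse, I would proceed in two steps. First, the composition $f^i\circ f^{i-1}$ vanishes on every stalk by the hypothesis; since a morphism of sheaves whose stalk at every point is zero is itself the zero morphism (because a section is determined by its germs, as follows from the uniqueness half of the sheaf condition), this forces $f^i\circ f^{i-1}=0$ as a sheaf morphism, whence $\im f^{i-1}\subset \ker f^i$ as subsheaves of $\Sheaf^i$. Second, to upgrade this containment to equality, I would form the quotient sheaf $\ker f^i / \im f^{i-1}$ and observe that all of its stalks vanish by stalkwise exactness; since a sheaf all of whose stalks are zero is itself the zero sheaf, the containment becomes an equality, completing the argument.

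The main obstacle is the familiar asymmetry between kernels and images in the category of sheaves: the presheaf $U\mapsto f^{i-1}(\Sheaf^{i-1}(U))$ is generally \emph{not} a sheaf, as already illustrated by the exponential sequence discussed in the preceding text, where the nonvanishing holomorphic function $z$ on $\comp\setminus\{0\}$ fails to admit a global logarithm even though logarithms exist locally. Thus $\im f^{i-1}$ must be defined as the sheafification of the naive presheaf image, and one must verify that sheafification is the identity on stalks; this is the nontrivial step that makes both of the crucial identifications $(\ker f^i)_x = \ker f^i_x$ and $(\im f^{i-1})_x = \im f^{i-1}_x$ work in tandem. Once this bookkeeping is dispatched, the proof reduces to the two slogans that the stalk functor is exact and that a morphism (respectively, a sheaf) vanishes precisely when all its stalks do.
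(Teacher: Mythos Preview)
Your argument is correct and is essentially the standard textbook proof of this fact. However, the paper does not actually prove this theorem: it is cited from \cite{Iversen} and is introduced with the remark that one ``could take the following theorem as a definition of exactness,'' which is indeed how the paper uses it thereafter. So there is no paper-side proof to compare against; your proposal simply supplies the standard argument that the cited reference would contain, and the key ingredients you identify---exactness of the stalk functor (via filtered colimits and the compatibility of sheafification with stalks) together with the fact that a sheaf or morphism vanishes iff all its stalks do---are exactly the right ones.
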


The key feature of taking global sections is that it preserves exactness at the left endpoint. Namely, if
\[
    0
    \stackrel{}{\longrightarrow}\Sheaf
    \stackrel{}{\longrightarrow}\mathcal{I}^0
    \stackrel{}{\longrightarrow}\mathcal{I}^1
    \stackrel{}{\longrightarrow}\cdots
\]
is an exact sequence of sheaves then
\[
    0
    \stackrel{}{\longrightarrow}\Sheaf(X)
    \stackrel{}{\longrightarrow}\mathcal{I}^0(X)
    \stackrel{}{\longrightarrow}\mathcal{I}^1(X)
    \stackrel{}{\longrightarrow}\cdots
\]
is no longer an exact sequence of abelian groups, but $\Sheaf(X)$ still injects into $\mathcal{I}^0(X)$. Since $\mathcal{I}^{\Grading}(X)$ is no longer exact it has potentially interesting cohomology. When each of the $\mathcal{I}^i$ are injective (see \cite{Iversen} for a good introduction) the cohomology of this complex is taken as the {\em definition} of sheaf cohomology of $\Sheaf$:
\[
	H^i(X,\Sheaf) := H^i(\mathcal{I}^{\Grading}(X))
\]
Injective sheaves can be quite mysterious. One may take on faith that an injective sheaf has the property that $r\colon\mathcal{I}(X)\to\mathcal{I}(U)$ is surjective for every open set $U$. Sheaves with this property are called \style{flabby}. In a flabby sheaf, one can always extend a section by zero to obtain a global section.

Since cohomology can also be calculated locally this data can be again assembled into a sheaf - called the $i^{th}$ \style{cohomology sheaf}. This is the sheaf $\mathcal{H}^i\Sheaf$ associated to the presheaf
\[
	\mathcal{H}^i\Sheaf: U \mapsto H^i(U,\Sheaf)=H^i(\mathcal{I}^{\Grading}(U)).
\]

\vspace{0.1in}
\noindent
{\bf Higher direct images:}
For a conceptual cartoon one may compare an injective resolution of a sheaf with a Taylor expansion of a function. Each term in the resolution has nicer algebraic (analytic) properties than the sheaf (function), and with infinitely many terms the resolution can replace the
sheaf (function). The idea that a sheaf can be replaced by its injective resolution was instrumental to the development of the now standard technology of \style{derived categories} and functors. The sweeping generalization which Grothendieck introduced \cite{tohoku} allowed this construction of sheaf cohomology to be imitated with other operations --- direct image, restriction and so on --- to produce the \style{higher direct image} and other generalizations.

The higher direct image or right-derived direct image is the one most important to the sheaf-theoretic definition of the Euler integral. Take $F\colon X\to Y$ continuous and $\Sheaf$ a sheaf on $X$. Then the higher direct image is a sequence of sheaves $R^iF_*\Sheaf$ (one for each $i$) associated to the presheaves on $Y$
\[
	R^iF_*\Sheaf: V\mapsto H^i(F^{-1}(V),\Sheaf)
\]
One can define the higher direct image more intrinsically by replacing $\Sheaf$ with its injective resolution $\mathcal{I}^{\Grading}$ and applying $F_*$ level-wise to $\mathcal{I}^{\Grading}$ then
\[
	RF_*\Sheaf:=F_*\mathcal{I}^{\Grading} \qquad \mathrm{and} \qquad
	R^iF_*\Sheaf:=\mathcal{H}^i(F_*\mathcal{I}^{\Grading}).
\]
This $RF_*\Sheaf$ is sometimes called the total right-derived pushforward and should be thought of as a complex of sheaves such that when you take the $i$th cohomology sheaf you get the $i$th right-derived pushforward $R^iF_*\Sheaf$. In the special case of a constant map $p\colon X\to\star$,
\[
       R^ip_*\Sheaf=H^i(X,\Sheaf) ,
\]
the $i^{th}$ cohomology group of $\Sheaf$ on $X$. Similarly $RF_!$ is what you would expect --- replace $\Sheaf$ with an
injective resolution and apply $F_!$ level-wise to that. With $F=p\colon X\to\star$, we have
\[
       R^ip_!\Sheaf=H^i_c(X,\Sheaf) ,
\]
the $i^{th}$ cohomology group with compact supports.

Since the combinatorial Euler characteristic only agrees with the intrinsic cohomological Euler characteristic when compact supports are used, an analogous construction is needed in the sheaf world. This construction is a form of duality.

\vspace{0.1in}
\noindent
{\bf Verdier Duality:}
One of the appealing features of sheaf theory is the access to the higher operations of the derived category. In this section we ask that our sheaves have some extra structure: for each open set $U$, $\Sheaf(U)$ is a $\Ring$-module for a fixed commutative ring $\Ring$ and the restriction maps respect this structure --- they are $\Ring$-linear. In the language of Iversen \cite{Iversen}, these are $\Ring$-sheaves.

For a simple example, consider an arbitrary $\Ring$-module $D$. This can be made into a sheaf $\widetilde{D}$ on $X$ by taking the inverse image $p^*D$ where $p\colon X\to\star$ is the constant map. Alternatively, this is the sheafification of the constant presheaf that assigns $D$ to every open set.

\begin{lemma}
\[
       \Hom(\widetilde{D},\Sheaf)\cong\Hom(D,\Gamma(X,\Sheaf))
\]
\end{lemma}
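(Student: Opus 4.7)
The plan is to recognize the stated isomorphism as a direct application of the adjunction between inverse and direct image displayed earlier in the excerpt, specialized to the constant map $p\colon X \to \star$. Two identifications need to be checked, plus the fact that sheaves on a point coincide with $\Ring$-modules so that both $\Hom$'s are taken in compatible categories.

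First, I would unwind the definitions of the two sides. By definition $\widetilde{D} = p^* D$, and the direct image construction gives
\[
    p_* \Sheaf(\star) = \Sheaf(p^{-1}(\star)) = \Sheaf(X) = \Gamma(X,\Sheaf),
\]
so as a sheaf on the one-point space $\star$, $p_* \Sheaf$ is completely determined by the single $\Ring$-module $\Gamma(X,\Sheaf)$. Second, I would note that the category of $\Ring$-sheaves on $\star$ is canonically equivalent to the category of $\Ring$-modules (a sheaf is a global section and restriction maps are all identities), so for any two $\Ring$-modules $M$, $N$ one has $\Hom_{Sh(\star)}(\widetilde{M},\widetilde{N}) = \Hom_{\Ring}(M,N)$. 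In particular $\Hom(D, \Gamma(X,\Sheaf))$ on the right-hand side is the same thing whether one reads it as module $\Hom$ or as sheaf $\Hom$ on $\star$.

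With these identifications in place, I would invoke the adjunction
\[
    \Hom(F^*\mathcal{G}, \Sheaf) \;\cong\; \Hom(\mathcal{G}, F_*\Sheaf)
\]
stated in the Duality paragraph of Section~\ref{sec:shops}, applied to $F = p$ and $\mathcal{G} = D$ (viewed as a sheaf on $\star$). The left-hand side becomes $\Hom(p^*D,\Sheaf) = \Hom(\widetilde{D},\Sheaf)$, and the right-hand side becomes $\Hom(D, p_*\Sheaf) = \Hom(D,\Gamma(X,\Sheaf))$, yielding the claim.

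The one subtlety that I would want to double-check is simply that the $\Ring$-linear version of the $(F^*,F_*)$-adjunction applies here (the paragraph on Duality stated the result for sheaves of abelian groups, but the arguments go through verbatim for $\Ring$-sheaves since both functors are $\Ring$-linear). No obstruction is expected; the main point of the lemma is really just to make explicit that taking global sections is right adjoint to pulling back a constant module, which is the workhorse one uses repeatedly when passing between module-level and sheaf-level computations.
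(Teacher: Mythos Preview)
Your proposal is correct and takes exactly the same approach as the paper: the paper's proof is a one-line invocation of the adjunction $\Hom(p^*D,\Sheaf)\cong\Hom(D,p_*\Sheaf)$, and you have simply unpacked the identifications $\widetilde{D}=p^*D$ and $p_*\Sheaf=\Gamma(X,\Sheaf)$ more carefully than the paper bothers to.
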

\begin{proof}
This is immediate from the first adjunction recorded
       \[
               \Hom(p^*D,\Sheaf)\cong\Hom(D,p_*\Sheaf)
       \]
\end{proof}

As a special case of this consider when $D=\Ring$. The isomorphism becomes much more useful to understand
\[
       \Hom(\tilde{\Ring},\Sheaf)\cong\Hom(\Ring,\Gamma(X,\Sheaf))\cong\Gamma(X,\Sheaf)
\]
where the last isomorphism comes from noting that any homomorphism is determined uniquely by where it sends $1\in \Ring$. This is one instance of how adjunctions reveal information about sections.

As hinted earlier, adjunctions between other functors provide useful insight into sheaves and their sections.

\begin{theorem}[Global Verdier Duality]
Suppose $F\colon X\to Y$ is a continuous map of locally compact spaces, and $\Sheaf$ and $\mathcal{G}$ are sheaves on $X$ and $Y$ then there exists an operation $F^!$ such that
\[
       \Hom(RF_!\Sheaf,\mathcal{G})\cong\Hom(\Sheaf,F^!\mathcal{G})
\]
where the left and right hand sides live in the derived category of sheaves on $Y$ and $X$ respectively.
\end{theorem}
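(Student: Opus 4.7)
The plan is to establish $F^!$ as a right adjoint to $RF_!$ via the adjoint functor theorem (Brown representability) in the unbounded derived category of sheaves of $\Ring$-modules. This strategy produces $F^!$ abstractly and defers explicit computation to case-by-case unpacking. I would not try to write down a formula for $F^!$ directly, since it inevitably involves a relative dualizing complex whose construction is essentially equivalent to the theorem itself.

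First, I would verify that $RF_!$ descends to a triangulated functor between the derived categories of sheaves on $X$ and $Y$. The crucial input is that on a locally compact space there are enough c-soft sheaves and $F_!$ is exact on c-soft resolutions; this is precisely where the locally compact hypothesis enters the argument. Hence $RF_!\Sheaf$ may be computed by applying $F_!$ level-wise to any c-soft (or injective) resolution of $\Sheaf$, and the usual homological algebra shows this descends to a well-defined exact functor on the derived category.

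Second, I would check that $RF_!$ preserves arbitrary small direct sums. A direct sum of c-soft sheaves on a locally compact space is again c-soft: any compactly supported section of $\bigoplus_i \Sheaf_i$ over a compact set involves only finitely many summands, by compactness of its support. Consequently $F_!$ commutes with coproducts on c-soft resolutions, and taking cohomology preserves this. Third, I would invoke Brown representability: the derived category of sheaves on $Y$ is compactly generated (for instance by $j_!\widetilde{\Ring}$ as $j\colon U\hookrightarrow Y$ ranges over a basis of open sets), so any triangulated functor out of it that preserves small coproducts has a right adjoint. Declare $F^!$ to be this right adjoint; the adjunction isomorphism is, verbatim, the statement of Verdier duality.

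The main obstacle is the coproduct-preservation step, which is the place where local compactness does genuine work. Without it, $RF_!$ fails to commute with direct sums and the abstract adjoint simply does not exist. As a reassuring cross-check, one can construct $F^!$ explicitly on building-block maps and verify the adjunction by hand: for $F$ an open embedding, $F^! = F^*$; for $F$ a closed embedding, $F^!\mathcal{G}$ is the subsheaf of sections of $\mathcal{G}$ supported in $F(X)$; and a general $F$ factors through the graph embedding $X\hookrightarrow X\times Y$ (closed) followed by the projection $X\times Y\to Y$, whose $\pi^!$ is $\pi^*(-)\otimes\omega$ for a relative dualizing complex $\omega$. Composing these recovers the abstract adjoint, and uniqueness of adjoints closes the circle.
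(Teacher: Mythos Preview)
The paper does not prove this theorem at all; it is stated as a black-box import from the literature (the surrounding text cites Iversen, Dimca, and Sch\"urmann for this circle of ideas) and is immediately followed by the example recovering classical Poincar\'e duality. So there is nothing to compare your argument against on the paper's side.

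Your Brown representability strategy is a perfectly legitimate modern route (essentially Neeman's). One genuine slip: to obtain a right adjoint to $RF_!\colon D(X)\to D(Y)$ via the adjoint functor theorem, it is the \emph{source} category $D(X)$ that must be compactly generated, not the target. You wrote that $D(Y)$ is compactly generated and then invoked the theorem for functors ``out of it,'' but $RF_!$ goes \emph{into} $D(Y)$. The fix is cosmetic---your generators $j_!\widetilde{\Ring}$ for $j\colon U\hookrightarrow X$ over a basis of opens in $X$ work just as well---but as written the logic does not parse. The coproduct-preservation step and the explicit sanity checks on open and closed embeddings are fine.
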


\begin{example}[Classical duality]
Consider $F=p\colon X\to\star$ the constant map, $\Sheaf=\tilde{\Ring}$, $\mathcal{G}=\Ring$. Verdier duality then says
\[
       \Hom(Rp_!\tilde{\Ring},\Ring)\cong\Hom(\tilde{\Ring},p^!\Ring).
\]
The second term is isomorphic to $\Gamma(X,p^!\Ring)$ as noted above. The first term is understood as usual: we replace $\tilde{\Ring}$ with its injective resolution $\mathcal{I}^{\Grading}$ and by applying $Rp_!$ we are just taking compact supports. Our reinterpreted isomorphism then becomes
\[
       \Hom(\Gamma_c(X,\mathcal{I}^{\Grading}),\Ring)\cong \Gamma(X,p^!\Ring)
\]
This may still seem discouraging. The $\Hom$-term is still mysterious. To clarify, assume that $\Ring$ is a field and so everything in sight is a vector space and complexes thereof. Consequently $\Hom$ just indicates linear maps to the ground field $\Ring$, i.e. linear functionals. So when the complex of vector spaces,
\[
\xymatrix{
    \ar[r] &\Gamma_c(X,{I}^{k-1})\ar[r] & \Gamma_c(X,{I}^k)\ar[r] & \Gamma_c(X,{I}^{k+1}) \ar[r] &
    }
\]
is dualized we must take the transposes of the connecting linear maps, thereby reversing them
\[
\xymatrix{ & \ar[l] \Gamma_c(X,{I}^{k-1})^\vee & \ar[l]
\Gamma_c(X,{I}^k)^\vee & \ar[l]  \Gamma_c(X,{I}^{k+1})^\vee & \ar[l].}
\]
Notice that the grading in the dual complex decreases by one, as in homology, but we nevertheless want to take the cohomology of both sides. Fortunately, there is a general notational device that allows one to think of homology as cohomology, merely by allowing grading in negative degrees. We therefore place the dual of the $k^{th}$ vector space in degree $-k$, the dual of $(k-1)^{th}$ vector space in degree $1-k$ and so on. Notice that the dual map now augments $-k$ to $1-k$, so this agrees with cohomological conventions.\footnote{The interested reader is encouraged to consult \cite{GelfandManin} or any other good text on homological algebra for notational conventions.}

In the case where $X$ is an oriented $n$-manifold $p^!\Ring=\tilde{\Ring}[n]$ the constant sheaf concentrated in degree $-n$. If we replace $\widetilde{\Ring}[n]$ with its injective resolution, $\mathcal{I}^{\Grading}[n]$, the isomorphism above is then interpreted as a chain complex isomorphism relating
\[
        \Gamma_c(X,\mathcal{I}^{\bullet})^{\vee}
    \quad {\rm and } \quad
        \Gamma(X,\mathcal{I}[n]^{-\bullet})
        =
        \Gamma(X,\mathcal{I}^{n-\bullet}),
\]
which, after taking cohomology, yields,
\[
       H^k_c(X,\tilde{\Ring})^\vee=H^{n-k}(X,\tilde{\Ring})
\]
the classical Poincar\'e duality.
\end{example}

The operation $p^!$ is curious in that it associates to any complex of vector spaces a sheaf on $X$. This is called the \style{dualizing complex} $\omega_X:=p^!\Ring$ and it relies only on the topological properties of $X$. The dualizing complex allows one to associate to any sheaf of real vector spaces $\Sheaf$ a dual sheaf $\Dualizing\Sheaf$ that comes from treating sheaf morphisms from $\Sheaf$ to $\omega_X$ as a sheaf --- it is the sheaf that assigns to an open subsets $U$ the group of sheaf morphisms from $\Sheaf|_U$ to $\omega_X|_U$.

The above example can be generalized as follows:
\begin{theorem}[\cite{Dimca} Thm 3.3.10]
\begin{equation}
       H_c^k(X,\Sheaf)^\vee\cong H^{-k}(X,\Dualizing\Sheaf).
\end{equation}
and thus
\begin{equation}
       \chi_c(X,\Sheaf)=\chi(X,\Dualizing\Sheaf).
\end{equation}
\end{theorem}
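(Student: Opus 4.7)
The plan is to generalize the classical Poincar\'e duality computation from the preceding example by replacing the constant sheaf $\widetilde{\Ring}$ with an arbitrary sheaf $\Sheaf$ of $\Ring$-modules on $X$, and replacing the explicit dualizing complex $\widetilde{\Ring}[n]$ with the abstract dualizing complex $\omega_X=p^!\Ring$. The whole argument is an unwinding of Global Verdier Duality applied to the constant map $p\colon X\to\star$.

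First, I would apply Global Verdier Duality to $p\colon X\to\star$, with the sheaf $\Sheaf$ on $X$ and the target $\mathcal{G}=\Ring$ on $\star$, to obtain
\[
    \Hom(Rp_!\Sheaf,\Ring)\cong\Hom(\Sheaf,p^!\Ring)=\Hom(\Sheaf,\omega_X),
\]
where both sides are to be interpreted in the derived category. The task is then to identify the cohomology of each side.

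On the left, $Rp_!\Sheaf$ is represented by the complex $\Gamma_c(X,\mathcal{I}^{\Grading})$ obtained by taking compactly supported sections of an injective resolution $\mathcal{I}^{\Grading}$ of $\Sheaf$. Assuming $\Ring$ is a field (so that $\Hom(-,\Ring)$ is exact and preserves dimensions), applying $\Hom(-,\Ring)$ dualizes each term and reverses arrows, exactly as in the classical example; the degree-reindexing convention that places the dual of the $k$-th term in degree $-k$ then identifies the $(-k)$-th cohomology of the resulting complex with $H^k_c(X,\Sheaf)^\vee$. On the right, by definition $\Dualizing\Sheaf$ is the sheaf $U\mapsto\Hom(\Sheaf|_U,\omega_X|_U)$ (in the derived sense), so $\Hom(\Sheaf,\omega_X)$ computes $R\Gamma(X,\Dualizing\Sheaf)$, whose $k$-th cohomology is $H^k(X,\Dualizing\Sheaf)$. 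Matching gradings term by term delivers
\[
    H^k_c(X,\Sheaf)^\vee\cong H^{-k}(X,\Dualizing\Sheaf).
\]

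The Euler characteristic identity then follows by taking alternating sums of dimensions: $\dim V=\dim V^\vee$ for finite-dimensional vector spaces, and the relabelling $k\mapsto -k$ preserves the alternating sum since $(-1)^{-k}=(-1)^k$. Thus
\[
    \chi_c(X,\Sheaf)=\sum_k(-1)^k\dim H^k_c(X,\Sheaf)=\sum_k(-1)^{-k}\dim H^{-k}(X,\Dualizing\Sheaf)=\chi(X,\Dualizing\Sheaf).
\]

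The main obstacle is bookkeeping in the derived category: Verdier duality is fundamentally an isomorphism of derived objects, and passing to an isomorphism of individual cohomology groups requires careful tracking of degree shifts and of the conversion between the homological grading of the dualized complex and the cohomological grading of $\Dualizing\Sheaf$. Additionally, a tacit finiteness assumption (e.g.\ $X$ tame and locally compact with $\Sheaf$ constructible) is needed to guarantee that each $H^k_c(X,\Sheaf)$ is finite-dimensional, so that dualization preserves dimension and the alternating sums defining $\chi_c$ and $\chi$ are well-defined.
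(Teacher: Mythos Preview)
The paper does not actually prove this theorem; it states it as a citation to \cite{Dimca} immediately after the classical duality example, with the framing ``The above example can be generalized as follows.'' Your proposal is precisely that generalization: you rerun the Verdier duality argument with $\widetilde{\Ring}$ replaced by an arbitrary $\Sheaf$ and $\widetilde{\Ring}[n]$ replaced by $\omega_X=p^!\Ring$, which is exactly the template the paper's exposition sets up. So your approach is correct and is the intended one.

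One small point worth tightening: when you identify the right-hand side $\Hom(\Sheaf,\omega_X)$ with $R\Gamma(X,\Dualizing\Sheaf)$, you are implicitly using the standard identity $R\Hom_X(\Sheaf,\omega_X)\cong R\Gamma(X,R\mathcal{H}om(\Sheaf,\omega_X))$, together with the paper's definition of $\Dualizing\Sheaf$ as the sheaf-$\Hom$ into $\omega_X$. This is routine but deserves a sentence, since the paper's statement of Global Verdier Duality speaks only of $\Hom$ in the derived category, not of the internal $\mathcal{H}om$ sheaf.
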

This will be useful below because it implies that the switch between compactly supported and regular cohomology is a switch between a sheaf and its dual.

\section{Constructible Functions and Sheaves: A Dictionary}
\label{sec:dictionary}

Persistent readers shall now be rewarded for their efforts. Euler calculus lifts to sheaf theory, whence its operations emanate. Recall that a constructible function is an integer-valued function $f\colon X\to\zed$ such that the level sets form a tame partition of $X$. The definition of a \style{constructible sheaf} is similar: $X$ has a tame partition $\{X_{\alpha}\}$ such that $\Sheaf|_{X_{\alpha}}$ is locally constant. A sheaf $\Sheaf$ on a topological space $X$ is locally constant if there is an open cover $\{U_i\}$ of $X$ such that $\Sheaf|_{U_i}$ is isomorphic to the constant sheaf. By {\em constructible sheaf}, we will mean sheaves valued in real vector spaces of finite dimension: more general data values are possible.

The connection between constructible functions and sheaves is given by Euler characteristic, but the connection is subtle. Suppose one is given a constructible sheaf $\Sheaf$, then one could define a constructible function $h$ as follows
\[
	h(x):=\chi(\Sheaf)(x)=\dim (\Sheaf_x).
\]
For example, suppose that $A\subset X$ is a tame set. Then consider the constant sheaf $\widetilde{\real}_A$ supported on $A$, \ie, it is the constant sheaf with stalk $\real$ for points on $A$ and zero elsewhere. In this case
\[
	\chi(\widetilde{\real}_A)(x)=\mathbf{1}_A(x)
\]
is the indicator function on $A$.

This definition will have trouble producing constructible functions that take values in the negative integers. To fix this we consider a bounded complex of constructible sheaves with zero maps connecting each sheaf:
\[
    \Sheaf^{\Grading}\quad =\quad \cdots
    \stackrel{0}{\longrightarrow}\Sheaf^{i-1}
    \stackrel{0}{\longrightarrow}\Sheaf^i
    \stackrel{0}{\longrightarrow}\Sheaf^{i+1}
    \stackrel{0}{\longrightarrow}\cdots
\]
A wider class of constructible functions can be achieved simply by setting
\[
	h(x):=\chi(\Sheaf^{\Grading})(x)=\sum_i (-1)^i\dim (\Sheaf^i_x).
\]

In the case where the maps in $\Sheaf^{\Grading}$ are not identically zero one must instead use the \style{local Euler-Poincar\'e index}
\[
	\chi(\Sheaf^{\Grading})(x)=\sum_i (-1)^i\dim(\mathcal{H}^i\Sheaf^{\Grading}_x)
\]
which generalizes the zero-map case because there $\mathcal{H}^i\Sheaf^{\Grading}=\Sheaf^i$.

To see that every constructible function is obtained via this lifted perspective, start with $h\in\CF(X)$ and consider the partition of $X$ via level sets $h\inv(n)$, $n\in\zed$. Consider the following two-term complex of constructible sheaves $\widetilde{\real}_h$ (we omit the grading for simplicity):
\begin{equation}\label{sheaf-fn}
	0
	\stackrel{0}{\longrightarrow}\bigoplus_{n<0}\widetilde{\real}_{X_n}^{-n}
    \stackrel{0}{\longrightarrow}\bigoplus_{n>0}\widetilde{\real}_{X_n}^{n}
	\stackrel{0}{\longrightarrow}
	0 .
\end{equation}
The second term occupies the zeroth slot in the complex (such an assignment is necessary for the alternating sum to make sense) and $\widetilde{\real}^n$ indicates the constant sheaf of $\real^n$; thus $\chi(\Sheaf)=h(x)$.

One can now define Euler integration via sheaf theory: let $h\in\CF(X)$, for $X$ compact, and $\Sheaf$ be a complex of sheaves on $X$ such that $h(x)=\chi(\Sheaf)(x)$. Let $p\colon X\to\star$ be the constant map. The \style{Euler integral} of $h$ is
\begin{equation}
		\int_X h\,d\chi:=\chi(Rp_*\Sheaf)
\end{equation}
When $X$ is not compact then we use the right-derived pushforward with compact supports:\footnote{The reader may note with pleasure that here, as in calculus class, integration over a noncompact domain can be ill-defined if one is not careful.}
\begin{equation}
               \int_X h\,d\chi:=\chi(Rp_!\Sheaf)
\end{equation}

The higher push forward of $\Sheaf$ to a point takes an injective resolution of $\Sheaf\to\mathcal{I}^{\Grading}$ and applies the direct image level-wise to $\mathcal{I}^{\Grading}$. From the definition of direct image in \S\ref{sec:shops},
\[
	p_*\mathcal{I}=\mathcal{I}(X) ,
\]
is the group (or vector space) of global sections. Thus we have a complex of vector spaces
\[
	\mathcal{I}^0(X)
	\stackrel{}{\longrightarrow} \mathcal{I}^1(X)
	\stackrel{}{\longrightarrow} \mathcal{I}^2(X)
	\stackrel{}{\longrightarrow} \cdots
\]
If one calculates the local Euler-Poincar\'e index over the one and only point $\star$, one gets
\begin{eqnarray*}
	\chi(Rp_*\Sheaf)(\star) &=& \sum_i (-1)^i\dim(\mathcal{H}^i\mathcal{I}^{\Grading}(X)) \\
	&=& \sum_i (-1)^i\dim(H^i\mathcal{I}^{\Grading}(X)) \\
	&=& \sum_i (-1)^i\dim(H^i(X,\Sheaf)) \\
	&=:&\chi(X,\Sheaf)
\end{eqnarray*}
in the compact case. In the general case the Euler integral is actually computing $\chi_c(X,\Sheaf)$. The moral of this section is this:
{\em the Euler integral of a constructible function is the Euler characteristic of its associated constructible sheaf}.
	
The right hand side of the definition may now seem less mysterious, but the connection to the original definition of the Euler integral may still be elusive. The connection is given by o-minimal geometry and properties of constant sheaves like $\widetilde{\real}_A$. As noted for $\widetilde{\zed}$, the cohomology of the constant sheaf coincides with normal cohomology of the space. Specifically
\[
	H^i(X,\widetilde{\real}_A)=H^i(A;\real)
\]
and consequently
\[
	\chi(X,\widetilde{\real}_A)
    = \sum_k(-1)^k\dim H^k(A;\real) .
\]
Note that this is {\it not} the same combinatorial Euler characteristic $\chi$ used in Euler integration, because compact supports are not used for $H^\Grading$ above.

Euler characteristic and its properties are adaptable to any situation with complexes involved. In particular for two sheaves (or complexes of sheaves) $\Sheaf$, $\mathcal{G}$ over $X$
\begin{eqnarray*}
	\chi(X,\Sheaf\oplus\mathcal{G})&=&\chi(X,\Sheaf)+\chi(X,\mathcal{G}) \\
	\chi(X,\Sheaf\otimes\mathcal{G})&=&\chi(X,\Sheaf)\cdot\chi(X,\mathcal{G})
\end{eqnarray*}
The same equations hold for $\chi_c$, the compactly-supported cohomological Euler characteristic of \S\ref{sec:cohom}. As a particular instance of the above one has
\[
	\chi(X,\widetilde{\real}^n_A)=n\chi(X,\widetilde{\real}_A)=n\chi(A)
\]
To see this result and more we prove the following lemma:
\begin{lemma}[\cite{Dimca}, 2.5.4 pg. 49]
For $\Sheaf$ a locally constant sheaf on $X$ valued in $\real$-vector spaces,
\[
	\chi(X,\Sheaf)=n\chi(X)
    \quad {\rm and } \quad
    \chi_c(X,\Sheaf)=n\chi_c(X)
\]
where $n$ is the (locally constant) stalk dimension.
\end{lemma}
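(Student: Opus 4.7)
My plan is to deduce both identities from the Triangulation Theorem (Theorem \ref{thm:Triangulation}) combined with the additivity of Euler characteristic over chain complexes (Lemma \ref{lem:ChainEuler}). The common local ingredient is that on any contractible open set $U$---in particular, any open cell of a definable triangulation---the locally constant sheaf $\Sheaf$ is isomorphic to the constant sheaf $\widetilde{\real}^n_U$, since contractibility forces simple connectivity and trivializes the monodromy. Thus $\chi(U,\Sheaf|_U)=n\chi(U)=n$ and $\chi_c(U,\Sheaf|_U)=n\chi_c(U)=n(-1)^{\dim U}$.

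For the compactly supported identity I would mimic the proof of the preceding $\chi=\chi_c$ lemma. Triangulate $X$ and select an open cell $\sigma$ of maximal dimension $k$; let $j\colon\sigma\hookrightarrow X$ and $i\colon X-\sigma\hookrightarrow X$ denote the open and closed inclusions. The short exact sequence of sheaves
\begin{equation*}
    0\to j_!(\Sheaf|_\sigma) \to \Sheaf \to i_*(\Sheaf|_{X-\sigma}) \to 0
\end{equation*}
induces a long exact sequence in compactly supported sheaf cohomology, and Lemma \ref{lem:ChainEuler} yields the additivity
\begin{equation*}
    \chi_c(X,\Sheaf) = \chi_c(\sigma,\Sheaf|_\sigma) + \chi_c(X-\sigma,\Sheaf|_{X-\sigma}).
\end{equation*}
Peeling off all top-dimensional cells and then inducting on dimension, exactly as in the earlier proof of $\chi=\chi_c$, yields $\chi_c(X,\Sheaf)=\sum_\alpha n(-1)^{\dim\sigma_\alpha}=n\chi_c(X)$.

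The identity $\chi(X,\Sheaf)=n\chi(X)$ cannot be proved by this same additivity, because ordinary sheaf cohomology is not additive along open/closed decompositions of a non-compact space. Instead I would use \v{C}ech cohomology on a good cover $\{U_v\}$ built from the triangulation by taking open stars of vertices. Every nonempty multiple intersection $U_{v_0}\cap\cdots\cap U_{v_k}$ is the open star of a simplex and is contractible, so $\Sheaf$ restricts to $\widetilde{\real}^n$ there and its higher sheaf cohomology vanishes. The \v{C}ech complex on this cover therefore computes $H^\Grading(X,\Sheaf)$ and $H^\Grading(X,\widetilde{\real})$ respectively. Since each nonempty intersection contributes $\Sheaf(U_{v_0}\cap\cdots\cap U_{v_k})\isom\real^n$ in place of $\real$, the cochain groups satisfy $\dim C^k(\{U_v\},\Sheaf)=n\cdot\dim C^k(\{U_v\},\widetilde{\real})$ in every degree, and Lemma \ref{lem:ChainEuler} then gives $\chi(X,\Sheaf)=n\chi(X,\widetilde{\real})=n\chi(X)$.

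The main obstacle I expect is in this \v{C}ech step: the transition isomorphisms of $\Sheaf$ over overlapping contractible opens can be arbitrary elements of $GL_n(\real)$, so the \v{C}ech complex for $\Sheaf$ need not literally split as $n$ copies of the one for $\widetilde{\real}$. The rescue is that Lemma \ref{lem:ChainEuler} only sees dimensions of cochain groups, and those agree up to the factor of $n$ regardless of which transition maps appear; the Euler characteristic is thus insensitive to the monodromy of $\Sheaf$.
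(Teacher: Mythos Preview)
Your proposal is correct and takes a genuinely different route from the paper's proof. The paper argues both identities simultaneously via Mayer-Vietoris induction: it first checks the single-open case $\Sheaf\cong\widetilde{\real}^n$ by splitting an injective resolution as an $n$-fold direct sum, then for $X=U\cup V$ with $\Sheaf$ trivialized on $U$, $V$, and $U\cap V$ it applies the sheaf Mayer-Vietoris sequence (and its compactly supported variant) to get $\chi(X,\Sheaf)=n\chi(U)+n\chi(V)-n\chi(U\cap V)=n\chi(X)$, iterating over a finite trivializing cover.

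Your argument is more elementary in that it never invokes injective resolutions or the sheaf Mayer-Vietoris machinery. For $\chi_c$ you use the open/closed excision sequence and cell-peeling, which is exactly the mechanism the paper reserves for the \emph{next} theorem ($\chi_c(X,\Sheaf)=\int h\,d\chi$); so your $\chi_c$ argument anticipates that one. For $\chi$ your \v{C}ech dimension count is particularly clean: the observation that Lemma~\ref{lem:ChainEuler} sees only $\dim C^k$ and is therefore blind to the $GL_n(\real)$ monodromy is the heart of the matter, and it sidesteps the inductive bookkeeping of Mayer-Vietoris entirely. The one point to tighten is the construction of the good cover: the Triangulation Theorem realizes a tame $X$ as a subcollection of \emph{open} simplices, not necessarily a full simplicial complex, so ``open stars of vertices'' needs a word of justification (or replacement by any finite good cover, which tame sets admit). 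The paper's Mayer-Vietoris approach has the mild advantage of treating $\chi$ and $\chi_c$ uniformly; yours has the advantage of making the independence from monodromy completely transparent.
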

\begin{proof}
By hypothesis there is a cover of $\{U_i\}$ of $X$ so that $\Sheaf|_{U_i}\cong \widetilde{\real}^n_{U_i}$. Let us first assume that the cover $\{U_i\}$ consists of a single open set $U$. In this case
\[	
	\Sheaf\cong\widetilde{\real}^n\cong\bigoplus^n_{i=1}\widetilde{\real}.
\]
Consequently an injective resolution of $\widetilde{\real}^n$ can be constructed as follows: Take a resolution of $\widetilde{\real}$, say $\mathcal{I}^{\Grading}$. Now define an injective sheaf $\mathcal{J}^i=\oplus_{k=1}^n\mathcal{I}^i.$ The map from $\mathcal{J}^i\to\mathcal{J}^{i+1}$ is the n-fold direct sum (block matrix) of the map $\mathcal{I}^i\to\mathcal{I}^{i+1}$. This shows that for each $i$
\[	
	H^i(X,\widetilde{\real}^n)=\oplus H^i(X,\widetilde{\real})
\]
and thus
\[
	\chi(X,\widetilde{\real}^n)=n\chi(X,\widetilde{\real}).
\]
Now assume that $X=U\cup V$ where $\Sheaf|_U$ and $\Sheaf|_V$ are isomorphic to the constant sheaf. To handle this case, we look to the \style{Mayer-Vietoris sequence for sheaves} for inspiration. For $U,V$ open there is a long exact sequence in sheaf cohomology:
\begin{equation}
\label{eq:LESMVS}
\displaystyle
\xymatrix{
\ar[r] & H^n(U\cup V,\Sheaf) \ar[r] & H^n(U,\Sheaf)\oplus H^n(V,\Sheaf) \ar[r] & H^n(U\cap V,\Sheaf) \ar[r] &.
}
\end{equation}
The proof of this comes from the fact that if we replace $\Sheaf$ with its injective (flabby) resolution $\mathcal{J}^{\Grading}$, then the following complex of vector spaces (or groups) is exact:
\[
 \xymatrix{
0 \ar[r] & \Gamma(U\cup V,\mathcal{J}^{\Grading}) \ar[r] & \Gamma(U,\mathcal{J}^{\Grading})\oplus \Gamma(V,\mathcal{J}^{\Grading}) \ar[r] & \Gamma(U\cap V,\mathcal{J}^{\Grading}) \ar[r] & 0.
}
\]
As a consequence of this short exact sequence of complexes we get for free that
\[
	\chi(U,\Sheaf)+\chi(V,\Sheaf)=\chi(U\cup V,\Sheaf)+\chi(U\cap V,\Sheaf).
\]
Since we have assumed that $\Sheaf$ is isomorphic to $\widetilde{\real}^n$ on the open sets $U,V, U\cap V$ the Euler characteristic formula above yields for $U\cup V=X$
\[
	\chi(X,\Sheaf)=n\chi(U)+n\chi(V)-n\chi(U\cap V)=n\chi(X).
\]
Applying this argument inductively to a countable cover yields the result. To repeat the proof for $\chi_c$, note that Mayer-Vietoris for compact supports is also valid, except for the reversal of arrows, as per the general covariant behavior of compact supports.
\end{proof}

It follows that, for a constructible --- that is, piecewise-constant --- sheaf, the Euler characteristic of the sheaf is simple to compute on constant patches. One more argument is necessary to show that the Euler characteristic of a constructible sheaf is an Euler integral.

\begin{theorem}[\cite{Dimca}, Theorem 4.1.22; \cite{Schurmann}, Lemma 2.0.2]
For $\Sheaf$ a constructible sheaf and $h(x)=\chi(\Sheaf)(x)$, the Euler characteristic of $\Sheaf$ equals the integral of $h$ with respect to $d\chi$:
\begin{equation}
    \chi_c(X,\Sheaf)=\int_X h\,d\chi.
\end{equation}
\end{theorem}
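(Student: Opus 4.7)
The plan is to reduce the sheaf-theoretic Euler characteristic $\chi_c(X,\Sheaf)$ to a finite sum of contributions from the strata of a tame partition adapted simultaneously to $\Sheaf$ and to a triangulation, and then to match this sum termwise with the combinatorial integral $\int_X h\,d\chi$ via (\ref{eq:simplices}).

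First I would invoke the Triangulation Theorem (Theorem \ref{thm:Triangulation}) to choose a definable triangulation $\{\sigma_\alpha\}$ of $X$ that refines a tame partition on which $\Sheaf$ is locally constant and on which $h$ is constant. On each open cell $\sigma_\alpha$, the restriction $\Sheaf|_{\sigma_\alpha}$ is a locally constant sheaf whose stalk dimension is exactly the constant value $n_\alpha := h(x)$ for $x\in\sigma_\alpha$. The cell $\sigma_\alpha$ is definably homeomorphic to $\real^{\dim\sigma_\alpha}$, hence locally compact and tame, and by the lemma just proved we have
\begin{equation*}
    \chi_c(\sigma_\alpha,\Sheaf|_{\sigma_\alpha}) = n_\alpha \,\chi_c(\sigma_\alpha) = n_\alpha\,\chi(\sigma_\alpha) = n_\alpha (-1)^{\dim\sigma_\alpha},
\end{equation*}
where the middle equality uses the identification $\chi=\chi_c$ for tame locally compact spaces established earlier.

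The second step is to establish additivity of $\chi_c(X,\Sheaf)$ over the cell decomposition. For any open subset $U\subset X$ with closed complement $Z$, let $j\colon U\hookrightarrow X$ and $i\colon Z\hookrightarrow X$ be the inclusions. There is a short exact sequence of sheaves on $X$,
\begin{equation*}
    0\longrightarrow j_!\,j^*\Sheaf \longrightarrow \Sheaf \longrightarrow i_*\,i^*\Sheaf \longrightarrow 0,
\end{equation*}
whose associated long exact sequence in compactly supported cohomology, together with Lemma \ref{lem:ChainEuler} applied degreewise, yields
\begin{equation*}
    \chi_c(X,\Sheaf) = \chi_c(U,\Sheaf|_U) + \chi_c(Z,\Sheaf|_Z).
\end{equation*}
Iterating this by peeling off the open cells of maximal dimension one at a time (exactly as in the earlier argument identifying $\chi$ with $\chi_c$), and inducting on the finite number of cells, one obtains
\begin{equation*}
    \chi_c(X,\Sheaf) = \sum_\alpha \chi_c(\sigma_\alpha,\Sheaf|_{\sigma_\alpha}) = \sum_\alpha n_\alpha (-1)^{\dim\sigma_\alpha}.
\end{equation*}

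Finally, since $h=\sum_\alpha n_\alpha \one_{\sigma_\alpha}$ on this cell decomposition, the definition (\ref{eq:simplices}) of the Euler integral gives $\int_X h\,d\chi = \sum_\alpha n_\alpha (-1)^{\dim\sigma_\alpha}$, which matches the right-hand side above. The main obstacle is the additivity step: one must know that the open/closed triangle above really does produce the Mayer--Vietoris--style cancellation for $\chi_c$ with sheaf coefficients when $\Sheaf$ is only constructible (not locally constant on all of $X$), and that the long exact sequence has finite-dimensional terms in each degree so that Euler characteristics are defined and additive. Both facts follow from constructibility together with the Triangulation Theorem, which guarantees a finite model on which each $H^k_c$ with coefficients in $\Sheaf|_{\sigma_\alpha}$ is finite-dimensional.
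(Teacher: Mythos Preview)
Your proof is correct and follows essentially the same route as the paper: triangulate so that $\Sheaf$ is locally constant on each open cell, use the open/closed long exact sequence in $H^\Grading_c$ to peel off top-dimensional cells and induct on dimension, and invoke the preceding lemma on locally constant sheaves to evaluate $\chi_c$ on each cell. The paper writes the long exact sequence directly rather than naming the underlying distinguished triangle $j_!j^*\Sheaf\to\Sheaf\to i_*i^*\Sheaf$, but the argument is the same.
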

\begin{proof}
Let $X_i$ be the partition of $X$ into tame sets so that $\Sheaf|_{X_i}$ is locally constant. Since each $X_i$ is tame, consider a triangulation of each $X_i$ into cells $S_{ij}$. Let $U$ be the union of the (disjoint) top dimensional cells, which is necessarily open. In analogy with the compactly supported cohomology we have the following long exact sequence:
\begin{equation}
\displaystyle
\xymatrix{
\ar[r] & H^n_c(U,\Sheaf) \ar[r] & H^n_c(X,\Sheaf) \ar[r] & H^n_c(X - U,\Sheaf) \ar[r] & }
\end{equation}
and so
\[
	\chi_c(X,\Sheaf)=\chi_c(U,\Sheaf)+\chi_c(X-U,\Sheaf)=\sum_i n_i\chi_c(U_i)+\chi_c(X-U,\Sheaf)
\]
Here the $U_i$ are the open cells belonging to each of the $X_i$ (there may be none), and $\Sheaf$ has potentially different rank on each of $X_i$. This is the inductive step. The theorem holds if $X$ is zero dimensional. Induction allows us to excise the top dimensional cells and pass from dimension $n$ to $n-1$. Thus,
\[
	\chi_c(X,\Sheaf)=\sum_{i,j}n_{ij}\chi_c(S_{ij})=\sum_i n_i\chi_c(X_i)=\int h\,d\chi.
\]
\end{proof}

Now duality makes another nice appearance. Recall that to any sheaf we can associate a dual sheaf $\Dualizing\Sheaf$ such that $\chi_c(X,\Sheaf)=\chi(X,\Dualizing\Sheaf)$, so the Euler integral is also computing the Euler characteristic of the dual sheaf. If one considers the associated dual function
\[
       \Dualizing h(x)=\chi(\Dualizing\Sheaf)(x)
\]
and integrates this instead, one obtains:
\[
       \int_X \Dualizing h\,d\chi=\chi_c(X,\Dualizing\Sheaf)=\chi(X,\Dualizing\Dualizing\Sheaf)
\]
As duality is involutive --- $\Dualizing\Dualizing\Sheaf\cong\Sheaf$ --- the Euler characteristic of a sheaf is computed via duality.

This lift of constructible functions to constructible sheaves fits nicely within the story (if not the rigorous definition) of categorification that inspired our use of homology in \S\ref{sec:hom}. Here one lifts from the group of constructible functions to the category of constructible sheaves and then ``decategorifies'' by taking Euler characteristic.

The sheaf perspective does more than simply give a definition of the Euler integral. If one pays special attention to the definition of $F_!$ and the limiting procedure of the stalk, one gets the \style{base change} theorem \cite{Schurmann}:
\[
    (RF_!\Sheaf)_y \cong R\Gamma_c(F^{-1}(y),\Sheaf)
\]
The integral over the fiber computes the compactly supported sheaf cohomology there, simply by applying $\chi$ to both sides. The correspondence between constructible sheaves and functions is totally functorial: one may operating on sheaves first and then decategorify, or decategorify first and then operate on functions.

\specialsection*{\bf Applications to Sensor \& Network Data Aggregation}
\sweetline
\vspace{0.1in}

Although sheaf theory provides a systematic approach to Euler calculus, the real utility of sheaves lies in a conceptual broadening of the notion of a function. Instead of assigning a single number to a point, a vector, group or other type of datum can be substituted. Moreover, this assignment is local in nature --- one is recording observations valid only in a neighborhood of the observer. Sheaf cohomology answers the question whether these local observations can be aggregated into globally coherent ones and provides the obstruction to consistency.

The core insight then is the following: there is a rigorous framework for organizing observations valued in arbitrarily complicated data types, that is local for whatever topology desired, and a global calculus for inference-making. This global calculus {\it is} sheaf theory. Euler calculus is just a small part of this larger theory that exploits the following: constructible sheaves can be decategorified into constructible functions via local Euler-Poincar\'e index, so general data types are packaged into integer data. Integer data is computationally easier to manipulate, but the interpretation of these quantities in the context of an applied problem only makes sense on the sheaf level. We thus turn from the theory of Euler calculus to its recent applications in management of data in engineering systems and sensor networks, beginning with an example having no sensible distinction between sheaf and function --- counting anonymous targets locally. 

\section{Target enumeration}
\label{sec:target}

Consider a finite collection of \style{targets}, represented as discrete points in a domain $W$. There is a large collection of sensors, each of which observes some subset of $W$ and counts the number of targets therein. The sensors will be assumed to be distributed so densely as to be approximated by a topological space $X$ (typically a manifold, in the continuum limit). Note that various factors important in sensing (geometry of the domain, obstacles, time-dependence, moving targets, etc.) may be accounted for by enlarging the domains $X$ and/or $W$. For example, the case of moving distinct targets may require setting $W$ to be a configuration space of points.

There are many modes and means of sensing: infrared, acoustic, optical, magnetometric, and more are common. To best abstract the idea of sensing away from the engineering details, it is proper to give a topological definition of sensing. In a particular system of sensors in $X$ and targets in $W$, let the \style{sensing relation} be the relation $\Sense\subset W\times X$ where $(w,x)\in\Sense$ iff a sensor at $x\in X$ detects a target at $w\in W$. The horizontal and vertical fibers (inverse images of the projections of $\Sense$ to $X$ and $W$ respectively) have simple interpretations. The vertical fibers --- \style{target supports} --- are those sets of sensors which detect a given target in $W$. The horizontal fibers --- \style{sensor supports} --- are those targets observable to a given sensor in $X$.

Assume that the sensors are additive but anonymous: each sensor at $x\in X$ counts the number of targets in $W$ detectable and returns a local count $h(x)$, but the identities of the sensed targets are unknown. This counting function $h\colon X\to\zed$ is, under the assumption of tameness, constructible. Given $h$ and some minimal information about the sensing relation $\Sense$, what is the total number of targets? This is in essence a problem of computing a global section (number) from a collection of local sections (target counts): it is no surprise that the tools of sheaf theory are applicable.

\begin{theorem}[\cite{BG:enum}]
\label{thm:enumerate}
If $h\in\CF(X)$ is a counting function of target supports $U_\alpha$ with $\chi(U_\alpha)=N\neq 0$ for all $\alpha$, then $\#\alpha=\frac{1}{N}\int_Xh\,d\chi$.
\end{theorem}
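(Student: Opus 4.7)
The plan is to exploit the linearity of the Euler integral together with the defining property of the counting function $h$. By hypothesis, $h$ counts targets: each target indexed by $\alpha$ contributes $+1$ to $h(x)$ precisely when $x$ lies in its target support $U_\alpha$. In other words,
\[
    h \;=\; \sum_\alpha \one_{U_\alpha},
\]
the sum being finite since the collection of targets (and hence of supports $U_\alpha$) is finite under the tameness assumption.

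Next, I would invoke the defining property \eqref{eq:intchar} of $\int\,d\chi$ on indicator functions, namely $\int_X\one_{U_\alpha}\,d\chi=\chi(U_\alpha)$, together with the $\zed$-linearity of the Euler integral as a homomorphism $\CF(X)\to\zed$ (which follows immediately from the expression \eqref{eq:simplices} in terms of a common definable cell decomposition refining all the $U_\alpha$). Applying linearity termwise gives
\[
    \int_X h\,d\chi \;=\; \sum_\alpha \int_X \one_{U_\alpha}\,d\chi \;=\; \sum_\alpha \chi(U_\alpha).
\]

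Finally, using the hypothesis $\chi(U_\alpha)=N$ for every $\alpha$, the right-hand side collapses to $N\cdot\#\alpha$. Dividing through by the nonzero integer $N$ yields $\#\alpha = \frac{1}{N}\int_X h\,d\chi$, as claimed.

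There is really no obstacle here beyond unpacking definitions; the substantive content has already been discharged in building the Euler calculus (well-definedness of $\chi$ on tame sets via the Triangulation Theorem, and additivity of $\int\,d\chi$ via Lemma~\ref{lem:MVchi}). The only thing worth flagging is that the hypothesis $N\neq 0$ is essential: when $N=0$ (e.g., when target supports are annuli in the plane), the Euler integral $\int h\,d\chi$ provides no information about target count, and one must pass to more refined invariants — a point that motivates much of the later material on integral transforms.
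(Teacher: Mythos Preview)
Your proof is correct and follows essentially the same route as the paper: write $h=\sum_\alpha\one_{U_\alpha}$, apply linearity of $\int\,d\chi$, evaluate each term as $\chi(U_\alpha)=N$, and divide. Your closing remark on the $N=0$ obstruction also mirrors the paper's discussion of annuli immediately following the theorem.
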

The proof is trivial given that $\int\,d\chi$ is an integration operator with measure $d\chi$.
\begin{equation}
    \int_X h\,d\chi
    =
    \int_X \left(\sum_\alpha \one_{U_\alpha}\right) \,d\chi
    =
    \sum_\alpha \int_X \one_{U_\alpha}\,d\chi
    =
    \sum_\alpha \chi(U_\alpha)
    =
    N\,\#\alpha
.
\end{equation}

This is really a problem in aggregation of redundant data, since many nearby sensors with the same reading are detecting the same targets; in the absence of target identification (an expensive signal processing task, in practice), it seems very difficult. Notice that the restriction $N\neq 0$ is nontrivial. If $h\in\CF(\real^2)$ is a finite sum of characteristic functions over annuli, it is not merely inconvenient that $\int_{\real^2} h\,d\chi=0$, it is a fundamental obstruction to disambiguating sets. Some sums of annuli may be expressed as a union of different numbers of embedded annuli.

\begin{figure}[hbt]
\begin{center}
\includegraphics[angle=0,width=2.0in]{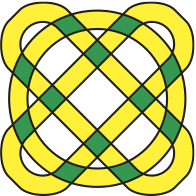}
\caption{An example for which the integral with respect to Euler characteristic vanishes. Note that the integrand can be represented as a sum of indicator functions of embedded annuli, the number of which is indeterminate.}
\label{fig:annuli}
\end{center}
\end{figure}

\section{Enumeration and Fubini}
\label{sec:fubini}

The Fubini theorem is very significant in applications of the Euler calculus, both classical (\eg, the Riemann-Hurwitz Theorem \cite{Viro,GZ}) and contemporary.

\subsection{Enumerating vehicles}
One application of the Fubini Theorem is to time-dependent targets. Consider a collection of vehicles, each of which moves along a smooth curve $\gamma_i:[0,T]\to\real^2$ in a plane filled with sensors that count the passage of a vehicle and increment an internal counter. Specifically, assume that each vehicle possesses a {\em footprint} --- a support $U_i(t)\subset\real^2$ which is a compact contractible neighborhood of $\gamma_i(t)$ for each $i$, varying tamely in $t$. At the moment when a sensor $x\in\real^2$ detects when a vehicle comes within proximity range --- when $x$ crosses into $U_i(t)$ --- that sensor increments its internal counter. Over the time interval $[0,T]$, the sensor field records a counting function $h\in\CF(\real^2)$, where $h(x)$ is the number of times $x$ has entered a support. As before, the sensors do not identify vehicles; nor, in this case, do they record times, directions of approach, or any ancillary data. It is helpful to think of the \style{trace} of a vehicle: the union over $t$ of $U_i(t)$ in $\real^2$. Note the possibility that the trace can be a non-contractible set. On an overlap, however, the counting function reads a value higher than $1$. This is the key to resolving the true vehicle count.

\begin{proposition}[\cite{BG:enum}]
\label{prop:t-dep-euler}
Assuming the above, the number of vehicles is equal to $\int_{\real^2}h\,d\chi$.
\end{proposition}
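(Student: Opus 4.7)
The plan is to invoke the Fubini theorem twice, on the space-time trace of each vehicle, reducing the problem to two elementary fibered computations. For each vehicle $i = 1, \ldots, n$, define the trace
\[
V_i = \{(x,t) \in \real^2 \times [0,T] : x \in U_i(t)\},
\]
which is tame by the assumption that $t \mapsto U_i(t)$ varies tamely. Let $\pi_1 \colon \real^2 \times [0,T] \to \real^2$ and $\pi_2 \colon \real^2 \times [0,T] \to [0,T]$ denote the projections.

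First I would re-express the integrand $h$ in terms of vertical fibers of the $V_i$. The fiber $\pi_1\inv(x) \cap V_i = \{t \in [0,T] : x \in U_i(t)\}$ is a closed tame subset of $[0,T]$, hence a finite disjoint union of closed intervals and isolated points, each of which has Euler characteristic $+1$ and corresponds to exactly one entry of $x$ into the footprint $U_i$. Summing over $i$ gives the pointwise identity $h(x) = \sum_i \chi(\pi_1\inv(x) \cap V_i)$.

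Next I would apply Theorem \ref{thm:fubini} along $\pi_1$ to each $\one_{V_i} \in \CF(\real^2 \times [0,T])$: integrating the fiberwise Euler characteristics recovers $\chi(V_i)$, so linearity yields
\[
\int_{\real^2} h \, d\chi = \sum_i \int_{\real^2} \chi(\pi_1\inv(x) \cap V_i) \, d\chi(x) = \sum_i \chi(V_i).
\]
Then I would apply Fubini a second time, this time along $\pi_2$, to evaluate each $\chi(V_i)$. The fiber $\pi_2\inv(t) \cap V_i$ is (homeomorphic to) the compact contractible footprint $U_i(t)$, so it has Euler characteristic $1$, giving
\[
\chi(V_i) = \int_{[0,T]} \chi(U_i(t)) \, d\chi(t) = \int_{[0,T]} 1 \, d\chi = \chi([0,T]) = 1.
\]
Combining the two displays yields $\int_{\real^2} h\, d\chi = n$, the number of vehicles, as desired.

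The main obstacle is the first step: verifying that $h(x)$ genuinely coincides with $\sum_i \chi(\pi_1\inv(x) \cap V_i)$. This relies on tameness to rule out pathological accumulations of entry events and, crucially, on the compactness of each footprint $U_i(t)$ to ensure that every component of the fiber is a closed interval or point (contributing $+1$) rather than a half-open piece (which would contribute $-1$ or $0$ and destroy the identification). Once this bookkeeping is verified, the rest of the argument is a clean double application of the Fubini theorem.
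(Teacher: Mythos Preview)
Your proposal is correct and follows essentially the same approach as the paper: lift to space-time tubes $V_i\subset\real^2\times[0,T]$, identify $h(x)$ with the Euler characteristic of the vertical fiber (a finite union of compact intervals, one per entry event), and apply Fubini along the projection to $\real^2$. The only cosmetic difference is that the paper asserts $\chi(V_i)=1$ directly (each tube being a compact tame family of contractible slices over $[0,T]$), whereas you establish it by a second Fubini along $\pi_2$; both arguments are equally valid.
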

\begin{proof}
Consider the projection map $p\colon\real^2\times[0,T]\to\real^2$. Each target traces out a compact tube in $\real^2\times[0,T]$ given by the union of slices $(U_i(t),t)$ for $t\in[0,T]$. Each such tube has $\chi=1$. The integral over $\real^2\times[0,T]$ of the sum of the characteristic functions over all $N$ tubes is, by Theorem \ref{thm:enumerate}, $N$, the number of targets. By the Fubini Theorem, this also equals $\int_{\real^2}h\,d\chi$, where $h(x)$ is the value of the integral of the aforementioned sum of tubes over the fiber $p\inv(x)$. Since $p\inv(x)$ is $\{x\}\times[0,T]$, the integral over $p\inv(x)$ records the number of (necessarily compact) connected intervals in the intersection of $p\inv(x)$ with the tubes in $\real^2\times[0,T]$. This number is precisely the sensor count (the number of times a sensor detects a vehicle coming into range).
\end{proof}

\begin{figure}[hbt]
\begin{center}
\includegraphics[angle=0,width=5in]{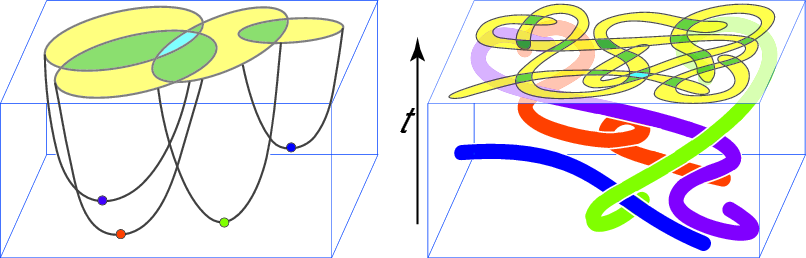}
\caption{Time-dependent phenomena (waves [left] or tracks [right]) can be enumerated by sensors without clocks.}
\label{fig:t-dep}
\end{center}
\end{figure}

\subsection{Enumerating wavefronts}
Consider a finite collection of points $\Obs_\alpha$ in $W\subset\real^n$. Each $\Obs_\alpha$ represents an event which occurs at some time and which triggers a wavefront that propagates for a finite extent. Assume that each sensor has the ability to record the presence
of a wavefront which passes through its vicinity. Each node has a simple counter memory which allows it to store the number of wavefronts that have passed. Under the continuum assumption, this yields a counting function $h:W\to\nats$ that returns wavefront detection counts {\em a posteriori}. The problem is to determine the number of source events $\Obs_\alpha$.

In this setting, there is no temporal data associated to the sensors. With a particular assumption on the sensing modality, this problem is solved as a corollary of Proposition \ref{prop:t-dep-euler}, using the same Fubini argument. Assume that the `wavefront' associated to each event $\Obs_\alpha$ induces a continuous definable map $F_\alpha$ from a compact ball $D^n$ to $W$ whose restriction to rays from the origin are geodesic rays in $W$ based at $\Obs_\alpha$. It is not enough to model sensors which count wavefronts by recording the number of `fronts' that have passed, as one must account for singularities. To that end, the cleanest assumption for the counting sensors is the following:

\begin{corollary}
\label{cor:wavefront}
In the context of this subsection, assume that each sensor at $w\in W$ increments its internal counter by $\chi(F_\alpha\inv(w))$ as the wavefront of $\Obs_\alpha$ passes over. Under this assumption, the number of triggering events is $\#\alpha = \int_W h\,d\chi$.
\end{corollary}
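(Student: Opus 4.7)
The plan is to recognize this as a direct Fubini-style enumeration, in the exact spirit of Proposition \ref{prop:t-dep-euler}, but with the temporal ``tube'' replaced by the map $F_\alpha$ modeling each wavefront. First I would rewrite the accumulated counting function as
\begin{equation*}
    h(w) = \sum_\alpha \chi\bigl(F_\alpha^{-1}(w)\bigr),
\end{equation*}
which is just the hypothesis on how each sensor updates its counter as each wavefront sweeps by. By linearity of the Euler integral over $\CF(W)$, it therefore suffices to show that $\int_W \chi(F_\alpha^{-1}(w))\,d\chi(w) = 1$ for each $\alpha$.

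Next I would apply the Fubini Theorem (Theorem \ref{thm:fubini}) to the tame mapping $F_\alpha \colon D^n \to W$, with integrand the constant function $\one_{D^n}\in\CF(D^n)$. Fubini yields
\begin{equation*}
    \int_{D^n} \one_{D^n}\,d\chi
    \;=\; \int_W \left( \int_{F_\alpha^{-1}(w)} \one_{D^n}(x)\,d\chi(x) \right) d\chi(w)
    \;=\; \int_W \chi\bigl(F_\alpha^{-1}(w)\bigr)\,d\chi(w).
\end{equation*}
The left-hand side is $\chi(D^n) = 1$ since the closed ball is contractible. Summing over $\alpha$ produces $\int_W h\,d\chi = \#\alpha$, as desired.

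The verification that the hypotheses of Fubini apply is the one spot that warrants care: $F_\alpha$ is assumed continuous and definable, $D^n$ is compact and tame, and $\one_{D^n}$ is trivially constructible, so Hardt's theorem (Theorem \ref{thm:Hardt}) applies and the Fubini statement is valid without modification. The ray/geodesic structure of $F_\alpha$ is not actually used in the proof --- it only motivates the model of a propagating wavefront --- so no differential-geometric or dynamical argument is needed. The main conceptual obstacle, rather than a technical one, is the philosophical point underlying the $\chi(F_\alpha^{-1}(w))$ update rule: the sensor is implicitly performing an Euler integral over the fiber of the wavefront map (counting arrivals with appropriate signs at singularities such as caustics or cusps), and Fubini then guarantees that these distributed fiberwise measurements aggregate to the global count $\chi(D^n)=1$ per event.
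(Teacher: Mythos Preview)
Your proof is correct and is essentially identical to the paper's: the paper writes $h=\sum_\alpha (F_\alpha)_*\one_{D^n}$ (which is exactly your $h(w)=\sum_\alpha\chi(F_\alpha^{-1}(w))$ by definition of pushforward) and applies Fubini. Your version is more explicit about the Fubini computation and the verification of hypotheses, but the argument is the same.
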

\begin{proof}
Apply the Fubini theorem to $h=\sum_\alpha (F_\alpha)_*\one_{D^n}$, where $F_\alpha$ is the mapping of the wavefront into $W$.
\end{proof}

The assumption is academic, but not outrageous, since we have suppressed the time variable. Since $n=\dim(W)=\dim(D^n)$, the inverse image $F_\alpha\inv(w)$ is generically discrete, and the assumption on the sensor modality boils down to a count of the number of passing wavefronts over the entire time interval. Of course, certain complications can arise in practice. For example, very coarse binary sensors may not be able to distinguish between one wavefront and several wavefronts passing over simultaneously: this can lead to positive-codimension defects in the counting function $h$. A similar loss of upper semi-continuity occurs when there is reflection of wavefronts along the boundary $\partial W$. For a compact domain $W\subset\real^n$ with smooth boundary $\partial W$, consider a wavefront-counting integrand $h=\sum_\alpha (F_\alpha)_*\one_{D^n}$ whose projection maps $F_\alpha$ may have fold singularities (reflections) along $\partial W$. Let $h^+:W\to\nats$ be the upper semi-continuous extension of $h$. Then, as the reader may show:
\begin{equation}
\label{eq:wavefront}
\#\alpha =
    \int_W h\,d\chi =
        \int_W h^+d\chi
        - \frac{1}{2}\int_{\partial W}h^+\,d\chi .
\end{equation}

\begin{figure}[hbt]
\begin{center}
\includegraphics[angle=0,width=5in]{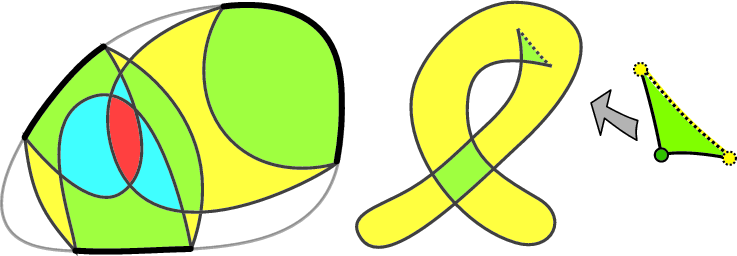}
\caption{Singularities in a system of wavefronts can be resolved by numerical approximation in the case of fold (reflection) singularities [left], but not in the case of cusps [right].}
\label{fig:singularities}
\end{center}
\end{figure}

This allows for numerical approximation of integrals involving fold singularities (in the context of signal reflection). We are unaware of how to meaningfully compute integrals when wavefronts have cusp singularities. There are several other challenges associated to measuring signals in the context of waves and obstacles: besides reflections and caustics, diffractions around corners are nontrivial and seemingly difficult to excise.

\subsection{Enumerating via beams}
Fix a Euclidean target space in $\real^n$ and consider a variant of the counting problem in which a sensor at each $x\in\real^n$ senses targets via a ``beam'' that is a round compact $k$-dimensional ball in $\real^n$ centered at $x$ (the term beam evoking the case $k=1$). Each target $\Obs_\alpha$ has spatial extent equal to a closed convex domain $V_\alpha$ in $\real^n$. Each sensor at $x\in\real^n$ performs a sweep of its $k$-ball beam over all possible bearings. At each such location-bearing pair, the sensor counts the number of target shapes $V_\alpha$ which intersect the beam. The sensor field is parameterized by the affine Grassmannian  $\AGr_k^n=\real^n\times\Gr^n_k$, where $\Gr^n_k$ is the Grassmannian of $k$-planes in $\real^n$. (For example, the projective space $\RP^{n-1}$ is $\Gr^n_1$.) Thus, the sensor field returns a counting function $h\colon\AGr_k^n\to\zed$.

\begin{theorem}
\label{thm:beam}
Under the above assumptions and the additional assumption that if $n$ if even then so is $k$, the number of targets is equal to
\begin{equation}
\label{eq:grass}
\#\alpha =
   \left(
    \begin{array}{c}
            \lfloor\frac{n}{2}\rfloor
            \\
            \lfloor\frac{k}{2}\rfloor
    \end{array}
    \right)
    \int_{\AGr_k^n}h\,d\chi   ,
\end{equation}
where $\lfloor\cdot\rfloor$ denotes the floor function.
\end{theorem}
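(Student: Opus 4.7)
My plan is to reduce this to the basic enumeration principle (Theorem~\ref{thm:enumerate}) via a Fubini argument over the Grassmannian factor. Write $h = \sum_\alpha \one_{U_\alpha}$, where $U_\alpha \subset \AGr_k^n$ is the set of location--bearing pairs $(x,P)$ whose $k$-ball beam intersects the convex target $V_\alpha$. By additivity of the Euler integral,
\[
\int_{\AGr_k^n} h \, d\chi \;=\; \sum_\alpha \chi(U_\alpha),
\]
so the entire problem reduces to computing $\chi(U_\alpha)$ for a single convex target and checking that the answer is independent of the particular $V_\alpha$.

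For the second step I would apply the Fubini Theorem (Theorem~\ref{thm:fubini}) to the projection $\pi\colon \real^n \times \Gr^n_k \to \Gr^n_k$ given by $(x,P)\mapsto P$. For fixed $P$, the fiber $\pi^{-1}(P) \cap U_\alpha$ consists of those centers $x \in \real^n$ for which the $k$-ball beam in the affine plane $x + P$ meets $V_\alpha$; unwinding definitions identifies this fiber with the Minkowski sum of $V_\alpha$ and the symmetric $k$-ball lying inside the linear subspace $P$. Being the Minkowski sum of two non-empty convex sets, this fiber is itself non-empty and convex, hence contractible with Euler characteristic $1$. Fubini then gives $\chi(U_\alpha) = \chi(\Gr^n_k) \cdot 1 = \chi(\Gr^n_k)$, independently of $V_\alpha$, and therefore $\#\alpha \cdot \chi(\Gr^n_k) = \int_{\AGr_k^n} h\, d\chi$.

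It remains to evaluate $\chi(\Gr^n_k)$. Using the classical Schubert cell decomposition of the real Grassmannian --- whose cells are indexed by Young diagrams fitting in a $k\times(n-k)$ rectangle and graded by the number of boxes --- a parity-sign count yields $\chi(\Gr^n_k) = \binom{\lfloor n/2\rfloor}{\lfloor k/2\rfloor}$ whenever $n$ is odd or $k$ is even, while $\chi(\Gr^n_k) = 0$ in the remaining case of $n$ even and $k$ odd. The parity hypothesis of the theorem precisely excludes this vanishing case, so the normalization is non-zero and $\#\alpha$ is recovered from the integral by dividing through by $\chi(\Gr^n_k) = \binom{\lfloor n/2\rfloor}{\lfloor k/2\rfloor}$, as displayed.

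The main obstacle I anticipate is the Grassmannian Euler characteristic computation rather than any of the Euler-calculus machinery: once convexity of $V_\alpha$ and the beam is in hand, the Fubini reduction is essentially a one-line observation, but the Schubert-cell parity bookkeeping is what produces the floors in the binomial coefficient and singles out the correct parity condition on $n$ and $k$. The convexity of the target is genuinely used --- for non-convex shapes the Minkowski sum of fibers can have nontrivial topology and the clean identification $\chi(U_\alpha) = \chi(\Gr^n_k)$ fails.
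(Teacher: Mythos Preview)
Your proof is correct and follows essentially the same route as the paper: reduce to computing $\chi(U_\alpha)$, fiber over $\Gr^n_k$, observe that each fiber is contractible, and invoke the known Euler characteristic of the Grassmannian. Your Minkowski-sum identification of the fiber as $V_\alpha + B_P$ (hence convex) is in fact slightly sharper than the paper's star-convexity claim, and your explicit Fubini argument replaces the paper's assertion of a product homeomorphism $U_\alpha \cong V_\alpha \times \Gr^n_k$, but the substance is identical.
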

\begin{proof}
Target supports $U_\alpha\subset\AGr_k^n$ are computed as follows. Fix an $\alpha$ and fix a {\em bearing} $k$-plane in the Grassmannian $\Gr_k^n$. The set of nodes in $\real^n$ at which this $k$-ball intersects $V_\alpha$ is star-convex with respect to (the centroid of) $V_\alpha$. Thus, the target support $U_\alpha$ is homeomorphic to $V_\alpha\times\Gr_k^n$ and thus to $\Gr_k^n$. The Euler characteristic of $\Gr_k^n$ is:
\[
    \chi(\Gr_k^n) = \left\{
    \begin{array}{cl}
    0 & : n {\mbox{ even}}, k {\mbox{ odd}}
    \\
   \left(
    \begin{array}{c}
            \lfloor\frac{n}{2}\rfloor
            \\
            \lfloor\frac{k}{2}\rfloor
    \end{array}
    \right)
    & : {\mbox{ else}}
    \end{array}
    \right.  .
\]
The result follows from Theorem \ref{thm:enumerate}.
\end{proof}

Of course, such sensors (``$k$-dimensional planes in $n$-space'') at continuum-level densities are fanciful at best. Worse still, in the most physically plausible setting --- the case of the plane ($n=2$) with linear beams ($k=1$) --- the theorem fails to be useful, since target supports have the homotopy type of $\Sphere^1$, a measure zero set in Euler calculus.

\section{Numerical approximation}
\label{sec:numerical}

What if (as is true in practice) sensor counting data is not given over a continuous space of sensors but rather a discrete set of points? In expressing the answer to the aggregation problem as a formal integral, one is led to the notion of numerical integration. The task of approximating an integral based on a discrete sampling has a long and fruitful history. One must be cautious, however; this integration theory is not at all like that of Riemann or Lebesgue.  An annulus in the plane is a set of Euler measure zero; a single point is a set of Euler measure one. Thus, $d\chi$ gives tremendous flexibility, as one can integrate target supports with vastly dissimilar geometry; however, noise, in the form of errors at individual sensors, can be fatal to the estimation of the integral. The development of numerical integration theory for $d\chi$ is in its infancy.

\subsection{Euler integration based on a triangulation}
\label{sec:triang}
Let $h\colon\real^2\to\nats$ be constructible and assume that $h$ is known only over the vertex set of a triangulation of $\real^2$ and that $\tilde{h}$ is the constructible function whose value on an open $k$-simplex equals the minimum of all boundary vertex values. The most straightforward approach to the approximation of the integral is (since $\tilde{h}\geq 0$ and upper semicontinuous) to use the following formula:
\begin{equation}
\label{eq:combnumeuler}
    \sum_{s=0}^\infty \#V\{\tilde{h}>s\}-\#E\{\tilde{h}>s\}+\#F\{\tilde{h}>s\} ,
\end{equation}
where $\#V$, $\#E$, and $\#F$ refer to the number of vertices, edges, and faces, respectively. This formula can, however, fail in several ways. The sampling can be too sparse relative to the features of $h$, as in Figure \ref{fig:badchambers}. This is not too surprising, since a too-sparse sampling impedes the approximation of Riemann integrals likewise.

\begin{figure}[hbt]
\begin{center}
\includegraphics[angle=0,width=5in]{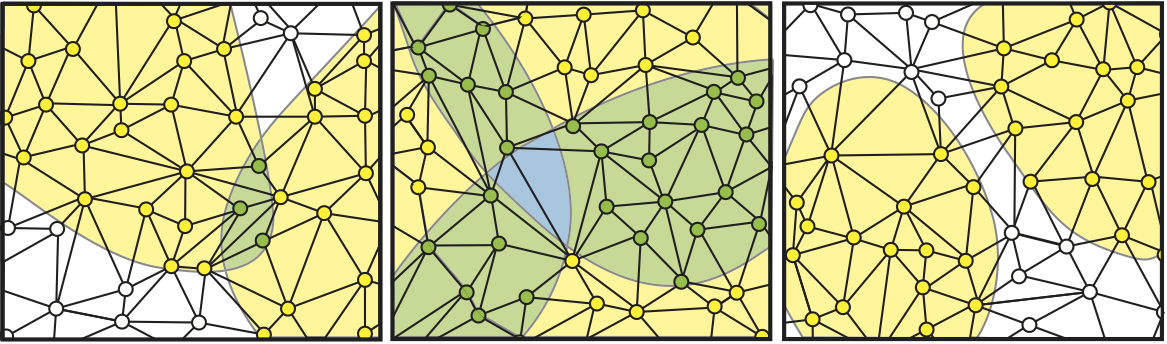}
\caption{These types of triangulations leads to incorrect Euler integrals due to mis-sampling of connected components of upper or lower excursion sets of the integrand: the relevant features are too small for the triangulation to accurately sample the Euler characteristics.}
\label{fig:badchambers}
\end{center}
\end{figure}

Unfortunately, such discretization errors can persist, even with triangulations of arbitrarily high density. A generic codimension two singularity in $h\in \CF^+(\real^2)$ is locally equivalent (up to a constant) to the sum of Heaviside step functions $h(x,y):=H_0(x)+H_0(y)\in\CF^+(\real^2)$. A random triangulation will be with positive probability locally equivalent to that in Figure \ref{fig:discreterror}. The level sets of $h$ with respect to the triangulation produce a spurious path component, adding $1$, wrongly, to the Euler integral. This example is scale-invariant and not affected by increased density of sampling. In a random placement of $N$ discs in a bounded planar domain, the expected number of such intersection points is quadratic in $N$, leading to potentially large errors in numerical approximations to $\int\cdot d\chi$, regardless of sampling density.

\begin{figure}[hbt]
\begin{center}
\includegraphics[angle=0,width=4.5in]{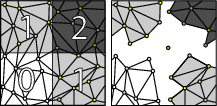}
\caption{At a typical discontinuity in an integrand $h\in\CF(\real^2)$, a triangulation [left] can lead to an error in the topology of the level sets [right], causing an error in the integral computation. This is a scale-independent phenomenon.}
\label{fig:discreterror}
\end{center}
\end{figure}

\subsection{Exploiting numerical errors in Fubini}
\label{sec:errors}
Sometimes, numerical errors are serendipitous. Consider $h$ a sum of characteristic functions over compact annuli $A$ in $\real^2$ that are {\em convex} in the sense that each annulus is a compact convex disc minus a smaller open convex sub-disc. As noted $\int h\,d\chi=0$. However, assume (1) that $h$ is sampled over a rectilinear lattice; (2) that the geometric ``feature size'' of $h$ is sufficiently large relative to the sampling density (meaning that the aspect ratios, size, and distances between boundary curves are suitably bounded). Then, to put it in a less-than-technical form,
\begin{equation}
    \#{\mbox{annuli}} = \frac{1}{2}\int_{\tiny{\mbox{the other}}}\int_{\tiny{\mbox{one way}}} \tilde{h}\,d\chi\,d\chi ,
\end{equation}
where the integration is performed numerically (using Equation (\ref{eq:combnumeuler}) of \S\ref{sec:triang}). The reasoning is as follows. By additivity and general position, it suffices to demonstrate in the case of a single annulus. Performing the first integral means counting the number of connected intersections of lines in the rectilinear lattice with the annulus. The result is a function which increases from zero to two, then back to zero. Assuming transversality, the (numerical!) integral of the resulting function on $\real$ is equal to two, {\em not zero}, since codimension-1 values of the integrand are ignored by the numerical sampling. The curious reader should, armed with Fubini, contrast the case of a field of sensors over $\real\times\real$ with the discretized setting.

\begin{figure}[hbt]
\begin{center}
\includegraphics[angle=0,width=5in]{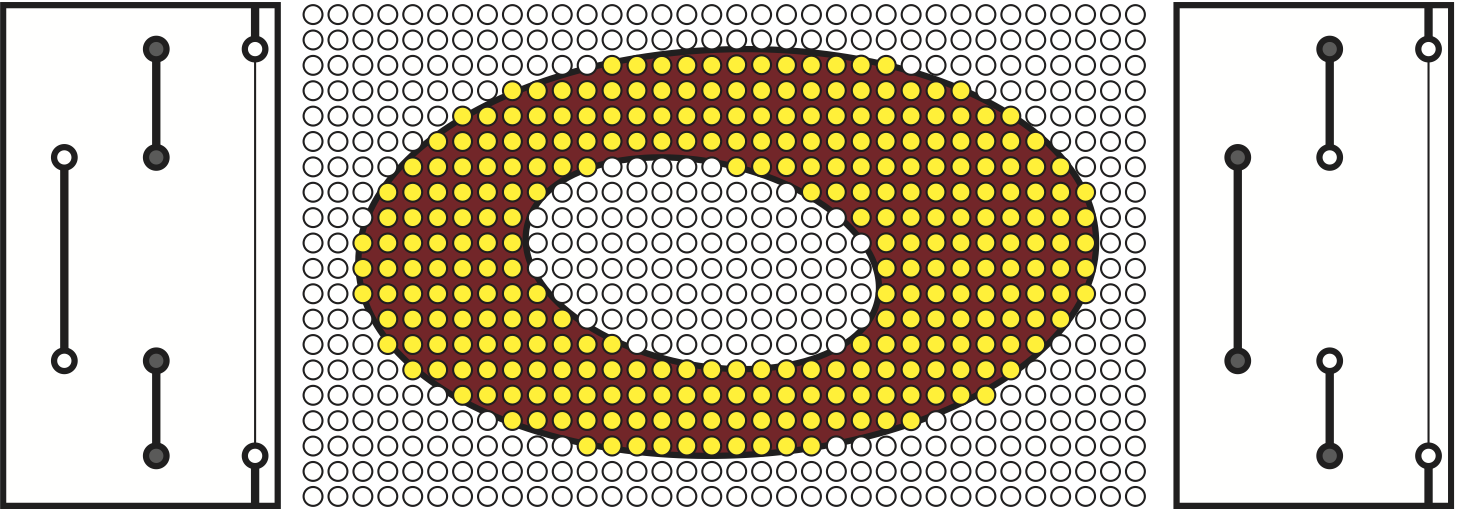}
\caption{In a dense lattice [center], integrating exactly along (horizontal) directions [left] yields an integrand with Euler integral zero. However, performing numerical integration introduces errors [right] which allows one to count fat convex annuli in general position.}
\label{fig:annuluscheeky}
\end{center}
\end{figure}

\section{Planar {\em ad hoc} networks}
\label{sec:adhoc}

A highly effective numerical method for performing integration with respect to Euler characteristic over a planar network uses the homological properties of the Euler characteristic and is an excellent argument for the utility of that approach to $\chi$. We consider an integrand $h\in\CF(\real^2)$ sampled over a network $\Graph$ in which values of $h$ are recorded at the vertices of $\Graph$ and edges of $\Graph$ correlate (roughly) to distance in $\real^2$. However, no coordinate data are assumed --- the embedding of $\Graph$ into $\real^2$ is unknown. This is a realistic model of an {\em ad hoc} coordinate-free network, such as might occur when simple sensors with no GPS set up a wireless communications network. This lack of coordinates or distances makes the use of a triangulation problematic, though not impossible.
Equation (\ref{eq:eulerexcursion}) suggests that the estimation of the Euler characteristics of the upper excursion sets is an effective approach. However, if the sampling occurs over a network with communication links, then it is potentially difficult to approximate those Euler characteristics, since (inevitable) undersampling leads to holes that ruin an Euler characteristic approximation. Duality is the key to mitigating this phenomenon.

\begin{theorem}
\label{thm:planardual}
For $h\colon\real^2\to\nats$ constructible and upper semi-continuous,
\begin{equation}
\label{eq:dual2d}
    \int_{\real^2}h\,d\chi
    =
    \sum_{s=0}^\infty
    \left(\beta_0\{h>s\} - \beta_0\{h\leq s\} + 1\right)
,
\end{equation}
where $\beta_0=\dim\ H_0$, the number of connected components of the set.
\end{theorem}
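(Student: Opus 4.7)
The plan is to reduce the Euler integral to a sum of Euler characteristics of upper excursion sets via Proposition \ref{prop:computation}, and then to express each such $\chi$ in terms of $\beta_0$ of the excursion set and of its complement using the special form that Poincar\'e/Alexander duality takes in the plane.

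First, since $h$ takes values in $\nats$, the negative-excursion term $\chi\{h<-s\}$ in Proposition \ref{prop:computation} vanishes for every $s\geq 0$, leaving $\int_{\real^2}h\,d\chi=\sum_{s=0}^\infty\chi\{h>s\}$. Because $h$ is upper semi-continuous and integer-valued, $\{h>s\}=\{h\geq s+1\}$ is closed, and because $h\in\CF(\real^2)$ has compact support, each $A_s:=\{h>s\}$ is a compact, tame subset of the plane. By the Triangulation Theorem each $A_s$ is a finite $2$-dimensional simplicial complex embedded in $\real^2$, hence homotopy equivalent to a finite graph; in particular $\beta_k(A_s)=0$ for $k\geq 2$, so Corollary \ref{cor:EulerChar} gives $\chi(A_s)=\beta_0(A_s)-\beta_1(A_s)$.

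The core step is to identify $\beta_1(A_s)$ with the number of bounded components of its complement. For a compact, tame subset $A\subset\real^2$, Alexander duality (or, equivalently, the planar Poincar\'e--Lefschetz duality discussed in \S\ref{sec:cohom}) gives a bijection between the elements of a basis of $H_1(A;\real)$ and the bounded connected components of $\real^2\setminus A$. Since $\real^2\setminus A_s=\{h\leq s\}$ has exactly one unbounded component (again using compactness of $A_s$ and tameness), this yields $\beta_0\{h\leq s\}=\beta_1(A_s)+1$, and hence
\[
\chi\{h>s\}=\beta_0\{h>s\}-\beta_0\{h\leq s\}+1.
\]
Summing over $s\geq 0$ gives the stated formula. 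Note the sum is finite: for $s$ exceeding the (bounded) sup of $h$, $A_s=\varnothing$ while $\{h\leq s\}=\real^2$, so the summand equals $0-1+1=0$.

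The main obstacle is the duality step: one must verify that the planar Alexander duality applies to the not-necessarily-smooth tame set $A_s$ and that ``number of bounded complementary components'' genuinely equals $\beta_1(A_s)$. This is where tameness is essential --- a pathological compact set in $\real^2$ (e.g.\ with infinitely many complementary components, or with $\beta_1$ ill-defined) would break the identity --- but the Triangulation Theorem reduces $A_s$ to a finite polyhedron, where the duality is classical and the counting of bounded complementary regions against $1$-cycles is a standard fact. Everything else --- the vanishing of $\beta_{\geq 2}$, the identity $\chi=\beta_0-\beta_1$, and the telescoping via Proposition \ref{prop:computation} --- is then bookkeeping.
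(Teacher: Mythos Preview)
Your proposal is correct and follows essentially the same route as the paper: reduce via Proposition~\ref{prop:computation} to $\sum_s\chi\{h>s\}$, write $\chi(A_s)=\beta_0(A_s)-\beta_1(A_s)$ for the compact planar set $A_s=\{h>s\}$, and invoke Alexander duality to trade $\beta_1(A_s)$ for $\beta_0(\real^2\setminus A_s)-1$. You supply more detail than the paper does on why the duality step is legitimate for tame sets and why the sum terminates, but the argument is the same.
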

\begin{proof}
Let $A$ be a compact nonempty subset of $\real^2$. From (1) the homological definition of the Euler characteristic and (2) Alexander duality, we note:
\[
    \chi(A) = \dim\ H_0(A) - \dim\ H_1(A) = \dim\ H_0(A) + \dim\ H_0(\real^2-A)-1 .
\]
Since $h$ is upper semi-continuous, the set $A=\{h>s\}$ is compact. Noting that $\real^2-A=\{h\leq s\}$, one has:
\[
    \int h\,d\chi
    = \sum_{s=0}^\infty \chi\{h>s\}
    = \sum_{s=0}^\infty \dim\ H_0(\{h>s\})-\left(\dim\ H_0(\{h\leq s\})-1\right) .
\]
\end{proof}

This result gives both a criterion and an algorithm for correctly computing Euler integrals based on nothing more that an {\em ad hoc} network of sampled values.
\begin{figure}[hbt]
\begin{center}
\includegraphics[angle=0,width=4.5in]{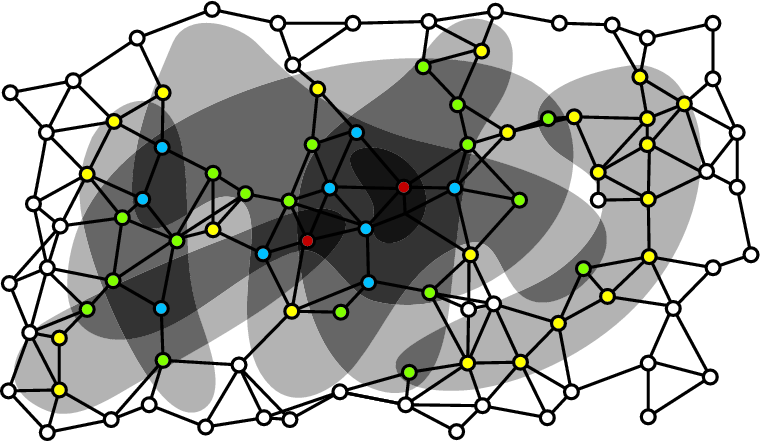}
\caption{A sparse sampling over an {\em ad hoc} network retains enough connectivity data to evaluate the integral exactly.}
\label{fig:dualex}
\end{center}
\end{figure}

\begin{corollary}
\label{cor:sampling}
The degree of sampling required to ensure exact approximation of $\int h\,d\chi$ over a planar network $\Graph$ is that $\Graph$ correctly samples the connectivity of all the upper and lower excursion sets of $h$.
\end{corollary}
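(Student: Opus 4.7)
The plan is to derive Corollary \ref{cor:sampling} as essentially a bookkeeping consequence of Theorem \ref{thm:planardual}. Since $h\in\CF(\real^2)$ is $\nats$-valued, bounded, and compactly supported, its image is a finite set, say $\{0,1,\ldots,M\}$, and the excursion sets $\{h>s\}$ and $\{h\leq s\}$ are constant as $s$ varies over each interval between consecutive integers. Consequently, the infinite sum in Equation (\ref{eq:dual2d}) collapses to the finite evaluation
\[
   \int_{\real^2}h\,d\chi
   \;=\;\sum_{s=0}^{M-1}\Bigl(\beta_0\{h>s\}-\beta_0\{h\leq s\}+1\Bigr).
\]

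Next I would make the informal phrase ``correctly samples the connectivity'' mathematically precise: for each threshold $s$, let $\Graph_{>s}$ denote the subgraph of $\Graph$ induced on the vertex set $\{v\in V(\Graph):h(v)>s\}$, and define $\Graph_{\leq s}$ analogously. The hypothesis is that, for every $s$, the number of connected components of the induced subgraph $\Graph_{>s}$ equals $\beta_0\{h>s\}$, and similarly for $\Graph_{\leq s}$. Under this hypothesis each quantity in the displayed sum is computable by breadth-first search on $\Graph$ alone, yielding a numerical answer that coincides \emph{on the nose} with the topological Euler integral. Conversely, any discrepancy in the sampled component counts at some threshold produces a discrepancy in the computed integral, so the hypothesis is also necessary for exact computation via this algorithm.

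The main obstacle is conceptual rather than technical: one must commit to a definition of ``correctly samples.'' The most natural and weakest sufficient condition is the one above, namely bijective correspondence between path components of induced subgraphs and path components of the corresponding excursion sets in $\real^2$ --- a $\pi_0$-level analogue of the good-cover condition from \v{C}ech theory. Critically, no higher connectivity or geometric fidelity is required: holes in the excursion sets are handled automatically by Alexander duality, which is what allowed Theorem \ref{thm:planardual} to express $\chi$ in terms of $\beta_0$ alone. Given the definition, the corollary follows in one line by substituting the combinatorially computed $\beta_0$ values into the finite sum above.
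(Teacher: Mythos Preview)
Your approach is essentially the same as the paper's: the paper gives no formal proof of the corollary at all, merely restating what ``correctly samples'' means (namely, that the upper and lower excursion sets of the extended $\tilde{h}$ on $\Graph$ have the same number of connected components as those of $h$ on $\real^2$) and leaving the sufficiency as an immediate consequence of Theorem~\ref{thm:planardual}. Your write-up is, if anything, more thorough than the paper's.

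One small overreach: your converse claim is not quite right. A discrepancy in the sampled $\beta_0$ at some threshold need not produce a discrepancy in the final integral, since errors at different thresholds (or between the upper and lower excursion terms at the same threshold) can cancel. The corollary as stated is really a sufficiency statement --- this level of sampling \emph{ensures} exactness --- and the paper does not assert necessity either. You should drop or weaken that sentence.
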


In other words, if, given $h$ on the vertices of $\Graph$, one extends to an upper semicontinuous $\tilde{h}\in\CF(\Graph)$ in the usual manner, then the criterion is that the upper and lower excursion sets of $\tilde{h}$ on $\Graph$ have the same number of connected components as those of $h$ on $\real^2$.

\section{Software implementation}
\label{sec:eucharis}

The formula in Theorem \ref{thm:planardual} has been implemented in {\tt Java} as a general {\em Eu}ler {\em ch}aracteristic {\em i}ntegration {\em s}oftware, {\em Eucharis} \cite{Eucharis}. The determination of the number of connected components of the upper and lower excursion sets is a simple clustering problem, computable in logspace with respect to the number of network nodes. The software implementation has the following features:
\begin{enumerate}
\item Lattice or {\em ad hoc} networks of arbitrary size and communication distance can be generated.
\item Targets with supports of predetermined shape (circular, polygonal, etc.) can be placed at will; targets with support a neighborhood of a drawn path are also admissible.
\item  The value of the counting function $h$ is represented by colors on the nodes.
\item Euler integrals are estimated via Equation (\ref{eq:dual2d}) and compared to true values.
\item Various advanced transforms are available, including addition of noise, smoothing via convolution with Gaussians , and other integral transforms (using ideas from \S\ref{sec:Rval}-\ref{sec:defker}).
\end{enumerate}

Screenshots are included as Figures \ref{fig:eucharis1}.

\begin{figure}[hbt]
\begin{center}
\includegraphics[angle=0,width=5in]{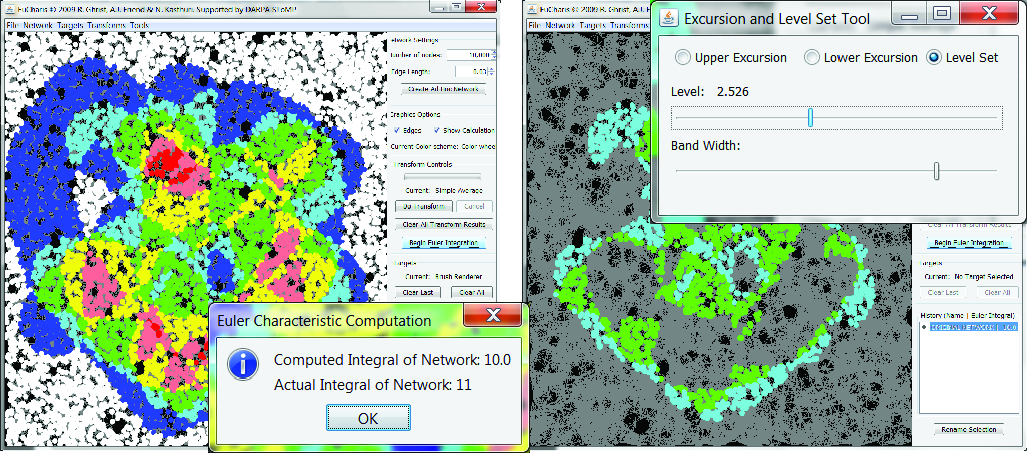}
\caption{Screenshots of {\em Eucharis}, including an Euler integral approximation [left] and a level-set explorer [right]. The true integral ($11$) is approximated ($10$) numerically, but imperfectly so, due to the existence of small, poorly-sampled chambers.}
\label{fig:eucharis1}
\end{center}
\end{figure}

Of course, the criterion of Corollary \ref{cor:sampling} is not always verifiable: to know the connected components of the excursion sets of the true integrand is, under realistic circumstances, a luxury. Approximating integrals of Riemann integrals based of discrete sampling suffers from similar problems, as mass can concentrated in a small region. Unlike that situation, we cannot easily bound derivatives and work in that manner. There is a wealth of open problems relating to how one properly does numerical estimation in Euler calculus. It is remarkable that, though the Euler calculus has existed in its fullest form for more than thirty years, only a few works (\eg, \cite{Klain}) have addressed the problem of estimating Euler characteristics of sets based on discrete data. The broader problem of numerical computation of the Euler integral based on discrete sampling seems to be in its infancy.

\specialsection*{\bf Integral Transforms \& Signal Processing}
\sweetline
\vspace{0.1in}

The previous chapter presented simple applications of basic Euler integrals to data aggregation problems, using little more than the definition and well-definedness of the integral, along with the corresponding Fubini Theorem. Euler {\em Calculus} is more comprehensive than the integral alone: Euler integration admits a variety of operations which mimic analytic constructs. The operations surveyed in this chapter are the beginnings of a rich calculus blending analytic, combinatorial, and topological perspectives into a package of particular relevance to signal processing, imaging, and inverse problems.

\section{Convolution and duality}
\label{sec:conv}

On a finite-dimensional real vector space $V$, a \style{convolution} operation with respect to Euler characteristic is straightforward. Given $f,g\in \CF(V)$, one defines
\begin{equation}
\label{eq:conv}
    (f*g)(x) = \int_V f(t)g(x-t)\,d\chi(t) .
\end{equation}

\begin{lemma}
\label{lem:convprod}
\begin{equation}
    \int f*g\,d\chi=\int f\,d\chi\,\int g\,d\chi
\end{equation}
\end{lemma}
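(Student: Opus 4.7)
The plan is a direct application of the Fubini Theorem (Theorem \ref{thm:fubini}) together with the translation invariance of Euler integration. First I would rewrite the double integral that arises from $\int_V(f*g)\,d\chi$ as an integral over the product $V\times V$, then change variables via a definable homeomorphism to decouple the arguments of $f$ and $g$, and finally apply Fubini in the opposite direction to split the product integral into a product of single integrals.

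Concretely, starting from the definition in \eqref{eq:conv}, I would write
\begin{equation*}
    \int_V (f*g)(x)\,d\chi(x)
    = \int_V\!\!\int_V f(t)\,g(x-t)\,d\chi(t)\,d\chi(x).
\end{equation*}
The integrand $(t,x)\mapsto f(t)g(x-t)$ is constructible on $V\times V$ because $f,g\in\CF(V)$ and the subtraction map $V\times V\to V$, $(t,x)\mapsto x-t$, is definable, so pullbacks of level sets remain tame. Consider the map $\Phi\colon V\times V\to V\times V$ defined by $\Phi(t,x)=(t,x-t)$. This is a definable homeomorphism (with definable inverse $(t,y)\mapsto(t,y+t)$), so by the Invariance Theorem (Theorem \ref{thm:EulerDim}) it preserves Euler characteristics of tame sets, and therefore preserves Euler integrals of constructible functions. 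Applying $\Phi$ transforms the integrand $f(t)g(x-t)$ into $f(t)g(y)$, so
\begin{equation*}
    \int_{V\times V} f(t)g(x-t)\,d\chi(t,x)
    = \int_{V\times V} f(t)g(y)\,d\chi(t,y).
\end{equation*}

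Now I would apply the Fubini Theorem to the projection $p\colon V\times V\to V$, $(t,y)\mapsto y$, whose fibers are copies of $V$ on which the integrand restricts to $t\mapsto f(t)g(y)$ for the fixed second coordinate $y$. Since $g(y)$ is constant on the fiber, it factors out, yielding $g(y)\int_V f\,d\chi$ on each fiber, and a second application of Fubini (or just linearity of $\int\,d\chi$) collapses the outer integral to $\int_V g\,d\chi\cdot\int_V f\,d\chi$, giving the claim.

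The only step that requires care is the change of variables via $\Phi$; strictly speaking this uses either the Hardt Theorem applied to $\Phi$ (to see that pushforward preserves Euler integrals) or, equivalently, the invariance of $\chi$ under definable homeomorphism applied level-set by level-set. Everything else is bookkeeping, so I do not anticipate a real obstacle — the result is essentially a formal consequence of the Fubini Theorem and the product formula $\chi(X\times Y)=\chi(X)\chi(Y)$ of Lemma \ref{lem:chiproduct}.
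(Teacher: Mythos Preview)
Your proposal is correct and is precisely the argument the paper intends: its proof consists of the single word ``Fubini.'' You have simply spelled out what that entails---the definable change of variables $(t,x)\mapsto(t,x-t)$ followed by Fubini on the product---so there is nothing to add.
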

\begin{proof}
Fubini.
\end{proof}
This convolution operator is of interest to problems of computational geometry, given the close relationship to the \style{Minkowski sum}: for $A$ and $B$ convex, $\one_A*\one_B=\one_{A+B}$, where $A+B$ is the set of all vectors expressible as a sum of a vector in $A$ and a vector in $B$ \cite{Viro,Schapira:op,BGRR,Groemer}.
%
%
The work of Guibas and collaborators \cite{GRS,Guibas,Ramkumar} contains a wealth of results on the computational complexity of using Euler convolution instead of the usual Minkowski sums in computational geometry, with applications in \cite{Ramkumar} to robot motion planning and obstacle avoidance.


Where there is convolution, a deconvolution operator lurks, if as nothing more than desideratum. In this case, the appropriate avenue to deconvolution is via a formal (Verdier) duality operator on sheaves \cite{Schapira:op}. In the context of $\CF$, this vast generalization of Poincar\'e duality takes on a simple and concrete form. Define the \style{dual} of $h\in CF(X)$ to be:
\begin{equation}
    \Dual h(x)
    =
    \lim_{\epsilon\to 0^+}\int_Xh\one_{B(x,\epsilon)}d\chi ,
\end{equation}
where $B(x,\epsilon)$ denotes an {\em open} ball of radius $\epsilon$ about $x$. This limit is well-defined thanks to Theorem \ref{thm:Hardt} applied to $h\colon X\to\zed$. Duality provides a de-convolution operation ---  a way to undo a Minkowski sum.

\begin{lemma}[\cite{Schapira:tom}]
\label{lem:deconv}
For any convex closed $A\subset V$ with non-empty interior, the convolution inverse of $\one_A$ is $\Dual\one_{-A}$, where $-A$ denotes the reflection of $A$ through the origin. Specifically, $\one_A*\Dual\one_{-A}=\delta_0$, where $\delta$ is the indicator function of the origin.
\end{lemma}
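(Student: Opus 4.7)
The plan is to verify the identity directly by computing both sides as constructible functions on $V \cong \real^n$ (tacitly assuming $A$ is compact, so that $\one_A \in \CF(V)$). First I would compute $\Dual \one_{-A}$ explicitly, since this is the principal ingredient in the Minkowski deconvolution. By the defining limit, for $x \in -\Interior(A)$ any sufficiently small open ball $B(x,\epsilon)$ lies in $-A$ and the limit equals $\chi(B(x,\epsilon)) = (-1)^n$; for $x \notin -A$ the ball eventually misses $-A$, giving $0$; and for $x \in \partial(-A)$, Hardt's theorem (Theorem~\ref{thm:Hardt}) lets me model $-A$ locally near $x$ as a definable cone over its link, whence the cone formula combined with $\chi([0,1))=0$ yields $\lim_{\epsilon\to 0^+}\chi((-A)\cap B(x,\epsilon))=0$. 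Assembling the three cases gives $\Dual\one_{-A} = (-1)^n\,\one_{-\Interior(A)}$.

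Substituting this formula into the convolution and making the change of variable $s=x-t$, I obtain
\[
(\one_A * \Dual\one_{-A})(x) = (-1)^n \int_V \one_A(t)\,\one_{-\Interior(A)}(x-t)\,d\chi(t) = (-1)^n\,\chi\bigl(A\cap(x+\Interior(A))\bigr).
\]
It remains to show the right-hand side is $\delta_0(x)$. At $x=0$ the intersection collapses to $\Interior(A)$, which is a nonempty open convex subset of $\real^n$ and thus definably homeomorphic to $\real^n$; it has Euler characteristic $(-1)^n$, so the product with the prefactor $(-1)^n$ equals $1$, as required.

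For $x \neq 0$ the claim is that $\chi(A\cap(x+\Interior(A)))=0$. Using the additive decomposition $A\cap(x+A) = \bigl(A\cap(x+\Interior(A))\bigr) \sqcup \bigl(A\cap(x+\partial A)\bigr)$ and Lemma~\ref{lem:MVchi}, one has
\[
\chi\bigl(A\cap(x+\Interior(A))\bigr) = \chi\bigl(A\cap(x+A)\bigr) - \chi\bigl(A\cap(x+\partial A)\bigr).
\]
If $A\cap(x+A)$ is empty, both terms vanish. Otherwise $A\cap(x+A)$ is a nonempty compact convex set, so its Euler characteristic is $1$. The geometric heart of the lemma is that $\chi(A\cap(x+\partial A))$ also equals $1$: this set is the portion of the topological sphere $x+\partial A$ lying inside the convex body $A$, and by convexity it forms a single ``spherical cap'' definably homeomorphic to a closed $(n-1)$-disk. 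I would prove this by explicit radial retraction from any point of $\Interior(A)\cap\Interior(x+A)$, or by invoking Hardt on the tame family of translates $\{y+\partial A\}_{y\in A-A}$. The degenerate case $x\in\partial(A-A)$, where $A$ and $x+A$ meet only tangentially, reduces by a compactness argument to a lower-dimensional instance of the same calculation.

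The main obstacle is precisely this cap claim --- verifying that $A\cap(x+\partial A)$ is definably contractible rather than disconnected or topologically nontrivial. This is the one step in which the convexity hypothesis plays an essential role; everything else is routine additivity of $\chi$ together with the Hardt-driven local model for definable sets.
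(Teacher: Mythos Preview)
The paper does not actually prove this lemma; it is stated with attribution to Schapira \cite{Schapira:tom} and no argument is supplied. So there is no paper proof to compare against, and the question is simply whether your outline is sound.

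It is. The computation $\Dual\one_{-A}=(-1)^n\one_{-\Interior(A)}$, the reduction of the convolution to $(-1)^n\chi\bigl(A\cap(x+\Interior A)\bigr)$, the value at $x=0$, and the additive decomposition for $x\neq 0$ are all correct. The cap claim $\chi\bigl(A\cap(x+\partial A)\bigr)=1$ is indeed the crux, and it is true, but your proposed mechanisms are a bit loose. Radial retraction from a point of $\Interior A\cap\Interior(x+A)$ does not by itself force contractibility of the cap: that argument would apply equally to any pair of convex bodies, yet the analogous intersection can be disconnected (take $A$ a disc and replace $x+A$ by a thin slab --- the slab boundary meets $A$ in two arcs). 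What is essential is that $x+A$ is a \emph{translate} of $A$. One clean way to exploit this: take $x$ parallel to $e_n$, slice $A$ as $\{y\}\times[\alpha(y),\beta(y)]$ over $y\in\pi(A)\subset\real^{n-1}$, and check that $A\cap(x+\partial A)$ fibers (with compact contractible fibers) over the set $D=\{y:\beta(y)-\alpha(y)\ge|x|\}$. The width $\beta-\alpha$ is concave by convexity of $A$, so $D$ is convex; the Fubini theorem (Theorem~\ref{thm:fubini}) then gives $\chi=1$. With that replacement for your retraction sketch, the proof is complete.
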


The related \style{link} transform $\Link$ is defined as in $\Dual$ but by integrating over the boundary sphere:
\begin{equation}
    \Link h(x)
    =
    \lim_{\epsilon\to 0^+}\int_X h\one_{\del B(x,\epsilon)}d\chi.
\end{equation}
The link transform acts, in a sense, like a derivative in the Euler calculus. For $h\in\CF(\real^{2n})$, $\Link h$, like a derivative, vanishes everywhere except at the discontinuities of $h$. On an $n$-manifold, $\Link$ acts as multiplication by $1-(-1)^n$ on open regions where $h$ is constant. By now, the reader is used to the particulars of this dimensional parity. The relationship to $\Dual$ is made precise in the following:
\begin{lemma}
$\Link=\Id-\Dual$.
\end{lemma}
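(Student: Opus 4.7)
\smallskip
\noindent\textbf{Proof proposal.}
The plan is to exhibit both $\Link h(x)$ and $\Dual h(x)$ as summands of a single limit whose total value is $h(x)$. Specifically, since, for any $\epsilon>0$, the closed ball $\bar B(x,\epsilon)$ is the disjoint union of the open ball $B(x,\epsilon)$ and the sphere $\del B(x,\epsilon)$, additivity of the Euler integral (Lemma \ref{lem:MVchi}) gives
\[
    \int_X h\, \one_{\bar B(x,\epsilon)}\, d\chi
    \;=\;
    \int_X h\, \one_{B(x,\epsilon)}\, d\chi
    \;+\;
    \int_X h\, \one_{\del B(x,\epsilon)}\, d\chi.
\]
The two terms on the right converge, by definition, to $\Dual h(x)$ and $\Link h(x)$. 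So the entire content of the lemma reduces to the pointwise ``fundamental theorem''
\begin{equation}\label{eq:goal-plan}
    \lim_{\epsilon\to 0^+}\int_X h\,\one_{\bar B(x,\epsilon)}\,d\chi \;=\; h(x).
\end{equation}

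To establish \eqref{eq:goal-plan}, I would exploit the local conic structure of tame sets, which is a standard consequence of the Triangulation Theorem (Theorem~\ref{thm:Triangulation}) refined to respect the level sets of $h$. Namely, for $\epsilon$ sufficiently small there is a definable homeomorphism of pairs between $\bar B(x,\epsilon)$ and the closed cone on $\del B(x,\epsilon)$ (with $x$ mapped to the apex), compatible with each level set $\{h=s\}$. Decomposing $h=\sum_s s\,\one_{\{h=s\}}$ and writing $L_s := \{h=s\}\cap \del B(x,\epsilon)$ for the corresponding link, the conic structure yields two cases. For $s=h(x)$, the set $\{h=s\}\cap\bar B(x,\epsilon)$ is definably homeomorphic to the closed cone $c(L_s)$, which is contractible, so $\chi=1$. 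For $s\neq h(x)$, the set $\{h=s\}\cap\bar B(x,\epsilon)$ is the cone minus its apex, definably homeomorphic to $L_s\times (0,1]$. Since the half-open interval satisfies $\chi((0,1])=0$ (one open $0$-cell minus one open $1$-cell), multiplicativity (Lemma \ref{lem:chiproduct}) gives $\chi = \chi(L_s)\cdot 0 = 0$.

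Substituting these Euler characteristics into the level-set formula (equation (\ref{eq:level})) collapses the sum to a single term,
\[
    \int_X h\,\one_{\bar B(x,\epsilon)}\,d\chi
    \;=\; \sum_s s\cdot\chi\!\left(\{h=s\}\cap\bar B(x,\epsilon)\right)
    \;=\; h(x)\cdot 1 \;=\; h(x),
\]
already for all $\epsilon$ below the threshold at which the conic structure holds, which establishes \eqref{eq:goal-plan} (and in fact shows the limit is eventually constant). Combining with the splitting above gives $h(x)=\Dual h(x)+\Link h(x)$, i.e.\ $\Link = \Id-\Dual$.

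The main obstacle in this outline is the justification of the compatible local conic structure, i.e.\ that a single $\epsilon$ works simultaneously for all (finitely many) level sets of $h$. This is where tameness does the heavy lifting: applying Theorem~\ref{thm:Triangulation} to the stratification of $X$ by the level sets $h^{-1}(s)$, the star of $x$ in a sufficiently fine triangulation provides the required definable cone neighborhood. Once this is in hand, the remaining Euler-characteristic computations are elementary consequences of additivity, multiplicativity, and the trivial identity $\chi((0,1])=0$.
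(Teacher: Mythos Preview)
Your argument is correct. The paper states this lemma without proof, so there is nothing to compare against; your proposal supplies what the paper omits. The decomposition $\one_{\bar B(x,\epsilon)}=\one_{B(x,\epsilon)}+\one_{\del B(x,\epsilon)}$ together with linearity of $\int\,d\chi$ on $\CF(X)$ reduces the claim exactly to \eqref{eq:goal-plan}, and your verification of \eqref{eq:goal-plan} via the local conic structure is the standard and correct route: the cone over $L_{h(x)}$ has $\chi=1$ (even when $L_{h(x)}=\varnothing$, since then the set is just $\{x\}$), while $L_s\times(0,1]$ has $\chi=0$ by Lemma~\ref{lem:chiproduct}. The only point worth tightening in a final write-up is the citation for the simultaneous conic structure: rather than Theorem~\ref{thm:Triangulation} alone, the precise statement you need is the o-minimal local conic structure theorem for a finite definable family (as in van den Dries, Ch.~8), which guarantees a single $\epsilon$ and a single definable homeomorphism of $\bar B(x,\epsilon)$ onto the cone on $\del B(x,\epsilon)$ carrying each level set $\{h=s\}$ to the corresponding sub-cone. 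With that reference in place the proof is complete.
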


\section{Radon transforms and inversion}
\label{sec:radon}

One of the most flexible integral transforms is the \style{Radon transform} of Schapira \cite{Schapira:tom,Brocker,CE}. This Fredholm-type transform integrates over a kernel via pushforward and pullback operations on $\CF$. Recall from \S\ref{sec:integral} the pushforward $F_*\colon\CF(X)\to\CF(Y)$ associated to a definable $F\colon X\to Y$ via fibrewise Euler integration. The \style{pullback} $F^*\colon\CF(Y)\to\CF(X)$ is the obvious composition: given $g\in\CF(Y)$ constructible, $(F^*g) := g\circ F$. Functoriality of these operations is expressed in the \style{projection formula}: for $g\in\CF(Y)$ and $h\in\CF(X)$,
\begin{equation}
\label{eq:proj}
           F_*(h(F^*g)) = F_*(h)g ,
\end{equation}

Given a locally closed definable set $S\subset W\times X$, let $P_W$ and $P_X$ denote the projection maps of $W\times X$ to their factors. The \style{Radon transform} is the map $\Radon_S\colon\CF(W)\to \CF(X)$ given by
\begin{equation}
    \Radon_S h = (P_X)_*((P_W^*h)\one_S) .
\end{equation}
The push/pull language is apt. One pulls back $h\in\CF(W)$ via projection to the support $S$ (along vertical fibers) then pushes this down (via integration along horizontal fibers) to $\Radon_S h\in\CF(X)$. This is reminiscent of a Fredholm transform with kernel $\one_S$.

\begin{example}
Duality on $\CF(X)$ is the Radon transform associated to the relation $S\subset X\times X$ where $S$ is a sufficiently small open tubular neighborhood of the diagonal $\Delta=\{(x,x)\,\colon\,x\in X\}$.
\end{example}

In the context of sensor networks, the Radon transform is entirely natural. As in \S\ref{sec:target} let $W$ denote the  target space and $X$ denote the sensor space. The sensing relation is $\Support = \{(w,x)\, :$~the sensor at $x$ senses a target at $w \}$. This lies in the product space $W\times X$ as a relation whose vertical fibers $\Support_w=P_X(P_W\inv(w)\cap\Support)$ are target supports and whose horizontal fibers $\Support_x=P_W(P_X\inv(x)\cap\Support)$ are sensor supports. Consider a finite set of targets $T\subset W$ as defining an atomic function $\one_T\in \CF(W)$. Observe that the counting function $h\in\CF(X)$ which the sensor field on $X$ returns is precisely the Radon transform $\Radon_{\Support}\one_T$. In this language, Theorem \ref{thm:enumerate} is implied by the following:

\begin{lemma}
\label{lem:radon}
Assume that $\Support\subset W\times X$ has vertical fibers $P_W\inv(w)\cap \Support$ with constant Euler characteristic $N$. Then, $\Radon_\Support\colon\CF(W)\to \CF(X)$ scales Euler integration by a factor of $N$:
\[
    \int_X\,d\chi\,\circ\,\Radon_\Support = N \int_W\,d\chi .
\]
\end{lemma}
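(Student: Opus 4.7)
The plan is to unfold the definition of the Radon transform and apply the Fubini Theorem twice, with the projection formula in between. Let $h \in \CF(W)$; the goal is
\[
\int_X \Radon_\Support h \, d\chi = \int_X (P_X)_*\bigl((P_W^* h)\one_\Support\bigr)\, d\chi = N \int_W h\, d\chi.
\]

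First, I would observe that, by definition of the pushforward and the Fubini Theorem (Theorem \ref{thm:fubini}), for any $f \in \CF(W \times X)$ one has
\[
\int_X (P_X)_* f\, d\chi = \int_X \!\left( \int_{P_X^{-1}(x)} f \, d\chi\right) d\chi(x) = \int_{W \times X} f\, d\chi,
\]
and, applying Fubini the other way along $P_W$, this also equals $\int_W (P_W)_* f\, d\chi$. Taking $f = (P_W^* h)\one_\Support$ therefore gives
\[
\int_X \Radon_\Support h \, d\chi = \int_W (P_W)_*\bigl((P_W^* h)\one_\Support\bigr)\, d\chi.
\]

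Next, I would invoke the projection formula (\ref{eq:proj}) applied to $F = P_W$, with $g = h \in \CF(W)$ and the constructible function $\one_\Support \in \CF(W \times X)$ in the role of the integrand. This yields
\[
(P_W)_*\bigl((P_W^* h)\, \one_\Support\bigr) = h \cdot (P_W)_* \one_\Support.
\]
The fibrewise integral $(P_W)_* \one_\Support$ evaluated at $w$ is precisely $\chi(P_W^{-1}(w) \cap \Support)$, i.e.\ the Euler characteristic of the vertical fiber over $w$, which by hypothesis is the constant $N$. Thus $(P_W)_*\bigl((P_W^* h)\one_\Support\bigr) = N h$, and integrating against $d\chi$ on $W$ produces the claimed factor.

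I do not anticipate any real obstacle: the argument is a formal manipulation of pushforward, pullback, and the projection formula, all of which are sanctioned by the Fubini Theorem for definable maps (Theorem \ref{thm:fubini}) together with the Hardt Theorem (Theorem \ref{thm:Hardt}) guaranteeing that $\Radon_\Support h$ and the intermediate push/pull objects are genuinely in $\CF$. The only mild subtlety is verifying that $\one_\Support$ is constructible (so that the projection formula applies), which follows from $\Support$ being a locally closed definable subset of $W \times X$.
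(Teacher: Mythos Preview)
Your proof is correct and follows essentially the same route as the paper: both compute $\int_X \Radon_\Support h\,d\chi$ by pushing up to $W\times X$ via Fubini and then integrating over the $W$-fibers, using the constant-$\chi$ hypothesis on vertical fibers. The only difference is cosmetic: the paper verifies the identity first for $h=\one_V$ with $V$ a contractible stratum and then invokes linearity, whereas you handle general $h$ in one stroke by citing the projection formula (\ref{eq:proj}).
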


\begin{proof}
Consider a compact contractible stratum $V\subset W$. Then $(P_W)^*\one_W=\one_{\Support'}$ for some $\Support'\subset\Support$. Knowing that $(P_X)_*\one_{\Support'}=\Radon_\Support\one_{V}$ and that the vertical fibers of $\Support'$ have $\chi=N$ yields
\[
    \int_X\Radon_\Support\one_{\Support'}d\chi
    = \int_{W\times X}\one_{\Support'}d\chi
    = \int_W\int_{F\inv(w)}\one_{\Support'}d\chi(s)\,d\chi(w)
    = N\int_W\one_Vd\chi = N ,
\]
thanks to the Fubini theorem. Linearity of the integral extends the result from $\one_V$ to all of $\CF(W)$.
\end{proof}

\subsection{Schapira's inversion formula}
\label{sec:inversion}

A similar regularity in the Euler characteristics of fibers allows a general inversion formula for the Radon transform \cite{Schapira:tom}.

\begin{theorem}[Schapira]
\label{thm:invradon}
Assume that $\Support\subset W\times X$ and $\Support'\subset X\times W$ have fibers $\Support_w$ and $\Support'_w$ in $X$ satisfying (1) $\chi(\Support_w\cap\Support'_w)=\mu$ for all $w\in W$; and (2) $\chi(\Support_w\cap\Support'_{w'})=\lambda$ for all $w'\neq w \in W$. Then for all $h\in \CF(W)$,
\begin{equation}
\label{eq:schapira}
    (\Radon_{\Support'}\circ\Radon_\Support)h
    =
    (\mu-\lambda)h
    + \lambda\left(\int_W h\,d\chi\right)\one_W
    .
\end{equation}
\end{theorem}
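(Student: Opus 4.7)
The plan is to expand the composition $\Radon_{\Support'} \circ \Radon_\Support$ into a double Euler integral, swap the order of integration via the Fubini Theorem (Theorem~\ref{thm:fubini}), and read off the two terms from the two cases of the hypothesis.

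First I would unwind the definitions. For $h \in \CF(W)$ and the sensing relation $\Support \subset W \times X$, the formula $\Radon_\Support h = (P_X)_* ((P_W^* h)\one_\Support)$ combined with the Fubini Theorem gives the pointwise expression
\[
    (\Radon_\Support h)(x) = \int_W h(w)\, \one_\Support(w,x) \, d\chi(w).
\]
The same reasoning applied to $\Support' \subset X \times W$ yields, for $g \in \CF(X)$,
\[
    (\Radon_{\Support'} g)(w') = \int_X g(x)\, \one_{\Support'}(x,w') \, d\chi(x).
\]
Composing and applying Fubini once more to swap the order of the $W$- and $X$-integrations produces
\[
    (\Radon_{\Support'}\circ\Radon_\Support)h(w') = \int_W h(w) \left( \int_X \one_\Support(w,x)\, \one_{\Support'}(x,w')\, d\chi(x) \right) d\chi(w).
\]
The inner integral is precisely $\chi(\Support_w \cap \Support'_{w'})$.

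Next I would use the kernel hypothesis to split the inner fiber Euler characteristic into a constant part and a diagonal part:
\[
    \chi(\Support_w \cap \Support'_{w'}) = \lambda + (\mu - \lambda)\, \one_{\{w'\}}(w),
\]
which is valid for every $w \in W$ since condition~(1) handles $w = w'$ and condition~(2) handles $w \neq w'$. Substituting and using linearity of $\int d\chi$ yields
\[
    (\Radon_{\Support'}\circ\Radon_\Support)h(w') = \lambda \int_W h\, d\chi + (\mu - \lambda)\int_W h(w)\, \one_{\{w'\}}(w)\, d\chi(w).
\]
The second integral evaluates to $h(w')\,\chi(\{w'\}) = h(w')$, recovering the stated identity.

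The only subtle point, and the place where one must pause, is the Fubini step: strictly speaking we need $\one_\Support(w,x)\,\one_{\Support'}(x,w')$ to define a constructible function on the triple product $W \times X \times W$ with tame projections. This is immediate from the assumed tameness of $\Support$ and $\Support'$ and the closure properties of the o-minimal structure, but it deserves an explicit mention. Once Fubini is justified, the decomposition of $\chi(\Support_w \cap \Support'_{w'})$ as a sum of a constant and a multiple of $\one_{\{w'\}}$ does all the remaining work, and the reliance of the final identification on $\chi(\{w'\}) = 1$ is a clean illustration of why points are not negligible in Euler calculus.
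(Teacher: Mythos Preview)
Your proof is correct and follows essentially the same route as the paper's: unwind the Radon transforms into iterated Euler integrals, swap the order via Fubini, identify the inner integral as $\chi(\Support_w\cap\Support'_{w'})$, and split it as $\lambda + (\mu-\lambda)\one_{\{w'\}}(w)$. The paper phrases the argument in the slightly more general setting of weighted kernels $K\in\CF(W\times X)$ rather than indicator functions $\one_\Support$, but the computation is identical.
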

\begin{proof}
We prove a generalization as from \cite{BGL}. It is a simple matter to generalize from Radon transforms to Fredholm transforms with arbitrary constructible kernels having an Euler regularity in the fibers. Given a kernel $K\in\CF(W\times X)$, one defines the weighted Radon transform as $\Radon_K h = (P_X)_*((P_W^*h)K)$. Inversion requires an inverse kernel $K' \in \CF(X \times W)$ such that there exist constants $\mu$ and $\lambda$ with
$\int_X  K(w,x) K'(x,w') d\chi=(\mu-\lambda)\delta_{w-w'} + \lambda$ for all $w,w' \in W$. Then, for all $h \in \CF(W)$, we claim that:
\begin{equation}
\label{eq:weightedRadon}
         (\Radon_{K'}\circ\Radon_K)h = (\mu-\lambda) h + \lambda\left(\int_W h\, d\chi\right)\one_{W}.
\end{equation}
To show this, note that for any $w' \in W$,
\begin{eqnarray*}
    (\Radon_{K'}\circ\Radon_{K}h)(w')
    & = & \int_{X} \left[\int_{W} h(w)K(w,x)\,d\chi \right] K'(x,w')\,d\chi \\
    & = & \int_{W} h(w)\left[\int_{X}  K(w,x) K'(x,w') d\chi\right] d\chi  \\
    & = & \int_{W} \left[ (\mu-\lambda) h(w)\delta_{w-w'} + \lambda h(w)\right] d\chi    \\
    & = & (\mu-\lambda)h(w') + \lambda\int_W h\,d\chi ,
\end{eqnarray*}
where the Fubini theorem is used in the second line.
\end{proof}

If the conditions of Theorem \ref{thm:invradon} are met and $\lambda\neq\mu$, then the inverse Radon transform $\Radon_{S'}h=\Radon_{S'}\Radon_S\one_T$ is equal to a multiple of $\one_T$ plus a multiple of $\one_W$. Thus, one can localize and identify a collection of targets --- and determine the exact shape of $T$ --- by performing the inverse transform.

A more general proof still (by D. Lipsky \cite{BGL}) uses the following \style{cocycle condition}. Let $X_1$, $X_2$, and $X_3$ be spaces. Supposing the constructible kernels $K_i \in \CF\left(\prod_{j\neq i}X_j\right)$ and the projection maps $P_i\colon\prod_iX_i\to\prod_{j\neq i}X_j$ satisfy the cocycle condition
\begin{equation}
    K_{3} =(P_3)_*(P_1^*K_1 \cdot P_2^*K_2).
\end{equation}
Then, consider the diagram
\begin{equation}
\xymatrix{
 & X_1\times X_3 \ar[dl]\ar[dr]& \\
X_1 & & X_3\\
 &X_1\times X_2\times X_3 \ar[uu]^{P_2}\ar[dr]_{P_1}\ar[dl]^{P_3} & \\
X_1\times X_2\ar[uu]\ar[dr] & & X_2\times X_3 \ar[uu]\ar[dl]\\
 & X_2 &
}
\end{equation}
It follows from commutativity and Equation (\ref{eq:proj}) that $\Radon_{K_2} \circ \Radon_{K_1} = \Radon_{K_3}$.

\subsection{Examples of Radon inversion}
\label{sec:radonex}

\begin{example}[Hyperplanes]
\label{ex:radonbrain}
Schapira's paper outlines a potential application to imaging. Assume that $W=\real^3$ and one scans a compact subset $T\subset W$ by slicing $\real^3$ along all flat hyperplanes, recording simply the Euler characteristics of the slices of $T$. Since a compact subset of a plane has Euler characteristic the number of connected components minus the number of holes (which, in turn, equals the number of bounded connected components of the complement), it may be feasible to compute an accurate Euler characteristic, even in the context of noisy readings (using convolution to smooth the data).

This yields a constructible function on the sensor space $X=\AGr^3_2$ (the affine Grassmannian manifold of 2-planes in $\real^3$) equal to the Radon transform of $\one_T$. Using the same sensor relation to define the inverse transform is effective. Since $\Sense_w\cong\RP^2$ and $\Sense_w\cap\Sense_{w'}\cong\RP^1$, one has $\mu=\chi(\RP^2)=1$, $\lambda=\chi(\RP^1)=0$, and the inverse Radon transform, by (\ref{eq:schapira}), yields $\one_T$ exactly: one can recover the shape of $T$ based solely on connectivity data of black and white regions of slices.
\end{example}

\begin{example}[Sensor beams]
\label{ex:radonbeam}
Let the targets $T$ be a finite disjoint collection of points in $W=D^n$, the open unit disc in $\real^n$. Assume that the boundary $\partial W$ is lined with sensors, each of which sweeps a ray over $W$ and counts, as a function of bearing, the number of targets intersected by the beam. The sensor space $X$ is homeomorphic to $T_*\Sphere^{n-1}$, the tangent bundle of $\partial W$. (To see this, note that the bearing of a ray at a point $p\in\partial W$ lies in the open hemisphere of the unit tangent bundle to $W$ at $p$. This open hemisphere projects to the open unit disc in $T_p\partial W$.)
\begin{figure}[hbt]
\begin{center}
\includegraphics[angle=0,width=4in]{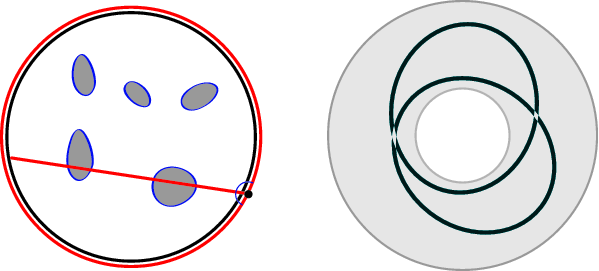}
\caption{Convex targets occupying a disc $W=D^2$ are sensed by a beam emitted from the edge [left]; any pair of distinct vertical fibers correspond to intersecting circles in $X$ [right]. }
\label{fig:beams}
\end{center}
\end{figure}

Any point in $W$ is seen by any sensor in $\partial W$ along a unique bearing angle. Thus, the sensor relation $\Sense$ has vertical fibers (target supports) which are sections of $T_*\Sphere^{n-1}$ and hence spheres of Euler measure $\mu=\chi(\Sphere^n)=1+(-1)^n$.
Any two distinct vertical fibers project to $X$ and intersect along the subset of rays from $\partial W$ that pass through both points in $W$: this is a discrete set of cardinality (hence $\chi$) $\lambda=2$. For $n$ even, one has $\lambda=2$, $\mu=0$, and Equation (\ref{eq:schapira}) implies that the inverse Radon transform gives $-2\one_T + 2(\# T)\one_W$. Thus, on even-dimensional spaces, targets may be localized with boundary beam sensors. One may likewise assume that the target set $T$ is not a collection of points but rather a disjoint collection of compact sets in $W$; target shapes will then be recovered, assuming the ability of beams to measure Euler characteristic of slices exactly. This is perhaps reasonable for convex-shaped targets.
\end{example}

\begin{example}[distance-based sensing]
\label{ex:invradoncomplements}
Non-self-dual examples of Radon inversion can be generated easily with complementary supports. For example, let $X=W=\real^n$ with $\Support_w$ a closed ball about $w$ and $\Support'_w$ the closure of the complement in $\real^n$. Physically, this means that the sensor relation detects proximity-within-range, and the inverse sensor relation counts targets out-of-range. The inversion formula applies, because of the singular nature of the ``eclipse'' that occurs when targets coalesce. Specifically: (1) $\chi(\Support_w\cap\Support'_{w'\neq w})=1$; and (2) $\chi(\Support_w\cap\Support'_{w})=\chi(\Sphere^{n-1})\neq 1$.

There are technicalities in applying the inversion formula in settings where the supports are non-compact, since we have defined $\CF$ in terms of compactly supported functions. This does not interfere with inversion in this example: all non-compact integrands are still tame. In addition, this example may be easily modified so that the supports (and the corresponding complements) change smoothly from point-to-point within the domain, so long as $\Support_w$ is, say, always compact and contractible and varies ``slowly'' enough so that $\chi(\Support_w\cap\Support'_{w'\neq w})=1$.
\end{example}

\begin{example}[discrete distance]
\label{ex:invradonbessel}
Using discrete distances can lead to inversions. Let $W=\real^{2n}=X$ and define  $\Support_w$ as the unit sphere about $w$: sensors count targets at a fixed distance. The inverse kernel $\Support'$ has fibers equal to concentric spheres about the basepoint of radius $r=1,3,5,\ldots$. Distinct fibers of $\Support$ and $\Support'$ always intersect in a set homeomorphic either to $\Sphere^{2n-2}$ or, if the points line up at integer distances, $\Sphere^0$: either way, $\chi(\Support_w\cap\Support'_{w'\neq w})=2$. In the instance when the basepoints coincide, the intersection is precisely the unit sphere $\Sphere^{2n-1}$ with $\chi=0$: full invertibility follows.
\begin{figure}[hbt]
\begin{center}
\includegraphics[angle=0,width=5in]{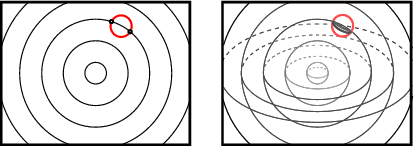}
\caption{Using supports for a Radon transform based on discrete distances works in even dimensions [left] but requires a signed weighting scheme in odd dimensions [right].}
\label{fig:radonex}
\end{center}
\end{figure}
\end{example}

\begin{example}[weighted kernels]
\label{ex:invradonweighted}
Consider the case of Example \ref{ex:invradonbessel}, modified so that $W=X=\real^{2n+1}$. On odd-dimensional spaces, the spheres of intersection have dimension $2n-1$ or $0$, depending on position. Thus $\lambda$ is not a constant, preventing invertibility. However, if we weight the inverse kernel $\Support'$ so that concentric spheres of radius $r=1,3,5,\ldots$ have weight $(-1)^{r+1}$, then $\chi(\Support_w\cap\Support'_{w'\neq w})=0$, independent of position. Full inversion is therefore possible using the generalization of Theorem \ref{thm:invradon} given in the proof.
\end{example}

\section{The Microlocal Fourier transform}
\label{sec:MF}

One of the deep tools of sheaf theory is a Fourier (or Fourier-Sato) transform \cite{KS} that is perhaps best described as {\em microlocal}.  In the context of $\CF(\real^n)$, this transform reveals itself to be extremely concrete and important in applications. Fix $\real^n$ with the standard Euclidean structure. The \style{microlocal Fourier transform} of $h\in\CF(\real^n)$ at $x$ takes as its argument a unit vector $\xi\in\Sphere^{n-1}$ and returns:
\begin{equation}
    (\FourierSato\,h)_x(\xi) = \lim_{\epsilon\to 0^+}\int_{B_\epsilon(x)}\one_{\xi\cdot (y-x)\geq 0}h\,d\chi(y) ,
\end{equation}
where, as before $B_\epsilon$ denotes the {\em open} ball of radius $\epsilon$. Like the classical Fourier transform, $\FourierSato$ takes a frequency vector $\xi$ and integrates over isospectral sets defined by dot product. The microlocal character of the transform stems from its dependence on the location $x$ as well as the direction $\xi$; in \S\ref{sec:fourier}, we will outline a more global Fourier transform.
\begin{figure}[hbt]
\begin{center}
\includegraphics[angle=0,width=4.5in]{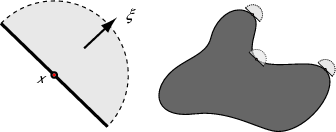}
\caption{The microlocal Fourier transform at $x$ has support on a half-open half-disc in the direction $\xi$ [left]. This directional component captures critical-point features of the integrand with respect to the Morse function pointing in the direction $\xi$ [right].}
\label{fig:MF}
\end{center}
\end{figure}

The microlocal Fourier transform is strongly connected to the (differential) geometry of the integrand. For $Y$ a compact tame set, $\FourierSato\one_Y(\xi)$ is the constructible function on $Y$ that records the \style{Euler-Poincar\'e index} of the (generically Morse) function pointing in the direction $\xi$ (\cf the results from \S\ref{sec:morse}). This function, when averaged over $\xi$ with respect to Haar measure on $\Sphere^{n-1}$ yields the curvature measure $d\kappa_Y$ on $Y$ implicit in the Gauss-Bonnet Theorem (as shown by Br\"ocker and Kuppe \cite{BK}): for any open Borel set $U\subset\real^n$,
\[
    \int_U d\kappa_Y
    =
    \frac{1}{\vol\ \Sphere^{n-1}}
    \int_{\Sphere^{n-1}}\int_U \FourierSato\,\one_Y
    \, d\chi \, d\xi
    .
\]
For $Y$ a smooth surface in $\real^3$, $d\kappa$ is (up to $2\pi$) Gauss curvature times the area form; on the boundary of such a surface, $d\kappa$ is geodesic curvature times the length form; and on a polyhedral domain, $d\kappa$ is supported on the vertex set and returns the exterior angles. The Gauss-Bonnet Theorem, properly interpreted in this language, says that
\begin{equation}
    \frac{1}{\vol\ \Sphere^{n-1}}
    \int_{\Sphere^{n-1}}\int_Y \FourierSato\,\one_Y
    \, d\chi \, d\xi
    =
    \int_Y\,d\chi = \chi(Y)
\end{equation}
It is apparent that the Gauss-Bonnet theorem extends via linearity of $\FourierSato$ to all of $\CF(Y)$, and one may sensibly speak of the curvature form $d\kappa_h$ of $h\in\CF(Y)$:
\begin{equation}
    \int_U d\kappa_h
    =
    \frac{1}{\vol\ \Sphere^{n-1}}
    \int_{\Sphere^{n-1}}\int_U \FourierSato\, h
    \, d\chi \, d\xi
    .
\end{equation}
This is yet another concrete and elegant example in the style of categorification that the sheaf-theoretic Euler calculus permits: the Gauss-Bonnet Theorem applies not just to sets but also to (constructible) data over sets.

\section{Hybrid Euler-Lebesgue integral transforms}
\label{sec:hybrid}

Blending Euler characteristic with Lebesgue integration has a long history in integral geometry and its applications \cite{Adler,Dohmen,GS,Wors1}. There are a number of interesting integral transforms based on a mixture of $d\chi$ and standard Lebesgue measure. We introduce (from \cite{GR}) two Euler integral transforms on real vector spaces for use in signal processing problems.

\subsection{Fourier}
\label{sec:fourier}

The following integral transform is best thought of as a global version of the microlocal Fourier transform $\FourierSato$ on a finite-dimensional real vector space $V$ with inner product. The Fourier transform takes as its argument a covector $\xi\in V^\vee$. For $h\in\CF(V)$ define the \style{Fourier transform} of $h$ in the direction $\xi\in V^\vee$ as
\begin{equation}
    \Fourier h(\xi) = \int_0^\infty\int_{\xi\inv(r)}h\,d\chi\,dr .
\end{equation}
The integration domain $\xi\inv(r)$ is the \style{isospectral set} of the transform; in this case, as in the classical Fourier transform, a hyperplane normal to $\xi$. One may reasonably rescale $\xi$ to a unit covector: this has the effect of rescaling the normal $dr$ measure.

\begin{example}
\label{ex:fourierwidth}
For $A$ a compact convex subset of $\real^n$ and $\norm{\xi}=1$, $(\Fourier\one_A)(\xi)$ equals the projected width of $A$ along the $\xi$-axis.
\end{example}

\begin{figure}[hbt]
\begin{center}
\includegraphics[angle=0,width=5in]{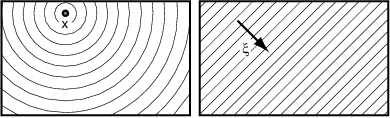}
\caption{Isospectral sets for Bessel [left] and Fourier [right] transforms. In these transforms, one integrates with respect to $d\chi$ along isospectral sets, then across the isospectral sets with respect to Lebesgue measure.}
\label{fig:isospectrals}
\end{center}
\end{figure}

\subsection{Bessel}
\label{sec:bessel}
For the Eulerian generalization of a classical Bessel (or Hankel) transform, let $V$ denote a finite-dimensional real vector space with norm $\norm{\cdot}$, and let $D_r(x)$ denote the compact disc of points $\{y \, : \, \norm{y-x}\leq r\}$. Recall that $\CF$ denotes compactly-supported definable integer-valued functions. For $h\in \CF(V)$ define the \style{Bessel transform} of $h$ via
\begin{equation}
    \Bessel h(x) = \int_0^\infty\int_{\partial D_r(x)}h\,d\chi\,dr .
\end{equation}

This transform Euler-integrates $h$ over the concentric spheres at $x$ of radius $r$, and Lebesgue-integrates these spherical Euler integrals with respect to $r$. For the Euclidean norm, these isospectral sets are round spheres. Given our convention that $\CF(V)$ consists of compactly supported functions, $\Bessel\colon\CF(V)\to\Def(V)$ is well-defined using Theorem \ref{thm:Hardt} (Hardt Theorem).

The Bessel transform can be seen as a Fourier transform of the log-blowup. This perspective leads to results like the following.

\begin{proposition}
\label{prop:fourierbessel}
The Bessel transform along an asymptotic ray is the Fourier transform along the ray's direction: for $h\in\CF(V)$ and $x\neq 0\in V$,
\begin{equation}
    \lim_{\lambda\to\infty}(\Bessel h)(\lambda x) = (\Fourier h)\left(\frac{x^\vee}{\norm{x^\vee}}\right) .
\end{equation}
where $x^\vee$ is the dual covector.
\end{proposition}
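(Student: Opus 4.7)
The plan is to exploit the asymptotic flattening of large spheres: as $\lambda\to\infty$, the spheres $\partial D_r(\lambda x)$ that intersect the compact support of $h$ have radius $r$ close to $\lambda\norm{x}$, and on this window they approximate, on any bounded region, the affine hyperplanes normal to $x$. The appropriate change of variable converts the radius $r$ into a signed height $s=\lambda\norm{x}-r$ along $\xi:=x^\vee/\norm{x^\vee}$, turning a sphere integration into a hyperplane integration in the limit.

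\textbf{Reduction and change of variable.} By $\zed$-linearity of both $\Bessel$ and $\Fourier$ in the integrand and the decomposition $h=\sum_\alpha c_\alpha\one_{A_\alpha}$, it suffices to prove the identity when $h=\one_A$ with $A$ compact and tame. Pick $R>0$ with $A\subset B_R(0)$. For $\lambda$ so large that $\lambda\norm{x}>R$, the sphere $\partial D_r(\lambda x)$ meets $A$ only for $r\in[\lambda\norm{x}-R,\lambda\norm{x}+R]$, so substituting $s=\lambda\norm{x}-r$ gives
\[
    (\Bessel\one_A)(\lambda x)=\int_{-R}^{R}\chi\bigl(\partial D_{\lambda\norm{x}-s}(\lambda x)\cap A\bigr)\,ds.
\]

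\textbf{Fiberwise stabilization via Hardt.} Introduce the tame function $F_\lambda(y):=\lambda\norm{x}-\norm{\lambda x-y}$, whose $s$-level set is exactly the sphere appearing above. Rationalizing and reparametrizing by $\mu=1/\lambda$ yields
\[
    \tilde G(\mu,y)=\frac{2\langle x,y\rangle-\mu\norm{y}^2}{\norm{x}+\norm{x-\mu y}},
\]
which is continuous on $[0,\mu_0]\times V$, agrees with $F_{1/\mu}(y)$ for $\mu>0$, and satisfies $\tilde G(0,y)=\xi(y)$. Since $\tilde G$ is tame in any o-minimal structure containing the Euclidean norm, applying the Hardt Theorem (Theorem~\ref{thm:Hardt}) to the map $(\mu,y)\mapsto(\mu,\tilde G(\mu,y))$ from $[0,\mu_0]\times A$ to $[0,\mu_0]\times\real$ produces a finite partition of the target into definable cells over each of which the fibers are definably homeomorphic. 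Consequently, for each $s\in[-R,R]$ outside a finite exceptional set, there is a neighborhood of $\mu=0$ on which $\chi(\tilde G_\mu^{-1}(s)\cap A)$ is constant and equal to $\chi(\xi^{-1}(s)\cap A)$.

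\textbf{Bounded convergence.} The same Hardt trivialization yields only finitely many topological types of fiber as $(\mu,s)$ varies, so the integrands are uniformly bounded by an integer on the compact window $[-R,R]$. Bounded convergence then gives
\[
    \lim_{\lambda\to\infty}(\Bessel\one_A)(\lambda x)
    = \int_{-R}^{R}\chi\bigl(\xi^{-1}(s)\cap A\bigr)\,ds
    = (\Fourier\one_A)(\xi),
\]
where in the last equality the integrand vanishes for $|s|>R$ since $\xi^{-1}(s)\cap A=\varnothing$ there. The main obstacle is producing a single tame family that genuinely incorporates the limit $\mu=0$: the explicit rationalization bypasses the square-root singularity of $F_\lambda$ and keeps everything semialgebraic, but analogous care is needed if one works in a weaker o-minimal structure where the norm is handled only via axiom extension.
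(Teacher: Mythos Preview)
Your proof is correct and follows the same underlying idea as the paper's --- that spheres centered far away flatten to hyperplanes on compact sets --- though the paper's proof is a single sentence (``The isospectral sets restricted to the (compact) support of $h$ converge in the limit and the scalings are identical''), whereas you supply the rigorous details via Hardt trivialization and bounded convergence. Your explicit rationalization producing a tame family that includes the $\mu=0$ limit is exactly the sort of care the paper's sketch glosses over.
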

\begin{proof}
The isospectral sets restricted to the (compact) support of $h$ converge in the limit and the scalings are identical.
\end{proof}

While the Fourier transform obviously measures a {\em width} associated to a constructible function, the geometric interpretation of the Bessel transform is more involved and related to image processing.
\begin{example}
Consider $h\in\CF(\real^2)$ equal to $h=\sum_\alpha\one_{D_\alpha}$ for $D_\alpha$ a closed disc of unknown location and radius. For each $\alpha$, the Bessel transform of $\one_{D_\alpha}$ takes on a minimum of $0$ precisely at the center of the disc (since $\chi(\Sphere^1)=0$) and has maximum equal to the diameter of $D_\alpha$ (equal to $\Fourier(\one_{D_\alpha})$) asymptotically via Proposition \ref{prop:fourierbessel}.
\end{example}


Section \ref{sec:index} will use index theory to give computational formulae and interpretations for both $\Fourier$ and $\Bessel$, and will apply these observations to target localization and classification problems.

\section{Wavelet transforms}
\label{sec:wavelet}

Wavelet transforms have found numerous applications in recent years for representing multiscale signals \cite{Usevitch_2001,Bradley_1994,Wang_1996}. Though first appearing in the early 20th century \cite{Haar_1909}, the idea lay dormant until the 1970s, when it began to find application in signal processing, culminating in a substantial literature \cite{Daubechies_1992,Graps_1995,Hubbard_1998}. Underlying all wavelet theory is the idea that information can be extracted by examining the inner product between a signal of interest and a collection of localized reference signals.  Typical classes of localized reference signals are the Haar basis \cite{Haar_1909}, the Mexican hat \cite{Brinks_2008}, and the Daubechies wavelet \cite{Daubechies_1992}, though there are many others.

This section gives some preliminary observations about possible wavelet transforms in Euler calculus.  The obvious choice for an Euler inner product on $\CF(X)$ is:
\begin{equation}
\label{eulerinnerproduct}
    (f,g)_\chi=\int_X f\,g \, d\chi.
\end{equation}
This product is bilinear but lacks definiteness as for $f=\one_{(0,1)}$ one has $(f,f)_\chi=-1$. It is
unreasonable to expect the typical Fourier series decomposition of functions to hold in Euler calculus.
Nevertheless, let $S$ be a set of mutually ($\chi$-)orthogonal constructible functions.  What can be learned about $f\in \CF(X)$ by its Euler products with the elements of $S$?

As a first step, we will define two families of wavelets that together define an Euler calculus version of the Haar wavelet transform.
We work over $\real^n$ via induction on $n$. Define the first family
\begin{equation}
    H^{(0)}_{s,t}(x)=
    \begin{cases}
        1&\text{if }2^sx=t\\
        0&\text{otherwise}\\
    \end{cases}
\end{equation}
consisting of atomic functions on dyadic points of the real line, where $s$ and $t$ are integers.  The integers $s$ and $t$ are called {\it scale} and {\it translation} respectively.  We note that there is some harmless duplication of functions with this indexing, for instance, all of the functions with $t=0$ are identical.  The second family consists of step functions of the following form:
\begin{equation}
H^{(1)}_{s,t}(x)=
\begin{cases}
    1&\text{if} \, 0<2^sx-t<1/2\\
    -1&\text{if} \, 1/2<2^sx-t<1\\
    0&\text{otherwise}\\
\end{cases}
\end{equation}
for $s,t\in\zed$.

It should be immediately clear that distinct elements of $\{H^{(0)}_{s,t}\}$ are orthogonal, and that they are orthogonal to each element of $\{H^{(1)}_{\sigma,\tau}\}$ for which $\sigma \le s$. Similarly, the set $\{H^{(1)}_{s,t}\}$ is orthogonal but not orthonormal since $(H^{(1)}_{s,t},H^{(1)}_{s,t})_\chi=-2$.

We extend this definition to functions on $\real^n$ as follows. Let $p\in\{0,1\}^n$ be a binary $n$-tuple, and $s,t\in\zed^n$ be integer $n$-tuples.  We define the Haar wavelet functions on $\bfx=(x_1,\ldots,x_n)$ multiplicatively as
\begin{equation}
    H^{(p)}_{s,t}(x)=H^{(p_1)}_{s_1,t_1}(x_1) ... H^{(p_n)}_{s_n,t_n}(x_n),
\end{equation}
where we again have some harmless duplication of functions, due to the indexing of $H^{(0)}$.

We define the {\it Euler-Haar wavelet transform} of a function $f\in \CF(\real^n)$ to be
\begin{equation}
    (\Wavelet\,f)(p,s,t)=(f,H^{(p)}_{s,t})_\chi,
\end{equation}
for $p\in\{0,1\}^n$ and $s,t\in\zed^n$.

\begin{theorem}
\label{thm:EHWTinjectivity}
The Euler-Haar wavelet transform is injective on $\CF(\real^n)$.
\end{theorem}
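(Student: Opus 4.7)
The plan is to induct on $n$, reducing the general case to one dimension via the product structure of $H^{(p)}_{s,t}$ and the Fubini theorem. The base case $n=1$ will itself split into two phases corresponding to the two wavelet families.

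For $n = 1$, I would first apply $\{H^{(0)}_{s,t}\}$: since $H^{(0)}_{s,t}$ is the indicator of the single dyadic point $d = t/2^s$, the coefficient $(f, H^{(0)}_{s,t})_\chi$ equals $f(d)$, so the hypothesis $\Wavelet f = 0$ forces $f$ to vanish at every dyadic point. Writing $f$ as a finite sum $\sum c_\alpha \one_{\sigma_\alpha}$ over a partition into tame strata on which $f$ is constant, every open-interval stratum meets the dense set of dyadics and hence contributes $c_\alpha = 0$. Thus $f = \sum_{k=1}^m c_k \one_{\{x_k\}}$ is supported at finitely many non-dyadic points. For each $x_k$ I would then choose $s$ large enough that the dyadic grid of spacing $2^{-s-1}$ separates the points $x_j$, and choose $t$ so that $x_k$ lies in the open interval $(t/2^s, (t+1)/2^s)$; since $x_k$ is non-dyadic, it falls strictly in one of the two halves, so $H^{(1)}_{s,t}(x_k) = \pm 1$ while $H^{(1)}_{s,t}(x_j) = 0$ for $j \neq k$. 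The coefficient $(f, H^{(1)}_{s,t})_\chi = \pm c_k$ then forces $c_k = 0$ and $f \equiv 0$.

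For the inductive step, let $F\colon \real^n \to \real^{n-1}$ be projection onto the first $n-1$ coordinates. For each triple $(p_n, s_n, t_n)$ I would set
\begin{equation*}
g_{p_n, s_n, t_n}(y) := F_*\bigl(f \cdot H^{(p_n)}_{s_n, t_n}\bigr)(y) = \int_\real f(y, x_n)\, H^{(p_n)}_{s_n, t_n}(x_n)\, d\chi(x_n),
\end{equation*}
which lies in $\CF(\real^{n-1})$ because $f$ is compactly supported constructible and projection is tame. The multiplicative form $H^{(p)}_{s,t}(x) = \prod_i H^{(p_i)}_{s_i, t_i}(x_i)$ combined with Theorem \ref{thm:fubini} then yields
\begin{equation*}
(\Wavelet f)(p, s, t) \;=\; \bigl( g_{p_n, s_n, t_n},\; H^{(p')}_{s', t'} \bigr)_\chi,
\end{equation*}
where $(p', s', t')$ denotes the first $n-1$ entries of the indices. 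The vanishing hypothesis then forces $g_{p_n, s_n, t_n} \equiv 0$ by the inductive assumption, and at each $y$ this says that the constructible 1D slice $x_n \mapsto f(y, x_n)$ has every 1D Euler-Haar coefficient equal to zero. Applying the base case slice-by-slice gives $f \equiv 0$.

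The substantive step is the base case: vanishing on the dense set of dyadic points does not automatically annihilate a constructible function, because lower-dimensional strata can sit at non-dyadic points and avoid the lattice entirely. The family $\{H^{(1)}_{s,t}\}$ is exactly what detects these residual point masses, since at sufficiently fine scales each $x_k$ is isolated in its own dyadic interval and occupies exactly one of its two halves (it cannot land on the midpoint, which is itself dyadic). Once the base case is in hand, the inductive passage is Fubini bookkeeping for the tensor structure of the wavelet family.
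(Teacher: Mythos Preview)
Your proposal is correct and follows essentially the same approach as the paper: in the one-dimensional base case you use the $H^{(0)}$ family to force $f$ to vanish at all dyadic points, reducing $f$ to a finite sum of point masses at non-dyadic locations, and then isolate each such mass with a sufficiently fine $H^{(1)}$ wavelet. Your inductive reduction via Fubini and the product structure of $H^{(p)}_{s,t}$ spells out what the paper compresses into the single remark that the general case ``follows trivially from showing injectivity in the case $n=1$.''
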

\begin{proof}
The result follows trivially from showing injectivity in the case $n=1$. Suppose that $f\in \CF(\real)$.  Discern two cases
\begin{enumerate}
\item If some nonzero level set of $f$ contains a dyadic point $t/2^s$, then $\int f\,H^{(0)}_{s,t} d\chi \not= 0$.
\item Otherwise, $f\not= 0$ for only a finite set of points, since $f$ is constructible.  Therefore, there are integers $s,t$ so that the support of $H^{(1)}_{s,t}$ contains exactly one point on which $f\not= 0$.  Hence $\int f H^{(1)}_{s,t} d\chi \not= 0$.
\end{enumerate}
These cases are non-overlapping, so both types of Haar wavelet functions, the $H^(0)_{s,t}$ and the $H^(1)_{s,t}$, are necessary for injectivity to hold.
\end{proof}

We note from the proof that there is a function orthogonal to each $H^{(p)}_{s,t}$, so removing any element of the $\{H^{(p)}_{s,t}\}$ destroys the injectivity of $\Wavelet$ on the real line.

\begin{figure}[h]
  \begin{center}
       \includegraphics[width=2.85in]{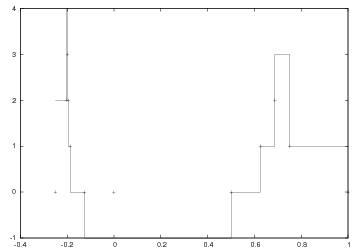}
  \end{center}
  \caption{This is the result of using the Fourier summation formula with $\Wavelet$ coefficients. The original function was the indicator function on the compact interval $[-0.2,0.7]$.}
  \label{fig:invertfail}
\end{figure}

We reiterate that $\Wavelet$ cannot be used to directly reconstruct functions by the usual Fourier series formula
\begin{equation*}
    \sum_{p,s,t} (f,H^{(p)}_{s,t})_\chi\, H^{(p)}_{s,t},
\end{equation*}
by lack of mutual orthogonality.  A rather spectacular example of this failure is shown in Figure \ref{fig:invertfail} which is supposed to be the reconstruction of an indicator function on an interval.

\begin{example}
As an example of how $\Wavelet$ operates on $\CF(\real^2)$, consider the indicator function shown in Figure \ref{fig:circleehwt}[left], with the $\Wavelet$ transform (with $p=(1,1)$, all scales) shown at right, with all relevant scales shown. The image is broken up into rows and columns divided by heavy lines.  Each column corresponds to a particular choice of horizontal scale (starting at zero at the left and increasing to the right), and each row corresponds to a particular vertical scale (starting at zero at the top and increasing as one proceeds downward).  Within each subframe are the coefficients corresponding to various translates of the $H^{(1)}$.
\begin{figure}[htb]
  \begin{center}
       \includegraphics[width=5.0in]{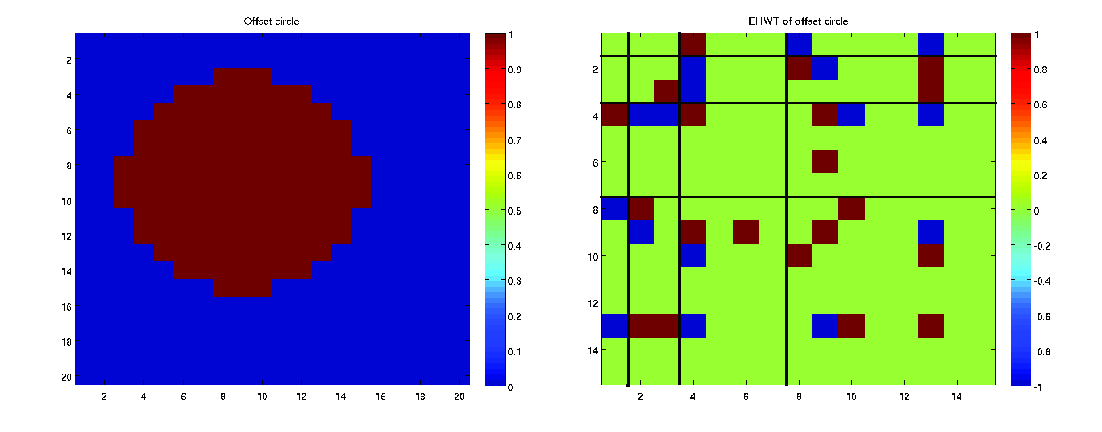}
  \end{center}
  \caption{An indicator function on a disc (left) and its $\Wavelet$ transform (right) with $p=(1,1)$. }
  \label{fig:circleehwt}
\end{figure}

From the definition of the $H^{(1)}$ functions, it is clear that the transform is sensitive to jumps in the original function.  In particular, in the lower-right corner of the transformed function in Figure \ref{fig:circleehwt} (corresponding to the smallest scale) is a rough copy of the gradient of the original function. However, since the original function is not aligned with any dyadic points in $x$ or $y$, there is not perfect symmetry in the transform.  It is also immediately apparent that the transform is rather sparse, though this is more a reflection of of the sparsity of the original function.
\end{example}

Another example of a more complicated constructible function and its $\Wavelet$ transform is shown in Figure \ref{fig:logoehwt}.  Together, these two examples indicate that the transform's coefficients depend on the function in a fairly complicated and sensitive manner and is currently not well understood. Indeed, $\Wavelet$ is {\it not} translation invariant, not even in the weak sense of the Fourier transform in which translation results in a multiplication by a phase. Although this sensitivity to translation might be perceived as a difficulty, it is shared (to a lesser extent) with more traditional wavelet transforms.  Both of these kinds of wavelet transforms are {\it dyadic} translation invariant, so that $\tau \Wavelet=\Wavelet \tau$ for a translation $\tau$ of a power-of-$2$ amount in a cardinal direction.  Its increased sensitivity to translation makes $\Wavelet$ potentially useful for detecting when signals fail to be aligned.  We suspect (but do not here argue) that $\Wavelet$ may prove useful in prefiltering mechanisms for correlation processes.  These find extensive application in synchronization contexts, the most prominent being in localization such as GPS signals and radar.
\begin{figure}[hbt]
  \begin{center}
       \includegraphics[width=5.0in]{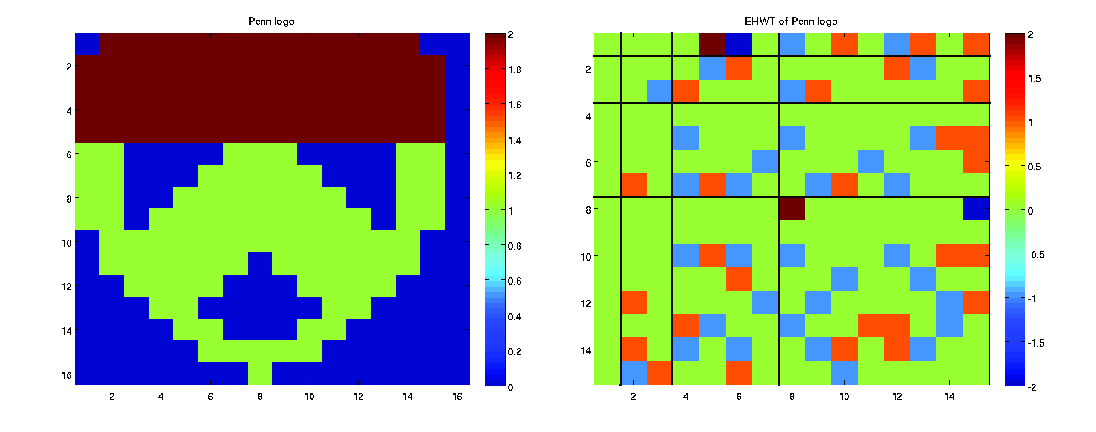}
  \end{center}
  \caption{A two dimensional image (left) and its $\Wavelet$ transform (right) with $p=(1,1)$.}
  \label{fig:logoehwt}
\end{figure}

\specialsection*{\bf Toward a $\real$-valued Euler Calculus}
\sweetline
\vspace{0.1in}

\section{Real-valued integrands}
\label{sec:Rval}

The Euler calculus, being integer-valued, has a delimited purview: it would be beneficial to have a calculus that manages other types of data than integral. Extending the Euler integral from $h\colon X\to\zed$ to (tame) functions with range in a discrete subgroup of a ring $R$ is a simple exercise for the reader. For purposes of constructing an honest numerical analysis for the constructible setting of Euler integration, it would be beneficial to have an extension to $\real$-valued continuous functions. Any such extension will require the appropriate notion of tameness for integrands. Fortunately, this is easy in the o-minimal setting. Given $X$ and $Y$ tame spaces in an o-minimal structure, let $\Def(X,Y)$ denote the definable functions $f\colon X\to Y$ --- those whose graphs in $X\times Y$ are tame. Recall from \S\ref{sec:tame} that such maps are only piecewise smooth/continuous in general; thus, denote by $\Def^k(X,Y)$ those definable maps which are $C^k$ smooth in the usual sense. This section focuses on extensions of Euler integration to $\Def(X):=\Def(X,\real)$. We will, as per the remainder of this article, carry an implicit assumption of compactly supported integrands; this assumption can be removed with care and additional work if needed.

\subsection{The Rota-Chen definition}
\label{sec:RC}

One such extension was proposed by Rota \cite{Rota} and used and enriched by Chen \cite{Chen}. For $h\in\Def(\real)$, define the integral to be:
\begin{equation}
    \int_\real h\,d\chi := \int_\real J h\,d\chi
    \quad : \quad
    (J h)(x) := h(x)-\frac{1}{2}\left(\lim_{y\to x^-}h(y)+\lim_{y\to x^+}h(y)\right) .
\end{equation}
The operator $J\colon\Def(\real)\to\CF(\real)$ records the (finite) jumps of $h$, so that the integral of $Jh$ becomes a finite sum. For $h\in\Def(\real^n)$, the integral is defined so as to satisfy the Fubini theorem:
\[
    \int_{\real^n} h\,d\chi := \iint\cdots\int h\,d\chi\cdots d\chi\,d\chi ,
\]
where each integral is over a coordinate axis of $\real^n$. This yields a well-defined integral operator which agrees with the usual $\int d\chi$ on $\CF$. Unfortunately, this extension of $\int d\chi$ to $\Def(\real^n)$ has the definable continuous functions $\Def^0$ in its kernel. This is singularly unhelpful as a tool for doing numerical analysis in Euler calculus in which we want to, say, take a piecewise-linear approximation to a sampled integrand, or perhaps smooth out a noisy integrand via convolution with a Gaussian.

\subsection{A Riemann-sum definition}
\label{sec:riemann}

A more simple-minded definition yields an entirely different extension of Euler integrals over $\Def(X)$. Following \cite{BG:PNAS}, extend integration to $\Def(X)$ via upper- or lower- step function approximations. Given $h\in \Def(X)$, define:
\begin{equation}
\label{eq:realintegral}
    \int_X h\,\dchifloor
    =
    \lim_{n\to\infty}
    \frac{1}{n}\int_X \lfloor nh\rfloor d\chi
    \quad : \quad
    \int_X h\,\dchiceil
    =
    \lim_{n\to\infty}
    \frac{1}{n}\int_X \lceil nh\rceil d\chi ,
\end{equation}
where $\lfloor\cdot\rfloor$ is the floor function and $\lceil\cdot\rceil$ is the ceiling function (returning the nearest integer below/above respectively). The notations $\dchifloor$ and $\dchiceil$ are chosen to denote which step-function approximation (lower/upper) is used in the limit. These limits exist and are well-defined, though not equal.

\begin{figure}[hbt]
\begin{center}
\includegraphics[angle=0,width=5in]{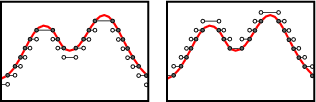}
\caption{Lower and upper step function approximations lead to different limits due to, in this case, endpoints.}
\label{fig:stepfunction}
\end{center}
\end{figure}

\begin{lemma}
\label{lem:r-simplex}
Given an affine function $h\in\Def(\sigma)$ on an open $k$-simplex
$\sigma$,
\begin{equation}
    \int_\sigma h\,\dchifloor = (-1)^k\inf_\sigma h
    \quad : \quad
    \int_\sigma h\,\dchiceil = (-1)^k\sup_\sigma h    .
\end{equation}
\end{lemma}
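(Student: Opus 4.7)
The plan is to reduce the $k$-dimensional Euler integral to a one-dimensional one via the Fubini theorem (Theorem~\ref{thm:fubini}) applied to the map $h\colon \sigma \to \real$. First I would dispose of the trivial case $h \equiv c$: then $\int_\sigma \lfloor nh \rfloor\, d\chi = \lfloor nc \rfloor\, \chi(\sigma) = (-1)^k \lfloor nc \rfloor$, since an open $k$-simplex is definably homeomorphic to $\real^k$; dividing by $n$ and letting $n \to \infty$ recovers $(-1)^k c$, which is simultaneously $(-1)^k \inf_\sigma h$ and $(-1)^k \sup_\sigma h$.

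Assume henceforth that $h$ is non-constant, and set $(a,b) := (\inf_\sigma h,\, \sup_\sigma h)$. The key geometric observation is that for each $y \in (a,b)$ the fiber $h^{-1}(y) \cap \sigma$ is the relative interior of a $(k-1)$-dimensional convex polytope --- namely the intersection of an affine hyperplane with the open simplex $\sigma$. As an open convex subset of a $(k-1)$-dimensional affine subspace it is definably homeomorphic to $\real^{k-1}$, so $\chi(h^{-1}(y)) = (-1)^{k-1}$. For $y \notin (a,b)$ the fiber in the open simplex $\sigma$ is empty, since the extrema of a non-constant affine function on $\bar\sigma$ are attained on the boundary $\partial\sigma$. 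Applying Fubini to the constructible function $\lfloor nh \rfloor$ therefore gives
\[
    \int_\sigma \lfloor nh \rfloor\, d\chi
    \;=\; \int_\real \lfloor ny \rfloor \,\chi(h^{-1}(y))\, d\chi(y)
    \;=\; (-1)^{k-1} \int_{(a,b)} \lfloor ny \rfloor\, d\chi(y).
\]

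The remaining one-dimensional integral is a direct calculation. For $n$ large enough that $2/n < b-a$, the level sets of $\lfloor ny \rfloor$ on $(a,b)$ partition it into a leftmost open interval $(a,\,(m_0+1)/n)$ with $m_0 := \lfloor na \rfloor$, a chain of intermediate half-open intervals $[m/n,\,(m+1)/n)$, and a rightmost half-open interval $[m_1/n,\,b)$. Half-open intervals have Euler characteristic zero and contribute nothing, so only the leftmost piece matters, giving $\int_{(a,b)} \lfloor ny\rfloor\,d\chi(y) = m_0\cdot(-1) = -\lfloor na\rfloor$. Therefore $\tfrac{1}{n}\int_\sigma \lfloor nh \rfloor\, d\chi = (-1)^k \lfloor na\rfloor / n \to (-1)^k a = (-1)^k\inf_\sigma h$ as $n\to\infty$. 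The ceiling computation is entirely symmetric: the rightmost strip $((m_1-1)/n,\,b)$ becomes the unique open piece and contributes $-\lceil nb \rceil$, producing $(-1)^k \sup_\sigma h$ in the limit.

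The main obstacle I anticipate is verifying that $\chi(h^{-1}(y)) = (-1)^{k-1}$ \emph{uniformly} for all $y \in (a,b)$; one must rule out degenerate slices where the hyperplane might touch $\bar\sigma$ only in a face of higher codimension. For strictly interior values $y$, however, the slice must separate the sublevel and superlevel regions of the convex set $\bar\sigma$, which forces it to be genuinely $(k-1)$-dimensional --- everything else reduces to step-function bookkeeping in a single variable.
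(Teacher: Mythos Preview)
Your argument is correct but takes a different route from the paper. The paper works directly with the excursion-set formula (Proposition~\ref{prop:computation}): it observes that $\{\lfloor nh\rfloor > s\} = \{h \geq (s+1)/n\}\cap\sigma$ has Euler characteristic $(-1)^k$ when $(s+1)/n \leq \inf_\sigma h$ (it is all of $\sigma$) and $0$ otherwise (it is a ``half-open'' convex body, the disjoint union of an open $k$-cell and an open $(k-1)$-cell), so the sum $\sum_s \chi\{\lfloor nh\rfloor > s\}$ collapses to $(-1)^k\lfloor n\inf_\sigma h\rfloor$ in one step.

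You instead push forward along $h$ via Fubini, reducing to a one-variable Euler integral on $(a,b)$ and then doing explicit level-set bookkeeping there. The two approaches rest on closely related Euler computations: the paper needs $\chi(\{h\geq c\}\cap\sigma)=0$ for intermediate $c$, while you need $\chi(\{h=c\}\cap\sigma)=(-1)^{k-1}$; the former is exactly the latter plus $\chi(\{h>c\}\cap\sigma)=(-1)^k$. Your route is a bit longer but has the virtue of handling arbitrary signs of $h$ uniformly (the paper's displayed sum tacitly assumes $h\geq 0$) and of making the one-dimensional mechanism completely explicit. The paper's version is terser and avoids invoking Fubini, at the cost of leaving the $\chi=0$ claim for intermediate excursion sets to the reader.
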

\begin{proof}
For $h$ affine on $\sigma$, $\chi\{\lfloor nh\rfloor > s\}=(-1)^k$ for all $s<\lfloor n\inf_\sigma h\rfloor$, and $0$ otherwise. One
computes
\[
    \lim_{n\to\infty}
    \frac{1}{n}\int_{\sigma}\lfloor nh\rfloor d\chi
    =
    \lim_{n\to\infty}
    \frac{1}{n}\sum_{s=0}^\infty \chi\{\lfloor nh\rfloor > s\}
    =
    (-1)^k\inf_\sigma h
    .
\]
The analogous computation holds with $\chi\{\lceil nh\rceil > s\}=(-1)^k$ for all $s<\lceil n\sup_\sigma h\rceil$, and $0$ otherwise.
\end{proof}

This integration theory is robust to changes in coordinates.

\begin{lemma}
\label{lem:cov}
Integration on $\Def(X)$ with respect to $\dchifloor$ and $\dchiceil$ is invariant under the right action of definable bijections of $X$.
\end{lemma}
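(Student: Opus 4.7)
The plan is to reduce the claim about the real-valued integrals $\dchifloor$ and $\dchiceil$ to the already-established invariance of the integer-valued Euler integral under definable bijections, which is itself a direct consequence of the Invariance Theorem (Theorem \ref{thm:EulerDim}).

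First, let $\phi : X \to X$ be a definable bijection and let $h \in \Def(X)$. By the definition \eqref{eq:realintegral},
\[
    \int_X (h\circ\phi)\,\dchifloor = \lim_{n\to\infty}\frac{1}{n}\int_X \lfloor n(h\circ\phi)\rfloor\,d\chi,
\]
so it suffices to show that $\int_X \lfloor n(h\circ\phi)\rfloor\,d\chi = \int_X \lfloor nh\rfloor\,d\chi$ for every positive integer $n$. Since $\lfloor n(h\circ\phi)\rfloor = \lfloor nh\rfloor\circ\phi$, the problem reduces to the auxiliary claim that for every $f \in \CF(X)$ the integer-valued Euler integral satisfies $\int_X (f\circ\phi)\,d\chi = \int_X f\,d\chi$.

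Second, I would verify this auxiliary claim using the level-set formula \eqref{eq:level}. For $f \in \CF(X)$,
\[
    \int_X (f\circ\phi)\,d\chi = \sum_{s\in\zed} s\,\chi\bigl((f\circ\phi)^{-1}(s)\bigr) = \sum_{s\in\zed} s\,\chi\bigl(\phi^{-1}(f^{-1}(s))\bigr).
\]
Since $\phi$ is a definable bijection of the tame set $X$ onto itself, its restriction to each tame level set $f^{-1}(s)$ is a definable bijection onto $\phi^{-1}(f^{-1}(s))$. By the convention established in \S\ref{sec:tame}, any such bijection between tame sets is a definable homeomorphism (continuity is not required), so Theorem \ref{thm:EulerDim} yields $\chi(\phi^{-1}(f^{-1}(s)))=\chi(f^{-1}(s))$. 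The displayed sum therefore equals $\int_X f\,d\chi$, as required.

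Finally, the ceiling-based integral is handled by the identical argument, with $\lfloor\cdot\rfloor$ replaced by $\lceil\cdot\rceil$ throughout. The only point that deserves care — and what one might at first mistake for the main obstacle — is the absence of any continuity hypothesis on $\phi$; but the o-minimal framework is designed precisely to sidestep this, since the Invariance Theorem delivers equality of Euler characteristics level-set by level-set regardless of continuity, and that is all the combinatorial integral sees.
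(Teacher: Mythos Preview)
Your proof is correct and follows the same approach as the paper: reduce to invariance of the integer-valued Euler integral on $\CF(X)$ via the floor/ceiling approximations, then take the limit. The paper's proof is a one-line appeal to that invariance, whereas you spell out the level-set argument and the use of Theorem~\ref{thm:EulerDim}; but the logical skeleton is identical.
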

\begin{proof}
The claim is true for Euler integration on $\CF(X)$; thus, it holds for $\int_X\lfloor nh\rfloor\,d\chi$ and $\int_X\lceil
nh\rceil\,d\chi$.
\end{proof}

\begin{lemma}
\label{lem:welldef}
The limits in Equation (\ref{eq:realintegral}) are well-defined.
\end{lemma}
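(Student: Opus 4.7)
The plan is to recognize $\frac{1}{n}\int_X \lfloor nh\rfloor\,d\chi$ as a classical Riemann sum for a tame integer-valued step function on the real line, then invoke classical Riemann integrability. First I would expand the integral using the excursion formula of Proposition~\ref{prop:computation}: since $\lfloor nh\rfloor>s$ iff $h\geq (s+1)/n$, and $\lfloor nh\rfloor<-s$ iff $h<-s/n$ for integer $s\geq 0$, one obtains
\[
\frac{1}{n}\int_X \lfloor nh\rfloor\,d\chi \;=\; \frac{1}{n}\sum_{k=1}^{\infty}\chi\{h\geq k/n\}\;-\;\frac{1}{n}\sum_{s=0}^{\infty}\chi\{h<-s/n\}.
\]
This is manifestly a pair of Riemann sums of mesh $1/n$ for the functions $\phi_+(t):=\chi\{h\geq t\}$ and $\phi_-(t):=\chi\{h<-t\}$ on $[0,\infty)$. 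An analogous manipulation for $\lceil nh\rceil$ yields parallel Riemann sums for $t\mapsto\chi\{h>t\}$ and $t\mapsto\chi\{h\leq -t\}$.

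The crux is to verify that $\phi_\pm$ are bounded, compactly supported, integer-valued step functions on $[0,\infty)$ with only finitely many jumps. For this I would apply the Hardt Theorem (Theorem~\ref{thm:Hardt}) to the definable projection
\[
E_{+}\;:=\;\{(x,t)\in X\times\real\,:\,h(x)\geq t\}\;\stackrel{\pi}{\longrightarrow}\;\real,
\]
and likewise to the analogously defined $E_{-}$. Hardt produces a definable cell partition of $\real$ which, by the fourth o-minimality axiom, consists of only finitely many points and open intervals; over each cell, the fibers of $\pi$ are definably homeomorphic to a common model $U_\alpha$, so their Euler characteristics are constant. Hence $\phi_+$ and $\phi_-$ are integer-valued step functions with only finitely many jumps. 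Since $h$ is compactly supported (and hence bounded by some $M$, as is necessary merely for $\lfloor nh\rfloor$ to lie in $\CF(X)$ in the first place), both $\phi_\pm$ vanish for $t>M$.

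A bounded integer-valued step function on $[0,M]$ with finitely many jumps is classically Riemann integrable, and its mesh-$1/n$ Riemann sums converge to the integral: the total discrepancy is $O(1/n)$, contributed by the finitely many mesh points falling inside jump intervals. Thus both limits in Equation~(\ref{eq:realintegral}) exist, and as a bonus one obtains the closed form
\[
\int_X h\,\dchifloor \;=\; \int_0^{\infty}\chi\{h\geq t\}\,dt \;-\; \int_0^{\infty}\chi\{h<-t\}\,dt,
\]
with the parallel strict/non-strict formula for $\dchiceil$. The main obstacle is the constructibility of $\phi_\pm$ as functions of $t$, which is exactly the tame-triviality output of Hardt's theorem; the remaining steps are routine Riemann-sum bookkeeping.
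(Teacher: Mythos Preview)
Your proof is correct and takes a genuinely different route from the paper. The paper invokes an o-minimal extension of the Triangulation Theorem to produce a definable triangulation on which $h$ is affine on each open simplex, then cites Lemma~\ref{lem:r-simplex} (the explicit limit on a single affine simplex) together with Lemma~\ref{lem:cov} (invariance under definable bijection) to conclude. You instead push everything onto the real line via the excursion formula and let Hardt's theorem supply the finiteness, reducing the question to ordinary Riemann integrability of an integer-valued step function. Your approach has the pleasant side effect of yielding Proposition~\ref{prop:r-valued} for free, whereas the paper proves that separately (again via the simplex computation). The paper's route, on the other hand, keeps Lemma~\ref{lem:r-simplex} front and center, which pays dividends because that formula is reused repeatedly downstream (Lemma~\ref{lem:floortoceiling}, Proposition~\ref{prop:r-valued2}, Theorem~\ref{thm:lineartransform}). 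One minor wording issue: compact support alone does not force a definable $h$ to be bounded; boundedness is really a standing hypothesis (needed, as you observe, for $\lfloor nh\rfloor\in\CF(X)$ at all), and in the paper's argument it falls out of the affine triangulation rather than from compact support per se.
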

\begin{proof}
An extension of the Triangulation Theorem to $\Def(X)$ \cite{vdD} states that to any $h\in\Def(X)$, there is a definable triangulation of $X$ on which $h$ is definably affine, meaning that there is a Euclidean simplicial complex $K$ and a definable homeomorphism $\phi\colon K\to X$ for which $h\circ\phi$ is affine on each open simplex of $K$. The result now follows from Lemmas \ref{lem:r-simplex} and \ref{lem:cov}.
\end{proof}

\subsection{Nonlinearity}
\label{sec:nonlin}

One is tempted to apply all the sheaf-theoretic perspectives of \S\ref{sec:dictionary} to $\int_X\colon\Def(X)\to\real$. However, for this formulation of the integral on $\Def(X)$, functoriality, and indeed, linearity, fails.

\begin{lemma}
\label{lem:nonlinear}
$\int_X\colon\Def(X)\to\real$ (via $\dchifloor$ or $\dchiceil$) is not a homomorphism for any $X$ with $\dim\ X>0$.
\end{lemma}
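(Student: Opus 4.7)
The plan is to produce an explicit two-function counterexample to additivity. By the Triangulation Theorem (in the $\Def$-form invoked in the proof of Lemma \ref{lem:welldef}) together with the hypothesis $\dim X>0$, any $X$ admits a definable triangulation containing at least one open $1$-simplex $\sigma$; let $\phi\colon(0,1)\to\sigma$ be the associated definable homeomorphism. Define $h_1,h_2\in\Def(X)$ by
\[
    h_1(\phi(t)) = t, \qquad h_2(\phi(t)) = 1-t, \qquad h_1\equiv h_2\equiv 0\text{ on }X\setminus\sigma.
\]
Both functions are manifestly definable with compact support.

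Next I would decompose each of the three integrals $\int_X h_1\,\dchifloor$, $\int_X h_2\,\dchifloor$, $\int_X(h_1+h_2)\,\dchifloor$ by additivity across the open cells of the triangulation. Every cell other than $\sigma$ carries $h_1$, $h_2$, and $h_1+h_2$ identically $0$, and Lemma \ref{lem:r-simplex} integrates the zero function to zero on an open simplex of any dimension. On $\sigma$ itself, Lemma \ref{lem:cov} pulls the computation back along $\phi$ to the standard open $1$-simplex $(0,1)$, where the three integrands become the affine functions $t$, $1-t$, and $1$. Lemma \ref{lem:r-simplex} then gives
\[
    \int_\sigma h_1\,\dchifloor = -\inf_{(0,1)} t = 0, \qquad
    \int_\sigma h_2\,\dchifloor = -\inf_{(0,1)}(1-t) = 0, \qquad
    \int_\sigma(h_1+h_2)\,\dchifloor = -\inf_{(0,1)} 1 = -1.
\]
Hence $\int_X h_1\,\dchifloor + \int_X h_2\,\dchifloor = 0 \neq -1 = \int_X(h_1+h_2)\,\dchifloor$, so $\int_X$ fails to be additive. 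The case of $\dchiceil$ is identical after replacing $\inf$ by $\sup$: the three integrals become $-1$, $-1$, and $-1$, yielding $-2\neq -1$.

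There is no serious obstacle here. The underlying mechanism is simply that an open $k$-cell carries Euler weight $(-1)^k$, so on such a cell $\int h\,\dchifloor$ depends only on $\inf h$ and not linearly on $h$. Choosing two affine functions that each graze zero but sum to a nonzero constant concentrates this nonlinearity at a single $1$-cell and lifts it to any $X$ with $\dim X>0$ by extension by zero.
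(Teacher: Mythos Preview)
Your proof is correct and follows the same idea as the paper's---the counterexample $t+(1-t)=1$---though you place it on an open $1$-simplex inside a general $X$ rather than on $[0,1]$ directly (the paper obtains $1\neq 1+1$; you obtain $-1\neq 0+0$). One minor point: the compact-support claim requires $\overline{\sigma}\subset X$, which is not automatic from the Triangulation Theorem (consider $X=(0,1)$ itself), but is easily arranged by choosing $\sigma$ so that its closure lies inside a top-dimensional open cell of $X$.
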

\begin{proof}
By explicit computation,
\[
    1 =
    \int_{[0,1]} 1\,\dchifloor
    \neq
    \int_{[0,1]} x\,\dchifloor + \int_{[0,1]} (1-x)\,\dchifloor
    = 1+1 = 2.
\]
\end{proof}

Nonlinearity of these integral operators is due to the fact that the floor and ceiling functions used are nonlinear: $\lfloor f+g \rfloor$ agrees with $\lfloor f \rfloor + \lfloor g \rfloor$ up to a set of Lebesgue measure zero, but not  Euler measure zero. In particular, the problems arise when increasing and decreasing functions compete. Though the change of variables formula (Lemma \ref{lem:cov}) holds, the more general Fubini theorem does not:
\begin{corollary}
\label{cor:fubinifail}
The Fubini theorem fails for $\int_X\colon\Def(X)\to\real$ (via $\dchifloor$ or $\dchiceil$) in general.
\end{corollary}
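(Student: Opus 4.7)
The plan is to reduce the corollary directly to the additivity failure already exhibited in Lemma \ref{lem:nonlinear}: if Fubini held for a carefully chosen projection, the fiber-by-fiber integral would force the very linearity that Lemma \ref{lem:nonlinear} forbids. The cleanest setup is the ``fold'' map out of a disjoint union. Set $X = [0,1]\sqcup[0,1]$, $Y = [0,1]$, and let $F\colon X\to Y$ be the tame map that sends each copy identically onto $Y$, so that every fiber $F^{-1}(y)$ is a pair of points.

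Next I would define $h\in\Def(X)$ by $h(t,1)=t$ on the first copy and $h(t,2)=1-t$ on the second, and compute both sides of the proposed Fubini identity. The direct side $\int_X h\,\dchifloor$ splits over connected components into $\int_{[0,1]}t\,\dchifloor+\int_{[0,1]}(1-t)\,\dchifloor$; each summand equals $1$ by Lemma \ref{lem:r-simplex} applied to the triangulation $\{0\}\cup(0,1)\cup\{1\}$ (endpoint contributions $0+1$ plus an open-edge contribution $-\inf = 0$), giving total $2$.

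For the iterated side, each fiber $F^{-1}(y)$ is a pair of $0$-simplices on which the restriction of $h$ is trivially affine, and Lemma \ref{lem:r-simplex} with $k=0$ identifies the $\dchifloor$-integral over a $0$-simplex with the function value. Hence the inner integral is $g(y)=h(y,1)+h(y,2)=y+(1-y)=1$ for every $y\in[0,1]$, and the outer integral is $\int_Y 1\,\dchifloor=\chi([0,1])=1$. Since $2\neq 1$, the two sides disagree and Fubini fails; an identical argument with $\sup$ in place of $\inf$ handles the $\dchiceil$ case.

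There is no serious obstacle: the only things to verify are that $F$ is a bona fide tame map between tame spaces (immediate, since it is the identity on each branch and definable in any o-minimal structure) and that the definition of $\dchifloor$ reduces to pointwise values on $0$-dimensional fibers (immediate from the $n\to\infty$ limit, since $\frac1n\lfloor nc\rfloor\to c$). The conceptual content is exactly that Fubini across the fold map $F$ would re-express $\int_X h\,\dchifloor$ as $\int_Y(h|_{X_1}+h|_{X_2})\dchifloor$, converting the Fubini assertion into an additivity statement that Lemma \ref{lem:nonlinear} has already ruled out.
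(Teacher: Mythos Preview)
Your proof is correct and follows essentially the same route as the paper: both use the fold map $F\colon Y\sqcup Y\to Y$ to convert the Fubini identity into the additivity statement $\int f+\int g=\int(f+g)$, which Lemma~\ref{lem:nonlinear} has already ruled out. The only difference is cosmetic---the paper states the reduction abstractly for general $Y$ and then cites Lemma~\ref{lem:nonlinear}, whereas you instantiate $Y=[0,1]$ and the specific integrands $t$, $1-t$ from that lemma's proof and compute both sides explicitly.
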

\begin{proof}
Let $F\colon X=Y\sqcup Y\to Y$ be the projection map with fibers $\{p\}\sqcup\{p\}$. Any $h\in\Def(X)$ is decomposable as $h=f\sqcup g$ for $f,g\in\Def(Y)$. The Fubini theorem applied to $F$ is equivalent to the statement
\[
    \int_Y f + \int_Y g = \int_X h = \int_Y F_*h = \int_Y f+g
\]
(where the integration is with respect to $\dchifloor$ or $\dchiceil$ as desired). Lemma \ref{lem:nonlinear} completes the proof.
\end{proof}

The Fubini theorem {\em does} hold when the map respects fibers.

\begin{theorem}
\label{thm:r-fubini}
For $h\in\Def(X)$, let $F\colon X\to Y$ be definable and $h$-preserving ($h$ is constant on fibers of $F$). Then $\int_Y F_*h\dchifloor = \int_X h\dchifloor$, and $\int_Y F_*h\dchiceil = \int_X h\dchiceil$.
\end{theorem}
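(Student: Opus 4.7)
The strategy is to reduce the real-valued statement to the classical Fubini theorem (Theorem~\ref{thm:fubini}) applied to the integer-valued approximations $\lfloor nh\rfloor$ and $\lceil nh\rceil$, and then to pass to the limit as in the definition of $\dchifloor$ and $\dchiceil$.

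First I would use the hypothesis that $h$ is constant on every fiber of $F$ to write $h = c\circ F$ for some $c\in\Def(Y)$. The scaled floors $\lfloor nh\rfloor = \lfloor nc\rfloor\circ F$ then lie in $\CF(X)$ and are themselves fiber-constant, so their fiber integrals factor as $F_*(\lfloor nh\rfloor)(y) = \lfloor nc(y)\rfloor\cdot\chi(F^{-1}(y))$. Theorem~\ref{thm:fubini} therefore yields, for every $n$,
\[
\int_X \lfloor nh\rfloor\,d\chi \;=\; \int_Y \lfloor nc(y)\rfloor\cdot\chi(F^{-1}(y))\,d\chi(y),
\]
and the identical identity with $\lceil\cdot\rceil$ in place of $\lfloor\cdot\rfloor$.

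Next I would divide both sides by $n$ and send $n\to\infty$. The left-hand side converges to $\int_X h\,\dchifloor$ by definition (equation~(\ref{eq:realintegral})). For the right-hand side, the $h$-preservation hypothesis is precisely what permits the limit to commute with the outer integral: the nonlinearity obstruction to Fubini documented in Lemma~\ref{lem:nonlinear} and Corollary~\ref{cor:fubinifail} arises when increasing and decreasing contributions to $h$ coexist on a common fiber, and fiber-constancy kills this by collapsing $h|_{F^{-1}(y)}$ to the single real value $c(y)$. Concretely, I would choose a definable triangulation of $Y$ that simultaneously refines the Hardt decomposition for $F$ (Theorem~\ref{thm:Hardt}, so that $k_\tau := \chi(F^{-1}(y))$ is a constant integer on each open cell $\tau$) and the affine triangulation for $c$ (Theorem~\ref{thm:Triangulation}). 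On each cell $\tau$, linearity of the classical Euler integral pulls the integer $k_\tau$ outside, and Lemma~\ref{lem:r-simplex} gives $\tfrac{1}{n}\int_\tau\lfloor nc\rfloor\,d\chi \to (-1)^{\dim\tau}\inf_\tau c = \int_\tau c\,\dchifloor$. Summing $k_\tau\cdot\int_\tau c\,\dchifloor$ over cells is the value of $\int_Y F_*h\,\dchifloor$ in the fiber-constant setting, which is the limit one obtains on the right.

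The main obstacle is the final interchange of the $n\to\infty$ limit with the outer Euler integral over $Y$. Unlike the general case of Corollary~\ref{cor:fubinifail}, the cellwise setup here performs the limit before the summation over cells: on each cell $\tau$ the integer factor $k_\tau$ sits outside an honest classical Euler integral that converges by Lemma~\ref{lem:r-simplex}, so the interchange is bona fide and no $\lfloor f+g\rfloor \neq \lfloor f\rfloor + \lfloor g\rfloor$ pathology can enter. Collecting the cellwise terms yields the $\dchifloor$ identity, and the verbatim argument with $\lceil\cdot\rceil$ and $\sup_\tau c$ gives the $\dchiceil$ version.
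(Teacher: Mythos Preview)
Your route is the paper's route made explicit. The paper invokes the Hardt theorem to trivialize $F^{-1}(Y_\alpha)\cong U_\alpha\times Y_\alpha$ and then asserts the one-line chain
\[
\int_{Y_\alpha}F_*h\,\dchifloor \;=\; \int_{Y_\alpha}\chi(U_\alpha)\,h\,\dchifloor \;=\; \int_{U_\alpha\times Y_\alpha}h\,\dchifloor,
\]
summing over $\alpha$; your limit-of-$\lfloor nh\rfloor$ computation together with a simultaneous Hardt/affine triangulation is precisely the unpacking of that chain.

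There is, however, one genuine soft spot in your argument, and it sits exactly where the paper's displayed equality is terse. You correctly show that
\[
\tfrac{1}{n}\int_Y k(y)\,\lfloor nc(y)\rfloor\,d\chi \;\longrightarrow\; \sum_\tau k_\tau\int_\tau c\,\dchifloor .
\]
Your last move then asserts that this sum \emph{is} $\int_Y F_*h\,\dchifloor = \sum_\tau \int_\tau (k_\tau c)\,\dchifloor$. That identification needs $\int_\tau(k_\tau c)\,\dchifloor = k_\tau\int_\tau c\,\dchifloor$, and Lemma~\ref{lem:homog} (positive homogeneity) only supplies this for $k_\tau\ge 0$. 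For $k_\tau<0$, Lemma~\ref{lem:floortoceiling} gives instead $\int_\tau(k_\tau c)\,\dchifloor = k_\tau\int_\tau c\,\dchiceil$, which is in general different: already for $\tau=[0,1]$, $c(y)=y$, $k_\tau=-1$ one gets $\int_{[0,1]}(-y)\,\dchifloor=0$ while $-\!\int_{[0,1]}y\,\dchifloor=-1$. So the sentence ``summing $k_\tau\cdot\int_\tau c\,\dchifloor$ over cells is the value of $\int_Y F_*h\,\dchifloor$'' is not automatic when some fiber has negative Euler characteristic, and your write-up does not furnish the extra argument needed there. The paper's proof is equally brief at this exact point; if you want your version to stand on its own, this is the step to shore up.
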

\begin{proof}
By Theorem \ref{thm:Hardt}, $Y$ has a definable partition into tame sets $Y_\alpha$ such that $F\inv(Y_\alpha)$ is definably homeomorphic to $U_\alpha\times Y_\alpha$ for $U_\alpha$ tame, and that $F:U\times Y_\alpha\to Y_\alpha$ acts via projection. Since $h$ is constant on fibers of $F$, one computes
\[
    \int_{Y_\alpha}F_*h\dchifloor
    =
    \int_{Y_\alpha}h\,\chi(U_\alpha)\dchifloor
    =
    \int_{U_\alpha\times Y_\alpha}h\dchifloor .
\]
The same holds for $\int\dchiceil$.
\end{proof}

\begin{corollary}
\label{cor:r-fubini}
For $h\in\Def(X)$, $\int_Xh = \int_{\real}h_*h$. In other words,
\begin{equation}
    \int_Xh\,\dchifloor
    =
    \int_\real s\,\chi\{h=s\}\dchifloor ,
\end{equation}
and likewise for $\dchiceil$.
\end{corollary}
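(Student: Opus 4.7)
The plan is to recognize the corollary as a direct application of Theorem \ref{thm:r-fubini} with the very map $F=h\colon X\to\real$ playing the role of the projection. Since $h\in\Def(X)$ is definable by hypothesis, and since $h$ is trivially constant (equal to $s$) on each fiber $h^{-1}(s)$, the map $h$ is $h$-preserving in the sense required by Theorem \ref{thm:r-fubini}. Applying that theorem therefore gives immediately
\[
    \int_X h\,\dchifloor = \int_\real h_* h\,\dchifloor ,
\]
and the same equality with $\dchiceil$ in place of $\dchifloor$.

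The remaining task is simply to identify $h_* h$ with the concrete integrand $s\,\chi\{h=s\}$. For each $s\in\real$, the pushforward is defined by $(h_* h)(s)=\int_{h^{-1}(s)}h\,\dchifloor$. Because $h$ restricted to $h^{-1}(s)$ is the constant function $s$, the definition of $\dchifloor$ gives $\int_{h^{-1}(s)} s\,\dchifloor = \lim_{n\to\infty}\frac{1}{n}\lfloor ns\rfloor\,\chi\{h=s\}= s\,\chi\{h=s\}$, where the constructibility of the level set (from the Triangulation Theorem applied to $h$) ensures $\chi\{h=s\}$ is a well-defined integer. The $\dchiceil$ variant is handled identically with $\lceil ns\rceil$ in place of $\lfloor ns\rfloor$.

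The main technical point to verify, and the one I expect to be the chief obstacle, is that the object $s\mapsto s\,\chi\{h=s\}$ actually lies in $\Def(\real)$ so that the outer integral is well-defined. This should follow from the Hardt Theorem (Theorem \ref{thm:Hardt}) applied to $h$: $\real$ admits a definable partition into cells $Y_\alpha$ over which $h^{-1}(Y_\alpha)$ is definably homeomorphic to $U_\alpha\times Y_\alpha$, so on each $Y_\alpha$ the function $s\mapsto\chi\{h=s\}$ is the constant $\chi(U_\alpha)$. Hence $\chi\{h=s\}$ is a constructible (in fact piecewise constant) function of $s$, and multiplying by the affine factor $s$ keeps the product in $\Def(\real)$. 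With this well-definedness in hand, chaining the two equalities above produces the stated identity.
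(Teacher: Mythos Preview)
Your proposal is correct and is exactly the intended argument: the paper states the corollary without proof as an immediate consequence of Theorem~\ref{thm:r-fubini} applied with $F=h$, and your identification of $(h_*h)(s)$ with $s\,\chi\{h=s\}$ together with the Hardt-based check that this lies in $\Def(\real)$ fills in precisely the details the paper leaves implicit.
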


\begin{figure}[hbt]
\begin{center}
\includegraphics[angle=0,width=4in]{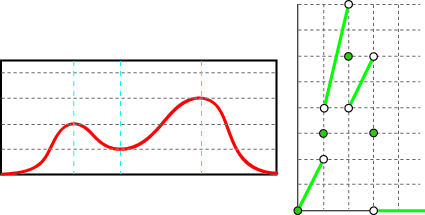}
\caption{The application of Corollary \ref{cor:r-fubini} to a smooth univariate function [left] gives a piecewise-linear integrand [right] with the same Euler integral.}
\label{fig:r-fubini}
\end{center}
\end{figure}

\subsection{Computation}
\label{sec:comput}

There are several definable analogues of computational formulae for integration over $\CF$. The following results parallel those of \S\ref{sec:integral}.

\begin{proposition}
\label{prop:r-valued}
For $h\in \Def(X)$,
\begin{eqnarray}
\label{eq:r-valued}
    \int_X h\,\dchifloor
    &=&
    \int_{s=0}^\infty \chi\{h\geq s\} - \chi\{h<-s\}\,ds
    \\
    \int_X h\,\dchiceil
    &=&
    \int_{s=0}^\infty \chi\{h> s\} - \chi\{h\leq -s\}\,ds
    .
\end{eqnarray}
\end{proposition}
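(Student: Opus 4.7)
The plan is to combine the definition of $\int\dchifloor$ as a limit of rescaled constructible integrals with the excursion-set formula of Proposition \ref{prop:computation}, and then recognize the resulting Riemann sum as a Riemann integral. More concretely, I would first fix $n$ and apply Proposition \ref{prop:computation} to the bounded compactly supported constructible function $\lfloor nh\rfloor\in\CF(X)$, obtaining
\[
    \int_X \lfloor nh\rfloor\,d\chi
    = \sum_{s=0}^\infty \chi\{\lfloor nh\rfloor > s\} - \chi\{\lfloor nh\rfloor < -s\} .
\]

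Next I would rewrite the level sets in terms of $h$ itself. For integer $s\geq 0$, one has $\lfloor nh\rfloor > s$ iff $nh\geq s+1$ iff $h\geq (s+1)/n$, and $\lfloor nh\rfloor < -s$ iff $nh < -s$ iff $h < -s/n$. Dividing by $n$ and reindexing turns the sum into
\[
    \frac{1}{n}\int_X \lfloor nh\rfloor\,d\chi
    = \frac{1}{n}\sum_{k=1}^\infty \chi\{h\geq k/n\}
    - \frac{1}{n}\sum_{k=0}^\infty \chi\{h < -k/n\} .
\]
This is visibly a Riemann sum, with mesh $1/n$, for the claimed integral $\int_0^\infty \chi\{h\geq s\}-\chi\{h<-s\}\,ds$. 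The computation for $\dchiceil$ is formally identical after replacing $\lfloor\cdot\rfloor$ by $\lceil\cdot\rceil$, noting that for integer $s\geq 0$ one has $\lceil nh\rceil > s$ iff $h > s/n$, and $\lceil nh\rceil < -s$ iff $h\leq -(s+1)/n$.

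The one step that requires real justification is convergence of the Riemann sums to a genuine Riemann integral. Here I would invoke definability: since $h\in\Def(X)$ is compactly supported and tame, the Hardt Theorem (Theorem \ref{thm:Hardt}) applied to $h\colon X\to\real$ implies that the functions $s\mapsto\chi\{h\geq s\}$ and $s\mapsto\chi\{h<-s\}$ are themselves constructible on $\real$ --- each takes only finitely many integer values and is piecewise constant with finitely many jumps, all occurring within a bounded interval by compact support of $h$. Such functions are bounded with finitely many discontinuities and hence Riemann integrable, so the Riemann sums above converge to the stated Riemann integrals as $n\to\infty$. Combining this with the definition of $\int_X h\,\dchifloor$ (respectively $\int_X h\,\dchiceil$) as the limit of $\frac{1}{n}\int_X\lfloor nh\rfloor\,d\chi$ (respectively the ceiling version), and invoking Lemma \ref{lem:welldef} for well-definedness, yields both identities.

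The main obstacle is precisely this Riemann-sum convergence --- in particular, being careful that the finitely many jump discontinuities of the excursion-set Euler characteristics, which occur at critical values of $h$, do not spoil the convergence. The definable triangulation used in the proof of Lemma \ref{lem:welldef} already provides what is needed: on the triangulated decomposition of $X$ making $h$ affine per cell, the excursion-set Euler characteristics are piecewise constant in $s$ with finitely many jumps, so bounded convergence handles the limit. Beyond this, the argument is bookkeeping with the floor/ceiling conventions (strict versus non-strict inequalities), which is precisely what produces the asymmetry $\chi\{h\geq s\}$ versus $\chi\{h>s\}$ distinguishing the two formulas.
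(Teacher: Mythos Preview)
Your proof is correct but takes a genuinely different route from the paper. The paper proceeds by triangulation: it invokes Lemma \ref{lem:r-simplex} to compute $\int_\sigma h\,\dchifloor = (-1)^k\inf_\sigma h$ on each open $k$-simplex $\sigma$ of an affine triangulation, and then verifies directly that $(-1)^k\inf_\sigma h = \int_0^\infty \chi(\sigma\cap\{h\geq s\})\,ds$ (with the analogous statement for $h\leq 0$), concluding by additivity over simplices. Your argument instead goes back to the definition as $\lim_n \frac{1}{n}\int\lfloor nh\rfloor\,d\chi$, applies the constructible excursion-set formula of Proposition \ref{prop:computation} at each finite stage, and recognizes the result as a Riemann sum for the claimed integral. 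What your approach buys is a transparent explanation of \emph{why} the formula holds --- it is literally the limit of the discrete excursion-set formula --- and it avoids any per-simplex computation. What the paper's approach buys is that it reuses Lemma \ref{lem:r-simplex} (already needed for well-definedness) and requires no separate convergence argument, since equality is checked simplex-by-simplex as an exact identity rather than as a limit. Both are short; yours is more conceptual, the paper's more self-contained given the surrounding lemmas.
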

\begin{proof}
For $h\geq 0$ affine on an open $k$-simplex $\sigma$,
\[
    \int_{\sigma} h\,\dchifloor
    = (-1)^k\inf_{\sigma} h
    = \int_0^\infty \chi(\sigma\cap\{h\geq s\})ds ,
\]
and for $h\leq 0$, the equation holds with $-\chi(\sigma\cap\{h < -s\})$. The result for $\int\dchiceil$ follows from an identical computation.
\end{proof}

It is tempting to guess that $\int_Xh\,\dchifloor=\int_0^\infty s\ \chi\{h=s\}\,ds$: the reader will easily verify that this is false. However, Equation (\ref{eq:level}) does have a definable analogue as follows.
\begin{proposition}
\label{prop:r-valued2}
For $h\in \Def(X)$,
\begin{eqnarray}
\label{eq:r-valued2}
    \int_X h\,\dchifloor
    &=&
    \lim_{\epsilon\to 0^+}\frac{1}{\epsilon}\int_{\real}
    s\,\chi\{s\leq h<s+\epsilon\}\,ds
\\
    \int_X h\,\dchiceil
    &=&
    \lim_{\epsilon\to 0^+}\frac{1}{\epsilon}\int_{\real}
    s\,\chi\{s< h\leq s+\epsilon\}\,ds .
\end{eqnarray}
\end{proposition}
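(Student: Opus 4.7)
The strategy is to reduce both sides to sums over cells of a definable triangulation on which $h$ is affine, then verify the identity cell-by-cell by direct computation of the Euler characteristic of the ``level band'' $\sigma \cap \{s \le h < s+\epsilon\}$ as a function of $s$.

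First, invoke the extension of the Triangulation Theorem to $\Def(X)$ used in Lemma \ref{lem:welldef} to fix a finite cell decomposition $X = \bigsqcup_\sigma \sigma$ on which $h$ is affine on every open simplex. The right-hand side then splits as $\sum_\sigma \int_\real s\,\chi(\sigma \cap \{s \le h < s+\epsilon\})\,ds$ by additivity of $\chi$ on disjoint tame unions (Lemma \ref{lem:MVchi}) and linearity of the Lebesgue integral in $s$. The left-hand side splits as $\sum_\sigma \int_\sigma h\,\dchifloor$, since $\lfloor nh\rfloor \in \CF(X)$ is additive over the decomposition for every $n$ and the (finite) sum commutes with the limit defining $\dchifloor$. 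Hence it suffices to verify the identity on a single open $k$-simplex $\sigma$ on which $h$ is affine.

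Second, compute $\chi(\sigma \cap \{s \le h < s+\epsilon\})$ as a function of $s$, with $a = \inf_\sigma h$ and $b = \sup_\sigma h$. If $h \equiv c$ on $\sigma$, the band equals $\sigma$ for $s \in (c-\epsilon,c]$ and is empty otherwise, so $\chi = (-1)^k\,\one_{(c-\epsilon,c]}(s)$. If $h$ is non-constant, then $h(\sigma) = (a,b)$ and three regimes arise. For $s \in (a, b-\epsilon]$, Hardt's Theorem presents the band as the disjoint union of the level set $\{h=s\}\cap\sigma$ (an open $(k-1)$-ball with $\chi = (-1)^{k-1}$) and an open product slab (generic fiber $\times\, (s, s+\epsilon)$ with $\chi = (-1)^{k-1}\cdot(-1) = (-1)^k$), which cancel to $\chi = 0$. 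For $s \in (b-\epsilon, b]$ the top boundary is absent because $h < b$ on $\sigma$, and the same cancellation $(-1)^{k-1} + (-1)^k = 0$ occurs between the bottom level set and the open convex cap. For $s \in (a-\epsilon, a]$, however, the bottom level set is swept away because $h > a$ on $\sigma$, leaving only the open convex cap $\sigma \cap h^{-1}((a, s+\epsilon))$, which is homeomorphic to $\real^k$ and has $\chi = (-1)^k$. Outside $(a-\epsilon, b]$ the band is empty. In both cases
\[
\chi(\sigma \cap \{s \le h < s+\epsilon\}) \;=\; (-1)^k\,\one_{(a-\epsilon,\,a]}(s)
\]
away from a measure-zero set of $s$ corresponding to the finitely many values where Hardt's trivializing partition changes.

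Third, integrate and take the limit:
\[
\frac{1}{\epsilon}\int_\real s\,\chi(\sigma \cap \{s \le h < s+\epsilon\})\,ds
\;=\; \frac{(-1)^k}{\epsilon}\int_{a-\epsilon}^{a} s\,ds
\;=\; (-1)^k\!\left(a - \tfrac{\epsilon}{2}\right)
\;\longrightarrow\; (-1)^k a \;=\; \int_\sigma h\,\dchifloor,
\]
the last equality by Lemma \ref{lem:r-simplex}. Summing over simplices proves the first equation. The second follows from the mirror argument applied to $\sigma \cap \{s < h \le s+\epsilon\}$: now the bottom of the band is open and the top is closed, so the asymmetry is reversed, the uncancelled mass lives on $[b-\epsilon, b)$ with coefficient $(-1)^k$, and the resulting limit is $(-1)^k b = \int_\sigma h\,\dchiceil$.

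The main obstacle is the Euler-characteristic computation on a single cell. One must verify in arbitrary dimension that the contribution $(-1)^{k-1}$ from an included level set always cancels the $(-1)^k$ from the open slab between consecutive levels, so that only the boundary effect where such a level set fails to exist in $\sigma$ survives. Hardt's Theorem applied to $h\colon\sigma\to h(\sigma)$ does exactly this work, trivializing the map on the interior of the range and making the whole matter turn on the asymmetric treatment of the endpoints $a$ and $b$ of $h(\sigma)$; this asymmetry is also what distinguishes $\dchifloor$ from $\dchiceil$.
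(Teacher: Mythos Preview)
Your proposal is correct and follows essentially the same approach as the paper: reduce via triangulation to a single open $k$-simplex on which $h$ is affine, compute the integral of $s\,\chi\{s\le h<s+\epsilon\}$ there, and match it against Lemma~\ref{lem:r-simplex}. The paper's proof is a one-line statement of the per-simplex result $\int_\real s\,\chi\{s\le h<s+\epsilon\}\,ds=\epsilon(-1)^k(-\epsilon/2+\inf_\sigma h)$ for $\epsilon$ small; you supply the case analysis on $s$ (the cancellation $(-1)^{k-1}+(-1)^k=0$ everywhere except on $(a-\epsilon,a]$) that justifies this, which the paper leaves implicit.
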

\begin{proof}
For $h$ affine on an open $k$-simplex $\sigma$, and $0<\epsilon$ sufficiently small, $\int_{\real} s\,\chi\{s\leq
h<s+\epsilon\}\,ds = \epsilon\,(-1)^k\left(-\frac{\epsilon}{2}+\inf_{\sigma} h\right)$ and $\int_{\real} s\,\chi\{s< h \leq s+\epsilon\}\,ds =
\epsilon\,(-1)^k\left(-\frac{\epsilon}{2}+\sup_{\sigma} h\right)$.
\end{proof}

\subsection{Interpretation}
\label{sec:interpret}

The measures $\dchifloor$ and $\dchiceil$, though not equal, are clearly related. A measured perusal of the formulae derived in this section suggest a type of {\em duality} between them (sup versus inf; $\geq$ versus $<$). One general result reinforcing that duality is the following: these measures are related by conjugation with $-1$.
\begin{lemma}
\label{lem:floortoceiling}
\begin{equation}
\label{eq:floortoceiling}
    \int h\dchiceil = -\int -h\dchifloor .
\end{equation}
\end{lemma}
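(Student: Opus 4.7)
The plan is to reduce the identity to the elementary pointwise fact that $\lceil x\rceil = -\lfloor -x\rfloor$ for every real $x$, then transport this through the definitions using the fact that the combinatorial integral on $\CF(X)$ \emph{is} linear (the nonlinearity phenomenon of \S\ref{sec:nonlin} only appears in the limit, not at each fixed approximation level).

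First I would unwind the right-hand side directly from Equation~(\ref{eq:realintegral}):
\[
    -\int_X (-h)\,\dchifloor
    = -\lim_{n\to\infty}\frac{1}{n}\int_X \lfloor n(-h)\rfloor\,d\chi
    = -\lim_{n\to\infty}\frac{1}{n}\int_X \lfloor -nh\rfloor\,d\chi.
\]
Next, apply the pointwise identity $\lfloor -y\rfloor = -\lceil y\rceil$ with $y = nh(x)$ to replace $\lfloor -nh\rfloor$ by $-\lceil nh\rceil$ as elements of $\CF(X)$ (both are constructible because $h\in\Def(X)$ and taking floor/ceiling of a bounded definable function yields a constructible function, by the Triangulation Theorem for definable functions invoked in Lemma~\ref{lem:welldef}).

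Then I would invoke linearity of the combinatorial Euler integral $\int_X\,d\chi\colon\CF(X)\to\zed$, which is a homomorphism by construction in \S\ref{sec:integral}, to pull the sign through:
\[
    -\lim_{n\to\infty}\frac{1}{n}\int_X (-\lceil nh\rceil)\,d\chi
    = \lim_{n\to\infty}\frac{1}{n}\int_X \lceil nh\rceil\,d\chi
    = \int_X h\,\dchiceil,
\]
where the last equality is the definition of $\dchiceil$ from Equation~(\ref{eq:realintegral}). Chaining these equalities yields $\int_X h\,\dchiceil = -\int_X (-h)\,\dchifloor$, as claimed.

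There is essentially no obstacle here: the only thing to verify carefully is that the limits defining $\dchifloor$ and $\dchiceil$ exist (guaranteed by Lemma~\ref{lem:welldef}), so that the manipulation of the limit and the sign is legitimate. The conceptual content of the lemma is simply that the floor/ceiling duality $\lceil\cdot\rceil = -\lfloor -\cdot\rfloor$ survives the passage through Euler integration because linearity holds \emph{before} taking the $n\to\infty$ limit, even though it fails in the limit on general elements of $\Def(X)$.
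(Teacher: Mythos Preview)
Your argument is correct and is genuinely different from the paper's. The paper proceeds by triangulation: invoking Lemma~\ref{lem:r-simplex}, on each open $k$-simplex $\sigma$ one has $\int_\sigma h\,\dchiceil = (-1)^k\sup_\sigma h$ and $\int_\sigma(-h)\,\dchifloor = (-1)^k\inf_\sigma(-h)$, and the identity $\sup_\sigma h = -\inf_\sigma(-h)$ finishes it simplex by simplex. Your route is more primitive: you never invoke the triangulation theorem or the simplex formula, working instead directly from the defining Riemann-sum limits~(\ref{eq:realintegral}) and the pointwise identity $\lceil y\rceil = -\lfloor -y\rfloor$, together with linearity of the honest combinatorial integral on $\CF(X)$. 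Your approach has the virtue of making transparent exactly where the duality comes from (the floor/ceiling relation itself) and of emphasizing that linearity is available at each finite stage even though it fails in the limit; the paper's approach, by contrast, ties the result to the geometric picture of $\sup$ versus $\inf$ on cells, which is the form most useful for the Morse-theoretic developments that follow in \S\ref{sec:morse}.
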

\begin{proof}
Triangulate. Note that $\sup_\sigma h=-\inf_\sigma -h$.
\end{proof}


As to the question of what these integrals measure, the following direct computation gives a clue: both integrals are generalizations of total variation.

\begin{corollary}
\label{cor:totvar}
If $M$ is a 1-dimensional manifold and $h\in\Def^0(M)$, then
\begin{equation}
\label{eq:totvar}
    \int_M h\,\dchifloor
    =
    -\int_M h\,\dchiceil
    =
    \frac{1}{2}\totvar(h) .
\end{equation}
\end{corollary}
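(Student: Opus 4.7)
The plan is to reduce to a single identity and then compute Euler characteristics of level sets directly. First, I would apply Lemma \ref{lem:floortoceiling} together with the obvious invariance $\totvar(-h) = \totvar(h)$ to conclude that once $\int_M h\,\dchifloor = \frac{1}{2}\totvar(h)$ is established, the equality $-\int_M h\,\dchiceil = \frac{1}{2}\totvar(h)$ follows automatically; so only the floor integral needs treatment. By Proposition \ref{prop:r-valued},
\[
    \int_M h\,\dchifloor = \int_0^\infty \chi\{h \geq s\}\,ds \;-\; \int_0^\infty \chi\{h < -s\}\,ds.
\]

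Next I would analyze the integrands pointwise. Since $M$ is a $1$-manifold and $h \in \Def^0(M)$ is compactly supported, a tame triangulation of the graph of $h$ shows $h$ has only finitely many critical values. For all but finitely many $s > 0$, the closed superlevel set $\{h \geq s\}$ is a finite disjoint union of closed arcs in $M$; each such arc has $\chi = 1$ and contributes exactly two boundary points to $\{h = s\}$, so $\chi\{h \geq s\} = \tfrac{1}{2}\#\{h = s\}$. Dually, at such regular $s$ the open sublevel set $\{h < -s\}$ is a finite disjoint union of open arcs, each with $\chi = -1$ and two frontier points in $\{h = -s\}$, giving $\chi\{h < -s\} = -\tfrac{1}{2}\#\{h = -s\}$. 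Substituting and changing variables,
\[
    \int_M h\,\dchifloor = \frac{1}{2}\int_0^\infty \bigl(\#\{h=s\} + \#\{h=-s\}\bigr)\,ds = \frac{1}{2}\int_{\real} \#\{h = t\}\,dt.
\]

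Finally, for a compactly supported tame $C^0$ function on a $1$-manifold the Banach indicatrix identity $\int_{\real} \#\{h = t\}\,dt = \totvar(h)$ holds; it is verified by choosing a triangulation on which $h$ is piecewise monotone, applying the elementary substitution $t = h(x)$ on each monotone arc, and summing $|h(b) - h(a)|$ across arcs. The main obstacle is the careful accounting at the finitely many non-regular values of $s$: a local extremum produces an isolated point in $\{h = s\}$, level sets can collapse onto intervals, and the "two frontier points per arc" tally can break down on the boundary of $\supp h$. Because these exceptional $s$ form a finite (hence Lebesgue-null) set, they contribute nothing to the outer $ds$-integral, and the identification with $\tfrac{1}{2}\totvar(h)$ goes through.
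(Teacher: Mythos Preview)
Your proof is correct but follows a different route from the paper's. The paper proceeds by direct triangulation: choose an affine triangulation of $h$ whose $0$-simplices are exactly the local maxima $\{p_i\}$ and minima $\{q_j\}$, with open $1$-simplices the monotone arcs between them. Applying Lemma~\ref{lem:r-simplex} term by term gives $\int_M h\,\dchifloor = \sum_i h(p_i) + \sum_j h(q_j) - 2\sum_j h(q_j) = \sum_i h(p_i) - \sum_j h(q_j)$, and this last expression is recognized as $\tfrac{1}{2}\totvar(h)$ since each extremum borders exactly two monotone arcs. The ceiling identity then comes from Lemma~\ref{lem:floortoceiling} or the analogous computation.

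Your approach instead invokes Proposition~\ref{prop:r-valued} to pass to Euler characteristics of excursion sets, identifies $\chi\{h\geq s\}$ and $\chi\{h<-s\}$ with half the cardinality of the corresponding level sets, and then appeals to the Banach indicatrix identity $\int_{\real}\#\{h=t\}\,dt=\totvar(h)$. This is a legitimate and clean alternative: it makes the link to classical total-variation analysis explicit and avoids choosing a triangulation, at the cost of importing the co-area formula and having to dispose of the finitely many singular levels (which you handle correctly via Lebesgue-null sets). One small point worth noting: a connected component of $\{h\geq s\}$ could in principle be an entire circle component of $M$, in which case it has $\chi=0$ and zero boundary points --- but your formula $\chi=\tfrac12\#\{h=s\}$ still holds in that degenerate case, so nothing breaks. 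The paper's argument is shorter and stays entirely within the combinatorial framework already built; yours gives a more analytic perspective on why the answer is total variation.
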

\begin{proof}
Choose an affine triangulation of $h$ which triangulates $M$ with the maxima $\{p_i\}$ and minima $\{q_j\}$ as 0-simplices and the intervals between them as 1-simplices. To each minimum $q_j$ is associated two open 1-simplicies, since $M$ is a 1-manifold. Thus, via Lemma \ref{lem:r-simplex}:
\[
    \int_M h\,\dchifloor = \sum_ih(p_i)+\sum_jh(q_j)-2\sum_jh(q_j) =\frac{1}{2}\totvar(h) .
\]
This equals $-\int_M h\,\dchiceil$ by either an analogous computation or by Lemma \ref{lem:floortoceiling}.
\end{proof}

Equation (\ref{eq:totvar}) {\em concentrates} the measures $\dchifloor$ and $\dchiceil$ on the critical points of the (univariate) integrand. This is the key to understanding what happens in the higher-dimensional generalization of total variation.

\section{Morse theory}
\label{sec:morse}

Morse theory is a means of relating the global features of (in the classical setting) a Riemannian manifold $M$ with the local features of critical points of smooth $\real$-valued functions on $M$. Recall that $h:M\to\real$ is \style{Morse} if all critical points of $h$ are nondegenerate, in the sense of having a nondegenerate Hessian matrix of second partial derivatives. Denote by $\crit(h)$ the set of critical points of $h$. For each $p\in\crit(h)$, the \style{Morse index} of $p$, $\mu(p)$, is defined as the number of negative eigenvalues of the Hessian at $p$, or, equivalently, the dimension of the unstable manifold $W^u(p)$ of the gradient vector field $-\nabla h$ at $p$.

We will show, as a consequence of a more general approach, the following Morse-theoretic interpretation of $\int\cdot\dchifloor$:
\begin{theorem}
\label{thm:morse}
If $h$ is a Morse function on a closed $n$-manifold $M$, then:
\begin{align}
\label{eq:morse}
    \int_M h\,\dchifloor
    =
    \sum_{p\in\crit(h)}(-1)^{n-\mu(p)}h(p)
    ;
\\
\label{eq:morse2}
    \int_M h\,\dchiceil
    =
    \sum_{p\in\crit(h)}(-1)^{\mu(p)}h(p)
    .
\end{align}
\end{theorem}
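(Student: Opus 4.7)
The plan is to combine the level-set formula of Proposition \ref{prop:r-valued} with classical Morse theory on the superlevel sets $\{h\ge s\}$. By adding a generic small constant and perturbing $h$ within Morse functions having the same critical set with the same indices, I will first reduce to the case that every critical value of $h$ is nonzero and that all critical values are distinct. Continuity of both sides of \eqref{eq:morse} under such $C^0$-small perturbations (continuity of the right side by inspection; of the left side via Lemma \ref{lem:welldef} and Proposition \ref{prop:r-valued2}) will then recover the general case.

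The topological input I plan to use is the standard Morse attaching lemma: since $M$ is closed, the gradient flow of $-\nabla h$ identifies $\{h\ge s\}$ with $\{h\ge t\}$ diffeomorphically whenever $[s,t]$ contains no critical value, and as $s$ decreases past a critical value $c=h(p)$, the superlevel set $\{h\ge s\}$ changes by attachment of a cell of dimension $n-\mu(p)$ (the dimension of the stable manifold of $p$ for $-\nabla h$). Hence $\chi\{h\ge s\}$ is a step function of $s$, vanishing for $s>\max h$, whose jump as $s$ crosses $h(p)$ downward is $(-1)^{n-\mu(p)}$. Integrating from $0$ to $\infty$ and interchanging the (finite) sum with the integral yields
\begin{equation*}
\int_0^\infty\chi\{h\ge s\}\,ds=\sum_{p:\,h(p)>0}(-1)^{n-\mu(p)}\,h(p),
\end{equation*}
and the parallel computation after $t=-s$, using $\chi\{h<t\}=\chi(M)-\chi\{h\ge t\}=\sum_{p:\,h(p)<t}(-1)^{n-\mu(p)}$, gives
\begin{equation*}
\int_0^\infty\chi\{h<-s\}\,ds=-\sum_{p:\,h(p)<0}(-1)^{n-\mu(p)}\,h(p).
\end{equation*}
Subtracting and invoking Proposition \ref{prop:r-valued} recombines the two partial sums into the full sum over $\crit(h)$ asserted in \eqref{eq:morse}.

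Formula \eqref{eq:morse2} will then fall out of Lemma \ref{lem:floortoceiling}: writing $\int_M h\,\dchiceil=-\int_M(-h)\,\dchifloor$, I note that $-h$ is Morse with the same critical points but indices $\mu_{-h}(p)=n-\mu(p)$, so plugging into \eqref{eq:morse} converts the sign factor from $(-1)^{n-\mu(p)}$ to $(-1)^{\mu(p)}$, with the outer minus absorbed. The main obstacle I anticipate is not the Morse-theoretic bookkeeping but the perturbation/continuity step: one must verify that $\int_M h\,\dchifloor$ and $\int_M h\,\dchiceil$ depend continuously on $h$ as critical values are moved across $0$ or coalesce. The right-hand sides are continuous by inspection; for the left-hand sides I plan to work directly from the $1/n$-limit definition, showing that the step-function approximants $\lfloor nh\rfloor/n$ are uniformly close to $h$ and that this uniformity lets the Morse evaluation pass through the limit.
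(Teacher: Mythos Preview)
Your proposal is correct and follows essentially the same route as the paper: both use Proposition~\ref{prop:r-valued} to reduce to the Lebesgue integral of $s\mapsto\chi\{h\ge s\}$, invoke the classical Morse attaching lemma to see that this step function jumps by $(-1)^{n-\mu(p)}$ at each critical value, and then obtain \eqref{eq:morse2} from \eqref{eq:morse} by the duality $\int h\,\dchiceil=-\int(-h)\,\dchifloor$ together with $\mu_{-h}(p)=n-\mu(p)$. The only difference is in how coincident critical values are handled: the paper dismisses this with a parenthetical ``using localization for the more general case'' (i.e.\ work in disjoint neighborhoods of the critical points and sum), whereas you propose a perturbation-plus-continuity argument. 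Your route is fine here---both sides of \eqref{eq:morse} are manifestly continuous under $C^0$-small perturbations of $h$ that fix the critical set and indices, since by Proposition~\ref{prop:r-valued} the left side is a Lebesgue integral of a bounded step function whose jump locations move continuously---but you are working harder than necessary, and your worry about continuity is slightly misplaced given that the paper later (\S\ref{sec:cont}) shows $\int\dchifloor$ is \emph{not} $C^0$-continuous on $\Def(X)$ in general. Localization sidesteps this entirely; your restricted class of perturbations happens to avoid the pathology.
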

\begin{proof}
The Euler characteristic of upper/lower excursion sets is piecewise-constant, changing only at critical values. Assume distinct critical values of $h$ (using localization for the more general case). For $p\in\crit(h)$, $s=h(p)$, and $\epsilon\ll 1$, classical Morse theory \cite{Milnor} says that $\{h\geq s+\epsilon\}$ differs from $\{h\geq s-\epsilon\}$ by the addition of a product of discs $D^{\mu(p)}\times D^{n-\mu(p)}$ glued along $D^{\mu(p)}\times \del D^{n-\mu(p)}$. The change in Euler characteristic resulting from this handle addition is $(-1)^{n-\mu(p)}$. Equation (\ref{eq:r-valued}) and duality complete the proof.
\end{proof}

A more general result arises from a more general Morse theory.  Stratified Morse theory \cite{GM} is a powerful extension to singular  spaces, including definable sets with respect to an o-minimal structure \cite{BK,Schurmann}. Let $X$ be a tame subset of $\real^n$ and $h\colon X\to\real$ a smooth map which extends to $\widetilde{h}\colon\real^n\to\real$ smooth. The \style{local Morse data} of $h$ at a point $x\in X$ is not a number (like the Morse index $\mu$) but rather (the homotopy type of) a pair of compact spaces:
\[
    \LMD(h,x) =
    \lim_{\epsilon'\ll\epsilon\to 0^+}
    \left(
        \overline{B_\epsilon(x)}
        \cap
        \{\abs{h-h(x)}\leq\epsilon'\}
    \, , \,
        \overline{B_\epsilon(x)}
        \cap
        \{h=h(x)-\epsilon'\}
    \right) ,
\]
where $\overline{B_\epsilon(x)}$ is the closed ball of radius $\epsilon$ about $x\in X$. The limit (in homotopy type) is well-defined (indeed, the homeomorphism type stabilizes) thanks to Theorem \ref{thm:Hardt}. The \style{Euler-Poincar\'e index} of $h$ at $x$ is, equivalently,
\begin{eqnarray*}
\label{eq:index}
    (\Index_*h)(x) &=& \chi(\LMD(h,x)) \\
        &=&
        1-
        \lim_{\epsilon'\ll\epsilon\to 0^+} \chi\left(\overline{B_\epsilon(x)}\cap\{h=h(x)-\epsilon'\}\right) \\
        &=&
        \lim_{\epsilon'\ll\epsilon\to 0^+}
        \chi\left(
        \overline{B_\epsilon(x)}
        \cap
        \{h>h(x)-\epsilon'\}
        \right) .
\end{eqnarray*}
There is a dual \style{co-index} given by $\Index^*h=\Index_*(-h)$. Note that $\Index_*,\Index^*\colon\Def(X)\to \CF(\overline{X})$, and the restriction of these operators to $\CF(X)$ is the identity (every point of a constructible function is a critical point). The two types of integration on $\Def(X)$ correspond to the Euler-Poincar\'e (co)index.

\begin{theorem}
\label{thm:stratified}
For $h\in\Def^0(X)$,
\begin{equation}
\label{eq:morseindex}
        \int_X h\,\dchifloor = \int_{\overline{X}}h\,\Index^*h\,d\chi
         \quad  ;  \quad
        \int_X h\,\dchiceil   = \int_{\overline{X}}h\,\Index_*h\,d\chi
.
\end{equation}
\end{theorem}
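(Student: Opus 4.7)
My strategy is to rewrite the real-valued Euler integral as a Stieltjes-type integral along the $s$-line recording jumps of Euler characteristics of upper excursion sets, then identify these jumps fiberwise with the Euler--Poincar\'e co-index. By the extension of the triangulation theorem to $\Def(X)$ used in the proof of Lemma \ref{lem:welldef}, refine $\overline X$ to a definable triangulation on whose open cells $h$ is affine and on which $\Index^* h$ is locally constant. Both sides of the claimed identity are additive over tame partitions (the LHS by Lemmas \ref{lem:r-simplex}--\ref{lem:cov}, the RHS by additivity of the Euler integral on $\CF$), so it suffices to verify the equality globally using this adapted partition.

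Starting from Proposition \ref{prop:r-valued}, a discrete integration by parts in $s$ rewrites
\begin{equation*}
   \int_X h\,\dchifloor \;=\; -\int_\real s\,d_s\,\chi\{h \geq s\},
\end{equation*}
where the right-hand integrand is the signed point measure on $\real$ recording the jumps of $s \mapsto \chi\{h \geq s\}$, finitely supported on critical values of $h$ with respect to the chosen stratification. The heart of the argument is the local identification
\begin{equation*}
   \chi\{h \geq c - \delta\} - \chi\{h \geq c + \delta\} \;=\; \int_{h^{-1}(c)} \Index^* h\,d\chi, \qquad 0 < \delta \ll 1,
\end{equation*}
for $c$ a critical value. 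By additivity and an excursion-set decomposition the left-hand side localizes near a point $x \in h^{-1}(c)$ as $\chi\bigl(\overline{B_\epsilon(x)} \cap \{c - \epsilon' \leq h < c + \epsilon'\}\bigr)$ (with $\epsilon' \ll \epsilon \to 0^+$); this local contribution unpacks from the definition of $\Index^* h = \Index_*(-h)$ to exactly $(\Index^* h)(x)$. Substituting into the Stieltjes formula gives $\int_X h\,\dchifloor = \int_{\overline X} h\,\Index^* h\,d\chi$. The companion identity for $\dchiceil$ and $\Index_*$ follows by applying the first identity to $-h$ via Lemma \ref{lem:floortoceiling} together with the relation $\Index^*(-h) = \Index_* h$.

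The main obstacle will be justifying the pointwise identification of $\chi\bigl(\overline{B_\epsilon(x)} \cap \{c - \epsilon' \leq h < c + \epsilon'\}\bigr)$ with $(\Index^* h)(x)$ when $h$ is merely continuous and definable, rather than smooth Morse. In the classical Morse case this is elementary handle attachment. In the singular tame setting the cleanest justification is via stratified Morse theory \cite{GM, BK, Schurmann} applied to the Whitney stratification induced by the adapted triangulation; alternatively, sheaf-theoretically via the base change theorem of \S\ref{sec:dictionary} applied to $h \colon X \to \real$, so that the stalk of $Rh_!\widetilde{\real}_X$ at $c$ encodes the local Morse data in the fiber, and decategorifying via $\chi$ recovers the asserted Euler integral of $\Index^* h$. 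The remaining steps are routine additivity bookkeeping.
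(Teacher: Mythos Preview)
Your proposal is correct in spirit and would yield a proof, but it takes a considerably more indirect route than the paper. The paper's argument is a direct simplex-by-simplex verification using an affine triangulation: on each open $k$-simplex $\sigma$, one computes $\Index^*h$ explicitly as $(-1)^k$ times the indicator function of the closed face of $\sigma$ on which $\inf_\sigma h$ is attained; then $\int_{\overline{\sigma}} h\,\Index^*h\,d\chi = (-1)^k\inf_\sigma h$ by continuity of $h$, which matches $\int_\sigma h\,\dchifloor$ by Lemma~\ref{lem:r-simplex}. Additivity over the triangulation finishes the $\dchifloor$ case, and duality handles $\dchiceil$.

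The contrast is instructive. Your approach --- rewriting $\int_X h\,\dchifloor$ as a Stieltjes integral in $s$ and identifying the jumps of $s\mapsto\chi\{h\geq s\}$ with fiberwise Euler integrals of $\Index^*h$ --- is conceptually attractive because it makes the Morse-theoretic content visible globally, and it is the natural argument if one is thinking sheaf-theoretically via base change for $Rh_!$. But it forces you to confront exactly the obstacle you name: justifying the local identification of the slab Euler characteristic with $\Index^*h(x)$ for merely continuous definable $h$, which then requires importing stratified Morse theory or constructible-sheaf machinery as a black box. The paper's simplicial computation completely sidesteps this: once $h$ is affine on each open simplex, the co-index is computable by hand, and no appeal to \cite{GM} or \cite{Schurmann} beyond the definitions is needed. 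Your route is the right one for understanding \emph{why} the theorem is true; the paper's is the efficient one for establishing it.
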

\begin{proof}
On an open $k$-simplex $\sigma\subset X\subset\real^n$ in an affine triangulation of $h$, the co-index $\Index^*h$ equals $(-1)^{\dim(\sigma)}$ times the characteristic function of the closed face of $\sigma$ determined by $\inf_\sigma h$. Since $h$ is
continuous, $\int_{\overline{\sigma}}h\Index^*h\,d\chi = (-1)^{\dim(\sigma)}\inf_\sigma h$. Lemma \ref{lem:r-simplex} and additivity complete the proof; duality gives a proof for $\Index_*$ and $\dchiceil$.
\end{proof}

This yields a concise proof of Theorem \ref{thm:morse}: for $p\in\crit(h)$ a nondegenerate critical point on an $n$-manifold, $\Index^*h(p)=(-1)^{n-\mu(p)}$ and $\Index_*h(p)=(-1)^{\mu(p)}$. Furthermore, one sees clearly that the relationship between $\dchifloor$ and $\dchiceil$ is regulated by Poincar\'e duality --- the canonical {\em flip} $\Index^*h=\Index_*(-h)$ is in play. For example, on continuous definable integrands over an $n$-dimensional manifold $M$,
\begin{equation}
\label{eq:morse3}
    \int_M h\,\dchiceil
    =
    (-1)^n\int_M h\,\dchifloor
    .
\end{equation}

The generalization from continuous to general definable integrands requires weighting $\Index^*h$ by $h$ directly. To compute $\int_X h\dchifloor$, one integrates the weighted co-index
\begin{equation}
        \lim_{\epsilon'\ll\epsilon\to 0^+}
        h(x+\epsilon')\chi\left(
        \overline{B_\epsilon(x)}
        \cap
        \{h<h(x)+\epsilon'\}
        \right)
\end{equation}
with respect to $d\chi$. Details are left to the curious reader.

It is worth noting that although the integral operators $\int \dchifloor$ and $\int \dchiceil$ are not linear, not functorial, and have seemingly little relationship to the sheaf-theoretic tools of \S\ref{sec:presheaves}-\ref{sec:dictionary}, it is nevertheless true that at heart these operators stem from stratified Morse theory, a subject born from and intrinsic to the techniques of constructible sheaves \cite{GM,Schurmann}.

\section{Incomplete data and harmonic interpolation}
\label{sec:incomplete}

Our first application of the definable integration theory concerns management of uncertain or incomplete data in estimation of an Euler integral over $\CF(\real^2)$. It is common in sensor networks to possess regions of undersampling (or {\em holes}) in the underlying domain, through incomplete coverage or node failures. In this case, one wants to estimate the number of targets relative to the missing information. This translates to the following relative problem: if one knows $h\colon X\to\nats$ only on some subset $A\subset X$, how well can one estimate $\int_X h\,d\chi$ from the restriction $h\vert_A$?

\subsection{Bounds}
\label{sec:bounds}
We give bounds for the planar case, following \cite{BG:RSS}.

\begin{theorem}
\label{thm:holes}
Assume $h\colon\real^2\to\nats$ is the sum of indicator functions over a collection of compact contractible sets in $\real^2$, none of which
is contained entirely within $D$, a fixed open contractible disc. Then
\begin{equation}
\label{eq:hole}
    \int_{\real^2} \hat{h}\, d\chi
    \leq
    \int_{\real^2} h\,d\chi
    \leq
    \int_{\real^2} \bigcheck{h}\, d\chi ,
\end{equation}
where
\begin{eqnarray*}
    \hat{h}(y) &=& \left\{
    \begin{array}{cl}
        \max_{\partial D} h & : y\in\overline{D} \\
        h & : {\mbox{else}}
    \end{array}
    \right.
    \\
    \bigcheck{h}(y) &=& \left\{
    \begin{array}{cl}
        \min_{\partial D} h & : y\in D \\
        h & : {\mbox{else}}
    \end{array}
    \right.
\end{eqnarray*}
\end{theorem}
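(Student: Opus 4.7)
The plan is to reduce both inequalities in (\ref{eq:hole}) to per-target Euler-characteristic estimates. Additivity of $\int\,d\chi$ together with $\chi(\overline D)=\chi(D)=1$ gives
\[
    \int\hat h\,d\chi - \int h\,d\chi = M - \int_{\overline D}h\,d\chi,
    \qquad
    \int\bigcheck h\,d\chi - \int h\,d\chi = m - \int_{D}h\,d\chi,
\]
so it suffices to establish $\int_{\overline D}h\,d\chi\geq M$ and $\int_{D}h\,d\chi\leq m$. Writing $h=\sum_\alpha \mathbf{1}_{U_\alpha}$ with $\chi(U_\alpha)=1$ and invoking additivity on the decomposition $U_\alpha=(U_\alpha\cap D)\sqcup(U_\alpha\setminus D)$, these become
\[
    \sum_\alpha \chi(U_\alpha\cap\overline D)\geq M,
    \qquad
    \sum_\alpha \chi(U_\alpha\setminus D)\geq N-m,
\]
where $N=\#\alpha=\int h\,d\chi$.

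For the first inequality, I would show that each connected component $C$ of $U_\alpha\cap\overline D$ is contractible. Any simple closed curve $\gamma\subset C$ bounds a compact disc $\Delta\subset\real^2$; because $U_\alpha$ is compact and simply-connected, its complement is connected and unbounded, forcing $\Delta\subset U_\alpha$, and because $\overline D$ is convex, $\Delta\subset\overline D$. Hence $\Delta\subset U_\alpha\cap\overline D$, and path-connectedness with $\gamma\subset C$ gives $\Delta\subset C$, so $\gamma$ contracts in $C$. Therefore $\chi(U_\alpha\cap\overline D)\geq 1$ when nonempty and $\geq 0$ always. Choosing $x^\ast\in\partial D$ with $h(x^\ast)=M$, the $M$ indices $\alpha$ with $x^\ast\in U_\alpha$ each contribute at least one, so the sum is at least $M$.

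For the second inequality, I would appeal to Alexander duality in the plane, which gives $\chi(X)=\beta_0(X)-\beta_0(\real^2\setminus X)+1$ for any compact tame $X\subset\real^2$ (this is Theorem~\ref{thm:planardual} repackaged). Applied to $X=U_\alpha\setminus D$, the complement is $(\real^2\setminus U_\alpha)\cup D$. Since $\real^2\setminus U_\alpha$ is connected and unbounded and $D$ is connected, the union is a single component precisely when the two pieces meet, i.e., when $D\not\subset U_\alpha$; in that case $\chi(U_\alpha\setminus D)\geq 1$ whenever $U_\alpha\setminus D$ is nonempty, which is guaranteed by the hypothesis $U_\alpha\not\subset D$. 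Choose $x_\ast\in\partial D$ with $h(x_\ast)=m$: for each of the $N-m$ indices $\alpha$ with $x_\ast\notin U_\alpha$, closedness of $U_\alpha$ and $x_\ast\in\partial D$ together produce a neighborhood of $x_\ast$ missing $U_\alpha$ and meeting $D$, so $D\not\subset U_\alpha$, and each such term contributes at least $1$. The remaining $m$ terms are nonnegative, so the sum is at least $N-m$.

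The main obstacle is the Alexander-duality bookkeeping for the second inequality: one must verify that $(\real^2\setminus U_\alpha)\cup D$ is genuinely connected when $D\not\subset U_\alpha$, which requires locating an actual intersection point of the two open sets. Openness of $D$ and closedness of $U_\alpha$ combine to supply the needed bridge through any $x_\ast\in\partial D\setminus U_\alpha$. Once this is in place, the rest is routine in the o-minimal tame setting, where all Euler characteristics are well-defined.
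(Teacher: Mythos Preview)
Your proposal is correct. The reduction to $\int_{\overline{D}}h\,d\chi\ge M$ and $\int_D h\,d\chi\le m$, and the lower-bound argument via contractibility of the components of $U_\alpha\cap\overline{D}$, match the paper exactly (the paper simply asserts this contractibility and deduces $\int_{\overline{D}}h\,d\chi=\abs{\mathcal{V}}\ge\max_{\partial D}h$, where $\mathcal{V}$ is the set of such components). One small slip: you invoke convexity of $\overline{D}$, which is not assumed --- only contractibility. The fix is to reuse the Jordan-curve argument you already gave for $U_\alpha$: since $\overline{D}$ is compact and simply connected in $\real^2$, its complement is connected and unbounded, forcing $\Delta\subset\overline{D}$.

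For the upper bound you take a genuinely different route. The paper stays inside $\overline{D}$: it observes that at most $m$ of the components $V_\beta$ can equal $\overline{D}$, and that every remaining $V_\beta$ meets $\partial D$ in a nonempty proper compact subset of the circle, hence with strictly positive Euler characteristic; summing gives $\abs{\mathcal{V}}\le m+\int_{\partial D}h\,d\chi=\int_{\overline{D}}\bigcheck{h}\,d\chi$. You instead pass to the complementary pieces $U_\alpha\setminus D$ and use Alexander duality to read off $\chi(U_\alpha\setminus D)$ from the connectivity of $(\real^2\setminus U_\alpha)\cup D$. Your approach is a bit cleaner in that it bypasses the separate boundary integral and the special case $V_\beta=\overline{D}$; the paper's approach is more overtly geometric and highlights the role of $\partial D$. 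To finish your argument you should add one line for the ``remaining $m$ terms'': if $D\subset U_\alpha$ the duality formula gives $\chi(U_\alpha\setminus D)=\beta_0(U_\alpha\setminus D)-1$, which is nonnegative because $U_\alpha\setminus D\neq\emptyset$ by the standing hypothesis $U_\alpha\not\subset D$.
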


\begin{proof}
Via additivity of $\chi$ over domains, Eqn. (\ref{eq:hole}) follows from the corresponding inequalities over the compact domain $\overline{D}$. Explicitly, if $\overline{h}=h$ on $\real^2-\overline{D}$, then
\[
\int_{\real^2}\overline{h}\,d\chi
    =
    \int_{\real^2-D}h\,d\chi
    -
    \int_{\partial D}h\,d\chi
    +
    \int_{\overline{D}}\overline{h}\,d\chi  .
\]
Denote by $\V=\{V_\beta\}$ the collection of nonempty connected components of intersections of all target supports $U_\alpha$ with
$\overline{D}$. Since we work in $\real^2$, each $V_\beta$ is a compact contractible set which intersects $\partial D$. By Theorem
\ref{thm:enumerate}, $\int_{\overline{D}}h\,d\chi$ equals the number of components $\abs{\V}$. There are at least $\max_{\partial D}h$
such pieces; hence
\[
\int_{\overline{D}}\hat{h}\,d\chi  \leq \int_{\overline{D}}h\,d\chi
.
\]

Consider $\min_{\partial D}h$ and remove from the collection $\V$ this number of elements, including all such $V_\beta$ equal to
$\overline{D}$ (which is possible since we remove at most $\min_{\partial D}h$ such elements). Each remaining $V_\beta\in\V$ is not equal to $\overline{D}$ and thus intersects $\partial D$ in a set with strictly positive Euler characteristic. Thus,
\[
    \int_{\overline{D}} h\,d\chi
    \leq
    \int_{\overline{D}}\bigcheck{h}\,d\chi.
\]
\end{proof}

\begin{figure}[hbt]
\begin{center}
\includegraphics[angle=0,width=4in]{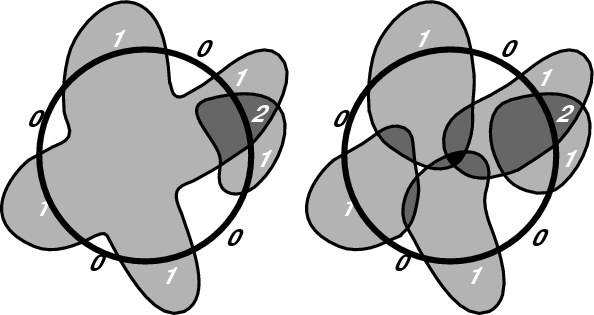}
\caption{An example for which the upper and lower bounds of Equation (\ref{eq:hole}) are sharp.}
\label{fig:hole}
\end{center}
\end{figure}

\begin{example}
\label{ex:hole}
Consider the example illustrated in Figure \ref{fig:hole}. The upper and lower estimates for the number of targets are $2$ and $4$ respectively. For this example, the estimates are sharp in that one can have collections of target supports over compact contractible sets which agree with $h$ outside of $\overline{D}$ and realize the bounds.
\end{example}

\begin{remark}
\label{rem:sharpness}
The lower bound $\int\hat{h}\,d\chi$ may fail to be sharp in several ways. For example, a target support can intersect $D$ in multiple components, causing $\hat{h}$ to not have a decomposition as a sum of characteristic functions over contractible sets (but rather with annuli). One can even find examples for which each target support intersects $\overline{D}$ in a contractible set but for which $\int\hat{h}\,d\chi$ is negative. The fact that the lower bound $\int\hat{h}$ can be so defective follows from the difficulty associated with annuli in the plane --- these are `large sets of measure zero' in $d\chi$.
\end{remark}

%
\begin{figure}[hbt]
\begin{center}
\includegraphics[angle=0,width=4in]{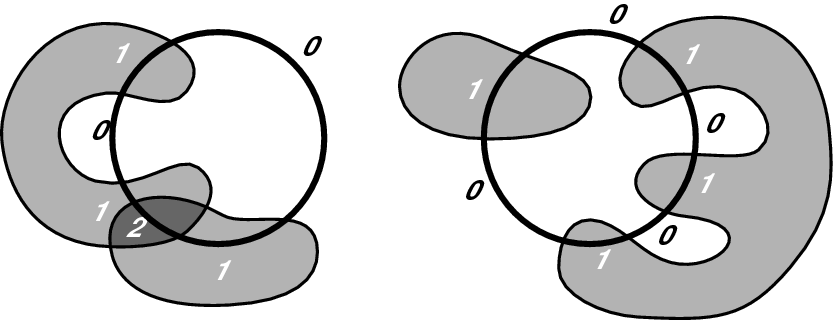}
\caption{Two examples in which the lower bound $\int\hat{h}\,d\chi$ fails: [left] the lower bound is $1$, but $\hat{h}$ cannot have a contractible support; [right] $\int\hat{h}\,d\chi$ is negative.}
\label{fig:holefail}
\end{center}
\end{figure}

The bounds of Theorem \ref{thm:holes} allow one to conclude when a hole is inessential and no ambiguity about the integral exists.

\begin{corollary}
\label{cor:tight}
Under the hypotheses of Theorem \ref{thm:holes}, the upper and lower bounds are equal when there is a unique connected local maximum of $h$ on $\partial D$.
\end{corollary}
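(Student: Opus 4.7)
The plan is to reduce the corollary to a one-dimensional statement about $h|_{\partial D}$, and then use the uniqueness hypothesis to pin down the Euler characteristics of the upper excursion sets on $\Sphere^1$.

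First I would apply additivity of the Euler measure (Lemma \ref{lem:MVchi}) across the decomposition $\real^2 = (\real^2\setminus\overline D) \sqcup D \sqcup \partial D$. Since $\hat h$ is the constant $\max_{\partial D}h$ on $\overline D$, $\bigcheck{h}$ is the constant $\min_{\partial D}h$ on the open disc $D$, and $\chi(\overline D) = \chi(D) = 1$ while $\chi(\partial D) = \chi(\Sphere^1) = 0$, these integrals simplify to
\[
\int_{\real^2}\hat h\,d\chi \;=\; \int_{\real^2\setminus\overline D} h\,d\chi \;+\; \max_{\partial D}h,
\]
\[
\int_{\real^2}\bigcheck{h}\,d\chi \;=\; \int_{\real^2\setminus\overline D} h\,d\chi \;+\; \int_{\partial D} h\,d\chi \;+\; \min_{\partial D}h.
\]
Subtracting, the corollary is equivalent to establishing the boundary identity $\int_{\partial D}h\,d\chi = \max_{\partial D}h - \min_{\partial D}h$.

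To prove this boundary identity, set $M := \max_{\partial D}h$ and $m := \min_{\partial D}h$ and decompose
\[
h|_{\partial D} \;=\; m\cdot\one_{\partial D} \;+\; \sum_{k=m+1}^{M}\one_{\{h|_{\partial D}\geq k\}}.
\]
Because each target support is compact, $h$ is upper semicontinuous, so every excursion set $\{h|_{\partial D}\geq k\}$ is closed in $\Sphere^1$. The crux is the claim that the unique-connected-local-maximum hypothesis forces each such set, for $m<k\leq M$, to be connected: if some $\{h|_{\partial D}\geq k_0\}$ decomposed into components $A$ and $B$, then $h$ would attain strictly separated local maxima in $A$ and in $B$ (each bounded outside its component by values $<k_0$), contradicting uniqueness. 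Moreover, each such set is a \emph{proper} subset of $\Sphere^1$, since $k>m$ guarantees the existence of a point on which $h<k$.

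A proper closed connected subset of $\Sphere^1$ is homeomorphic to a point or a closed arc and so has Euler characteristic $1$; combined with $\chi(\Sphere^1)=0$ and additivity, this yields
\[
\int_{\partial D}h\,d\chi \;=\; m\cdot 0 \;+\; \sum_{k=m+1}^{M}1 \;=\; M-m,
\]
which is the desired identity, so the upper and lower bounds of Theorem \ref{thm:holes} coincide. The main obstacle is the implication from ``unique connected local maximum'' to ``every upper excursion set above the minimum is connected'': this demands a careful rendering of what a local maximum means for a piecewise-constant function, together with the observation that any disconnection of an excursion set instantly produces a second local maximum separated from the first.
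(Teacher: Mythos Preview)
Your proof is correct and rests on the same core observation as the paper's: the unique-local-maximum hypothesis forces every upper excursion set of $h|_{\partial D}$ above the minimum to be a single closed arc in $\Sphere^1$, hence of Euler characteristic $1$. The paper organizes the bookkeeping slightly differently --- it compares $\chi\{\bigcheck{h}\geq s\}$ and $\chi\{\hat{h}\geq s\}$ level by level (implicitly localized to $\overline{D}$, as in the proof of Theorem~\ref{thm:holes}) and observes both equal $1$ --- but your reduction to the boundary identity $\int_{\partial D}h\,d\chi = M-m$ is equivalent and arguably more explicit about where additivity and the hypothesis each enter.
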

\begin{proof}
In the case where $h$ is constant on $\partial D$, $\bigcheck{h}=\hat{h}$ and the result is trivial. Otherwise, both $\bigcheck{h}$ and $\hat{h}$ have connected (and thus contractible) upper excursion sets; thus,
\begin{eqnarray*}
\label{eq:diff}
    \int_{\real^2} \bigcheck{h}\, d\chi
    -
    \int_{\real^2} \hat{h}\, d\chi
    &=&
    \sum_s \chi\{\bigcheck{h}\geq s\} - \chi\{\hat{h}\geq s\}
    \\
    &=&
    \sum_s 1-1 = 0 .
\end{eqnarray*}
\end{proof}

\subsection{Harmonic extension and ``expected'' target counts}
\label{sec:harmonic}

We continue the results of the previous subsection, considering the case of a planar domain with a contractible hole on which the integrand is unknown. As shown, upper and lower bounds are realized by extending the integrand across the hole via minimal and maximal values on the boundary of the hole. Inspired by the result that the PL-extension of a discretely sampled integrand yields correct integrals with respect to Euler characteristic, we consider extensions over holes via {\em continuous} functions.

The following result says that there is a principled interpolant between the upper and lower extensions. Roughly speaking, an extension to a harmonic function (discrete or continuous, solved over the hole with Dirichlet boundary conditions) provides an approximate integrand whose integral lies between the bounds given by upper and lower convex extensions. Analytically harmonic functions are unneeded: any form of weighted averaging will lead to an extension which respects the bounds. A specific criterion follows.

\begin{theorem}
\label{thm:harmonic}
Given $h\colon\real^2-D\to\nats$ satisfying the assumptions of Theorem \ref{thm:holes}, let $\overline{h}$ be any definable extension of $h$ which has no strict local maxima or minima on $D$. Then
\begin{equation}
    \int_{\real^2} \hat{h}\, d\chi
    \leq
    \int_{\real^2} \overline{h}\,\dchifloor
    \leq
    \int_{\real^2} \bigcheck{h}\, d\chi ,
\end{equation}
\end{theorem}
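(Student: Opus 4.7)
The plan is to verify the inequalities level-by-level using the excursion-set formula of Proposition~\ref{prop:r-valued}, exploiting the hypothesis that $\overline{h}$ has no strict interior extrema in $D$ to constrain the topology of its upper excursion sets. Since $\overline{h}$, $\hat{h}$, and $\bigcheck{h}$ all coincide with $h$ outside of $\overline{D}$, an additivity argument (pushed through the triangulation-based definition of $\int\cdot\,\dchifloor$ from Lemma~\ref{lem:welldef}) reduces the problem to comparing $\chi(\{\overline{h}\geq s\}\cap\overline{D})$ level-by-level against the corresponding Euler characteristics of the excursion sets of $\hat{h}$ and $\bigcheck{h}$ on $\overline{D}$.

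The key geometric step is a Morse-like principle for $\overline{h}$: the absence of strict local maxima in $D$ forces every connected component of $\{\overline{h}\geq s\}\cap\overline{D}$ to meet $\partial D$ (otherwise it would contain an interior local max), and the absence of strict local minima forces every bounded complementary component of $\{\overline{h}\geq s\}$ contained in $\overline{D}$ to be empty. Consequently, for generic $s$, the set $\{\overline{h}\geq s\}\cap\overline{D}$ is a disjoint union of contractible pieces, each incident to a nonempty union of arcs of $\{h\geq s\}\cap\partial D$; its Euler characteristic therefore equals its number of connected components. This count lies between $1$ (all boundary arcs merged into a single component, as for $\hat{h}$ when $s\leq \max_{\partial D}h$) and the number of arcs in $\{h\geq s\}\cap\partial D$ (no arcs merged, as for $\bigcheck{h}$). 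Integrating this pointwise bound against $ds$ via Proposition~\ref{prop:r-valued} yields the two inequalities.

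The main obstacle will be making the pointwise excursion-set comparison rigorous in the transition range $s\in[\min_{\partial D}h,\max_{\partial D}h]$, where the three extensions genuinely diverge, and confirming that the combinatorial pattern by which $\overline{h}$ merges boundary arcs is always a coarsening of the $\bigcheck{h}$ pattern and a refinement of the $\hat{h}$ pattern. This should reduce to the observation that each connected component of $\{\overline{h}\geq s\}\cap\overline{D}$ is a single connected piece and so can only fuse, never fragment, the arcs of $\{h\geq s\}\cap\partial D$ it meets. An alternative and perhaps cleaner route runs through the stratified Morse-theoretic reformulation of Theorem~\ref{thm:stratified}: writing $\int_{\overline{D}}\overline{h}\,\dchifloor = \int_{\overline{D}}\overline{h}\,\Index^{*}\overline{h}\,d\chi$, the absence of strict interior extrema forces the critical set of $\overline{h}$ in $D$ to consist solely of saddles, whose signed co-index contributions interpolate between the extreme values attained by the constant plateaus of $\hat{h}$ and $\bigcheck{h}$.
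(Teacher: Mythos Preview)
Your excursion-set approach is viable but is \emph{not} how the paper proceeds. The paper's proof is purely Morse-theoretic and closer in spirit to your ``alternative route'': it perturbs $\overline{h}$ near $\overline{D}$ to a Morse function $\widetilde{h}$ that falls off to zero outside $D$, so that all maxima of $\widetilde{h}$ lie on $\partial D$ and all interior critical points are saddles (this is where the no-interior-extrema hypothesis is used). Poincar\'e--Hopf on $\overline{D}$ forces exactly $M$ maxima and $M-1$ saddles; ordering both by value and observing $\widetilde{h}(q_i)<\widetilde{h}(p_i)$, the Morse formula $\int\widetilde{h}\,\dchifloor = \widetilde{h}(p_M)+\sum_{i=1}^{M-1}(\widetilde{h}(p_i)-\widetilde{h}(q_i))$ telescopes directly to give both bounds, with $\widetilde{h}(p_M)=\max_{\partial D}h=\int_{\overline{D}}\hat{h}\,d\chi$ and $\sum_i\widetilde{h}(p_i)=\int_{\overline{D}}\bigcheck{h}\,d\chi$. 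This bypasses any level-by-level bookkeeping.

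Your primary route---comparing $\chi\{\overline{h}\geq s\}\cap\overline{D}$ against the corresponding counts for $\hat{h}$ and $\bigcheck{h}$ and integrating in $s$---is a legitimate alternative, and your component/arc-merging argument for the upper bound is sound: each arc of $\{h\geq s\}\cap\partial D$ is connected and hence lies in a single component of $\{\overline{h}\geq s\}\cap\overline{D}$, while every such component must meet some arc. Two cautions. First, your ``additivity reduces to $\overline{D}$'' step is more delicate than stated: $\hat{h}$ does not agree with $\overline{h}$ on $\partial D$ (it is the constant $\max_{\partial D}h$ there), so the inclusion-exclusion splitting of $\chi\{\cdot\geq s\}$ over $\overline{D}\cup(\real^2- D)$ does not cancel cleanly; you will need to track the $\partial D$ contribution explicitly or mimic the paper's device of extending by zero outside a neighborhood. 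Second, ``no \emph{strict} local extrema'' does not by itself exclude interior plateaux, so your claim that every component of $\{\overline{h}\geq s\}\cap\overline{D}$ meets $\partial D$ needs a perturbation step---exactly the Morse perturbation the paper invokes at the outset.
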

\begin{proof}
Consider an open neighborhood of $\overline{D}$ in $\real^2$ and modify $\overline{h}$ so that it preserves critical values, is Morse, and falls off to zero quickly outside of $D$. This perturbed function, denoted $\widetilde{h}$, has isolated maxima on $\partial D$, isolated saddles in the interior of $D$ (since there are no local extrema in $D$ by hypothesis) and no other critical points outside of $D$. Since $\widetilde{h}$ is a small perturbation of $\overline{h}$, the integral of $\widetilde{h}$ with respect to $\dchifloor$ is equal to $\int_{\overline{D}}\overline{h}\,\dchifloor$. Via the Morse-theoretic formula of Equation (\ref{eq:morse}),
\[
    \int \widetilde{h}\,\dchifloor
    =
    \sum_{p\in\Crit(\widetilde{h})} (-1)^{2-\mu(p)}\widetilde{h}(p) .
\]
The integral thus equals the sum of $h$ over the maxima on $\partial D$ minus the sum of $\overline{h}$ over the saddle points in the interior of $D$, since saddles have Morse index $\mu=1$.

Denote by $\{p_i\}_1^M$ the maxima of $\widetilde{h}$, ordered by their (increasing) $\widetilde{h}$ values. Denote by $\{q_i\}_1^N$ the saddles of $\widetilde{h}$, ordered by their (increasing) $\widetilde{h}$ values. By the Poincar\'e index theorem,
\[
    1 = \chi(\overline{D}) =
    \#{\mbox{\rm maxima}}(\widetilde{h})
    -
    \#{\mbox{\rm saddles}}(\widetilde{h}) ,
\]
hence, $N=M-1$. Note that, since there are no local minima, $\widetilde{h}(q_i)<\widetilde{h}(p_i)$ for all $i=1\ldots M-1$. Thus,
\begin{eqnarray*}
    \int_{\overline{D}}\overline{h}\,d\chi
    &=&
    \int_{\overline{D}}\widetilde{h}\,d\chi
    \\
    &=&
    \widetilde{h}(p_M) + \sum_{i=1}^{M-1} \widetilde{h}(p_i)-\widetilde{h}(q_i)
    \\
    &\geq&
    \widetilde{h}(p_M)
=    \max_{\partial D}h
=    \int_{\overline{D}}\hat{h}\,d\chi .
\end{eqnarray*}
For the other bound,
\begin{eqnarray*}
    \int_{\overline{D}}\overline{h}\,d\chi
    &=&
    \widetilde{h}(p_M) + \sum_{i=1}^{M-1} \widetilde{h}(p_i)-\widetilde{h}(q_i)
    \\
    &\leq&
    \sum_{i=1}^{M} \widetilde{h}(p_i)
=    \int_{\overline{D}}\bigcheck{h}\,d\chi .
\end{eqnarray*}
\end{proof}

A harmonic or harmonic-like function $\widetilde{h}$ will often lead to an integral with non-integer value. One is tempted to interpret such an integral as an {\em expected} target count, though no formal notion of expected values for the Euler calculus as yet exists.

\begin{example}
\label{ex:grid}
Consider a hole $D$ and a function $h$ which is known only on $\partial D$ and which has two maxima with value $1$ and two minima with value $0$. Without knowing more about the possible size and shape of the target supports which make up $h$, it is not clear whether this is more likely to come from one target support (which crosses the hole) or from two separate target supports. Computing a harmonic extension of this $h$ over the interior of $D$ yields a function $\widetilde{h}$ with one saddle-type critical point in $D$. The value of the saddle is $c$ and satisfies $0<c<1$, depending on the geometry of $h$ on $\partial D$. This yields $\int\bar{h}\,d\chi=2-c$, reflecting the uncertainty of either one or two targets. In the perfectly symmetric case of Figure \ref{fig:harmonic}[left], $c=\frac{1}{2}$ and the integral is $\frac{3}{2}$, reflecting the balanced geometric ambiguity in target counts. In Figure \ref{fig:harmonic}[right], the harmonic extension has $c<\frac{1}{2}$, meaning that it is more likely that there are two target supports.
\end{example}

\begin{figure}[hbt]
\begin{center}
\includegraphics[angle=0,width=4.25in]{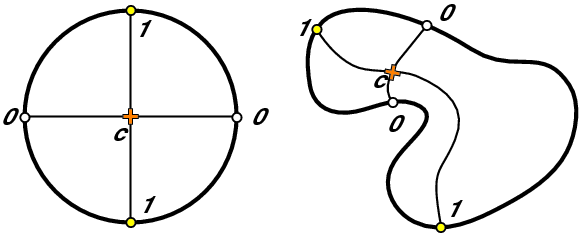}
\caption{An integrand with a hole has two minima at height $0$ and two maxima at height $+1$. Filling in by a harmonic function $\widetilde{h}$ has an interior saddle at height $0<c<1$, depending on the geometry of $h$ on $\partial D$: [left] $c=\frac{1}{2}$; [right] $c<\frac{1}{2}$.}
\label{fig:harmonic}
\end{center}
\end{figure}

In the network setting, holes often arise due to node failure or lack of sufficient node density. In these scenarios, one may reasonably employ any weighted local averaging scheme across dead nodes to recover a function which will respect the bounds of Theorem \ref{thm:holes}. Different weighting schemes may be more appropriate for different systems. For example, node readings can be assigned a {\em confidence measure}, which, when used as a weighting for the averaging over the dead zone, returns a value of the integral which reflects the fidelity of the data.

\begin{example}
We have assumed that targets are interrogated by a fixed network of stationary counting sensors: it is desirable to generalize to mobile sensors, especially in the context of robotics. Consider the following scenario: a collection of fixed target supports $\{U_\alpha\}$ lie in the plane. One or more mobile robots $R_i$ can maneuver in the plane along chosen paths $x_i(t)$, returning sensed counting functions $h_i(t)=\#\{\alpha: x_i(t)\in U_\alpha\}$. How should the paths $x_i$ be chosen so as to effectively determine the correct target count? If target supports are extremely convoluted, no guarantees are possible: therefore, assume that some additional structure is known (\eg, an injectivity radius) giving a lower bound on how ``thin'' the target supports may be. Assume that the robots initially explore the planar domain along a rectilinear graph $\Gamma$ that tiles the domain into rectangles. If desired, one can make these rectangles have either width or height small enough in order to guarantee that all the $U_\alpha$ intersect $\Gamma$. Consider the sensor function $h\colon\Gamma\to\nats$. The integral $\int_\Gamma h\,d\chi$ is likely to give the wrong answer, even (especially!) for a dense $\Gamma$. Two means of getting a decent approximation are (1) use the network approximation formula of Equation (\ref{eq:dual2d}); or (2) perform a harmonic extension over the holes of $\Gamma$.

Neither approach is guaranteed to give a good {\em a priori} approximation to the target count. How can one tell if $\Gamma$ should be filled in more? The simplest criterion follows from Corollary \ref{cor:tight}. Consider a basic cycle $\Gamma'\subset\Gamma$ in the tiling induced by $\Gamma$. If there is a single connected local maximum on $\Gamma'$, then (assuming that no small $U_\alpha$ lies entirely within the hole) the harmonic extension over $\Gamma'$ gives an accurate contribution to the integral.

If, on the other hand, there are multiple maximal sets on $\Gamma'$, then one must refine $\Gamma$ into smaller cycles for which the criterion holds. The obvious approach is to guide the mobile sensors so as to try and connect disjoint maxima and/or disjoint minima. Figure \ref{fig:grid} gives the sense of the technique. The crucial observation is that Corollary \ref{cor:tight} provides a stopping criterion.
\end{example}

\begin{figure}[hbt]
\begin{center}
\includegraphics[angle=0,width=3.5in]{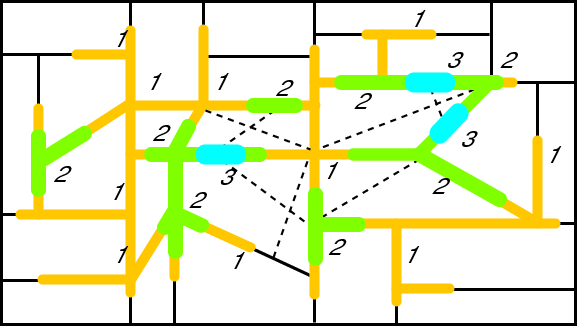}
\caption{Mobile agents determine target counts over a network. Holes with multiple maxima require further refinement (dashed lines).}
\label{fig:grid}
\end{center}
\end{figure}

\section{Index formulae for integral transforms}
\label{sec:index}

The definable Euler integration theory has important implications in the efficient computation of certain integral transforms from \S\ref{sec:hybrid}. Recall the Euler-Bessel transform $\Bessel\colon\CF(V)\to\Def(V)$ on a finite-dimensional normed real vector space $V$,  given by integrating along concentric spherical isospectral sets with respect to $d\chi$ and along the radius in Lebesgue measure. This transform, though acting on $\CF(V)$, is interpretable in terms of definable Euler integrals, and, hence, Morse theory. The following is from \cite{GR}.

\begin{lemma}
\label{lem:cone}
For $A\subset V$ a compact tame codimension-0 ball, star-convex with respect to $x\in A$,
\begin{equation}
    \Bessel\one_A(x) = \int_{\del A}d_x\,\dchifloor ,
\end{equation}
where $d_x$ is the distance-to-$x$ function $d_x:V\to[0,\infty)$.
\end{lemma}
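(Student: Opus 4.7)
The plan is to convert each side into an explicit Lebesgue integral of an Euler characteristic of a spherical slice, then exhibit the obvious definable homeomorphism identifying the two slices.

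First I would unpack the definition of $\Bessel$ from \S\ref{sec:bessel}: since $\one_A$ is constructible, $\int_{\partial D_r(x)} \one_A\,d\chi = \chi(\partial D_r(x)\cap A)$, so that
\[
\Bessel\one_A(x) = \int_0^\infty \chi(\partial D_r(x)\cap A)\,dr.
\]
Next, since $d_x\colon\partial A\to[0,\infty)$ is a nonnegative element of $\Def(\partial A)$, Proposition \ref{prop:r-valued} gives
\[
\int_{\partial A} d_x\,\dchifloor = \int_0^\infty \chi\{y\in\partial A : d_x(y)\geq s\}\,ds,
\]
with the lower-excursion term vanishing because $d_x\geq 0$.

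The heart of the proof is then the identification of the two integrands. Because $A$ is a compact tame codimension-$0$ ball star-convex with respect to $x$, every ray from $x$ meets $\partial A$ in a single point, so there is a (tame) radial function $\rho\colon\Sphere^{n-1}\to(0,\infty)$ with $A = \{x+t\xi : \xi\in\Sphere^{n-1},\,0\leq t\leq\rho(\xi)\}$ and $\partial A = \{x+\rho(\xi)\xi : \xi\in\Sphere^{n-1}\}$. The maps $\xi\mapsto x+r\xi$ and $\xi\mapsto x+\rho(\xi)\xi$ are definable homeomorphisms which carry
\[
\{\xi\in\Sphere^{n-1} : \rho(\xi)\geq r\}\;\cong\;\partial D_r(x)\cap A,
\qquad
\{\xi\in\Sphere^{n-1} : \rho(\xi)\geq s\}\;\cong\;\{y\in\partial A : d_x(y)\geq s\}.
\]
Hence both integrals reduce to $\int_0^\infty \chi\{\xi\in\Sphere^{n-1} : \rho(\xi)\geq r\}\,dr$, and they coincide.

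The main obstacle I expect is a mild measure-theoretic subtlety: one must check that the integrands $r\mapsto\chi(\partial D_r(x)\cap A)$ and $s\mapsto\chi\{d_x\geq s\}$ really do agree pointwise (not merely a.e.), or at least that they agree outside the finite set of critical values of $\rho$, which is a set of Lebesgue measure zero and thus irrelevant to either integral. Tameness of $A$ (hence of $\rho$) via Theorem \ref{thm:Triangulation} guarantees the set of critical levels is definable and finite, so the Lebesgue integrals are insensitive to the $\geq$ versus $>$ distinction. With that observation in hand, the two expressions are literally the same integral, and the lemma follows.
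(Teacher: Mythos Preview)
Your proof is correct and follows essentially the same route as the paper: write $\Bessel\one_A(x)$ as $\int_0^\infty \chi(A\cap\partial B_r(x))\,dr$, write $\int_{\partial A}d_x\,\dchifloor$ via the excursion-set formula of Proposition~\ref{prop:r-valued}, and then use star-convexity to identify $A\cap\partial B_r(x)$ with $\partial A\cap\{d_x\geq r\}$. The paper simply asserts this last homeomorphism; your explicit radial parametrization via $\rho$ and your remark on the finiteness of critical levels are welcome elaborations, but not a different argument.
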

\begin{proof}
By definition,
\[
            \Bessel h (x) = \int_0^\infty \chi(A\cap \del B_r(x)) dr .
\]
For $A$ a tame ball, star-convex with respect to $x$, $A\cap \del B_r(x)$ is homeomorphic to $\del A\cap\{d_x\geq r\}$. By Equation (\ref{eq:eulerexcursion}),
\[
            \Bessel h (x) = \int_0^\infty \chi(\del A\cap\{d_x\geq r\})dr = \int_{\del A} d_x\,\dchifloor .
\]
\end{proof}

This theorem is a manifestation of Stokes' Theorem: the integral of the distance over $\del A$ equals the integral of the `derivative' of distance over $A$. For non-star-convex domains, it is necessary to break up the boundary into positively and negatively oriented pieces. These orientations implicate $\dchifloor$ and $\dchiceil$ respectively.

\begin{theorem}
\label{thm:besseldchi}
For $A\subset V$ a codimension-0 submanifold with corners in $V$ and $x\in V$, decompose $\del A$ into $\del A=\del_x^+A\cup\del_x^-A$, where $\del_x^\pm A$ are the (closure of) subsets of $\del A$ on which the outward-pointing halfspaces  contain (for $\del_x^-$) or, respectively, do not contain (for $\del_x^+$) $x$. Then,
\begin{eqnarray}
\label{eq:besselindex}
    \Bessel\one_A(x)
            &=& \int_{\del_x^+A} d_x\, \dchifloor - \int_{\del_x^-A} d_x\, \dchiceil \\
            &=& \int_{\Crit_x\cap\del_x^+A} d_x\, \Index^*\, d\chi - \int_{\Crit_x\cap\del_x^-A} d_x\, \Index_*\, d\chi .
\end{eqnarray}
where $\Crit_x$ denotes the critical points of $d_x\colon\del A\to[0,\infty)$.
\end{theorem}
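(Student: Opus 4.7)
The plan is to establish a pointwise identity expressing $\chi(A\cap\del B_r(x))$ in terms of slices of $\del_x^\pm A$, integrate in $r$ via Proposition~\ref{prop:r-valued}, and then pass to critical points via Theorem~\ref{thm:stratified}. The starting observation: for generic $x$ and each direction $\xi\in\Sphere^{n-1}$, the ray $\{x+t\xi:t>0\}$ meets $\del A$ transversely in a finite alternating sequence of entering points (in $\del_x^- A$, where the outward normal points back at $x$) and exiting points (in $\del_x^+ A$), with the outermost crossing always in $\del_x^+ A$ by compactness of $A$.

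Writing $\one_A$ along the ray as a telescoping sum over these crossings yields, for generic $r$,
\[
    \one_A(x+r\xi)
    =\#\{p\in\del_x^+ A\cap\mathrm{ray}(\xi):d_x(p)\geq r\}
    -\#\{p\in\del_x^- A\cap\mathrm{ray}(\xi):d_x(p)>r\}.
\]
Integrating over $\xi$ with respect to $d\chi$ and invoking the Fubini Theorem (Theorem~\ref{thm:fubini}) for the radial projection to $\Sphere^{n-1}$ applied to each of $\del_x^+ A\cap\{d_x\geq r\}$ and $\del_x^- A\cap\{d_x>r\}$ converts each count on the right into the Euler characteristic of its domain, establishing
\[
    \chi(A\cap\del B_r(x))
    =\chi(\del_x^+ A\cap\{d_x\geq r\})-\chi(\del_x^- A\cap\{d_x>r\}).
\]
The strict versus non-strict inequalities here record whether the sphere $\del B_r(x)$ is leaving $\del A$ to the outside or to the inside as $r$ increases --- precisely the distinction that will translate into $\dchifloor$ versus $\dchiceil$.

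Integrating in $r\in[0,\infty)$ and using Proposition~\ref{prop:r-valued} (with $d_x\geq 0$ on $\del A$, so the negative-excursion term vanishes) identifies the two resulting integrals as $\int_{\del_x^+ A}d_x\,\dchifloor$ and $\int_{\del_x^- A}d_x\,\dchiceil$. Combined with $\Bessel\one_A(x)=\int_0^\infty\chi(A\cap\del B_r(x))\,dr$, this yields the first equality of Equation~(\ref{eq:besselindex}). The second equality is then immediate from Theorem~\ref{thm:stratified}: the co-index $\Index^* d_x$ and index $\Index_* d_x$ are supported on the critical sets of the restrictions of $d_x$ to $\del_x^+ A$ and $\del_x^- A$ respectively, and these coincide with $\Crit_x\cap\del_x^\pm A$.

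The main obstacle is justifying the generic-position argument in the first step. The pointwise identity requires that $\mathrm{ray}(\xi)$ meet $\del A$ only at transverse (non-grazing) points, which fails on the grazing locus $\del_x^+ A\cap\del_x^- A$ where $x-p\in T_p\del A$. For $x$ in an open dense definable subset of $V$ --- obtained by applying the Hardt Theorem (Theorem~\ref{thm:Hardt}) to the incidence $\{(x,p):p\in\del A,\ x-p\in T_p\del A\}$ --- the formula holds directly. For the remaining non-generic $x$, both sides of Equation~(\ref{eq:besselindex}) are definable functions of $x$, so the formula extends by a limiting argument consistent with the upper/lower semicontinuity built into $\dchifloor$ and $\dchiceil$.
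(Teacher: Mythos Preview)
Your proof is correct and follows the same overall arc as the paper's: establish the slice identity
\[
    \chi(A\cap\del B_r(x))=\chi(\del_x^+A\cap\{d_x\geq r\})-\chi(\del_x^-A\cap\{d_x>r\}),
\]
integrate in $r$ via Proposition~\ref{prop:r-valued}, then invoke Theorem~\ref{thm:stratified}. The difference is in how the slice identity is obtained. The paper proceeds via a cone decomposition: it writes $A$ (in the base case) as the closure of a difference $C_x^+\setminus C_x^-$ of cones at $x$ over $\del_x^+A$ and $\del_x^-A$, applies Lemma~\ref{lem:cone} to each star-convex cone, and then argues that passing from the full cone boundaries $\del C_x^\pm$ down to $\del_x^\pm A$ is harmless because the only extra critical point of $d_x$ on the conical part is $x$ itself, where $d_x=0$. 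Your ray-by-ray telescoping plus Fubini over $\Sphere^{n-1}$ reaches the identity for $\del_x^\pm A$ directly, bypassing both the cone construction and the critical-point cleanup. The paper's route has the virtue of reusing Lemma~\ref{lem:cone}; yours is more self-contained but requires the genericity discussion you supply at the end. One caution: your pointwise identity along rays can fail on a set of directions $\xi$ of nonzero \emph{Euler} measure (the grazing locus is codimension-one in $\Sphere^{n-1}$), so the step ``integrate the pointwise equation over $\xi$'' is better phrased as applying Fubini directly to the radial projections $\del_x^\pm A\cap\{d_x\geq r\}\to\Sphere^{n-1}$ and $A\cap\del B_r(x)\to\Sphere^{n-1}$, comparing fibers, rather than integrating a pointwise equality that only holds generically.
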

\begin{proof}
Assume, for simplicity, that $A$ is the closure of the difference of $C^+_x$, the cone at $x$ over $A^+_x$, and $C^-_x$, the cone over $A^-_x$, with the case of multiple cones following by induction. These cones, being star-convex balls with respect to $x$, admit  Lemma \ref{lem:cone}. The crucial observation is that, by additivity of $\chi$,
\[
        \chi(\del B_r(x) \cap A) = \chi(\del C^+_x\cap\{d_x\geq r\}) - \chi(\del C^-_x\cap\{d_x>r\}) .
\]
Integrating both sides with respect to $dr$ and applying Equation (\ref{eq:eulerexcursion}) gives
\[
        \Bessel\one_A(x) = \int_{\del C^+_x}d_x\,\dchifloor - \int_{\del C^-_x}d_x\,\dchiceil .
\]
By Theorem \ref{thm:stratified}, this reduces to an integral over the critical sets of the (stratified Morse) function $d_x$. The only critical point of $d_x$ on $C^+_x-\del A$ or $C^-_x-\del A$ is $x$ itself, on which the integrand $d_x$ takes the value $0$ and does not contribute to the integral. Therefore the integrals over the cone boundaries may be restricted to $\del^+A$ and $\del^-A$ respectively. The index-theoretic result follows from Theorem \ref{thm:stratified}.
\end{proof}

In even dimensions, the $\dchifloor$-vs-$\dchiceil$ dichotomy dissolves:

\begin{corollary}
\label{cor:evendim}
For $\dim\ V$ even and $A\subset V$ a codimension-0 submanifold with corners,
\begin{equation}
\label{eq:evendim}
    \Bessel\one_A(x) = \int_{\del A} d_x\, \dchifloor  = \int_{\Crit_x} d_x\,\Index_*\,d\chi .
\end{equation}
\end{corollary}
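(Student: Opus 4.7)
The plan is to deduce Corollary \ref{cor:evendim} directly from Theorem \ref{thm:besseldchi}, using the parity of $\dim V = n$. Starting from
$$\Bessel\one_A(x) = \int_{\del_x^+A} d_x\,\dchifloor - \int_{\del_x^-A} d_x\,\dchiceil,$$
observe that the decomposition pieces $\del_x^\pm A$ are both codimension-0 subsets of the $(n-1)$-dimensional boundary $\del A$, which has \emph{odd} dimension. On any odd-dimensional definable manifold, Equation (\ref{eq:morse3}) gives $\int \cdot\,\dchiceil = -\int\cdot\,\dchifloor$; granting this for the manifolds-with-boundary $\del_x^\pm A$, the minus sign in Theorem \ref{thm:besseldchi} is absorbed and the two summands become $\int_{\del_x^+A} d_x\,\dchifloor$ and $\int_{\del_x^-A} d_x\,\dchifloor$.

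Next I would combine these by additivity. The overlap $\del_x^+A\cap\del_x^-A$ is precisely the locus where the outward normal to $\del A$ is tangent to the sphere through $x$, i.e.\ a definable set of positive codimension in $\del A$. Since $d_x$ is continuous there and the defining integrals via step-function approximations are insensitive to such lower-dimensional overlaps, additivity yields
$$\Bessel\one_A(x) = \int_{\del_x^+A}d_x\,\dchifloor + \int_{\del_x^-A}d_x\,\dchifloor = \int_{\del A} d_x\,\dchifloor,$$
establishing the first equality. For the second equality, apply Theorem \ref{thm:stratified} to the continuous definable function $d_x\colon\del A\to[0,\infty)$, which localizes the $\dchifloor$-integral to the critical locus $\Crit_x$ with integrand $d_x\,\Index^* d_x$. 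Finally, convert $\Index^*$ to $\Index_*$ via the pointwise identity $\Index^* d_x = (-1)^{\dim \del A}\Index_* d_x$ at nondegenerate critical points — the same odd-dimensional duality used in Step 1, ensuring that the two sign flips compose correctly to produce the stated $\Index_*$-formula.

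The main obstacle will be the justification of the odd-dimensional duality $\int\cdot\,\dchiceil = -\int\cdot\,\dchifloor$ on the pieces $\del_x^\pm A$, which are not closed manifolds but manifolds-with-boundary (and possibly corners inherited from those of $A$). This requires going through the stratified-Morse formulation of Theorem \ref{thm:stratified} and checking that boundary strata contribute symmetrically to both $\Index^*$ and $\Index_*$, so that their difference still satisfies the parity-based sign flip. A secondary concern is ensuring consistent sign bookkeeping throughout Steps 1 and 3 — two applications of the same $(-1)^{\dim}$ duality that must combine to give a single overall sign matching the statement of the corollary.
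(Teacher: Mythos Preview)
Your approach is essentially the same as the paper's: invoke Theorem \ref{thm:besseldchi}, then use the odd-dimensionality of $\del A$ together with Equation (\ref{eq:morse3}) to flip $\dchiceil$ to $-\dchifloor$ and collapse the two pieces into a single integral over $\del A$. The paper's proof is two sentences and does not address the points you flag (that $\del_x^\pm A$ are manifolds with boundary rather than closed manifolds, or the sign bookkeeping between $\Index^*$ and $\Index_*$); your version is simply a more careful expansion of the same argument.
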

\begin{proof}
In this setting, $\del A$ is an odd-dimensional definable manifold. Equation (\ref{eq:morse3}) implies that $\int \dchiceil=-\int \dchifloor$. Equation (\ref{eq:besselindex}) completes the proof.
\end{proof}


Given the index theorem for the Euler-Bessel transform, that for the Euler-Fourier is a trivial modification that generalizes Example \ref{ex:fourierwidth}.

\begin{theorem}
\label{thm:fourierdchi}
For $A\subset V$ a codimension-0 submanifold with corners in $V$ and $\xi\in V^\vee-\{0\}$ a nonzero dual vector, decompose $\del A$ into $\del A=\del_x^+A\cup\del_x^-A$, where $\del_x^\pm A$ are the (closure of) subsets of $\del A$ on which $\xi$ points out of ($\del^+$) or into ($\del^-$) $A$. Then,
\begin{eqnarray}
\label{eq:fourierindex}
    \Fourier\one_A(\xi)
            &=& \int_{\del_{\xi}^+A} \xi\, \dchifloor - \int_{\del_{\xi}^-A} \xi\, \dchiceil \\
            &=& \int_{\Crit_\xi\cap\del_{\xi}^+A} \xi\, \Index^*\, d\chi - \int_{\Crit_\xi\cap\del_\xi^-A} \xi\, \Index_*\, d\chi .
\end{eqnarray}
where $\Crit_\xi$ denotes the critical points of $\xi\colon\del A\to[0,\infty)$. For $\dim\ V$ even, this becomes:
\begin{equation}
    \Fourier\one_A(\xi) = \int_{\del A} \xi\, \dchifloor  = \int_{\Crit_\xi} \xi\,\Index_*\,d\chi .
\end{equation}
\end{theorem}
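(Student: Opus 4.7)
The plan is to mirror the proof of Theorem \ref{thm:besseldchi} verbatim, replacing the distance function $d_x$ by the linear functional $\xi$ and the concentric spheres $\del B_r(x)$ by the parallel affine hyperplanes $\xi^{-1}(r)$. Starting from the definition $\Fourier\one_A(\xi) = \int_0^\infty \chi(A\cap\xi^{-1}(r))\,dr$, I will first establish the Fourier analog of Lemma \ref{lem:cone}: for a compact tame codimension-$0$ set $C$ that is $\xi$-star-shaped (meaning its slices $C\cap\xi^{-1}(r)$ stay contractible across all $r$ in the range of $\xi|_C$), the identity
\[
    \Fourier\one_C(\xi) = \int_{\del C}\xi\,\dchifloor
\]
follows from the homeomorphism $C\cap\xi^{-1}(r)\cong\del C\cap\{\xi\geq r\}$ (radial-in-$\xi$ projection) together with Equation (\ref{eq:eulerexcursion}).

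Next, for a general $A$ with corners, I will decompose $A$ into a signed union of $\xi$-star-shaped cones according to whether the outward normal to each boundary stratum aligns with or against $\xi$, yielding the additive identity
\[
    \chi(A\cap\xi^{-1}(r))
    = \chi(\del^+_\xi A\cap\{\xi\geq r\})
    - \chi(\del^-_\xi A\cap\{\xi > r\}),
\]
in direct analogy with the decomposition at the heart of the Bessel proof. Integrating both sides in $r$ and applying Equation (\ref{eq:eulerexcursion}) term-by-term produces the first equality of the theorem; the minus sign in front of the $\del^-$ contribution is precisely what forces the switch from $\dchifloor$ to $\dchiceil$ (strict vs.\ non-strict excursion sets).

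The second equality is then an immediate consequence of Theorem \ref{thm:stratified}: since $\xi|_{\del A}$ is stratified Morse and its critical set is exactly $\Crit_\xi$, the measures $\dchifloor$ and $\dchiceil$ concentrate on $\Crit_\xi$ with weights given by the Euler-Poincaré co-index $\Index^*$ and index $\Index_*$, respectively. The even-dimensional specialization follows because $\del A$ is then an odd-dimensional definable manifold, on which Equation (\ref{eq:morse3}) gives $\int\xi\,\dchiceil=-\int\xi\,\dchifloor$, collapsing the two orientation contributions into a single integral over all of $\del A$.

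The main obstacle I expect is the additive decomposition in the second step: unlike the Bessel case, where every cone shares the same apex $x$, the $\xi$-star-shape property must be verified locally along each boundary stratum, and the bookkeeping between $\del^+$ and $\del^-$ contributions near loci where $\xi\cdot n$ vanishes (corners, or sides perpendicular to $\xi$) requires care. Specifically, the placement of such loci in the closed decomposition $\del A = \del^+_\xi A\cup\del^-_\xi A$ must be consistent with the strict versus non-strict inequalities appearing on the right-hand side, so that the interior cancellations work out exactly as in the Bessel setting.
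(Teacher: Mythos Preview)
Your proposal is correct and is exactly the approach the paper takes: the paper's entire proof reads ``The proof follows that of Theorem \ref{thm:besseldchi} and is an exercise,'' and you have carried out precisely that exercise, replacing $d_x$ by $\xi$ and spheres by hyperplanes, invoking the analogue of Lemma \ref{lem:cone}, the additive decomposition, Theorem \ref{thm:stratified}, and Equation (\ref{eq:morse3}) for the even-dimensional case. The care you flag about strata where $\xi\cdot n$ vanishes is appropriate but does not alter the argument.
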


The proof follows that of Theorem \ref{thm:besseldchi} and is an exercise. Figure \ref{fig:index} gives a simple example of the Bessel and Fourier index theorems in $\real^2$.

\begin{figure}[htb]
\begin{center}
\includegraphics[width=5.0in]{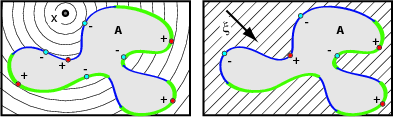}
\caption{The index formula for $\Bessel$ [left] and $\Fourier$ [right] applied to $\one_A$ localizes the transform to (topological or smooth) tangencies of the isospectral sets with $\del A$.}
\label{fig:index}
\end{center}
\end{figure}

\begin{example}
By linearity of $\Bessel$ and $\Fourier$ over $\CF(V)$, one derives index formulae for integrands in $\CF(V)$ expressible as a linear combination of $\one_{A_i}$ for $A_i$ the closure of definable bounded open sets. For a set $A$ which is not of dimension $\dim\ V$, it is still possible to apply the index formula by means of a limiting process on compact tubular neighborhoods of $A$. For example, let $A\subset\real^2$ be a compact straight line segment: see Figure \ref{fig:interval}. Let $A_0$ and $A_1$ denote the endpoints of $A$. The reader may compute directly that
\[
    \Bessel\one_A(x) =  d(x,A_0) + d(x,A_1) - 2d(x,A),
\]
where $d(x,A)$ denotes the distance from $x$ to the segment $A$. When one of the endpoints minimizes this distance, one gets $\Bessel\one_A(x)=\max_{\del A}d_x - \min_{\del A}d_x$, exactly as Equation (\ref{eq:besselindex}) would suggest. This correspondence seems to fail when there is a tangency between the interior of $A$ and the isospectral circles, as in Figure \ref{fig:interval}[right]: why the factor of $2$? The index interpretation is clear, however, upon taking a limit of neighborhoods, in which case a pair of negative-index tangencies are revealed.

\begin{figure}[bht]
\begin{center}
\includegraphics[width=5in]{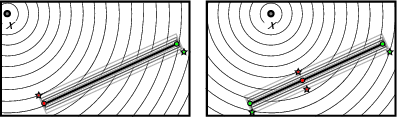}
\caption{The Euler-Bessel Transform of a line segment in $\real^2$ has an index formula determined by $d_x$ at the endpoints and at an interior tangency. This follows from Theorem \ref{thm:besseldchi} by limits of compact tubular neighborhoods.}
\label{fig:interval}
\end{center}
\end{figure}
\end{example}

These index formulae are useful in explaining the geometric content of the Fourier and Bessel transforms.

\begin{corollary}
\label{cor:monotone}
For $A$ a compact round ball about $p\in\real^{2n}$, the Bessel transform $\Bessel\one_A$ is a nondecreasing function of the distance to $p$, having unique zero at $p$.
\end{corollary}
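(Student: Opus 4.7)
The plan is to exploit rotational symmetry: since the round ball $A$ is invariant under the orthogonal group acting about its center $p$, so is the transform $\Bessel\one_A$, whence $\Bessel\one_A(x)$ depends only on $d = |x - p|$. It then suffices to compute this as an explicit function of $d$.

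First I would unwind the definition $\Bessel\one_A(x) = \int_0^\infty \chi(\del D_r(x) \cap A)\,dr$ and perform a case analysis on the three regimes of the parameter $r$: (i) $\del D_r(x)$ is entirely contained in the interior of $A$ (when $d < R$ and $r < R - d$); (ii) $\del D_r(x)$ meets $\del A$ transversely; or (iii) $\del D_r(x)$ is disjoint from $A$. In regime (i) the intersection is the full sphere $\Sphere^{2n-1}$, contributing $\chi = 0$ precisely because $2n-1$ is odd --- this is the crucial use of the even-dimensionality of $V$. In regime (ii) the intersection is a spherical cap homeomorphic to a closed disc $D^{2n-1}$, contributing $\chi = 1$. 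Regime (iii) contributes nothing.

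Collecting the $r$-intervals on which the integrand equals $1$ and integrating should produce the piecewise formula
\begin{equation*}
    (\Bessel\one_A)(x) = \begin{cases} 2d & 0 \leq d \leq R, \\ 2R & d \geq R, \end{cases}
\end{equation*}
from which continuity, monotonicity in $d$, and unique vanishing at $d = 0$ are all immediate. As a cross-check I would rederive this via Corollary \ref{cor:evendim}: for $x \neq p$, the restriction $d_x|_{\del A}$ is Morse on $\Sphere^{2n-1}$ with exactly two critical points on the line through $p$ and $x$ (the nearest and farthest boundary points, of opposite local index), and their weighted sum recovers the same closed form.

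The computation itself is essentially bookkeeping. The only conceptual subtlety worth flagging is the dimensional parity: in odd ambient dimension the enclosed-sphere regime (i) would contribute $\chi(\Sphere^{2n}) = 2$ rather than $0$, producing extra mass that would destroy monotonicity. The hypothesis $\dim V = 2n$ is precisely what annihilates this enclosed-sphere contribution and yields the clean piecewise-linear answer.
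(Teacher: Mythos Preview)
Your proposal is correct, and in fact your cross-check via Corollary~\ref{cor:evendim} is precisely the paper's own argument: the paper invokes that corollary together with convexity to write $\Bessel\one_A(x) = \int_{\del A} d_x\,\dchifloor = \max_{\del A} d_x - \min_{\del A} d_x$, then reads off monotonicity directly from that expression.

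Your primary route --- the direct case analysis on $r$ from the definition of $\Bessel$ --- is a genuinely more elementary alternative. It avoids the machinery of Theorem~\ref{thm:besseldchi} and Corollary~\ref{cor:evendim} entirely, trading the index-theoretic localization for an explicit geometric classification of sphere-ball intersections. Both arrive at the same piecewise-linear formula $2\min(d,R)$. The paper's route is shorter once the index machinery is in hand and generalizes more readily (e.g., to arbitrary convex $A$, where $\max - \min$ still makes sense but your regime analysis would be messier). Your direct route has the virtue of making the role of parity completely transparent at the level of the raw definition: the vanishing of $\chi(\Sphere^{2n-1})$ in regime~(i) is exactly what the paper's approach encodes more implicitly through the sign relation $\dchiceil = -\dchifloor$ on odd-dimensional $\del A$.
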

\begin{proof}
Convexity of balls and Corollary \ref{cor:evendim} implies that
\[
        \Bessel\one_A(x) = \int_{\del A} d_x\,\dchifloor = \max_{\del A}d_x - \min_{\del A}d_x ,
\]
which equals $\diam A$ for $x\not\in A$ and is monotone in distance-to-$p$ within $A$.
\end{proof}

Note that Corollary \ref{cor:monotone} is vacuous in odd dimensions; the Bessel transform of a ball in $\real^{2n+1}$ is constant, and $\Bessel$ obscures all information. However, for even dimensions, Corollary \ref{cor:monotone} provides a basis for target localization and shape detection. For targets with convex supports (regions detected by counting sensors), the local minima of the Euler-Bessel transform can reveal target locations: see Figure \ref{fig:besselloc}[left] for an example. Note that in this example, not all local minima are target centers: interference creates ghost minima. However, given $h\in\CF(V)$, the integral $\int_Vh\,d\chi$ determines the number of targets. This provides a guide as to how many of the deepest local minima to interrogate. The deepest minima correspond to perfectly round targets: this gives a basis for performing shape discrimination based solely on enumerative data, as in Figure \ref{fig:besselloc}[right].

\begin{figure}[hbt]
\begin{center}
\includegraphics[width=5.0in]{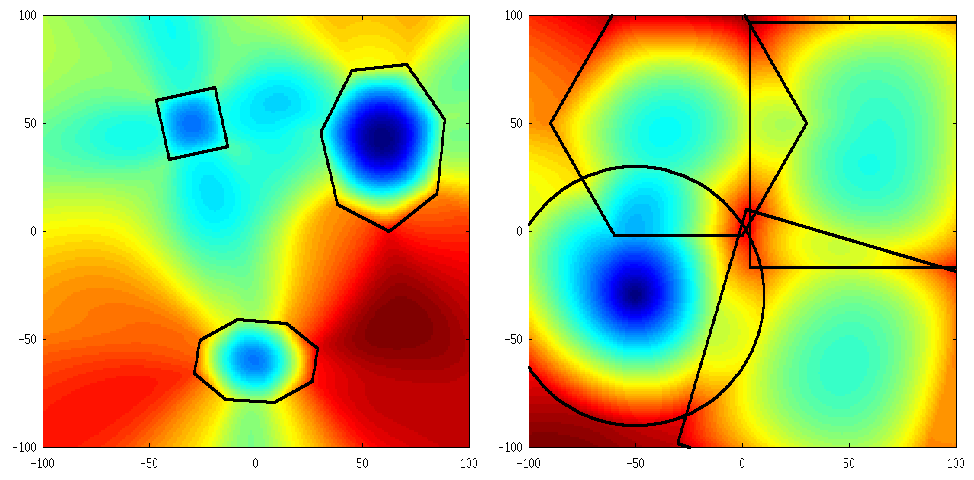}
\caption{The Euler-Bessel transform of a collection of convex targets [left] has local minima at the target centers. The deepest minima in the Bessel-transform correspond to round targets, and may be useful for shape detection [right].}
\label{fig:besselloc}
\end{center}
\end{figure}

There are significant limitations to superposition by linearity for this application. When targets are nearby or overlapping, their individual transforms will have overlapping sidelobes, which results in uncertainty when the transform is being used for localization.

\section{Integral transforms with definable kernels}
\label{sec:defker}

The previous subsection examined integral transforms over $\CF$ by means of index theory. There are numerous open questions concerning integral transforms over $\Def(X)$.

\subsection{Continuity}
\label{sec:cont}

Though the integral operators with respect to $\dchifloor$ and $\dchiceil$ are not linear on $\Def(X)$, they nevertheless retain some nice properties. The following remarks are adapted from \cite{BG:PNAS}. All properties below stated for $\int\,\dchifloor$ hold for $\int\,\dchiceil$ via duality.

\begin{lemma}
\label{lem:homog}
The integral $\int\,\dchifloor\colon\Def(X)\to\real$ is positively homogeneous.
\end{lemma}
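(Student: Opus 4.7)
The plan is to reduce the claim to the single-simplex case via the definable triangulation result invoked in the proof of Lemma~\ref{lem:welldef}. Given $h \in \Def(X)$ and a scalar $c \geq 0$, pick a definable homeomorphism $\phi\colon K \to X$ from a finite Euclidean simplicial complex $K$ such that $h \circ \phi$ is affine on every open simplex of $K$. Crucially, $ch \circ \phi$ is then affine on exactly the same decomposition, so a single triangulation simultaneously witnesses the integrals of $h$ and of $ch$.

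On each open $k$-simplex $\sigma$ of this triangulation, Lemma~\ref{lem:r-simplex} gives $\int_\sigma h\,\dchifloor = (-1)^k \inf_\sigma h$. Because $c \geq 0$, the infimum scales: $\inf_\sigma(ch) = c \inf_\sigma h$. Therefore
\[
    \int_\sigma ch \, \dchifloor
    = (-1)^k \inf_\sigma(ch)
    = c\,(-1)^k \inf_\sigma h
    = c \int_\sigma h \, \dchifloor .
\]
Summing over the finitely many open simplices of the triangulation and using additivity (which passes through the limit in the definition of $\dchifloor$, since each $\lfloor n \cdot \rfloor$ truncation is additive over a simplicial partition) gives $\int_X ch\,\dchifloor = c \int_X h\,\dchifloor$. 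The case $c=0$ is immediate since both sides vanish. The $\dchiceil$ version follows identically, replacing $\inf$ by $\sup$, or alternatively by conjugation using Lemma~\ref{lem:floortoceiling}.

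There is no real obstacle here; the only subtlety is conceptual rather than technical. The hypothesis $c \geq 0$ is exactly what is needed so that the infimum on each simplex scales linearly; for $c < 0$ one has $\inf(ch) = c\sup h$, which is why negative scalars swap $\dchifloor$ and $\dchiceil$ (as recorded by Lemma~\ref{lem:floortoceiling} with $c=-1$) and why positive homogeneity, not full linearity, is the best one can hope for — the failure of linearity already being illustrated by Lemma~\ref{lem:nonlinear}.
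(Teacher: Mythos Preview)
Your proof is correct, but it takes a different route from the paper's. The paper argues in one line via the excursion-set formula of Proposition~\ref{prop:r-valued}: for $\lambda>0$, the substitution $s\mapsto\lambda s$ in the Lebesgue integral
\[
    \int_X \lambda h\,\dchifloor
    = \int_0^\infty \chi\{\lambda h\geq s\}-\chi\{\lambda h<-s\}\,ds
    = \int_0^\infty \chi\{h\geq s/\lambda\}-\chi\{h<-s/\lambda\}\,ds
\]
immediately produces the factor of $\lambda$. Your argument instead triangulates and invokes Lemma~\ref{lem:r-simplex} together with the elementary fact that $\inf_\sigma(ch)=c\inf_\sigma h$ for $c\geq 0$. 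Both are sound; the paper's proof is shorter and more analytic in flavor, while yours is more combinatorial and has the pedagogical advantage of making transparent \emph{why} the sign of $c$ matters --- negative $c$ exchanges $\inf$ and $\sup$ on each simplex, which is exactly the mechanism behind Lemma~\ref{lem:floortoceiling}.
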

\begin{proof}
For $f\in \Def(X)$ and $\lambda\in\real^+$, the change of variables variables $s\mapsto \lambda s$ in Eqn. [\ref{eq:r-valued}] gives $\int\lambda f\,\dchifloor = \lambda\int f\,\dchifloor$.
\end{proof}

\begin{example}
Integration is not continuous on $\Def(X)$ with respect to the $C^0$ topology. An arbitrarily large change in $\int h\dchifloor$ may be effected by small changes to $h$. An example appears in \cite{Wright}: consider $h(x,y)=1-2\abs{x-\frac{1}{2}}\in\Def([0,1]^2)$ as in Figure \ref{fig:tent}[left]. Since there is one maximal value at $1$ along the compact interval $\{x=\frac{1}{2}\}$, the integral $\int h\dchifloor = 1$.
However, sampling over a triangulation that does not sample the topology of the maximal set correctly yields a number of maxima and saddles. It may be arranged (as in Figure \ref{fig:tent}[right]) so that successively fine triangulations cause the net variation to blow up to infinity, even as the graphs converge pointwise.
\begin{figure}[hbt]
\begin{center}
\includegraphics[width=5.0in]{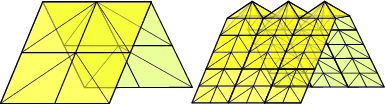}
\caption{The integrand $h$ has PL approximations that converge pointwise, but diverge in integration with respect to $\dchifloor$.}
\label{fig:tent}
\end{center}
\end{figure}
In some situations the ``complexity'' of the definable functions can be controlled in a way sufficient to ensure continuity \cite{BG:PNAS}. The above example fails if sufficiently smoothed. It remains an interesting open question to explore topologies on $\Def(X)$ and the relationship to continuity of Euler integration.
\end{example}

\subsection{Duality}
\label{sec:duality}
The duality operator $\Dual\colon\CF(X)\to\CF(X)$ of \S\ref{sec:conv} extends seamlessly to integrals on $\Def(X)$ in the obvious manner:
\begin{equation}
\label{eq:defdual}
    \Dual h(x)
    =
    \lim_{\epsilon\to 0^+}\int_Xh\one_{B_\epsilon(x)}\dchifloor
    =
    \lim_{\epsilon\to 0^+}\int_Xh\one_{B_\epsilon(x)}\dchiceil ,
\end{equation}
where $B_\epsilon$ is an open metric ball of radius $\epsilon$.

\begin{lemma}
\label{lem:dualindependent}
Duality $\Dual\colon\Def(X)\to\Def(X)$ is well-defined and independent of whether the integration in (\ref{eq:defdual}) is with respect to $\dchifloor$ or $\dchiceil$.
\end{lemma}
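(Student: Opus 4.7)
The plan is to establish the three assertions—existence of the $\dchifloor$-limit, existence of the $\dchiceil$-limit, their coincidence, and definability of the resulting function $\Dual h\colon X \to \real$—by reducing everything to Lebesgue integrals of definably controlled excursion-set Euler characteristics. The principal tool throughout is Hardt's Theorem~\ref{thm:Hardt}, applied first pointwise at $x$ to control the $\epsilon$-dependence and then in families to conclude definability.

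First I would apply Hardt's theorem to the definable map $(h, d_x)\colon X \to \real^2$, with $d_x(y) := \|y-x\|$, obtaining some $\epsilon_0 > 0$ such that $B_{\epsilon_0}(x)\cap X$ is definably trivial in the radial direction. In particular, for $0<\epsilon<\epsilon_0$ the set $B_\epsilon(x)\cap X$ is definably homeomorphic to an open cone on the link $L_x := \del B_{\epsilon_0/2}(x)\cap X$ truncated at height $\epsilon$, and $h$ is trivialized along cone rays away from a finite set $\Sigma_x\subset \real$ of link-critical levels. Using Proposition~\ref{prop:r-valued} applied to the tame integrand $h\one_{B_\epsilon(x)}$, and noting that $\{h\one_{B_\epsilon(x)}\geq s\} = \{h\geq s\}\cap B_\epsilon(x)$ for $s>0$, I would write
\[
\int_X h\one_{B_\epsilon(x)}\dchifloor = \int_0^\infty \bigl[\chi(\{h\geq s\}\cap B_\epsilon(x)) - \chi(\{h<-s\}\cap B_\epsilon(x))\bigr]\,ds,
\]
and the analogous formula with $>$, $\leq$ for $\dchiceil$. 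By the trivialization, for each $s$ outside the finite set $\Sigma_x \cup -\Sigma_x$ the integrand is eventually constant in $\epsilon$; by Hardt it is uniformly bounded, and by compact support of $h$ it is supported in a bounded $s$-interval. Bounded convergence then yields existence of both limits.

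Equality of the two limits follows from the pointwise identity (where $B := B_\epsilon(x)$)
\[
\bigl[\chi(\{h\geq s\}\cap B) - \chi(\{h<-s\}\cap B)\bigr] - \bigl[\chi(\{h>s\}\cap B) - \chi(\{h\leq -s\}\cap B)\bigr] = \chi(\{h=s\}\cap B) + \chi(\{h=-s\}\cap B),
\]
obtained from additivity of $\chi$ on the disjoint unions $\{h\geq s\}=\{h>s\}\sqcup\{h=s\}$ and $\{h\leq -s\}=\{h<-s\}\sqcup\{h=-s\}$. The $s$-support of this difference lies in $h(B)\cup -h(B)$, which by the conic structure and o-minimality contracts to the finite set $\Sigma_x\cup -\Sigma_x$ as $\epsilon\to 0^+$, a Lebesgue-null set; uniform boundedness of the integrands via Hardt then forces the Lebesgue integral of the difference to vanish in the limit, giving a single well-defined value $\Dual h(x) \in \real$.

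Finally, definability of $\Dual h$ is obtained by applying Hardt's theorem to the parameterized family $\{B_\epsilon(x)\cap X\}_{(x,\epsilon)\in X\times (0,\infty)}$: this makes the limit integrand $\beta(x,s) := \lim_{\epsilon\to 0^+}\chi(\{h\geq s\}\cap B_\epsilon(x))$ a definable function of $(x,s)$, so its Lebesgue integral in $s$ is a definable function of $x$ by the standard o-minimal theory of Lebesgue integration of definable integrands. The main obstacle will be justifying the bounded-convergence step uniformly across the link-critical levels $\Sigma_x$, where the integrands jump in $s$ and where $\dchifloor$ and $\dchiceil$ genuinely differ; the resolution rests on the fact that $\Sigma_x$ is finite (hence Lebesgue-null) while Hardt's theorem supplies the requisite uniform bounds on the Euler characteristics of the excursion-set slices.
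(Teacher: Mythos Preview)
Your argument is correct, but it takes a substantially different route from the paper's. The paper proceeds by invoking the definable Triangulation Theorem to make $h$ piecewise affine, then applies Lemma~\ref{lem:r-simplex} directly to each open simplex $\sigma_i$ of the star of $x$: on $\sigma_i\cap B_\epsilon(x)$ the $\dchifloor$-integral is $(-1)^{\dim\sigma_i}\inf h$ and the $\dchiceil$-integral is $(-1)^{\dim\sigma_i}\sup h$, and as $\epsilon\to 0^+$ both $\inf$ and $\sup$ converge to the common limit $h_i(x):=\lim_{\sigma_i\ni y\to x}h(y)$. This yields the explicit closed form
\[
\Dual h(x)=\sum_i(-1)^{\dim\sigma_i}h_i(x),
\]
which is manifestly independent of the measure and is reused immediately afterward in the proof that $\Dual$ is an involution.

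Your approach instead passes through the excursion-set formula of Proposition~\ref{prop:r-valued} and a dominated-convergence argument, showing that the $\dchifloor$/$\dchiceil$ discrepancy is the Lebesgue integral of $s\mapsto\chi(\{h=s\}\cap B_\epsilon)+\chi(\{h=-s\}\cap B_\epsilon)$, whose support collapses to a finite set as $\epsilon\to 0^+$. This is sound---your finite set $\Sigma_x$ is exactly $\{h_i(x)\}_i$, and the ``eventually constant'' claim follows because for $s\notin\Sigma_x$ each $\sigma_i\cap B_\epsilon(x)$ is eventually entirely inside $\{h>s\}$ or $\{h<s\}$. What your route buys is a more explicit treatment of definability of $\Dual h$ as a function of $x$; what it costs is the explicit simplex formula, which the paper's argument delivers for free and which drives the subsequent involutivity proof. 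If you want to align with the paper, replace the excursion-set machinery by a direct appeal to Lemma~\ref{lem:r-simplex} on the star of $x$.
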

\begin{proof}
The limit is well-defined thanks to the Hardt Theorem. To show that it is independent of the upper- or lower-semicontinuous approximation, take $\epsilon>0$ sufficiently small. Note that by triangulation, $h$ can be assumed to be piecewise-affine on open simplices. Pick a point $x$ in the support of $h$ and let $\{\sigma_i\}$ be the set of open simplices whose closures contain $x$. Then for each $i$, the limit $h_i(x):=\lim_{y\to x}h(y)$ for $y\in\sigma_i$ exists. One computes
\begin{equation}
\label{eq:dualsimplices}
    \Dual h(x)
    =
        \sum_i (-1)^{\dim\sigma_i}h_i(x) ,
\end{equation}
independent of the measure $\dchifloor$ or $\dchiceil$.
\end{proof}

For a continuous definable function $h$ on a manifold $M$, $\Dual h = (-1)^{\dim\ M}h$, as one can verify. This is commensurate with the result of Schapira \cite{Schapira:op} that $\Dual$ is an involution on $\CF(X)$.

\begin{theorem}
Duality is involutive on $\Def(X)$: $\Dual\circ\Dual h = h$.
\end{theorem}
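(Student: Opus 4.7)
The plan is to bootstrap from the explicit simplex formula \eqref{eq:dualsimplices} established in Lemma \ref{lem:dualindependent}. Fix $h\in\Def(X)$ and, by the definable Triangulation Theorem, choose a triangulation $\Triangulation$ of $X$ on which $h$ is affine on every open simplex. Writing $h_\sigma$ for the unique continuous affine extension of $h|_\sigma$ to $\overline{\sigma}$, formula \eqref{eq:dualsimplices} reads
\[
    \Dual h(y) \;=\; \sum_{\sigma\,:\,y\in\overline{\sigma}} (-1)^{\dim\sigma}\,h_\sigma(y),
\]
with the sum independent of whether we work with $\dchifloor$ or $\dchiceil$.

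The first step is to verify that $\Dual h$ is again piecewise affine on the \emph{same} triangulation $\Triangulation$, so that the formula applies a second time. For $y$ in the open simplex $\rho$, the open simplices $\sigma$ with $y\in\overline{\sigma}$ are exactly the cofaces $\sigma\geq \rho$; each $h_\sigma$ is affine on $\overline{\sigma}\supset\overline{\rho}$, so $\Dual h$ is affine on $\rho$ with continuous affine extension $(\Dual h)_\rho=\sum_{\sigma\geq \rho}(-1)^{\dim\sigma}h_\sigma|_{\overline{\rho}}$. I view this as the main (mild) obstacle: once one commits to a triangulation compatible with $h$, one must see it is automatically compatible with $\Dual h$; otherwise the double count below would be confounded by further refinement.

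The second step is a combinatorial collapse. Fix $x\in X$ and let $\tau_x$ be the unique open simplex containing $x$. Applying \eqref{eq:dualsimplices} twice and regrouping,
\[
    \Dual\Dual h(x) \;=\; \sum_{\rho\geq \tau_x}(-1)^{\dim\rho}\sum_{\sigma\geq \rho}(-1)^{\dim\sigma}h_\sigma(x) \;=\; \sum_{\sigma\geq \tau_x}(-1)^{\dim\sigma}h_\sigma(x)\,c_\sigma,
\]
where $c_\sigma=\sum_{\tau_x\leq \rho\leq \sigma}(-1)^{\dim\rho}$ is the alternating face count of the poset interval $[\tau_x,\sigma]$. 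If $\dim\tau_x=d$ and $\dim\sigma=k$, faces $\rho$ with $\tau_x\leq \rho\leq \sigma$ correspond to subsets of the $k-d$ vertices of $\sigma$ outside $\tau_x$, so
\[
    c_\sigma \;=\; \sum_{m=d}^{k}\binom{k-d}{m-d}(-1)^m \;=\; (-1)^d(1-1)^{k-d},
\]
which vanishes unless $k=d$, i.e.\ $\sigma=\tau_x$, in which case $c_\sigma=(-1)^d$.

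Hence only the $\sigma=\tau_x$ term survives and contributes $(-1)^{2d}h_{\tau_x}(x)=h(x)$, because $x$ lies in the open simplex $\tau_x$ on which $h$ is continuous. This yields $\Dual\Dual h(x)=h(x)$ pointwise, and since $x$ was arbitrary, the operator $\Dual$ is involutive on $\Def(X)$. The whole argument reduces, after the triangulation is fixed, to the vanishing of the reduced Euler characteristic of the link of $\tau_x$ inside each coface $\sigma\supsetneq \tau_x$, so the result is as combinatorial and robust as the Schapira involution on $\CF(X)$ that it generalizes.
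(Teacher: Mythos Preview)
Your argument is correct and takes a genuinely different route from the paper. The paper localizes: at each $x$ it freezes $h$ to the constructible function $L_xh$ taking the constant value $h_\sigma(x)$ on each nearby stratum, argues by an approximation that $\Dual^2 h(x)$ agrees with $\Dual^2 L_xh(x)$, and then invokes Schapira's result that $\Dual$ is involutive on $\CF$. You instead stay on a single triangulation compatible with $h$, show that $\Dual h$ is again affine on the \emph{same} triangulation, and then collapse the double sum by the M\"obius/binomial identity $\sum_{\tau_x\le\rho\le\sigma}(-1)^{\dim\rho}=(-1)^{\dim\tau_x}\delta_{\sigma,\tau_x}$. Your approach is more elementary and fully self-contained: it avoids the somewhat soft ``$\Dual h$ is close to $\Dual L_xh$'' step and does not lean on the $\CF$ case as a black box (indeed, your computation simultaneously re-proves the Schapira involution on $\CF$). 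The paper's approach, on the other hand, makes clearer why the definable result is a direct consequence of the constructible one and would port more readily to settings where an explicit simplex formula like \eqref{eq:dualsimplices} is unavailable.
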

\begin{proof}
Given $h$, fix a triangulation on which $h$ is affine on open simplices. Note that the dual of $h$ at $x$ is completely determined by the trivialization of $h$ at $x$. Let $L_x{h}$ be the constructible function on $B_\epsilon(x)$ which takes on the value $h_i(x)$ on strata $\sigma_i\cap B_\epsilon(x)$. (Though this is not necessarily an integer-valued function, its range is discrete and therefore is constructible.) As $L_xh$ is close to $h$ in $B_\epsilon(x)$ (this follows from the continuity of $h$ on each of the strata), $\Dual h$ is close to $\Dual L_xh$ in $B_\epsilon(x)$: indeed, the total Betti number of intersections of strata with any ball $B_\epsilon(y)$ is bounded, and Euler integral of a function small in absolute value is small as well. Hence the definable function $\Dual^2 h$ is close to the constructible function $\Dual L_xh$ with $\epsilon$ small. As $\Dual^2L_x{h}(x)=L_x{h}(x)=h(x)$, the result follows.
\end{proof}

Duality may be used to define a link operator on definable functions as
\begin{equation}
\label{eq:link}
    \Lambda h(x)
    =
    \lim_{\epsilon\to 0^+}\int_Xh\one_{\partial B_\epsilon(x)}d\chi.
\end{equation}
The link of a continuous function on an $n$-manifold $M$ is multiplication by $1+(-1)^n$, as a simple computation shows. In general, $\Lambda = \id - \Dual$, where $\id$ is the identity operator.

\subsection{Linearity}
\label{sec:linearity}

The nonlinearily of the integration operator prevents most straightforward applications of Schapira's inversion formula. Fix a kernel $K\in\Def(X\times Y)$ and consider the general integral transform $\Transform_K\colon\Def(X)\to\Def(Y)$ of the form $(\Transform_K h)(y)=\int_Xh(x)K(x,y)\dchifloor(x)$. In general, this operator is non-linear, via Lemma \ref{lem:nonlinear}. However, some vestige of (positive) linearity survives within $\CF^+$, the {\em positive} linear combinations of indicator functions over tame top-dimensional subsets of $X$.

\begin{lemma}
\label{lem:CF^+}
The integral transform $\Transform_K$ is positive-linear over $\CF^+(X)$.
\end{lemma}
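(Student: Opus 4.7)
The plan is to reduce positive-linearity to a simplex-by-simplex identity using the affine definable triangulation available for $K(\cdot,y)$. Fix $y \in Y$ and write $h_j = \sum_i c_{j,i}\one_{A_{j,i}}$ with $c_{j,i} \geq 0$ and each $A_{j,i}$ tame of top dimension. Using the extension of the Triangulation Theorem to $\Def$ (as invoked in the proof of Lemma \ref{lem:welldef}), I would first choose a single definable triangulation of $X$ on which every $A_{j,i}$ is a union of open simplices and on which $x \mapsto K(x,y)$ is affine on each open simplex. On any such open simplex $\sigma$, both $h_1$ and $h_2$ are identically equal to non-negative constants $c_{j,\sigma} := \sum_{i:\sigma \subset A_{j,i}} c_{j,i}$.

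The heart of the argument is then the simplex-level identity. By Lemma \ref{lem:r-simplex}, for any constant $c \geq 0$ and any affine $K$ on the open $k$-simplex $\sigma$,
\[
    \int_\sigma cK\,\dchifloor = (-1)^k \inf_\sigma(cK) = c\,(-1)^k \inf_\sigma K = c \int_\sigma K\,\dchifloor,
\]
the crucial step being that $\inf$ commutes with multiplication by a non-negative scalar. Consequently,
\[
    \int_\sigma (h_1+h_2)K\,\dchifloor
    = (c_{1,\sigma}+c_{2,\sigma}) \int_\sigma K\,\dchifloor
    = \int_\sigma h_1 K\,\dchifloor + \int_\sigma h_2 K\,\dchifloor,
\]
and summing over the finitely many open simplices of the triangulation (which is legitimate because $\int\,\dchifloor$ agrees with the corresponding Euler sum over strata, by the construction in Lemmas \ref{lem:r-simplex}--\ref{lem:welldef}) yields $(\Transform_K(h_1+h_2))(y) = (\Transform_K h_1)(y) + (\Transform_K h_2)(y)$. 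Positive homogeneity $(\Transform_K(\alpha h))(y) = \alpha (\Transform_K h)(y)$ for $\alpha \geq 0$ is then immediate from Lemma \ref{lem:homog} applied pointwise in $y$ to the integrand $\alpha h\cdot K$, and combining the two gives positive linearity over $\CF^+(X)$.

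The only real subtlety—rather than an outright obstacle—is precisely the non-negativity hypothesis: the identity $\inf_\sigma(cK) = c\inf_\sigma K$ fails for $c<0$, in which case $\inf$ swaps with $\sup$ and additivity breaks exactly as in Lemma \ref{lem:nonlinear} and Corollary \ref{cor:fubinifail}. This isolates both \emph{why} positivity is needed and why the proof does not extend to general $\Def(X)$ or to signed combinations of indicator functions in $\CF(X)$.
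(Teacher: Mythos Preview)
Your proof is correct and follows essentially the same route as the paper's, just unpacked in more detail. The paper's two-line argument (``Additivity of the integral in $\dchifloor$ combined with Lemma~\ref{lem:homog}'') is exactly your simplex-by-simplex computation: additivity here means additivity of $\int\,\dchifloor$ over a disjoint cell decomposition of the domain, and once $h$ is a nonnegative constant $c_\sigma$ on each open simplex $\sigma$, the paper invokes Lemma~\ref{lem:homog} where you invoke the equivalent identity $\inf_\sigma(cK)=c\inf_\sigma K$ for $c\geq 0$. Your explicit isolation of why the argument fails for $c<0$ is a nice addition that the paper leaves implicit.
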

\begin{proof}
Any $h\in \CF^+(X)$ is of the form $h=\sum_k a_k\one_{U_k}$ for $a_k\in\nats$ and $U_k\in\Def(X)$. For $h=\one_A$, $\Transform_K h = \int_A K \dchifloor$. Additivity of the integral in $\dchifloor$ combined with Lemma \ref{lem:homog} completes the proof.
\end{proof}

This implies in particular that when one convolves a function $h\in \CF^+(\real^n)$ with a smoothing kernel (\eg, a Gaussian) as a means of filtering noise or taking an average of neighboring data points, that convolution may be analyzed one step at a time (decomposing $h$).
The restriction to positive-linearity is critical, since $\int -h\dchifloor \neq -\int h\dchifloor$. However, integral transforms which combine $\dchifloor$ and $\dchiceil$ compensate for this behavior. Define the measure $[d\chi]$ to be the average of $\dchifloor$
and $\dchiceil$:
\begin{equation}
    \int_X h[d\chi] = \frac{1}{2}\left(\int_X h\dchifloor + \int_X h\dchiceil\right) .
\end{equation}

\begin{theorem}
\label{thm:lineartransform}
Any integral transform of the form
\begin{equation}
    (\Transform_K h)(y)
    =  \int_X h(x)K(x,y) [d\chi](x)
\end{equation}
for $K\in\Def(X\times Y)$ is a linear operator $\CF(X)\to\Def(Y)$.
\end{theorem}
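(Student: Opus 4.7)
The plan is to reduce full linearity to a single key identity: the averaged measure $[d\chi]$ is $\zed$-homogeneous, in the sense that for $c \in \zed$ and any $f \in \Def(X)$,
\[
    \int_X c\,f(x)\,[d\chi] = c \int_X f(x)\,[d\chi].
\]
For $c \geq 0$ this is Lemma \ref{lem:homog} applied separately to $\dchifloor$ and $\dchiceil$ and then averaged. For $c < 0$ write $c = -|c|$; Lemma \ref{lem:floortoceiling} gives $\int (-|c|f)\,\dchifloor = -\int |c|f\,\dchiceil = -|c|\int f\,\dchiceil$, and by the symmetric identity $\int (-|c|f)\,\dchiceil = -|c|\int f\,\dchifloor$. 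Averaging swaps the two summands but preserves the sign, producing $\int cf\,[d\chi] = -|c|\int f\,[d\chi] = c\int f\,[d\chi]$. This is precisely the role played by the definition of $[d\chi]$: it absorbs the $\dchifloor \leftrightarrow \dchiceil$ twisting that obstructs integer-homogeneity for either measure alone.

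Second, for $h_1,h_2 \in \CF(X)$ of compact support, invoke the Triangulation Theorem (Theorem \ref{thm:Triangulation}) on a decomposition compatible with both integrands, producing a finite tame partition $X = \coprod_\alpha X_\alpha$ on which each $h_i$ takes a constant integer value $a_\alpha^i$. Additivity of $\int\dchifloor$ and $\int\dchiceil$ over disjoint tame pieces is immediate from their triangulation-based definitions and Lemma \ref{lem:r-simplex}, hence holds for $[d\chi]$. On $X_\alpha$ the integrand $(h_1+h_2)(x)K(x,y)$ equals $(a_\alpha^1+a_\alpha^2)K(x,y)$, and applying the $\zed$-homogeneity of the first step yields
\[
    \int_{X_\alpha}(a_\alpha^1+a_\alpha^2)K\,[d\chi]
    = a_\alpha^1\!\int_{X_\alpha}\!K\,[d\chi] + a_\alpha^2\!\int_{X_\alpha}\!K\,[d\chi].
\]
Summing over $\alpha$ gives $(\Transform_K h_1)(y)+(\Transform_K h_2)(y)$, establishing additivity. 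Scalar multiplication by $\lambda \in \zed$ is identical: on each $X_\alpha$, $\lambda h$ is the integer constant $\lambda a_\alpha^i$, and $\zed$-homogeneity applies piecewise. Definability of $\Transform_K h$ as a function of $y$ follows from applying the Hardt Theorem (Theorem \ref{thm:Hardt}) to the projection $X\times Y \to Y$, so the image lies in $\Def(Y)$.

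The main obstacle is the negative-scalar case in the first step: neither $\int\dchifloor$ nor $\int\dchiceil$ is itself linear on $\CF$ (\cf Lemma \ref{lem:nonlinear} and Corollary \ref{cor:fubinifail}), so the entire argument hinges on the symmetrization. Once $\zed$-homogeneity of $[d\chi]$ is established, the remainder is routine bookkeeping against a common triangulation, made possible by the tameness of $h_1,h_2,K$ and the finiteness guarantees of the o-minimal setting.
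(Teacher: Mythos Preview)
Your proof is correct and follows essentially the same approach as the paper's: both rest on the observation that $[d\chi]$ is odd under negation (because negation swaps $\dchifloor$ and $\dchiceil$ via Lemma~\ref{lem:floortoceiling}), combined with positive homogeneity and a decomposition of $h\in\CF(X)$ into pieces on which it is a constant integer. The paper compresses the additivity step into a citation of Lemma~\ref{lem:CF^+} and the negation step into a one-line appeal to Lemma~\ref{lem:r-simplex}, whereas you spell out both steps directly from a common triangulation and the $\zed$-homogeneity identity; the substance is the same.
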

\begin{proof}
From Lemma \ref{lem:CF^+}, $\Transform$ is positive-linear over $\CF^+(X)$. Full linearity follows from the observation that $\int_{X} -h [d\chi]  = -\int_{X} h [d\chi]$, which follows from Lemma \ref{lem:r-simplex} by triangulating $h$.
\end{proof}

\begin{example}
Consider the transform with $X=\real^n$, $Y=\Sphere^{n-1}\subset X$, and kernel $K(x,\xi)=\langle x,\xi\rangle$. This transform with respect to $[d\chi]$ applied to $\one_A$ for $A$ compact and convex returns the `centroid' of $A$ along the $\xi$-axis: the average of the maximal and minimal values of $\xi$ on $\partial A$. Note how the dependence on critical values of the integrand on $\partial A$ reflects the Morse-theoretic interpretation of the integral in this case. The interested reader may wish to derive a more general index-theoretic result over $\CF(X)$ using the ideas of \S\ref{sec:morse}.
\end{example}

Integration with respect to $[d\chi]$ seems suitable only for integral transforms over $\CF$, since, on a continuous integrand, the integral with respect to $[d\chi]$ either returns zero (cf. the Rota-Chen definition of \S\ref{sec:RC}) or else the integral with respect to $\dchifloor$, depending on the parity of the $\dim\ X$.

\subsection{Convolution}
\label{sec:convolution}

Recall the convolution operator from \S\ref{sec:conv}: $(f*g)(x) = \int_V f(t)g(x-t)\,d\chi$. Convolution is well-defined on $\Def(V)$ by integrating with respect to $\dchifloor$ or $\dchiceil$. However, the product formula (Lemma \ref{lem:convprod}) for $\int f*g$ fails in general, since one relies on the Fubini theorem to prove it in $\CF(V)$. By Lemma \ref{lem:CF^+}, convolution with a definable $g\in\Def(V)$ {\em is} positive-linear over $\CF^+(V)$. We indicate (following \cite{BG:PNAS}) how to use this to smooth noise in an integrand $h\in\CF^+(V)$.

Integration over $\CF^+(X)$ is poorly behaved with respect to noise (Figure \ref{fig:eucharis4}[left]), owing to the fact that points have full measure in $d\chi$. Assume a sampling of $f\in\CF^+(X)$ over a network, with an error of $\pm 1$ on random nodes: specifically, $h=f+e$, where $e\colon\Nodes\to\{-1,0,1\}$ is an error function that is nonzero on a sparse subset of nodes $\Nodes'\subset\Nodes$. For typical choices of $\Nodes'$, $\abs{\int h\, d\chi - \int f\, d\chi}$ will be a normal of variance $O(\abs{\Nodes'}^\frac{1}{2})$.

\begin{figure}[hbt]
\begin{center}
\includegraphics[angle=0,width=5.0in]{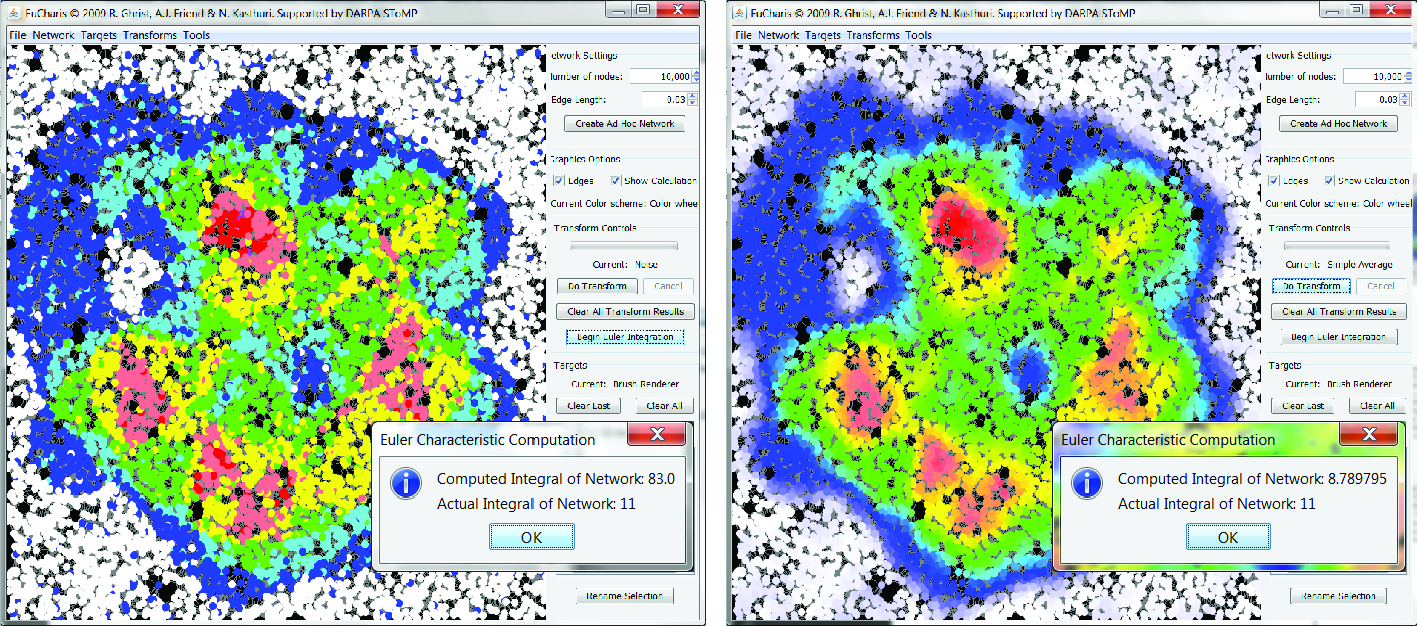}
\caption{Screenshots of {\em Eucharis}, demonstrating the deleterious impact of noise [left] and the resulting mitigation via convolution with a bump function via neighbor-averaging [right]. The true integral ($11$) is poorly approximated ($83$) when the integrand has a $10\%$ noise added, but is more reasonably approximated ($8.8$) after convolution.}
\label{fig:eucharis4}
\end{center}
\end{figure}

One strategy for mitigating noise is to dissipate via convolution with a smoothing kernel. For discrete data, this is best accomplished via a weighted average of neighboring node values, with weights inverse to distance (hop metric for the network). Such an averaging, for a sufficiently dense network of randomly paced nodes, approximates a convolution of the constructible data with a bump function. This poses the problem of when such a convolution returns the true integral. Assume a convolution $h*\Kernel$ of $h\in\CF^+(\real^n)$ with an appropriate kernel $\Kernel$, unimodal and of appropriately small support relative to $h$. To quantify the optimal size of  $\Kernel$, we use a characteristic length (related to the \style{weak feature size} of \cite{Chazal}) which encodes the fragility of an integrand $h\in \CF^+(\real^n)$ with respect to $\dchifloor$. Define the \style{constructible feature size} of $h\in \CF^+(\real^n)$ at $x\in\real^n$ to be
$\CFS_x(h)$, the supremum of all $R$ such that, for any closure $\Chamber$ of any connected component of upper or lower excursion sets of $h$, the convex hull of all outward-oriented normals to $\del\Chamber\cap B_R(x)$ contains $0$: see Figure \ref{fig:CFS}. Minimizing over $x$ yields $\CFS(h)=\inf_x\CFS_x(h)$. Constructible feature size regulates the impact of a smoothing kernel, whether coming from data diffused at the hardware level, or purposefully smoothed to mitigate noise.

\begin{figure}[hbt]
\begin{center}
\includegraphics[angle=0,width=4.5in]{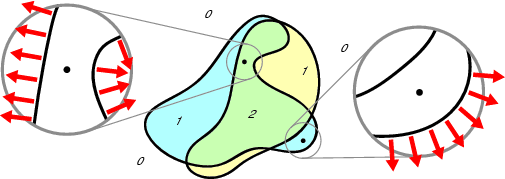}
\caption{Constructible feature size $\CFS$ of $h\in\CF(\real^n)$ detects fragility in excursion sets' topology.}
\label{fig:CFS}
\end{center}
\end{figure}

\begin{theorem}[\cite{BG:PNAS}]
\label{thm:conv=}
For $h\in \CF^+(\real^n)$ and $\Kernel\in\Def^1(\real^n)$ a radially-symmetric kernel with support of radius $R<\CFS(h)/2$,
\begin{equation}
\label{eq:conv=}
    \int_{\real^n} h*\Kernel \dchifloor
    =
    \int_{\real^n} h\,d\chi   .
\end{equation}
\end{theorem}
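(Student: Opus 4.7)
The plan is to express both sides as integrals over upper excursion sets and then use the constructible feature size hypothesis to make a levelwise Morse-theoretic comparison of those excursion sets.

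First, I would rewrite both sides in excursion-set form. Since $h \in \CF^+(\real^n)$ is nonnegative and integer-valued, Proposition \ref{prop:computation} together with integrality gives
\begin{equation*}
    \int_{\real^n} h\,d\chi
    = \sum_{k=1}^\infty \chi\{h\geq k\}
    = \int_0^\infty \chi\{h \geq \lceil s\rceil\}\,ds.
\end{equation*}
Since $h\geq 0$ and the radial kernel $\Kernel$ is nonnegative (part of its standing hypothesis as a bump function), $h*\Kernel \geq 0$ as well, so Proposition \ref{prop:r-valued} yields
\begin{equation*}
    \int_{\real^n} h*\Kernel\,\dchifloor = \int_0^\infty \chi\{h*\Kernel \geq s\}\,ds.
\end{equation*}
After normalizing $\Kernel$ so that $\Kernel(0) = \int\Kernel\,\dchifloor = 1$ (these coincide for a unimodal radial bump, as one sees by the upper-excursion-set formula of Proposition \ref{prop:r-valued}), it therefore suffices to prove that, for every non-integer $s>0$,
\begin{equation*}
    \chi\{h*\Kernel \geq s\} = \chi\{h \geq \lceil s \rceil\}.
\end{equation*}

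Fix $k\geq 1$ and $s\in(k-1,k)$. Applying positive-linearity of convolution (Lemma \ref{lem:CF^+}) to the distribution-function decomposition $h = \sum_j \one_{\{h\geq j\}}$ gives $h*\Kernel = \sum_j \one_{\{h\geq j\}}*\Kernel$, and the value of $(h*\Kernel)(x)$ is the integer $h(x)$ whenever the $R$-ball $B_R(x)$ is contained in a single level set of $h$. The only region where $h*\Kernel$ takes non-integer values is therefore the $R$-tubular neighborhood $N_R$ of $\partial\{h\geq k\}$, and outside $N_R$ one has $\{h*\Kernel\geq s\} = \{h\geq k\}$ by direct inspection. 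Inside $N_R$, the constructible feature size hypothesis enters: for every $x\in N_R$, the outward normals to $\partial\{h\geq k\}\cap B_R(x)$ have convex hull $\mathcal{N}(x)$ not containing the origin. Differentiating $(h*\Kernel)(x)$ under the Euler integral --- legitimate by the $C^1$ regularity of $\Kernel$ together with the Hardt Theorem --- realizes $-\nabla(h*\Kernel)(x)$ as a nonnegatively weighted sum of these outward normals, hence as a nonzero vector in the positive cone generated by $\mathcal{N}(x)$. Consequently $h*\Kernel$ has no critical points in $N_R$, and its negative gradient flow deformation-retracts $\{h*\Kernel \geq s\}$ onto $\{h \geq k\}$, yielding $\chi\{h*\Kernel \geq s\} = \chi\{h\geq k\}$.

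The main obstacle will be the analytic-to-geometric transfer in the preceding paragraph: extracting a strictly nonvanishing gradient of $h*\Kernel$ on the full tubular neighborhood $N_R$ from the combinatorial $\CFS$ condition on the possibly singular stratified boundary $\partial\{h\geq k\}$. To make this precise, I would decompose $\partial\{h\geq k\}$ into its smooth definable strata, apply the divergence theorem stratum by stratum to express $\nabla(\one_{\{h\geq k\}}*\Kernel)(x)$ as a $\Kernel$-weighted surface integral of outward unit normals over $\partial\{h\geq k\}\cap B_R(x)$, and then verify that this integral lies strictly in the open positive cone generated by $\mathcal{N}(x)$. The radiality of $\Kernel$ and its positivity on an open neighborhood of the origin are both essential to keep the weighted integral off the cone boundary. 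Once this geometric step is secured, the gradient-flow deformation retraction is classical, and integrating the resulting levelwise equality of Euler characteristics over $s\in(0,\infty)$ delivers the theorem.
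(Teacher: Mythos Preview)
Your excursion-set route is genuinely different from the paper's. The paper does \emph{not} compare $\{h*\Kernel\geq s\}$ with $\{h\geq k\}$ level by level; instead it decomposes $h=\sum_\alpha\one_{U_\alpha}$ into its constituent support indicators (not into the level-set indicators $\one_{\{h\geq j\}}$), observes that each $\one_{U_\alpha}*\Kernel$ individually has the right $\dchifloor$-integral, and then invokes the Morse-index formula of Theorem~\ref{thm:stratified} directly: it suffices that the superposition $\sum_\alpha\one_{U_\alpha}*\Kernel$ introduce no \emph{new} critical points beyond those of the summands, and that the indices of the old critical sets be unchanged. There is no deformation retraction and no matching of excursion-set Euler characteristics. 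Your approach trades the stratified-index machinery for a more elementary topological picture, which is attractive, but it forces you to control the global shape of each $\{h*\Kernel\geq s\}$ rather than just the critical locus.

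That trade exposes a gap. You fix a single level $k$, take $\mathcal{N}(x)$ to be the convex hull of outward normals to $\partial\{h\geq k\}\cap B_R(x)$, and then claim that $-\nabla(h*\Kernel)(x)$ lies in the positive cone over $\mathcal{N}(x)$. But $h*\Kernel=\sum_j\one_{\{h\geq j\}}*\Kernel$, so the gradient at $x$ receives a contribution from \emph{every} level boundary $\partial\{h\geq j\}$ meeting $B_R(x)$, not only from $j=k$; your divergence-theorem computation in the final paragraph is explicitly for $\one_{\{h\geq k\}}*\Kernel$, not for $h*\Kernel$. The $\CFS$ hypothesis bounds the normals of each excursion-set boundary separately and does not by itself prevent several $\partial\{h\geq j\}$ from crowding into one $R$-ball with normals that could cancel across levels, so neither the nonvanishing of $\nabla(h*\Kernel)$ on $N_R$ nor the claimed retraction of $\{h*\Kernel\geq s\}$ onto $\{h\geq k\}$ follows as written. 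The paper's $U_\alpha$-decomposition avoids this because all gradient contributions are pooled into a single sum of normals to the $\partial U_\beta$ before $\CFS$ is invoked, and the conclusion drawn (no new critical points anywhere) is weaker than a level-by-level deformation retraction. To salvage your argument you would need either an auxiliary statement that under $2R<\CFS(h)$ the various $\partial\{h\geq j\}$ meeting any $R$-ball have normals in a common open half-space, or to abandon the levelwise retraction and argue, as the paper does, through the critical set of $h*\Kernel$ as a whole.
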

\begin{proof}
Note that $h*\Kernel\in\Def^1(\real^n)$. By Lemma \ref{lem:CF^+}, $h*\Kernel$ is the sum over $\alpha$ of $\one_{U_\alpha}*\Kernel$. Each such convolution is a smoothing of $\one_{U_\alpha}$ on an exterior tubular neighborhood of radius $R$ of $U_\alpha$. The integral of each $\one_{U_\alpha}*\Kernel$ is thus unchanged. Lack of linearity for $\int\dchifloor$ forbids concluding that the integral of the full $h*\Kernel$ is unchanged. It does suffice, however, to show that no changes in critical points arise in summing these $R$-neighborhood smoothed supports.

Consider $x$ in the intersection of the exterior $R$-neighborhoods of (after re-indexing) a collection $\{U_\beta\}$. The gradient $\nabla_x(h*\Kernel)$ is a weighted vector sum over $\beta$ of $\nabla_x(\one_{U_\beta}*\Kernel)$. By radial symmetry of $\Kernel$, each such vector is parallel (and oppositely oriented) to the normal from $x$ to $U_\beta$. By definition of $\CFS$, this sum cannot be zero, and thus no new critical points arise in the convolution $h*\Kernel$. Invoking Theorem \ref{thm:stratified} and verifying that the indices of previously existing critical sets are unchanged by smoothing, one has that the integrals agree.
\end{proof}

Thus, integrating the (convolved) intensity data in $\dchifloor$ returns the $d\chi$ integral of the (true) constructible data, when feature size is sufficiently large. Stochastic geometry techniques reveal that typical feature size is small \cite{BG:PNAS}. More recent results \cite{BoBo} give strong results on convolution formulae for Gaussian smoothing of noisy integrands.

\
\specialsection*{\bf Closing Thoughts}
\sweetline
\vspace{0.1in}

\section{Open questions}
\label{sec:open}

\subsection{Sampling and estimation}
The success of Alexander duality in computing Euler integrals over {\em ad hoc} planar networks as in \S\ref{sec:adhoc} stands in marked contrast to situation in dimensions three and higher. In dimension three, {\em ad hoc} networks are still a sensible way to deploy sensors to sample a function, motivating the need for numerically efficient and stable algorithms for computing Euler integrals. Alexander duality permits one to pair $H_0$ and $H_2$, thus avoiding the need to determine voids in the sampling by means of clustering; however, $H_1$ is self-dual, and this is a nontrivial difficulty. {\em Is there a fast method for estimating Euler integrals over a network-sampled integrand on $\real^k$ for $k>2$?}

\subsection{Numerical analysis}
This survey leaves open the broader question of rigorous computation and estimation: {\em is there a sensible numerical analysis for the Euler calculus?} It seems that classical tools and techniques for approximating integrals fail to hold in the Euler calculus. The typical {\em sine qua non} --- convergence of integration with arbitrarily dense discrete sampling --- can fail, as shown in \S\ref{sec:numerical}. Any sceptic would feel justified in giving up. Nevertheless, we believe that the potential for applications demands the careful investigation of numerical issues in Euler calculus. It is our belief (offered without proof!) that a well-defined and applicable numerical analysis for Euler calculus exists and is worthwhile.

\subsection{Efficient integral transforms}
The potential of Euler-based integral transforms seems very significant in image and radar signal processing. Besides the difficulties of determining inverse transforms, etc., the sheer numerical complexity of computation is an impediment: {\em are there fast algorithms for computing and inverting Euler integral transforms?} As described in \S\ref{sec:index}, one way to dramatically improve the burden of computing certain Euler integral transforms is to specifically exploit the underlying index theory, focusing the transforms onto critical sets. As a small step in this direction, it is possible to construct explicit formulae for the integral transforms for geometric figures in, \eg, the Euler-Bessel and Fourier transform. In computer graphics contexts, where a (usually semialgebraic) description of sets exist, it may be efficient to exploit critical point computations on surfaces of positive codimension.  This would enable dramatically less computation than computing Euler integrals directly, and may also incur less numerical error.

\subsection{DSP and Euler calculus}
One of the major enablers in the signal processing revolution was the discovery of the Fast Fourier Transform (FFT).  By recursively interleaving the domain, a discrete-time signal's Fourier transform can be computed in $O(N \log N)$ operations rather than $O(N^2)$.  The crucial insight is that the transform can be quickly computed on an interval of length $N_1 N_2$ by first computing it on the much shorter intervals of length $N_1$ and $N_2$. {\em What is the Eulerian FFT?} It is presently unclear if such a reduction in computation can be made for the Euler integral on regularly-sampled domains: the underlying mechanism for combining the interleaved computations in the usual FFT is orthogonality, and the Euler product does not support the usual synthesis formula.  As a result, the relationship of a function's Euler integral to subsampled copies is likely to be complicated. That there is any possibility of an Eulerian FFT at all comes from the scissors relation for definable sets: definable level sets of a function on a regular grid have a decomposition (via a definable homeomorphism) that may enable computational simplifications.

\subsection{Euler wavelets}
As shown in \S\ref{sec:wavelet}, the Euler-Haar wavelet transform is injective, and can be used to synthesize a function from its transform algorithmically.  However, the primary use of the analysis-synthesis properties of transforms is to enable filtering.  {\em How do particular classes of Euler wavelet-based filters affect a given signal?} The inversion algorithm we have exhibited is delicate: not every set of wavelet coefficients corresponds to a transform of a function, which could cause nontermination of the algorithm.  Worse, since we do not have an explicit characterization of what the family of appropriate wavelet coefficients are, naive filtering may be destructive. In the definable category of functions, especially, there are likely many other wavelet families to be studied. As in the Lebesgue case, Haar wavelets represent the crudest (and not terribly effective) family. It is to be hoped that other Euler wavelet families that have substantially better properties may be found.

\subsection{Data fusion}
The Euler integral and its associated operations arise from a subtle connection to the sheaf of constructible functions. These functions take values in the integers, which provides much of the motivation for considering integer-valued data in this context.  Substantially more complicated algebraic data can be stored into sheaves; appropriately generalized integration theories {\em may} follow. It remains an open problem: {\em how to fuse disparate data types over a space (or network) into a consistent global datum?} These data types may be as simple as $\zed$ versus $\real$ (some sensors count; others record intensities); more complex data types (local logical statements about targets) would be of phenomenal impact. In the latter case, one suspects a relationship with geometric logic and topoi.

\subsection{Target classification}
The Euler calculus may provide a promising way to address the problem of identifying characteristics of a particular family of target supports. {\em How can one perform target identification with only enumerative data?} For instance, the number of wheels on a vehicle may be an identifying characteristic that can be counted by the Euler integral if applied to field of pressure sensors or remote imaging data.  More refined applications can be envisioned that use the integral transforms we have described, most notably the Bessel and Fourier transforms \cite{GR}.  In particular, families of invariants might be extracted from transforms of functions (rather than the Euler integral alone) that would aid in robust identification even if errors are present.

\subsection{Lifting topological invariants}
The Euler integral is a lift of the classical Euler characteristic $\chi$ from an invariant of spaces to an invariant of {\em data} (constructible or definable) over spaces. The success of this lifting prompts an examination of other classical topological invariants. One prime candidate is the Lusternik-Schnirelmann category of a space \cite{CLOT}, which has been lifted recently to an invariant of $\real$-valued distributions \cite{BG:uni}. Others are more geometric: there is a valuation $\mu_k$ on tame subsets of $\real^n$, one for each $k=0,\ldots,n$ that interpolate between Euler ($\mu_0$) and Lebesgue ($\mu_n$) measure. These \style{intrinsic volumes}\footnote{Also known as Hadwiger measures, mixed volumes, quemasseintegrals, Lipshitz-Killing curvatures, etc.} have been lifted not only to $\CF(\real^n)$ but also to $\Def(\real^n)$ in dual pairs $\lfloor\mu_k\rfloor$, $\lceil\mu_k\rceil$ \cite{Wright}. {\em Are there other algebraic invariants that can be lifted from spaces to data over spaces? Is it possible to lift algebraic-topological invariants of maps between spaces (\eg, Lefshetz index) to morphisms between data structures over spaces?}

\subsection{Applied sheaf theory}
Euler calculus is the vanguard of an emerging family of techniques in applied mathematics based on constructible sheaves. Any locally-defined algebraic data has a sheaf (or cosheaf) interpretation, and constructible sheaves are especially suited to applications due to their ease of manipulation and computation.  Sheaf cohomology permits local-to-global inference and is incisive, even in simple examples. We suggest that many scientifically-motivated problems that exhibit locality of information can be addressed via the cohomology of an appropriate sheaf. For instance, in communications networks, nodes and links typically only retain information about messages from their immediate neighbors: information in the network is local. Work in progress \cite{Rob:logic,Rob:flow,GH} indicates a sheaf-theoretic interpretation of network information, with sheaf cohomology measuring features of messages passed through the network at large. Examples with higher-dimensional base spaces would permit the use of sheaf cohomology beyond $H^0$ and $H^1$. {\em What are other examples in which higher-dimensional constructible sheaf cohomology classes solve data aggregation problems?}

\vspace{0.1in}

\sweetline

\vspace{0.1in}

{\small
\section*{Acknowledgements}

The applications described in this survey would not have been possible without the guidance and hard work of two individuals.  Yuliy Baryshnikov was the first to appreciate applications of Euler calculus to sensing and is directly or jointly responsible for most of the  applications appearing in this survey. Benjamin Mann oversaw the DARPA program {\em SToMP: Sensor Topology \& Minimalist Planning} under which the sensing applications of Euler calculus were developed.

\bibliographystyle{amsalpha}

}
\end{document}